\documentclass[final]{article}

\title{SIGLE: valid Selective Inference procedure for Generalized~Linear~Lasso}
\date{April 2023}
\usepackage{natbib}

\usepackage{tkz-tab}
\usepackage{afterpage}
\usepackage{fullpage}

\usepackage[utf8]{inputenc} 
\usepackage[T1]{fontenc}    
\usepackage{hyperref}
\definecolor{burgundy}{rgb}{0.5, 0.0, 0.13}
\definecolor{camel}{rgb}{0.76, 0.6, 0.42}
\definecolor{chamoisee}{rgb}{0.63, 0.47, 0.35}
\definecolor{grey1}{RGB}{128,128,128}

\usetikzlibrary{bayesnet}
\usetikzlibrary{arrows}
 \hypersetup{colorlinks=true,linkcolor=burgundy,citecolor=chamoisee,urlcolor=burgundy,linktoc=page}
\usepackage{url}            
\usepackage{booktabs}       
\usepackage{amsfonts}       
\usepackage{nicefrac}       
\usepackage{microtype}      
\usepackage{dsfont}         

\author{%
  Quentin Duchemin\\
  Swiss Data Science Center, École polytechnique fédérale de Lausanne\\
  1015, Lausanne, Switzerland\\
  \texttt{quentin.duchemin@epfl.ch} \\
  $\And$\\
   Yohann De Castro\\
  Univ. Lyon, École Centrale de Lyon, CNRS UMR 5208\\ 
  Institut Camille Jordan\\ 
  36 Avenue Guy de Collongue, 69134 Écully, France\\
  Institut Universitaire de France (IUF)\\
  \texttt{yohann.de-castro@ec-lyon.fr}}


\usepackage{comment}
\usepackage{tcolorbox}
\usepackage{amsmath}
\usepackage{amsthm}

\newtheorem{thm}{Theorem}
\newtheorem{Lemma}{Lemma}
\newtheorem{proposition}{Proposition}

\newtheorem{assumption}{Assumption}

\theoremstyle{remark}

\bibliographystyle{abbrvnat}
\usepackage{amssymb}
\usepackage{mathtools}
\DeclarePairedDelimiter\ceil{\lceil}{\rceil}

\usepackage{diagbox}
\usepackage{subcaption}
\usepackage{bbold}
\usepackage{tikz}
\usetikzlibrary{%
  arrows,%
  calc,%
  shapes.geometric,%
  shapes.misc,%
  shapes.symbols,%
  shapes.arrows,%
  automata,%
  through,%
  positioning,%
  scopes,%
  decorations.shapes,%
  decorations.text,%
  decorations.pathmorphing,%
  shadows}
\usetikzlibrary{positioning}
\usepackage{multirow}
\usepackage{makecell}
\usepackage{enumitem}
\usepackage{empheq}
\makeatletter
\newcommand{\opnorm}{\@ifstar\@opnorms\@opnorm}
\makeatother

\usepackage{colortbl}
\usepackage{multirow}

\usepackage{algorithm,algorithmic,refcount}
\usepackage{tikz}
\usetikzlibrary{arrows,calc,shapes,decorations.pathreplacing}
\usepackage{subcaption}

\newcolumntype{C}[1]{>{\centering\arraybackslash}m{#1}}

\makeatletter

\makeatother

\usepackage[leftbars]{changebar}

\usepackage{pifont}
\newcommand{\cmark}{\ding{51}}%
\newcommand{\xmark}{\ding{55}}%

\usepackage{algorithm,algorithmic,refcount}
\definecolor{light-gray}{gray}{0.8}

\definecolor{auburn}{rgb}{0.43, 0.21, 0.1}

\begin{document}

\maketitle

\begin{abstract}
This article investigates uncertainty quantification of the generalized linear lasso~(GLL), a popular variable selection method in high-dimensional regression settings. In many fields of study, researchers use data-driven methods to select a subset of variables that are most likely to be associated with a response variable. However, such variable selection methods can introduce bias and increase the likelihood of false positives, leading to incorrect conclusions. 

In this paper, we propose a post-selection inference framework that addresses these issues and allows for valid statistical inference after variable selection using GLL. 
We show that our method provides accurate $p$-values and confidence intervals, while maintaining high statistical power. 

In a second stage, we focus on the sparse logistic regression, a popular classifier in high-dimensional statistics. We show with extensive numerical simulations that SIGLE is more powerful than state-of-the-art PSI methods. SIGLE relies on a new method to sample states from the distribution of observations conditional on the selection event. This method is based on a simulated annealing strategy whose energy is given by the first order conditions of the logistic lasso.

\end{abstract}

\section{Introduction}
\label{sec:intro}

In modern statistics, the number of predictors can far exceed the number of observations available. In this high-dimensional context, $\ell_1$ regularisation leads to a small number of predictors to be selected (referred to as the selected support) while allowing for a minimax optimal prediction error, see for instance \cite[Chapter 2]{van2016estimation}. The estimated parameters and support are not explicitly known and are obtained by solving a convex optimisation program in practice. This makes inference of the model parameters difficult if not impossible. 

In this context, the application of standard inference methods without taking into account the use of data to select the model usually leads to undesirable statistical properties. Post-selection inference (PSI) is designed to address this issue. It consists of constructing inference procedures considering that the vector of observations $Y$ is distributed according to the distribution conditional on the so-called selection event. In the literature, the problem of post-selection inference has been studied mainly for linear regression with Gaussian noise and assuming that the model has been selected using LASSO. Leaving this specific framework is an essential step for applications and more challenges can be expected in the study of PSI procedures for a generalized linear model (GLM). Moreover, the ubiquity of the logistic model to solve practical regression problems and the surge of high dimensional data-sets make the sparse logistic regression~(SLR) more and more attractive. In this frame, it becomes crucial to provide certifiable guarantees on the output of the SLR, e.g. confidence intervals.

Inference procedures with statistical guarantees in the Generalized Linear Model (GLM) are few, if any. The practitioner is often left with no valid option to quantifies the uncertainty of predictions in high-dimensional GLMs. To the best of our knowledge, she might use the recent work of \cite{taylorGLM} for inference with the Generalized Linear LASSO (GLL). Based on a heuristic argument, \cite{taylorGLM} quantifies the uncertainty of the solutions of GLL. 

The main contribution of this article is three fold. First, we introduce {\bf SIGLE} (Selective Inference for Generalized Linear Estimation), {a new conditional MLE approach to provide testing procedures and confidence regions for the solutions of GLL. SIGLE relies one a new sampling scheme from the distribution of observations conditional on the selection event. 

Second, we focus on the SLR and we introduce a new method to sample states according to the conditional distribution, allowing the use of SIGLE in this context. We empirically witness that SIGLE is more powerful than current state-of-the-art methods. On Figure~\ref{fig:pvals-PSI}, we observe that our testing procedure (SIGLE) is correctly calibrated and we compare its power with the method from \cite{taylorGLM} and with a {\it weak learner}. This weak learner is a two-sided test based on the statistic $\sum_{i=1}^n |\overline \pi_i^{\theta_0} - y_i|$ where $\overline \pi^{\theta_0}$ is the expectation of the vector of observations under the null conditional on the selection event (cf. Eq.\eqref{eq:gradbar}).~More experiments can be found in Section~\ref{sec:hypo-testing}.

Last but not least, we prove a new conditional Central Limit Theorem (CLT) that exhibits conditions under which the SIGLE statistic is asymptotically normal. These assumptions hold under considerations similar to those commonly used in the study of asymptotic properties of subset selection via the Lasso in linear models (cf. \cite{taylorGLM,bunea}) and are not of particular interest for practical applications. This conditional CLT is a significant contribution and can be read at three levels of granularity. First it motivates the choice of the SIGLE statistic in this work. Second, it opens new perspective regarding the theoretical analysis of PSI methods in GLL. Indeed, while~\cite{taylorGLM} focus first on getting unconditional asymptotic result before considering the distribution of the limit distribution conditional on the selection event, we directly consider the conditional distribution of the SIGLE statistic before analyzing its asymptotic limit. Let us stress out that the asymptotic result stated in~\cite{taylorGLM} relies on non rigorous computations. Third, we believe that the proof of our conditional CLT might be of independent interest. In particular, we are--as far as we know--the first to correct the proof from~\cite{Liang2012MaximumLE} which has been reported as false (cf.  \cite{zhang18}). }

\begin{figure}[ht!]
\begin{center}
 \centering
    \begin{subfigure}[b]{0.49\textwidth}
        \centering
        \includegraphics[width=\textwidth]{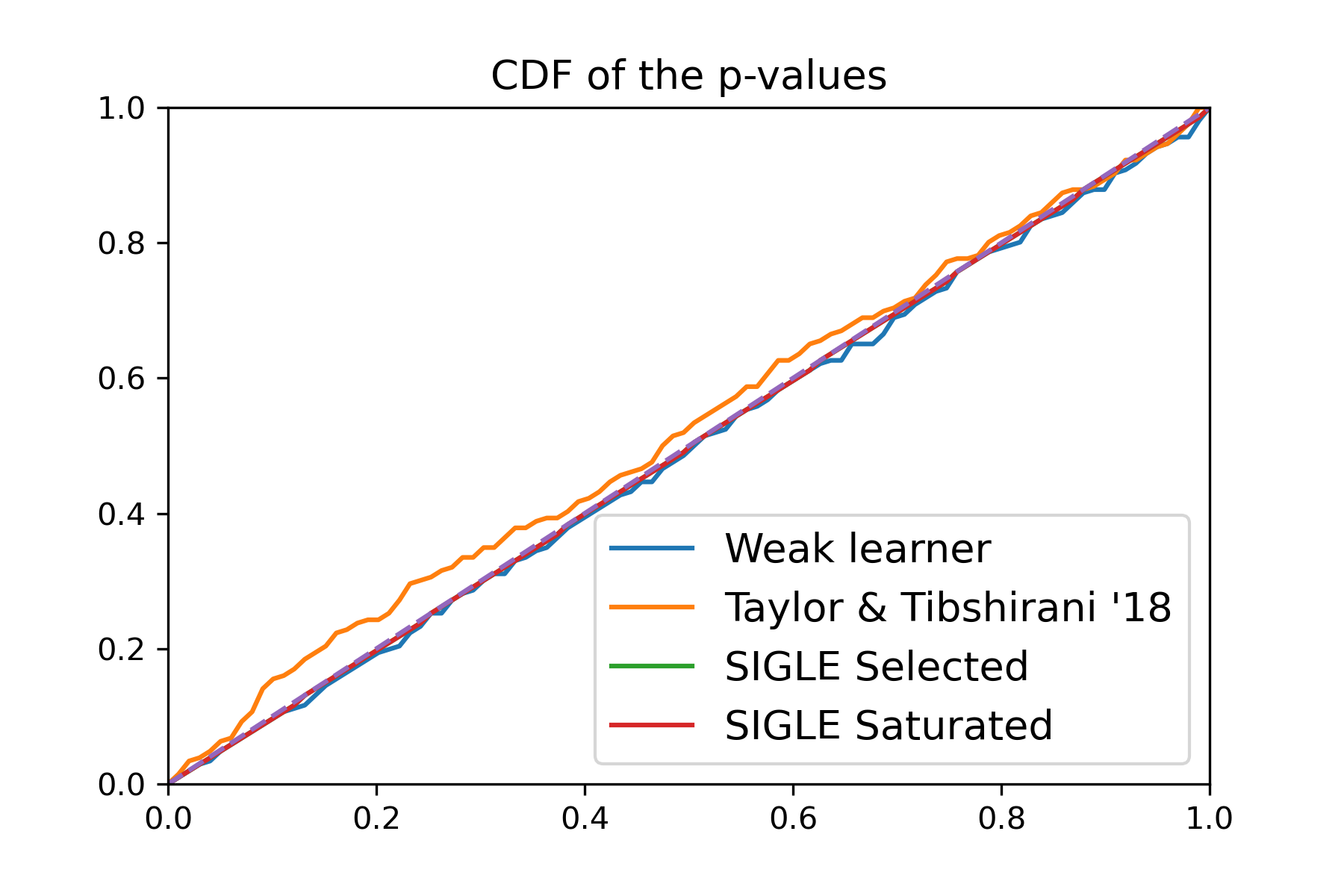}
        \caption[]%
        {{\small  $\vartheta^*= [0 , \ldots , 0]$.}}    
    \end{subfigure}
    \hfill
     \centering
    \begin{subfigure}[b]{0.49\textwidth}
        \centering
        \includegraphics[width=\textwidth]{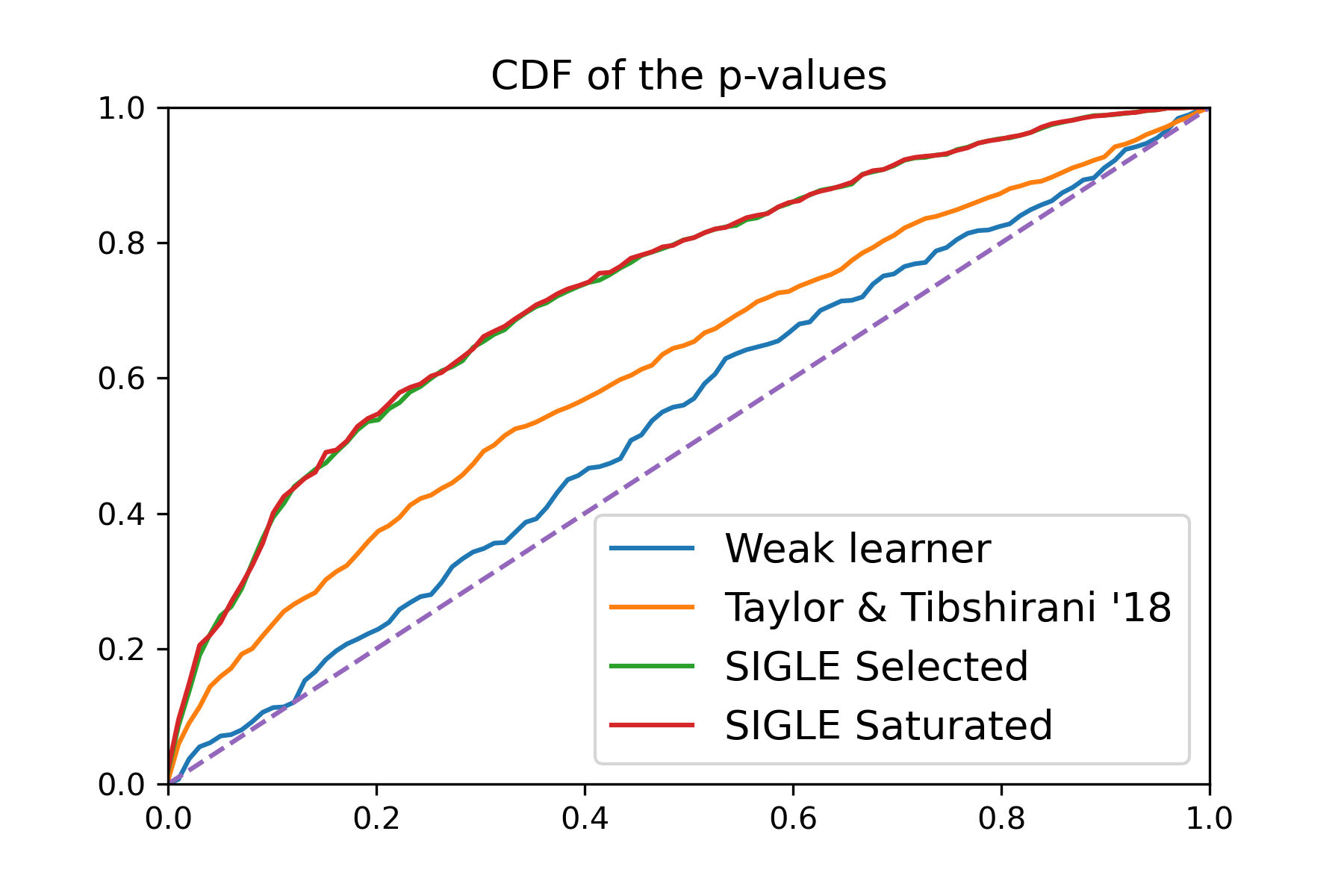}
        \caption[]%
        {{\small $\vartheta^*= [0.2,0.2,0,\ldots,0]$.}}    
    \end{subfigure}
\caption{In the logistic model, we consider a design matrix $\mathbf X \in \mathds R^{200\times 10}$ where we sample independently each entry with respect to a standard normal distribution. On Figures $(a)$ and $(b)$, we show the cumulative distribution function (CDF) of the p-values under obtained from $i)$ a {\it weak learner}, $ii)$ the procedure TT-1 (cf. Section~\ref{sec:hypo-testing}) adapted from \cite{taylorGLM} and $iii)$ SIGLE. On Figure $(a)$ we work under the global null showing that SIGLE and the weak leaner are correctly calibrated. The method from \cite{taylorGLM} does not show $p$-values systematically larger than uniform. On Figure $(b)$ we work under the alternative $\vartheta^*=[0.2,0.2,0,\dots]\in \mathds R^{10}$. 
}
\label{fig:pvals-PSI}\end{center}
\vskip -0.2in
\end{figure}

\subsection{Post-Selection Inference for high-dimensional GLM}
\label{sec:1.1}

We are interested in a target parameter $\vartheta^\star\in\Theta\subseteq\mathds R^d$ attached to the distribution~$\mathds P_{\vartheta^\star}$ of $N$ independent response variables $Y:=(y_1,\ldots,y_N)\in\mathcal Y^N\subseteq\mathds R^N$ given by the data  $Z:=(z_1,\ldots,z_N)$ where $z_i=(\mathbf x_i,y_i)\in\mathcal X\times \mathcal Y$ with $\mathbf x_i\in\mathcal X\subseteq\mathds R^d$ a covariate, namely a vector of $d$ predictors. The family of generalized linear models, or GLMs for short, is based on modeling the conditional distribution of the responses $y_i\in\mathcal Y$ given the covariate $\mathbf x_i\in\mathcal X$ in an exponential family form, namely
\[
\mathds P_{\vartheta^\star}(y|\mathbf x)=h_{v}(y)\exp
\Big\{
\frac{y\langle\mathbf x,\vartheta^\star\rangle-\xi(\langle\mathbf x,\vartheta^\star\rangle)}{v}
\Big\}\,,
\]
where $v>0$ is a scale parameter, and $\xi\,:\,\mathds R\to\mathds R$ is the partition function which is assumed to be of class~$\mathcal C^{m+1}$ (with $m$ a non-negative integer). For sake of readability, the dependence on $\mathbf X$ will be omitted when it is clear from the context, and we will simply denote $\mathds P_{\vartheta^*}(\cdot \,|\, \mathbf x)$ by $\mathds P_{\vartheta^*}(\cdot)$. Standard examples are $\xi(t)=t^2/2$ for the Gaussian linear model with noise variance~$v$ and observation space $\mathcal Y=\mathds R$, or $v=1$, $\xi(t)=\exp(t)$ and $\mathcal Y=\{0,1,2,\ldots\}$ for the Poisson regression. Last but not least, we will consider in this paper the logistic regression where $v=1$, $\xi(t)=\log(1+\exp(t))$ and $\mathcal Y=\{0,1\}$. 

The negative log-likelihood takes the form
\begin{equation}\label{eq:GLM}
\forall \vartheta\in\Theta\,,\ 
\mathcal L_N(\vartheta,Z)
:=\sum_{i=1}^N
\xi(\langle\mathbf x_i,\vartheta\rangle)
-\langle y_i\mathbf x_i,\vartheta\rangle\,.
\end{equation}
Assume that the partition function $\xi$ is differentiable, then the score function is 
\[
\forall \vartheta\in\Theta\,,\ 
\nabla_\vartheta\mathcal L_N(\vartheta,Z)
=\mathbf X^\top
\big(
\sigma(\mathbf X\vartheta)-Y
\big)\,,
\]
where $\sigma=\xi'$ is the derivative of the partition function and $\mathbf X\in\mathds R^{N\times d}$ is referred to as the design matrix whose rows are the covariates and the columns are the predictors. Note that $\sigma(\mathbf X \vartheta)$ should be understood as applying entrywise the function $\sigma$ to the vector $\mathbf X \vartheta$. In a high-dimensional context one has more predictors than observations ({\it i.e.,} $N\ll d$), and one would like to select a small number of predictors to explain the response. We use an $\ell_1$-regularization to enforce a structure of sparsity in $\vartheta$. Our overall estimator is based on solving the Generalized Linear Lasso (GLL)
\begin{equation}
\label{e:generalized_lasso}
\hat\vartheta^{\lambda}\in\arg\min_{\vartheta\in\Theta}
\big\{
\mathcal L_N(\vartheta,Z)+\lambda \|\vartheta\|_1
\big\} ,
\end{equation}
where $\lambda>0$ is a user-defined regularization hyperparameter. We assume that the negative log-likelihood is strictly convex. This assumption is satisfied for instance in the Gaussian linear model or logistic regression. In this case, it is necessary and sufficient that the solutions $\hat\vartheta^{\lambda}$ to \eqref{e:generalized_lasso} satisfy
the following Karush–Kuhn–Tucker (KKT) conditions 
 \begin{subequations}\label{eq:gKKTconditions}
    \begin{empheq}[left=\empheqlbrace]{align}
     \mathbf X^{\top} \Big( Y -\sigma(\mathbf X \hat{\vartheta}^{\lambda})\Big)&=\lambda \widehat{{S}},&&\label{eq:KKTY}\\
     \widehat {S}_k&=\mathrm{sign}(\hat{\vartheta}_k^{\lambda})\quad &&\text{if } \widehat{\vartheta}_k^{\lambda}\neq0,\label{subeqn-1:gbinomi} \\
     \widehat {S}_k&\in [-1,1]\quad && \text{if } \hat{\vartheta}_k^{\lambda}=0.\label{subeqn-2:gbinomi}    \end{empheq}
  \end{subequations}
Given any $Y\in\mathcal Y^N$ and $\lambda>0$, Proposition~\ref{prop:signs} shows that there exists one and only one vector of signs~$\widehat S \in \mathds R^d$ such that~$(\hat \vartheta^{\lambda}, \hat S)$ satisfies the KKT conditions for some~$\hat \vartheta^{\lambda} \in \Theta$. The proof of Proposition~\ref{prop:signs} can be found in Section~\ref{proof-prop:signs}.

\begin{proposition}\label{prop:signs} Let $Y \in \mathcal Y^N$ and let the partition function~$\xi$ be strictly convex. Then, there exists a unique~$\widehat S(Y)$ such that for any couple $(\hat \vartheta^{\lambda}, \hat S)$ satisfying the KKT conditions $($cf. Eq.\eqref{eq:gKKTconditions} with $Y$ in Eq.\eqref{eq:KKTY}$)$, it holds that~$\hat S=\widehat S(Y)$. Furthermore, one has 
\[\widehat S(Y):=\frac{1}{\lambda}\mathbf X^{\top}(Y-\sigma(\mathbf X \hat \vartheta^{\lambda})),\]
where $\hat \vartheta^{\lambda}$ is any solution of the generalized linear Lasso as defined in~\eqref{e:generalized_lasso}.
\end{proposition}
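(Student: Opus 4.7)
The plan is to observe that the first KKT equation $\mathbf{X}^\top(Y - \sigma(\mathbf{X}\hat\vartheta^\lambda)) = \lambda \widehat S$ already expresses $\widehat S$ as an explicit function of $\hat\vartheta^\lambda$, namely the formula $\widehat S = \tfrac{1}{\lambda}\mathbf{X}^\top(Y - \sigma(\mathbf{X}\hat\vartheta^\lambda))$. So the whole statement reduces to showing that the quantity $\mathbf{X}\hat\vartheta^\lambda$ (and hence $\sigma(\mathbf{X}\hat\vartheta^\lambda)$ entrywise) does not depend on which minimizer $\hat\vartheta^\lambda$ of the GLL is picked. This is the classical uniqueness-of-fitted-values argument for generalized Lasso-type problems, adapted here to strictly convex $\xi$.

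Concretely, let $\hat\vartheta_1$ and $\hat\vartheta_2$ be two minimizers of the objective $F(\vartheta) := \mathcal{L}_N(\vartheta,Z) + \lambda\|\vartheta\|_1$, and set $\vartheta_t := t\hat\vartheta_1 + (1-t)\hat\vartheta_2$ for $t\in(0,1)$. By convexity of $F$, the midpoint $\vartheta_t$ is also a minimizer, so $F(\vartheta_t) = tF(\hat\vartheta_1) + (1-t)F(\hat\vartheta_2)$. Decomposing $F$ into the sum $\sum_{i=1}^N \xi(\langle \mathbf{x}_i,\vartheta\rangle) - \sum_{i=1}^N \langle y_i \mathbf{x}_i,\vartheta\rangle + \lambda\|\vartheta\|_1$, the linear term contributes trivially to the convex combination, and the remaining summands are each convex in $\vartheta$. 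Equality in the overall Jensen inequality therefore forces equality in every summand.

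For the $\|\cdot\|_1$ part this gives nothing new, but for each $\xi(\langle \mathbf{x}_i,\vartheta\rangle)$ the hypothesis that $\xi$ is strictly convex forces $\langle \mathbf{x}_i, \hat\vartheta_1\rangle = \langle \mathbf{x}_i, \hat\vartheta_2\rangle$ for every $i = 1,\dots,N$. Equivalently $\mathbf{X}\hat\vartheta_1 = \mathbf{X}\hat\vartheta_2$. Plugging this equality back into the explicit formula $\widehat S = \tfrac{1}{\lambda}\mathbf{X}^\top(Y-\sigma(\mathbf{X}\hat\vartheta^\lambda))$ shows that $\widehat S$ is the same for the two minimizers, which yields the uniqueness claim and the announced expression.

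The only delicate point is justifying the strict-convexity step cleanly: one must argue that if a sum of convex functions equals the convex combination of its values at the endpoints, each summand must too, which is immediate from the summand-wise Jensen inequalities. I do not anticipate any genuine obstacle beyond this routine convexity bookkeeping; no full-column-rank assumption on $\mathbf{X}$ is needed because uniqueness is claimed only for $\widehat S$, not for $\hat\vartheta^\lambda$ itself.
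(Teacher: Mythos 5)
Your proof is correct and follows essentially the same route as the paper: both arguments show that strict convexity of $\xi$ forces any two minimizers to share the same fitted values $\mathbf X\hat\vartheta^{\lambda}$ (the paper uses the midpoint $t=1/2$ and argues by contradiction, whereas you use a general convex combination and summand-wise Jensen equality, which is the same idea), and then read off the uniqueness of $\widehat S$ from the stationarity equation. No gap.
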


\noindent
We define the {\it equicorrelation set} as
\[\widehat M(Y) := \{ k \in [d] \; | \; |\widehat{S}_k(Y)|=1\}.\]
In the following, we will identify the equicorrelation set and the set of predictors with nonzero coefficients $\{k \in [d] \; | \; \hat \vartheta_k^{\lambda}\neq 0\},$ also called ‘selected' model.
Since~$|\widehat {S}_k(Y)| = 1$ for any~$\hat \vartheta_k^{\lambda} \neq 0$, the equicorrelation set does in fact contain all predictors with nonzero coefficients, although it may also include some predictors with
zero coefficients. However, we work in this paper with Assumption~\ref{assumption:equi}, ensuring that the equicorrelation set is precisely
the set of predictors with nonzero coefficients. 
\begin{assumption}\label{assumption:equi} Problem~\eqref{e:generalized_lasso} is non degenerate: $\widehat S(Y) \in \mathrm{relint}\,  \partial \|\cdot\|_1$, where $\mathrm{relint}$ denotes the relative~interior.
\end{assumption} \noindent Let us highlight that this assumption has already been used in the context of GLMs \citep[cf.][Assumption 8]{massias2020dual}, and is common in works on support identification (cf. \cite{candes13,vaiter15}).

For any set of indexes~$M \subseteq[d]$ with cardinality $s$, we denote by~$\Theta_M$ the set of target parameters induced on the support~$M$ namely,
\[\Theta_M:= \{ \vartheta_M \, |\, \vartheta \in \Theta\} \subseteq \mathds R^s.\]
We aim at making inference conditionally on the {\it selection event}~$E_{M}$ defined as
\begin{equation}
\label{def:EM0}
E_{M}:= \left\{ Y \in \mathcal Y^N \; | \; \widehat M(Y) = M \right\}\,,
\end{equation}
namely, the set of all observations $Y$ that induced the same equicorrelation set~$M$ with the generalized linear lasso.

\subsection{A useful characterization of the selection event}

Following the approach of~\cite{sun16}, given some $M\subseteq[d]$ with~$|M|=s$ and~$S_M \in \{-1,+1\}^s$, we first characterize the event
\begin{equation}\label{def:EMSM}E_M^{S_M}:= \{ Y \in E_M \; |\; \widehat S_M(Y) = S_M\},\end{equation}
and we obtain $E_M$ as a corollary by taking a union over all possible vectors of signs~$S_M$. Proposition~\ref{prop:EMSM-deformed-polytope} gives a first description of $E_M^{S_M}$ and its proof is postponed to Section~\ref{proof-prop:EMSM-deformed-polytope}.

\begin{proposition} \label{prop:EMSM-deformed-polytope}
Let us consider $M \subseteq[d]$ with $| M|=s$ and $S_{M} \in \{-1,+1\}^s$. It holds
\begin{align}E_M^{S_M}=\Big\{ Y \in \mathcal Y^N \; |\; \exists \theta \in \Theta_M \text{ s.t. }  (i)&\;\; \mathbf X_{M}^{\top}\left(Y - \sigma(\mathbf X_{M} \theta) \right)=\lambda S_{M}\label{def:ES}\\
(ii)&\;\; \mathrm{sign}(\theta) = S_{M}\notag\\
(iii)&\;\; \left\| \mathbf X^{\top}_{-{M}} \left( Y- \sigma(\mathbf X_{M} \theta) \right)\right\|_{\infty} < \lambda \Big\}\,,\notag
\end{align}
were $\mathbf X_{M}\in\mathds R^{N\times s}$ $($resp. $\mathbf X_{-M}\in\mathds R^{N\times (d-s)})$ is the submatrix obtained from $\mathbf X$ by keeping the columns indexed by $M$ $($resp. its complement$)$.
\end{proposition}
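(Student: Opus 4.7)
The plan is to prove a two-sided inclusion by translating the defining conditions of $E_M^{S_M}$ into the KKT system~\eqref{eq:gKKTconditions} and vice versa, using Proposition~\ref{prop:signs} for uniqueness of~$\widehat S(Y)$ and Assumption~\ref{assumption:equi} to identify the equicorrelation set with the true support of $\hat\vartheta^\lambda$.

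For the forward inclusion, suppose $Y \in E_M^{S_M}$. Then $\widehat M(Y)=M$ and, under Assumption~\ref{assumption:equi}, the support of any solution $\hat\vartheta^\lambda$ is exactly $M$; in particular $\hat\vartheta^\lambda_{-M}=0$, so $\mathbf X \hat\vartheta^\lambda = \mathbf X_M \hat\vartheta^\lambda_M$. Setting $\theta := \hat\vartheta^\lambda_M \in \Theta_M$ and restricting the stationarity equation of~\eqref{eq:gKKTconditions} to the coordinates in~$M$ yields (i), while restricting it to $-M$ gives $\mathbf X_{-M}^\top(Y-\sigma(\mathbf X_M\theta)) = \lambda \widehat S_{-M}(Y)$; the relative-interior condition of Assumption~\ref{assumption:equi} forces $\|\widehat S_{-M}(Y)\|_\infty<1$, which is (iii). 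Finally (ii) holds because $\mathrm{sign}(\theta)=\mathrm{sign}(\hat\vartheta^\lambda_M)=\widehat S_M(Y)=S_M$ by the subgradient condition and the definition of $E_M^{S_M}$.

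For the reverse inclusion, suppose $\theta \in \Theta_M$ satisfies (i)--(iii). Define $\hat\vartheta^\lambda\in\mathds R^d$ by $\hat\vartheta^\lambda_M:=\theta$ and $\hat\vartheta^\lambda_{-M}:=0$, and define $\widehat S\in\mathds R^d$ by $\widehat S_M:=S_M$ and $\widehat S_{-M}:=\frac{1}{\lambda}\mathbf X_{-M}^\top(Y-\sigma(\mathbf X_M\theta))$. Condition (i) together with the construction of $\widehat S_{-M}$ is exactly the stationarity equation $\mathbf X^\top(Y-\sigma(\mathbf X\hat\vartheta^\lambda))=\lambda \widehat S$; condition (ii) ensures $\widehat S_M=\mathrm{sign}(\hat\vartheta^\lambda_M)$, and (iii) gives $\|\widehat S_{-M}\|_\infty<1$, which in particular lies in $[-1,1]$ for the coordinates where $\hat\vartheta^\lambda_k=0$. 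Hence $(\hat\vartheta^\lambda,\widehat S)$ satisfies \eqref{eq:gKKTconditions}, so $\hat\vartheta^\lambda$ is a minimizer of~\eqref{e:generalized_lasso} and by Proposition~\ref{prop:signs} one has $\widehat S(Y)=\widehat S$. Since $|\widehat S_k(Y)|=1$ precisely for $k\in M$, we obtain $\widehat M(Y)=M$ and $\widehat S_M(Y)=S_M$, i.e.\ $Y\in E_M^{S_M}$.

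The only delicate point is making sure the non-degeneracy Assumption~\ref{assumption:equi} is genuinely used to convert the soft inequality in~\eqref{subeqn-2:gbinomi} into the strict inequality~(iii) and, symmetrically, to guarantee that the $\theta$ obtained from~(i)--(iii) generates a solution whose equicorrelation set is exactly $M$ rather than a strict superset. Everything else is a direct translation between the block form of the KKT system (split along $M$ and $-M$) and the three conditions in~\eqref{def:ES}; I do not expect any analytic difficulty beyond this bookkeeping.
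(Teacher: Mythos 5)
Your proof is correct and follows essentially the same route as the paper: partition the KKT system along $M$ and $-M$, use that the KKT conditions are necessary and sufficient, and invoke the non-degeneracy assumption (together with the uniqueness of the sign vector from Proposition~\ref{prop:signs}) to identify the equicorrelation set with the support and pass between the block-KKT form and conditions (i)--(iii). Your version merely spells out the two inclusions that the paper compresses into a single ``if and only if'' step.
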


With Proposition~\ref{prop:signs}, we proved the uniqueness of the vector of signs satisfying the KKT conditions as soon as~$\xi$ is strictly convex. By considering additionally that~$\mathbf X_M$ has full column rank, we claim that there exists a unique~$\theta \in \Theta_M$ that satisfies the condition~$(i)$ in the definition of the selection event~$E_M^{S_M}$ (see Eq.\eqref{def:ES}). This statement will be a direct consequence of Proposition~\ref{prop:diffeo-g} (proved in Section~\ref{proof-prop:diffeo-g}) which ensures that the map~$\Xi$ arising in Eq.\eqref{def:ES} and defined by
\begin{align}\label{eq:Xi}
\Xi:\Theta_M &\to \mathds R^s\\
\theta  &\mapsto \mathbf X_M^{\top} \sigma(\mathbf X_M \theta) \notag
\end{align}
is a $\mathcal C^{m}$-diffeomorphism whose inverse is denoted by $\Psi$. 

\begin{proposition} 
\label{prop:diffeo-g} 
We consider that the partition function $\xi$ is strictly convex and we further assume that the set $M \subseteq[d]$ is such that~$\mathbf X_M$ has full column rank. Then~$\Xi$ is a $\mathcal C^{m}$-diffeomorphism from $\Theta_M$ to $\mathrm{Im}(\Xi)=\{ \mathbf X_M^{\top} \sigma(\mathbf X_M\theta) \; | \; \theta \in \Theta_M \}$.
\end{proposition}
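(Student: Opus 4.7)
The plan is to realize $\Xi$ as the gradient of a strictly convex potential and then combine global injectivity with the inverse function theorem. Define the auxiliary function
\[
F(\theta) := \sum_{i=1}^{N} \xi\big((\mathbf X_M \theta)_i\big), \qquad \theta \in \Theta_M,
\]
which inherits the $\mathcal C^{m+1}$ regularity of $\xi$ and satisfies $\nabla F(\theta) = \mathbf X_M^{\top}\sigma(\mathbf X_M \theta) = \Xi(\theta)$. A $\mathcal C^m$-diffeomorphism is then obtained as soon as (a) $\Xi$ is globally injective, (b) its differential is invertible at every point, and (c) its image is open.

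For (a), I would first check that $\Theta_M$ is convex (being a coordinate projection of the convex set $\Theta$). Since $\mathbf X_M$ has full column rank, the map $\theta \mapsto \mathbf X_M \theta$ is injective, so strict convexity of the scalar $\xi$ transfers into strict convexity of $F$ on $\Theta_M$. Strict convexity of $F$ then yields strict monotonicity of its gradient,
\[
\langle \Xi(\theta_1)-\Xi(\theta_2),\, \theta_1-\theta_2\rangle > 0 \quad \text{whenever } \theta_1 \neq \theta_2,
\]
which gives global injectivity of $\Xi$ on $\Theta_M$.

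For (b)--(c), a direct computation gives
\[
D\Xi(\theta) = \mathbf X_M^{\top}\,\mathrm{diag}\big(\xi''((\mathbf X_M\theta)_i)\big)_{i=1}^{N}\,\mathbf X_M = \mathrm{Hess}\,F(\theta).
\]
This matrix is positive definite (hence invertible) because the diagonal weights $\xi''((\mathbf X_M\theta)_i)$ are strictly positive and $\mathbf X_M$ has full column rank. The inverse function theorem then makes $\Xi$ a local $\mathcal C^m$-diffeomorphism at every $\theta$; in particular, $\Xi$ is an open map, so $\mathrm{Im}(\Xi)$ is open and the inverse $\Psi$ is of class $\mathcal C^m$ on it. Combining local smoothness of $\Psi$ with the global injectivity from (a) yields the desired global $\mathcal C^m$-diffeomorphism.

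The only delicate point is the strict positivity of $\xi''$: strict convexity of a $\mathcal C^2$ function alone does not rule out isolated zeros of $\xi''$ (think of $t\mapsto t^4$), and without $\xi''>0$ pointwise the Hessian is only positive semi-definite. This is the main obstacle, and the clean resolution is to rely on the exponential-family context of the paper, where $\xi''$ equals the conditional variance of the response and is therefore strictly positive for a non-degenerate model; this lets us avoid weakening the argument to mere homeomorphism and preserve the $\mathcal C^m$-differentiability of $\Psi$.
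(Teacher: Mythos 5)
Your proof is correct and follows the same overall skeleton as the paper's: establish global injectivity of $\Xi$, check that its differential $\mathbf X_M^{\top}\mathrm{Diag}(\sigma'(\mathbf X_M\theta))\mathbf X_M$ is invertible everywhere, and conclude with the global inversion theorem. The one genuine difference is the injectivity step: the paper writes $\Xi(\theta)-\Xi(\theta')=\mathbf X_M^{\top}D\,\mathbf X_M(\theta-\theta')$ with $D=\int_0^1 \mathrm{Diag}[\sigma'(t\mathbf X_M\theta+(1-t)\mathbf X_M\theta')]\,dt$ a positive diagonal matrix, and deduces $\mathbf X_M(\theta-\theta')=0$ from $(\theta-\theta')^{\top}\mathbf X_M^{\top}D\mathbf X_M(\theta-\theta')=0$; you instead view $\Xi$ as $\nabla F$ for the strictly convex potential $F(\theta)=\sum_i\xi((\mathbf X_M\theta)_i)$ and invoke strict monotonicity of the gradient. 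Both arguments implicitly use convexity of $\Theta_M$ (to integrate along, or evaluate monotonicity on, the segment joining $\theta$ and $\theta'$), but yours has the small advantage of needing only strict convexity of $\xi$ for injectivity, not pointwise positivity of $\xi''$. Your closing caveat is well taken and in fact more careful than the paper, which simply asserts ``$\xi''(u)=\sigma'(u)>0$ for any $u$'' as if it followed from strict convexity; this positivity is genuinely needed for the invertibility of $D\Xi(\theta)$ at a fixed point (the segment-averaged $D$ in the injectivity step would survive with $\xi''\geq 0$ and $\int_a^b\xi''>0$, but the pointwise Jacobian would not), and the honest justification is, as you say, the non-degenerate exponential-family structure where $\xi''$ is a conditional variance.
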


\noindent
Using Propositions~\ref{prop:EMSM-deformed-polytope} and~\ref{prop:diffeo-g}, we  are able to provide a new description of the selection event~$E_M^{S_M}$ which can be understood as the counterpart of~\cite[Proposition 4.2]{sun16}.

\begin{thm}\label{thm:ES} Suppose that $\xi$ is strictly convex. Given some $M\subseteq [d]$ with cardinal~$s$ such that $\mathbf X_M$ has full column rank and $S_M\in \{-1,1\}^s$, it holds
\begin{align}E_M^{S_M}= \Big\{ Y \in \mathcal Y^N \; |\; \text{s.t. }& \rho = -\lambda S_M + \mathbf X_M^{\top} Y \text{ satisfies} \label{def2:ES}\\
(a)\;& \rho \in \mathrm{Im}(\Xi)\notag \\
(b)\;&  \mathrm{Diag}(S_M) \Psi(\rho)\geq 0 \notag \\
(c)\; &  \left\| \mathbf X^{\top}_{-M} \left( Y- \sigma(\mathbf X_M  \Psi(\rho)) \right)\right\|_{\infty} < \lambda \Big\}.\notag
\end{align}
\end{thm}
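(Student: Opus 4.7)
The plan is to start from the description of $E_M^{S_M}$ given in Proposition~\ref{prop:EMSM-deformed-polytope} and use the diffeomorphism $\Xi$ from Proposition~\ref{prop:diffeo-g} to eliminate the existential quantifier on $\theta$, turning the characterization into three conditions on the single quantity $\rho = -\lambda S_M + \mathbf X_M^\top Y$.

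First, I would observe that condition $(i)$ in Proposition~\ref{prop:EMSM-deformed-polytope} can be rewritten as
\[
\Xi(\theta) \;=\; \mathbf X_M^\top \sigma(\mathbf X_M \theta) \;=\; \mathbf X_M^\top Y - \lambda S_M \;=\; \rho,
\]
simply by moving $\mathbf X_M^\top \sigma(\mathbf X_M\theta)$ to the other side. By Proposition~\ref{prop:diffeo-g}, the map $\Xi$ is a $\mathcal C^m$-diffeomorphism from $\Theta_M$ onto $\mathrm{Im}(\Xi)$, so this equation admits a $\theta \in \Theta_M$ if and only if $\rho \in \mathrm{Im}(\Xi)$, which is exactly condition $(a)$; and when it exists, it is unique and given by $\theta = \Psi(\rho)$.

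Then I would substitute $\theta = \Psi(\rho)$ into conditions $(ii)$ and $(iii)$ of Proposition~\ref{prop:EMSM-deformed-polytope}. The sign condition $(ii)$, $\mathrm{sign}(\theta) = S_M$, translates into $\mathrm{Diag}(S_M)\Psi(\rho) \geq 0$ componentwise, which is condition $(b)$. Strictly speaking, $\mathrm{sign}(\theta) = S_M \in \{-1,+1\}^s$ is the strict inequality version, but Assumption~\ref{assumption:equi} ensures that the equicorrelation set coincides with the support, so the boundary case $\Psi(\rho)_k = 0$ cannot occur on $E_M^{S_M}$, and the two formulations agree on the event of interest. Substituting $\mathbf X_M\theta = \mathbf X_M\Psi(\rho)$ into condition $(iii)$ gives exactly condition $(c)$.

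The main obstacle is not really technical: the theorem is essentially a repackaging of Proposition~\ref{prop:EMSM-deformed-polytope} that becomes possible once one has the invertibility granted by Proposition~\ref{prop:diffeo-g}, so the real work was already done in establishing that $\Xi$ is a $\mathcal C^m$-diffeomorphism (which relies on strict convexity of $\xi$ together with the full column rank of $\mathbf X_M$). The point of Theorem~\ref{thm:ES} is to display the dependence on $Y$ explicitly through the affine statistic $\rho = -\lambda S_M + \mathbf X_M^\top Y$, so that the selection event appears as a (possibly curved) region in $\rho$-space, which is the form convenient for the subsequent conditional inference.
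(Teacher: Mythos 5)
Your proposal is correct and follows exactly the route the paper intends: Theorem~\ref{thm:ES} is presented there as an immediate consequence of Propositions~\ref{prop:EMSM-deformed-polytope} and~\ref{prop:diffeo-g}, obtained by inverting $\Xi$ to eliminate the existential quantifier and substituting $\theta=\Psi(\rho)$ into conditions $(ii)$ and $(iii)$. Your remark on the strict versus non-strict sign inequality, resolved via Assumption~\ref{assumption:equi}, is a legitimate subtlety that the paper itself glosses over, and handling it the way you do is consistent with the paper's stated framework.
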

 \noindent {\bf Remark.} In the linear model, $\Xi:\theta \mapsto  \mathbf X_M ^{\top}\mathbf X_M\theta$ has full rank and thus condition~$(a)$ from Eq.\eqref{def2:ES} always holds.

\subsection{Which parameters can be inferred?}
\label{sec:sec:selec-satur}

Once a model 
$M$ has been selected, two different modeling assumptions are generally considered when we derive post-selection inference procedures, see for instance \cite[Section 4]{fithian2014optimal}. This choice appears to be essential since it determines the parameters on which inference is conducted. In the following, we consider the mean value
\begin{equation}
    \label{eq:pistar}
    \pi^*:=\mathds E_{\vartheta^*}[Y]=\sigma( \mathbf X \vartheta^*)\,,
\end{equation}
as the parameter of interest. To support this choice, note that the Bayes predictor in the logistic or the linear model is defined from $\mathds E_{\vartheta^*}[Y]$.

As presented in~\cite{fithian2014optimal}, the analyst should decide whether the model $M$ belongs to the so-called class of {\it saturated models} or {\it selected models}. In the following, we discuss these concepts for arbitrary GLMs and Table~\ref{table:question-asked} summarizes the key concepts.

\renewcommand{\arraystretch}{1.5}
\begin{table}[!ht]
\centering
\begin{tabular}{C{1.65cm}||C{3.1cm}|C{3.7cm}|c}
Model & Selected & Weak selected &  Saturated\\\hline \hline
Assumption & $\sigma^{-1}(\pi^*)\in \mathrm{Im}(\mathbf X_M)$ &$\mathbf X_M^{\top}\pi^*\in \mathrm{Im}(\Xi)$ & None\\\hline
Statistic of interest & $\Psi(\mathbf X_M^{\top} Y)$ &  $\Psi(\mathbf X_M^{\top} Y)$ &  $\mathbf X_M^{\top} Y$\\\hline
Inferred parameter  & $\theta^*\in \Theta_M$ s.t. $\pi^*=\sigma(\mathbf X_M\theta^*)$ &  $\theta^*\in \Theta_M$ s.t. $\pi^*$ and $\sigma(\mathbf X_M\theta^*)$ have the same projections on the column span of $\mathbf X_M$ &  $\mathbf X_M^{\top} \pi^*$ 
\end{tabular}
\caption{Once a model has been selected, we may infer some parameters assuming one of the three modeling: selected model, weak selected model, and saturated model respectively based on the assumptions described in the first row. In this case, inference on the quantities described on the third row can be done from the statistic described in the second row. }
\label{table:question-asked}
\end{table}

\paragraph{The (weak) selected model: Parameter inference.}

In the {\it weak selected model}, we consider that the data have been sampled from the GLM (cf. Eq.\eqref{eq:GLM}) and we assume that the selected model~$M$ is such that 
\begin{equation}
\label{eq:selectedmodel}
\mathbf X_M^{\top}\sigma( \mathbf X \vartheta^*)\in \mathrm{Im}(\Xi)\,,\end{equation}
and recall that $\mathbf X_M^{\top}\pi^*=\mathbf X_M^{\top}\mathds E_{\vartheta^*}[Y]=\mathbf X_M^{\top}\sigma( \mathbf X \vartheta^*)$. This is equivalent to state that there exists some vector~$\theta^* \in \Theta_M$ satisfying 
\begin{equation*}
\mathbf X_M^{\top}\pi^*=\Xi(\theta^*)\,,
\end{equation*}
and recall that $\Xi(\theta^*)=\mathbf X_M^{\top}\sigma(\mathbf X_M\theta^*)$. In this framework, we have the possibility to make inference on the parameter vector $\theta^*:=\Psi(\mathbf X_M^{\top}\pi^*)$ itself. 

In the {\it selected model}, we replace the condition from Eq.\eqref{eq:selectedmodel} by the stronger assumption that there exists $\theta^*\in \Theta_M$ such that 
\begin{equation}
\label{e:selectedmodel2}
\mathbf X_M \theta^* = \mathbf X \vartheta^*.
\end{equation}
This assumption is always satisfied for the global null hypothesis $\vartheta^*=0$ for which the aforementioned condition holds with $\theta^*=0$.

\paragraph{The saturated model: Mean value inference.} 
The assumption from Eq.\eqref{eq:selectedmodel} or~\eqref{e:selectedmodel2} can be understood as too restrictive since the analyst can never check in practice that this condition holds, except for the global null. This is the reason why one may prefer to consider the so-called {\it saturated model} where we only assume that the data have been sampled from the GLM.

In this case it remains meaningful to provide  post-selection inference procedure for transformation of $\pi^*$. A typical choice is to consider linear transformation of~$\pi^*$ and among them, one may focus specifically on transformation of $\mathbf X_M^{\top}\pi^*$. This choice is motivated by remarking that this quantity characterizes the first order optimality condition for the unpenalized MLE $\widehat \theta$ for the design matrix~$\mathbf X_M$ through $\mathbf X_M^{\top}Y=\Xi(\widehat \theta)$, or by considering the example of linear model (as presented below).

\paragraph{The example of the linear model.}

Note that in linear regression, $\sigma=\mathrm{Id}$ and $\Psi:\rho\mapsto\big(\mathbf X_M^{\top}\mathbf X_M\big)^{-1}\rho$. Hence, Eq.\eqref{eq:selectedmodel} is equivalent to Eq.\eqref{e:selectedmodel2} meaning that the selected and the weak selected models coincide. Moreover, in both the saturated and the selected models, we aim at making inference on transformations of $\Psi(\mathbf X_M^{\top}\pi^*)=\mathbf X_M^+\pi^*$ (where $\mathbf X_M^+$ is the pseudo-inverse of $\mathbf X_M$). While in the (weak) selected model, this quantity corresponds to the parameter vector $\theta^*$ satisfying $\pi^*=\mathbf X_M\theta^*$, in the saturated model, it corresponds to the best linear
predictor in the population for design matrix $\mathbf X_M$ in the sense of the squared $L^2$-norm.

\subsection{Inference procedures with SIGLE}
\label{sec:intro-infeprocedures}
\label{sec:intro-conditional-MLE}

In this section, we show how the SIGLE test statistics naturally emerge by establishing a parallel between post selection inference and M-estimation with a misspecified model.

\paragraph{SIGLE statistic in the selected model.} 
In the post-selection paradigm, we work conditional to the selection event~$\{Y \in E_M\}$. The conditional distribution of the observations is a conditional exponential family with the same parameters and sufficient statistics but different support and normalizing constant:
\[\overline {\mathds P}_{\theta}(Y) \propto \mathds 1_{E_M}(Y)  \prod_{i=1}^N h_{v}(y_i)\exp
\Big\{
\frac{y_i\mathbf X_{i,M}\theta-\xi(\mathbf X_{i,M}\theta )}{v}  \Big\}, \,\]
where the symbol $\propto$ means ‘proportional to'. When $E_M = \mathcal Y^N$ ({\it i.e.,} when there is no conditioning), we will simply denote $\overline {\mathds P}_{\theta}$ by $\mathds P_{\theta}$. In the following we will denote by~$\overline {\mathds E}_{\theta}$ (resp. ${\mathds E}_{\theta}$) the expectation with respect to~$\overline {\mathds P}_{\theta}$ (resp. $\mathds P_{\theta}$). In the selected model, we want to conduct inference on~$\theta^*$ (from Eq.\eqref{e:selectedmodel2}) based on the conditional and unpenalized MLE computed on the selected model~$M$, namely
\begin{align}\label{eq:uncondi-MLE-XM}
 \widehat \theta &\in \arg \min_{\theta \in \Theta_M} \mathcal L_{N}(\theta,Z^M),   \qquad \mathcal L_{N}(\theta,Z^M)=\sum_{i=1}^n \left\{ \xi(\mathbf X_{i,M}\theta) -y_i\mathbf X_{i,M}\theta\right\} ,
\end{align}
where~$Z^M=(Y,\mathbf X_M)$ and where~$Y$ is distributed according to~$\overline {\mathds P}_{\theta^*}$. Eq.\eqref{eq:uncondi-MLE-XM} can be understood as a mean-field approximation of the true likelihood where we make the assumption that the $Y_i$'s are independent conditional to the selection event~$\{Y\in E_M\}$. This simplification might make our model misspecified in that
$\overline {\mathds P}_{\theta^*}$ might fall out of the framework of independent Bernoulli trials. The asymptotic properties of the MLE under a misspecified model are well known. First we expect $\widehat \theta$ to be asymptotically consistent for a parameter vector $\overline \theta(\theta^*)$ which minimizes the conditional expected negative log-likelihood defined by
\begin{align} \label{def:thetabar}
 \overline \theta(\theta^*) &\in \arg \min_{\theta \in \Theta_M} \overline {\mathds E}_{\theta^*} \left[\mathcal L_{N}(\theta,Z^M)\right]\,. 
\end{align}
In the following, when there is no ambiguity we will simply denote~$\overline \theta(\theta^*)$ by~$\overline \theta$. 
The density $\mathds P_{\overline \theta}$ can be understood as the projection of the true underlying distribution $\overline {\mathds P}_{\theta^*}$ on the model using the Kullback-Leibler divergence. Second, we expect $\sqrt N (\widehat \theta - \overline \theta)$ to be asymptotically normal with zero mean and covariance matrix $\overline V:= \lim_{N\to \infty} N \overline V_N(\theta^*)$ (provided that the limit exists) where 
\begin{equation} \label{eq:heurisitc-CLT}  \overline V_N(\theta^*):= H_N(\overline \theta)^{-1} \left[  L_N(\overline \theta,Z^M) L_N(\overline \theta,Z^M)^{\top}\right] H_N(\overline \theta)^{-1},\end{equation}
where
\[L_N(\theta,Z^M): =  \frac{\partial \mathcal L_N}{\partial \theta}(\theta,Z^M) = \mathbf X_M^{\top}\big(\sigma(\mathbf X_M \theta)-Y\big),\]
is the score function and where~$H_N(\theta) := \frac{\partial^2 \mathcal L_N}{\partial \theta^2}(\theta,Z^M)$ is the Hessian of the log-likelihood. This result (provided in~\cite[Theorem 3.2]{white82}) holds under some regularity conditions such as the continuous differentiability of the score function and a domination assumption on the Hessian of the log-likelihood. Denoting 
\[ \overline L_N(\theta,\mathbf X_M)= \overline{\mathds E}_{\theta^*}\left[\frac{\partial \mathcal L_{N}}{\partial \theta}(\theta,Z^M)\right],\]
it holds that the conditional unpenalized MLE $\widehat\theta$ and the minimizer $\overline{\theta}$ of the conditional risk satisfy the first order condition

\begin{tabular}{rcrclclc}
$L_N(\widehat \theta,Z^M)=0 $&  i.e. & $\quad \mathbf X_M^{\top}(Y-\pi^{\widehat \theta})=0$ &  $\Leftrightarrow$& $ \Xi(\widehat \theta)=\mathbf X_M^{\top}Y$ & $\Leftrightarrow $ &$\widehat \theta = \Psi(\mathbf X_M^{\top}Y)$, & \refstepcounter{equation}\thetag{\theequation}\label{eq:gradbar0}\\
$\text{and }\overline L_N(\overline \theta,\mathbf X_M)=0$ & i.e. & $\quad \mathbf X_M^{\top}(\overline \pi^{\theta^*}-\pi^{\overline \theta})=0$ & $\Leftrightarrow$ & $ \Xi(\overline \theta) = \mathbf X_M^{\top} \overline \pi ^{\theta^*}$ & $\Leftrightarrow$ & $ \overline \theta = \Psi(\mathbf X_M^{\top}\overline \pi^{\theta^*})$, & \refstepcounter{equation}\thetag{\theequation}\label{eq:gradbar}
\end{tabular}
where~$\pi^{\theta}= \mathds E_{\theta}[Y]=\sigma(\mathbf X_M\theta)$ and~$\overline \pi^{\theta} = \overline {\mathds E}_{\theta}[Y]$. This leads to 
\[\overline V_N( \theta^*) = H_N(\overline \theta)^{-1}\overline G_N^c( \theta^*) H_N(\overline \theta)^{-1},\]
where
\[H_N(\overline \theta)=\mathbf X_M^{\top} \mathrm{Diag}(\sigma'(\mathbf X_M\overline \theta))\mathbf X_M \quad \text{and}\quad \overline G_N^c( \theta^*)=\mathbf X_M ^{\top}\overline{\mathds E}_{\theta^*}\big[(Y-\pi^{\overline \theta})(Y-\pi^{\overline \theta})^{\top}\big]\mathbf X_M.\]

Let us state explicitly that the previous asymptotic considerations hold under specific assumptions that are not satisfied in our setting. Nevertheless, building a bridge between the standard theory of the MLE under model misspecification and our framework of PSI can help us choose a relevant covariance structure to design the SIGLE test statistic. In the rest of this paper, we will consider the following proxy for the covariance matrix of the score~$\overline G_N^c(\theta^*)$:
\[\overline G_N( \theta^*)=\mathbf X_M ^{\top}\mathrm{Diag}\big(\overline \pi^{ \theta^*}\odot (1-\overline \pi^{ \theta^*})\big)\mathbf X_M.\]
$\overline G_N(\theta^*)$ is obtained from $\overline G^c_N(\theta^*)$ by using~$\mathbf X_M^{\top}\overline \pi^{\theta^*}=\mathbf X_M^{\top} \pi^{\overline \theta}$ (cf. Eq.\eqref{eq:gradbar}) and by keeping only the diagonal elements of the covariance matrix~$\overline{\mathds E}_{\theta^*}\big[(Y-\overline \pi^{ \theta^*})(Y-\overline \pi^{ \theta^*})^{\top}\big]$ while setting to zero the off-diagonal entries. Therefore, in the selected model SIGLE relies on the following test statistic
\begin{equation}\label{eq:sigle-stat-1.4}\|[\overline G_N(\theta^*)]^{-1/2}H_N(\overline \theta)(\widehat \theta - \overline \theta)\|_2^2.\end{equation} 
The choice to work with~$\overline G_N(\theta^*)$ rather than~$\overline G_N^c(\theta^*)$ is motivated by several reasons: 
\begin{enumerate}
 \item Working with~$\overline G_N(\theta^*)$ makes the theoretical analysis simpler although the post-selection inference methods proposed in this paper remain valid in the case one uses $\overline G_N^c(\theta^*)$.
 \item Extensive numerical experiments have shown that the power of hypothesis tests using SIGLE with either~$\overline G_N(\theta^*)$ or~$\overline G_N^c(\theta^*)$ is very similar (and some of them are presented in Section~\ref{sec:selective-inference}).
 \item Only~$N$ coefficients need to be estimated to approximate~$\overline G_N(\theta^*)$ as opposed to the~$N^2$ coefficients required to estimate~$\overline G_N^c(\theta^*)$. As a consequence, working with~$\overline G_N(\theta^*)$ might allow to reduce the variance of our estimate of the SIGLE statistic and thus to get closer to the power that would give SIGLE using the unknown quantities~$\overline \pi^{\theta^*}$, $\overline {\theta}(\theta^*)$.
 \end{enumerate}

\medskip
\paragraph{SIGLE statistic in the saturated model.} 
Let us start by introducing some notations. By assuming that $\xi$ is strictly convex, one can compute $\mathbf X\vartheta^*$ from $\pi^*$, allowing us to denote equivalently ${\mathds P}_{\pi^*}\equiv \mathds P_{\vartheta^*}$ with an abuse of notation. Given some set of selected variables~$M\subseteq[d]$ with~$s:=|M|$ and some~$\vartheta^*\in\mathds R^d$, we denote by $\overline{\mathds P}_{\pi^*}$ the distribution of~$Y$ given~$E_M$, namely \[\overline{\mathds P}_{\pi^*}(Y) \propto \mathds 1_{Y\in E_M} \mathds P_{\pi^*}(Y),\]
$\pi^* = \sigma(\mathbf X \vartheta^*)$ and where $\propto$ means equal up to a normalization constant.  In the selected model with $\theta^*\in \Theta_M$ satisfying Eq.\eqref{e:selectedmodel2}, we will also denote $ {\mathds P}_{\pi^*}\equiv  {\mathds P}_{\theta^*}$.  
\medskip

In the saturated, we have already explained that we focus on the statistic~$\mathbf X_M^{\top}Y$. Recalling the definition of~$\Xi$ (cf. Eq.\eqref{eq:Xi}) and using Eq.\eqref{eq:gradbar0}, we get that $\mathbf X_M^{\top}Y=\Xi(\widehat \theta)$. Therefore, one can apply the delta method to convert the heuristic CLT obtained for~$\widehat \theta$ (cf. Eq.\eqref{eq:heurisitc-CLT}) into a similar asymptotic result for~$\mathbf X_M^{\top}Y$. The delta method suggests that $\Xi(\widehat \theta)=\mathbf X_M^{\top}Y$ should be asymptotically normal with mean~$\lim_{N\to \infty} \Xi(\overline \theta)=\mathbf X_M^{\top} \overline \pi^{\theta^*}$ (using Eq.~\eqref{eq:gradbar}) and covariance matrix
\[\lim_{N \to\infty} \nabla \Xi(\overline \theta)^{\top}\overline V_N(\theta^*)\nabla \Xi(\overline \theta) = \lim_{N\to \infty} \overline G_N(\theta^*),\]
where we used that~$\nabla \Xi(\overline \theta)=H_N(\overline \theta)$. A careful reader would note that it makes no sense to refer to~$\theta^*$ in the saturated model. To overcome this issue, one can realize that~$\theta^*$ only appears in the asymptotic description of~$\mathbf X_M^{\top}Y$ through~$\overline \pi^{\theta^*}=\overline{\mathds E}_{\theta^*}[Y]=\overline {\mathds E}_{\pi^*}[Y]$. Therefore, denoting~$\overline \pi^{\theta^*}$ by
\[\overline \pi^{\pi^*}:=\overline {\mathds E}_{\pi^*}[Y],\] the previous discussion suggests that~$\mathbf X_M^{\top}(Y-\overline \pi^{\pi^*})$ should be asymptotically normal with mean~$0$ and covariance matrix~$\lim_{N\to \infty} \overline G_N(\pi^*)$ where
\[\overline G_N(\pi^*):=\mathbf X_M^{\top} \mathrm{Diag}\big(\overline \pi^{\pi^*}\odot (1-\overline \pi^{\pi^*})\big)\mathbf X_M.\]
Therefore, SIGLE in the saturated model relies on the following test statistic
\begin{equation}\label{eq:sigle-stat-1.4-sat}\| [\overline G_N(\pi^*)]^{-1/2}\mathbf X_M^{\top}(Y-\overline \pi^{\pi^*})\|_2^2.\end{equation}

\bigskip
\paragraph{Discussion.} The presentation of the SIGLE statistics in this section naturally gives rise to the following questions $\big(\mathcal Q_k\big)_{k\in[4]}$:

\begin{itemize}
\item {\bf $\mathcal Q_1$: How to use the SIGLE statistics~\eqref{eq:sigle-stat-1.4} and~\eqref{eq:sigle-stat-1.4-sat} in practice?}\par
In most cases, the distribution of the observations conditional to the selection event is unknown and computing~\eqref{eq:sigle-stat-1.4} or~\eqref{eq:sigle-stat-1.4-sat} requires to use sampling methods.

We consider hypothesis tests with pointwise nulls as presented in Table~\ref{table:hypo-testing}. Assuming that we are able to sample state according the $\overline {\mathds P}_{\pi_0^*}$ (resp. $\overline {\mathds P}_{\theta_0^*}$), we can compute estimates $\widetilde G_N(\pi^*_0), \widetilde \pi^{\pi^*_0}$ (resp. $\widetilde V_N(\theta^*_0), \widetilde \theta(\theta^*_0)$) of the unknown quantities $\overline G_N(\pi^*_0)$, $\overline \pi^{\pi^*_0}$ (resp. $\overline V_N(\theta^*_0)$, $\overline \theta(\theta^*_0)$) by sampling from the conditional null distribution $\overline {\mathds P}_{\pi^*_0}$ (resp. $\overline {\mathds P}_{\theta_0^*}$ in the selected model). 

\begin{table}[!ht]
\centering
\begin{tabular}{C{1.6cm}|C{3cm}|C{5cm}}
& Null and alternative & Test statistic \\\hline
Saturated model & $\mathds H_0: \, \{\pi^*=\pi^*_0\}$, $\mathds H_1: \, \{\pi^*\neq\pi^*_0\} $ & $\|\widetilde G_N(\pi^*_0)^{-1/2} (\mathbf X_M^{\top}Y-\mathbf X_M^{\top}\widetilde \pi^{\pi^*_0}) \|_2^2$\\\hline
Selected model&$\mathds H_0: \, \{\theta^*=\theta^*_0\},$ $\mathds H_1: \, \{\theta^*\neq\theta^*_0\} $  & $\|\widetilde V_N(\theta^*_0)^{1/2} (\Psi(\mathbf X_M^{\top}Y)-\widetilde \theta( \theta^*_0))\|_2^2$
\end{tabular}
\caption{Test statistics of SIGLE.}
\label{table:hypo-testing}
\end{table}

\medskip
\begin{changebar}
\noindent \underline{In the case of logistic regression}, we rely on a gradient alignment viewpoint of the selection event to provide in Section~\ref{sec:sampling} an algorithm which allows us to sample from $\overline {\mathds P}_{\pi^*}$ given any $\pi^*$. In Section~\ref{sec:selective-inference}, we present our hypothesis tests in both the saturated and the selected models.
\end{changebar}

\item {\bf $\mathcal Q_2$: What are the asymptotic properties of the SIGLE statistics \eqref{eq:sigle-stat-1.4} and \eqref{eq:sigle-stat-1.4-sat}?}\par
The way the SIGLE statistics have been motivated in this section naturally opens the question of their asymptotic properties. More precisely, can we find conditions ensuring that~$[\overline G_N(\pi^*)]^{-1/2}\mathbf X_M^{\top}(Y-\overline \pi^{\pi^*})$ (resp.$[\overline G_N(\theta^*)]^{-1/2}H_N(\overline \theta)(\widehat \theta - \overline \theta)$ in the selected model) is asymptotically normal? Asymptotic considerations have already been used in the literature to design post-selection inference methods in GLMs such as in~\cite{taylorGLM}. Such approaches often rely on non-rigorous computations conducted under (very) restrictive assumptions. 

\begin{changebar}
\underline{In the case of logistic regression}, we prove conditional central limit theorems (CLTs) for the SIGLE statistics in both the selected and the saturated model. As far as we know, we are the first to provide such results in the field of PSI. Our conditional CLTs hold under conditions that are similar to the ones usually considered in the literature when studying the asymptotic properties of the MLE in high dimensions (cf.\cite{bunea}). \par Furthermore, we provide an extensive comparison between our methods and the one from~\cite{taylorGLM} on both the numerical side (cf. Section~\ref{sec:experiments}) and the theoretical side (cf. Section~\ref{sec:MLEvsBIAS}).
\end{changebar}

\item {\bf $\mathcal Q_3$: Other statistics might have been considered. How the SIGLE statistics from~\eqref{eq:sigle-stat-1.4} and~\eqref{eq:sigle-stat-1.4-sat} perform compared to other approaches?}\par
At the end of Section~\ref{sec:selective-inference}, we show with numerical experiments that SIGLE statistics lead to more powerful testing procedures compared to methods based on other reasonable choices for the test statistics. In Section~\ref{sec:experiments}, we compare our method with state of the art approaches for PSI in logistic regression. 

\item {\bf $\mathcal Q_4$: How the methods of this paper can be interpreted when the model is misspecified from the start?}\par
So far, we have considered the case where the observed data~$y_i \in \mathcal Y$ has indeed by generated from the GLM presented in Section~\ref{sec:1.1}. Can we extend the methods presented in this paper when we remove this assumption? In Section~\ref{apdx:advanced_discussion_misspe}, we consider that the~$y_i$'s are i.i.d. and distributed according to an arbitrary probability distribution~$\mathds P$.
\end{itemize}

\begin{changebar}

\end{changebar}

\subsection{Related works}

In the Gaussian linear model with a known variance, the distribution of the linear transformation $\eta^{\top}Y$ (with $\eta^{\top}=e_k^{\top}\mathbf X_M^+$) is a truncated Gaussian conditionally on $E_M^{S_M}$ and $\mathrm{Proj}_{\eta}^{\perp}(Y)$. This explicit formulation of the conditional distribution allows to conduct exact post-selection inference procedures~\citep[cf.][Section 4]{fithian2014optimal}. However, when the noise is assumed to be Gaussian with an unknown variance, one needs to also condition on $\|Y\|^2$ which leaves insufficient information about $\theta^*_k$ to carry out a meaningful test in the saturated model \citep[cf.][Section 4.2]{fithian2014optimal}.

Outside of the Gaussian linear model, there is little hope to obtain a useful exact characterization of the conditional distribution of some transformation of~$\mathbf X_M^{\top}Y$. 
In the following, we sketch a brief review of this literature, see references therein for further works on this subject.
\begin{itemize}
\item Linear model but non-Gaussian errors.\\  
 Let us mention for example~\cite{tian17asympto,tib18} where the authors consider the linear model but relaxed the Gaussian distribution assumption for the error terms. They prove that the response variable is asymptotically Gaussian so that applying the well-oiled machinery from~\cite{sun16} gives asymptotically valid post-selection inference methods.
\item GLM with Gaussian errors.\\
\cite{xiang2020} consider generalized linear models with Gaussian noise and can then immediately apply the polyhedral lemma to the appropriate transformation of the response.
\end{itemize}
We classify existing works with Table~\ref{tab:review}.
\begin{table}[ht!]
\centering
\begin{tabular}{c||c|c}
Noise & Linear Model & GLM\\\hline\hline
Gaussian  & \cite{sun16}  & \cite{xiang2020}\\\hline
\multirow{2}{*}{Non-Gaussian} &\cite{tian17asympto} and & SIGLE (this paper) and\\
 & \cite{tib18} & \cite{taylorGLM}
\end{tabular}
\caption{Positioning of SIGLE (this paper) among some pioneering works on PSI in GLMs.}
\label{tab:review}
\end{table}

\pagebreak[3]

One important challenge that remains so far only partially answered is the case of GLMs without Gaussian noise, such as in logistic regression. In~\cite{fithian2014optimal}, the authors derive powerful unbiased selective tests and confidence intervals among all selective level-$\alpha$ tests for inference in exponential family models after arbitrary selection procedures. Nevertheless, their approach is not well-suited to account for discrete response variable as it is the case in logistic regression. In Section~6.3 of the former paper, the authors rather encourage the reader to make use of the procedure proposed by~\cite{taylorGLM} in such context. Both this paper and~\cite{taylorGLM} are tackling the problem of post selection inference in the logistic model.

\subsection{Contributions and organization of the paper}

\paragraph{SIGLE for an arbitrary GLM (Sec.\ref{sec:intro}).}
\begin{enumerate}
\item We provide a new formulation of the selection event in GLMs shedding light on the $\mathcal C^m$-diffeomorphism~$\Psi$ that carries the geometric information of the problem (cf. Theorem~\ref{thm:ES}). $\Psi$ allows us to define rigorously the notions of selected/saturated models for arbitrary GLM (cf. Sec.\ref{sec:sec:selec-satur}).
\item  We provide a new perspective on post-selection inference in the selected model for GLMs through the conditional MLE approach of which~$\Psi$ is a key ingredient (cf. Sec.\ref{sec:intro-infeprocedures}). 
\item We introduce the SIGLE statistics in both the saturated and the selected model. Computing these statistics and calibrating the SIGLE hypothesis testing require to be able to sample from the distribution of the observations conditional to the selection event (cf. Sec.\ref{sec:intro-infeprocedures}).
\end{enumerate} 

\paragraph{SIGLE for the Sparse Logistic Regression (SLR) (from Sec.\ref{sec:selective-inference}).}
\begin{enumerate}
\item[4.] We describe in details the way to use SIGLE in practice in both the selected in the saturated model (cf. Sec.\ref{sec:selective-inference}).
\item[5.] In the context of the SLR, we introduce a new sampling method allowing to compute the SIGLE statistics and to calibrate our methods (cf. Sec.\ref{sec:sampling}).
\item[6.]  We provide an extensive comparison between this paper and the heuristic from~\cite{taylorGLM} which is currently considered the best to use in the context of SLR \citep[cf.][Section 6.3]{fithian2014optimal}, as far as we know. The methods are compared both on the numerical side (cf. Sec.\ref{sec:experiments}) and the theoretical side (cf. Sec.\ref{sec:MLEvsBIAS}).
\item[7.] Going back to the motivation behind the choice of the SIGLE statistics, we study the asymptotic properties of the conditional MLE. We provide conditions under which conditional CLTs hold (cf. Sec.\ref{sec:CLT}). 
\end{enumerate}

\paragraph{Outline.}  In this paper, we focus specifically on the SLR. We start by describing the SIGLE hypothesis testing methods in this context in Section~\ref{sec:selective-inference}. In Section~\ref{sec:sampling}, we rely on a {\it gradient-alignment} viewpoint on the selection event to design a simulated annealing algorithm which is proved--for an appropriate cooling scheme--to provide iterates whose distribution is asymptotically uniform on the selection event. In Section~\ref{sec:experiments}, we present the results of our simulations. We conclude in Section~\ref{sec:asymptotic} by providing two conditional central limit theorems.

\paragraph{Notations.}For any set of indexes $M \subseteq[d]:=\{1,\dots,d\}$ and any vector~$v$, we denote by~$v_M$ the subvector of~$v$ keeping only the coefficients indexed by~$M$, namely~$v_M = (v_k)_{k\in M}$. Analogously, $v_{-M}$ will refer to the subvector~$(v_k)_{k\notin M}$. $|M|$ denotes the cardinality of the finite set~$M$. For any~$x\in \mathds R^d$, $\|x\|_{\infty}:=\sup_{i\in[d]}|x_i|$ and for any $p\in[1,\infty)$, $\|x\|_p^p:=\sum_{i\in[d]}x_i^p$. For any~$A\in \mathds R^{d\times p}$, we define the Frobenius norm of~$A$ as~$\|A\|_F:=(\sum_{i\in[d],j\in[p]}A_{i,j}^2)^{1/2}$ and the operator norm of~$A$ as~$\|A\|:=\sup_{x\in \mathds R^p, \|x\|_2=1}\|Ax\|_2$. We further denote by~$A^+$ the pseudo-inverse of~$A$. Considering that~$A$ is a symmetric matrix, $\lambda_{\min}(A)$ and~$\lambda_{\max}(A)$ will refer respectively to the minimal and the maximal eigenvalue of~$A$. $\odot$ denotes the Hadamard product namely for any~$A,B\in\mathds R ^{d\times p}$, $A\odot B:=(A_{i,j}B_{i,j})_{i\in[d],j\in[p]}$. By convention, when a function with real valued arguments is applied to a vector, one need to apply the function entrywise. $\mathrm{Id}_d\in\mathds R^{d\times d}$ will refer to the identity matrix and~$\mathcal N(\mu,\Sigma)$ will denote the multivariate normal distribution with mean~$\mu\in\mathds R^d$ and covariance matrix~$\Sigma$. For any $x \in \mathds R^d$, $R>0$ and for $p\in[1,\infty]$, we define $\mathds B_{p}(x,R)=\{z \in \mathds R^d \, |\, \|z\|_p\leq R\}$. \\
Let us finally recall that given some set of selected variables $M\subseteq[d]$ with $s:=|M|$ and some $\vartheta^*\in\mathds R^d$, we denote by $\overline{\mathds P}_{\pi^*}$ the distribution of $Y$ conditional on $E_M$, namely \[\overline{\mathds P}_{\pi^*}(Y) \propto \mathds 1_{Y\in E_M} \mathds P_{\pi^*}(Y),\]
$\pi^* = \sigma(\mathbf X \vartheta^*)$ and where $\propto$ means equal up to a normalization constant. By assuming that $\xi$ is strictly convex, one can compute $\mathbf X\vartheta^*$ from $\pi^*$, allowing us to denote equivalently ${\mathds P}_{\pi^*}\equiv \mathds P_{\vartheta^*}$ with an abuse of notation. In the selected model with $\theta^*\in \Theta_M$ satisfying Eq.\eqref{e:selectedmodel2}, we will also denote $ {\mathds P}_{\pi^*}\equiv  {\mathds P}_{\theta^*}$.

\section{Comprehensive description of SIGLE for SLR}
\label{sec:selective-inference}

From this section, we consider the case of the logistic regression where we recall that $Y=(y_i)_{i\in[N]}$ and for all $i\in[N]$, $y_i\sim \mathrm{Ber}(\pi^*_i)$ with $\pi^*=\sigma(\mathbf X \vartheta^*)$. As already explained in the introduction, the SIGLE statistics are motivated by the conditional CLTs provided in details in Section~\ref{sec:asymptotic}. In this section, we describe our methods.

\paragraph{SIGLE in the saturated model.}
Given some~$\pi^*_0 \in \mathds R^N$, we consider the hypothesis test with null and alternative hypotheses defined by
\begin{equation}\label{eq:test}\mathds H_0 \; :\; \{\pi^*=\pi^*_0\}\quad \text{and}\quad \mathds H_1 \; :\; \{\pi^* \neq \pi^*_0\}.\end{equation}
The statistics given by the CLT from Theorem~\ref{prop:CLT} (cf.  Section~\ref{sec:asymptotic}) naturally leads us to introduce the ellipsoid $W_N$ given by \[W_N=\left\{ Y \in \{0,1\}^N \; | \; \left\|[\overline G_N(\pi^*_0)]^{-1/2} \mathbf X_M^{\top}\left(Y-\overline {\pi}^{\pi^*_0}\right)\right\|^2_2 \geq w_{N,1-\alpha} \right\},\]
where
\begin{itemize}
\item $w_{N,1-\alpha} $ is the quantile of order~$1-\alpha$ of the SIGLE statistic \[\left\|[\overline G_N(\pi^*_0)]^{-1/2} \mathbf X_M^{\top}\left(Y-\overline {\pi}^{\pi^*_0}\right)\right\|^2_2,\]
\item $\overline G_N (\pi^*):=\mathbf X_M^{\top} \mathrm{Diag}((\overline \sigma^{\pi^*})^2) \mathbf X_M$ with $(\overline \sigma^{\pi^*})^2:= \overline \pi^{\pi^*} \odot (1-\overline \pi^{\pi^*})$.
\end{itemize}

If~$\overline{\mathds P}_{\pi_0^*}$ was nice enough, we could hope to easily compute $i)$~$\overline \pi^{\pi^*_0}$ and then~$\overline G_N(\pi_0^*)$ and $ii)$ $w_{N,1-\alpha}$. Contrary to the linear model where the conditional distribution is known to be a truncated Gaussian, we do not have such a characterization of $\overline{\mathds P}_{\pi_0^*}$ in SLR. As a consequence, we propose in the paper two different ways to sample state in the selection event and to estimate the parameters~$\overline \pi^{\pi^*_0}$ and $w_{N,1-\alpha}$ in order to approximate the rejection region~$W_N$. Both methods are presented in Section~\ref{sec:sampling}. The first sampling approach is a simple rejection sampling method. This method is particularly appropriate when the number of features~$d$ is small. When~$d$ is getting large, another sampling method is needed and we introduce in this paper the SEI-SLR algorithm. From Proposition~\ref{prop:uniform-distribution} (cf. Section~\ref{sec:sampling}), we know that under an appropriate cooling scheme, the asymptotic distribution of the states visited by the SEI-SLR algorithm (cf. Algorithm~\ref{algo1}) is the uniform distribution on the selection event. We deduce that under the null, we are able to estimate~$\overline \pi^{\pi^*_0}$ and thus $\overline G_N(\pi^*_0)$. Algorithm~\ref{algo:sigle_sat} describes the testing procedure when we sample states in the selection event using the SEI-SLR algorithm. \par
When the states~$(Y^{(t)})_{t\geq 1}$ in step 2. of Algorithm~\ref{algo:sigle_sat} are sampled using the rejection method instead of the SEI-SLR algorithm, one only needs to change the way~$\widetilde \pi^{\pi^*_0}$ and~$\zeta_{N,T}$ are computed by using
\[\widetilde \pi^{\pi_0^*} =\frac1T \sum_{t=1}^T Y^{(t)} \quad \text{and} \quad \zeta_{N,T} =\frac1T \sum_{t=1}^T \mathds 1_{Y^{(t)} \in \widetilde W_N}. \]

\begin{algorithm}
\begin{algorithmic}[1]
 	\STATE {\bf Input:}~$\mathbf X \in \mathds R^{N\times d}$, $Y\in\mathds R^N$, $\lambda>0$, $\alpha \in (0,1)$.\\
$(\mathbf X,Y,\lambda)$ characterizes the selection event $E_M$ (cf. Eq.\eqref{def:EM0}). 
  \STATE Sample states $(Y^{(t)})_{t\geq 1}$ uniformly distribution on~$E_M$ using the SEI-SLR algorithm (cf. Algorithm~\ref{algo1}).\\
  \STATE Compute:\\
 	$\quad$ - $\displaystyle \widetilde \pi^{\pi^*_0} =  \frac{\sum_{t=1}^T \mathds P_{\pi^*_0}(Y^{(t)}) Y^{(t)}}{\sum_{t=1}^T \mathds P_{\pi^*_0}(Y^{(t)})},$\\
  $\quad$ - $\widetilde G_N = \mathbf X_M^{\top} \mathrm{Diag}\left( \widetilde \pi^{\pi^*_0}\odot (1-\widetilde \pi^{\pi^*_0})\right) \mathbf X_M,$
  \\
  $\quad$ - $\widetilde w_{N,1-\alpha}$ which is the quantile of order $1-\alpha$ of the sequence $\left( \, \left\|\widetilde G_N^{-1/2}\mathbf X_M^{\top}\left( Y^{(t)}- \widetilde \pi^{\pi_0^*}\right)\right\|^2_2\, \right)_{t\geq 1}$.
  \STATE Define $\widetilde W_N:= \left\{  Y \in \{0,1\}^N \; | \; \left\|\widetilde G_N^{-1/2}\mathbf X_M^{\top}\left( Y- \widetilde \pi^{\pi_0^*}\right)\right\|^2_2 > \widetilde w_{N,1-\alpha} \right\}.$
   \STATE Reject the null hypothesis $\mathds H_0$ when \[ \zeta_{N,T}:= \frac{\sum_{t=1}^T \mathds P_{\pi^*_0}(Y^{(t)}) \mathds 1_{Y^{(t)}\in  \widetilde W_N}}{\sum_{t=1}^T \mathds P_{\pi^*_0}(Y^{(t)})} > \alpha.\]
\end{algorithmic}
\caption{SIGLE in the saturated model.}
\label{algo:sigle_sat}
\end{algorithm}

\paragraph{SIGLE in the selected model.}
Given some~$\theta^*_0 \in \mathds R^s$, we consider the hypothesis test with null and alternative hypotheses defined by
\begin{equation}
\label{eq:test-selected}
\mathds H_0 \; :\; \{\theta^*=\theta^*_0\}\quad \text{and}\quad \mathds H_1 \; :\; \{\theta^* \neq \theta^*_0\}\,.
\end{equation}
The statistic given by the CLT from Theorem~\ref{thm:MLEasymptotic} (cf.  Section~\ref{sec:asymptotic}) naturally leads us to introduce the ellipsoid $W_N$ given by 

\hspace*{-0.6cm}
\setlength\tabcolsep{1.5pt}
\begin{tabular}{rll}
\multirow{2}{*}{$W_N:=\Bigg\{Y \in \{0,1\}^N \; \Bigg| \;$} & $\diamond \, \, \mathbf X_M^{\top} Y \in \mathrm{Im}(\Xi)$&\multirow{2}{2mm}{$\Bigg\}$,}\\
&$\diamond \, \, \left\|[\overline G_N(\theta^*_0)]^{-1/2}H_N(\overline \theta(\theta_0^*))\left( \Psi(\mathbf X_M^{\top}Y)-\overline \theta(\theta^*_0)\right)\right\|^2_2 >  w_{N,1-\alpha}$  &
\end{tabular}\\
where
\begin{itemize}
\item $w_{N,1-\alpha} $ is the quantile of order~$1-\alpha$ of the SIGLE statistic \[ \left\|[\overline G_N(\theta^*_0)]^{-1/2}H_N(\overline \theta(\theta_0^*))\left( \Psi(\mathbf X_M^{\top}Y)-\overline \theta(\theta^*_0)\right)\right\|^2_2,\]
\item $\displaystyle H_N(\theta) :=\mathbf X_M^{\top} \mathrm{Diag}(\sigma'(\mathbf X_M  \theta)) \mathbf X_M=\mathbf X_M^{\top} \mathrm{Diag}((\sigma^{\theta})^2) \mathbf X_M$ is the Fisher information matrix with~$(\sigma^{\theta})^2:=  \pi^{\theta}\odot (1- \pi^{\theta})$ and $\pi^{\theta}=\mathds E_{\theta}[Y]$,
\item $\displaystyle \overline G_N (\theta^*):=\mathbf X_M^{\top} \mathrm{Diag}((\overline \sigma^{\theta^*})^2) \mathbf X_M$ is the natural counterpart of the Fisher information matrix~$H_N(\theta^*)$ when we work under the conditional distribution~$\overline {\mathds P}_{\theta^*}$ with~$(\overline \sigma^{\theta^*})^2:= \overline \pi^{\theta^*}\odot (1-\overline \pi^{\theta^*})$, $\overline \pi^{\theta^*}=\overline{\mathds E}_{\theta^*}[Y].$
\end{itemize}

We rely - as in the saturated model - on the SEI-SLR algorithm or the rejection sampling method (cf. Section~\ref{sec:sampling}) to estimate the parameters~$\overline \pi^{\theta^*_0}$ and $w_{N,1-\alpha}$ in order to approximate the rejection region~$W_N$. The SIGLE procedure in the selected model in presented in Algorithm~\ref{algo:sigle_sat} when the SEI-SLR algorithm is used.

When the states~$(Y^{(t)})_{t\geq 1}$ in step 2. of Algorithm~\ref{algo:sigle_sel} are sampled using the rejection method instead of the SEI-SLR algorithm, one only needs to change the way~$\widetilde \pi^{\theta^*_0}$ and~$\zeta_{N,T}$ are computed by using
\[\widetilde \pi^{\theta_0^*} =\frac1T \sum_{t=1}^T Y^{(t)} \quad \text{and} \quad \zeta_{N,T} =\frac1T \sum_{t=1}^T \mathds 1_{Y^{(t)} \in \widetilde W_N}. \]

\begin{algorithm}
\begin{algorithmic}[1]
 	\STATE {\bf Input:}~$\mathbf X \in \mathds R^{N\times d}$, $Y\in\mathds R^N$, $\lambda>0$, $\alpha \in (0,1)$.\\
$(\mathbf X,Y,\lambda)$ characterizes the selection event $E_M$ (cf. Eq.\eqref{def:EM0}). 
  \STATE Sample states $(Y^{(t)})_{t\geq 1}$ uniformly distribution on~$E_M$ using the SEI-SLR algorithm (cf. Algorithm~\ref{algo1}).\\
  \STATE Compute:\\
 	$\quad$ - $\displaystyle \widetilde \pi^{\theta^*_0} =  \frac{\sum_{t=1}^T \mathds P_{\theta^*_0}(Y^{(t)}) Y^{(t)}}{\sum_{t=1}^T \mathds P_{\theta^*_0}(Y^{(t)})} ,$\\
  $\quad$ - $ \widetilde \theta = \Psi(\mathbf X_M^{\top} \widetilde \pi^{\theta^*_0}),$\\
  $\quad$ - $\widetilde G_N = \mathbf X_M^{\top} \mathrm{Diag}\left( \widetilde \pi^{\theta^*_0}\odot (1-\widetilde \pi^{\theta^*_0})\right) \mathbf X_M,$
  \\
  $\quad$ - $\widetilde w_{N,1-\alpha}$ which is the quantile of order $1-\alpha$ of the sequence $\left( \,  \left\|\widetilde G_N^{-1/2}H_N(\widetilde \theta)\left( \Psi(\mathbf X_M^{\top}Y^{(t)})-\widetilde \theta\right)\right\|^2_2\, \right)_{t\geq 1}$.
  \STATE Define $\widetilde W_N:= \left\{  Y \in \{0,1\}^N \; | \;  \left\|\widetilde G_N^{-1/2}H_N(\widetilde \theta)\left( \Psi(\mathbf X_M^{\top}Y)-\widetilde \theta\right)\right\|^2_2 > \widetilde w_{N,1-\alpha} \right\}.$
   \STATE Reject the null hypothesis $\mathds H_0$ when \[ \zeta_{N,T}:= \frac{\sum_{t=1}^T \mathds P_{\theta^*_0}(Y^{(t)}) \mathds 1_{Y^{(t)}\in  \widetilde W_N}}{\sum_{t=1}^T \mathds P_{\theta^*_0}(Y^{(t)})} > \alpha.\]
\end{algorithmic}
\caption{SIGLE in the selected model.}
\label{algo:sigle_sel}
\end{algorithm}
The careful reader can notice that Algorithm~\ref{algo:sigle_sel} requires to compute efficiently~$\Psi(\mathbf X_M^{\top} \pi)$ for any~$\pi \in [0,1]^N$. In the specific case where $\pi=Y\in\{0,1\}^N$, we know that~$\Psi(\mathbf X_M^{\top}Y$ is the conditional MLE (cf. Eq.~\eqref{eq:gradbar0}) and thus can be computed using the usual Iterative Reweighted Least Squares algorithm. For an arbitrary~$\pi\in[0,1]^N$ (such as in step 3. of Algorithm~\ref{algo:sigle_sel} to compute~$\widetilde \theta$), we need to use another approach. In Section~\ref{apdx:inverting-PSI} of the Appendix, we describe in details our gradient descent-based method to compute~$\Psi(\mathbf X_M^{\top} \pi)$ which proved to be extremely accurate in our numerical experiments.

\paragraph{Discussion regarding the choice of the SIGLE statistic.}

As explained in Section~\ref{sec:intro-infeprocedures}, the SIGLE statistics can be motivated by making a connection between PSI and asymptotic properties of the MLE with model misspecification. Let us present a numerical experiment providing an additional support for the choice of the SIGLE statistics. We consider the hypothesis test in the saturated model presented in Table~\ref{table:hypo-testing} with $\pi^*_0=\frac12 \mathds 1_N.$
\medskip

We consider a design matrix $\mathbf X \in \mathds R^{100\times 10}$ where the entries are i.i.d. and sampled from a standard normal distribution. We use a regularization parameter $\lambda =5$. We work with the following three test statistics:
\begin{itemize}
\item the SIGLE statistic: $\left\|[\overline G_N(\pi^*_0)]^{-1/2} \mathbf X_M^{\top}\left(Y-\overline {\pi}^{\pi^*_0}\right)\right\|^2_2$,
\item the SIGLE correlated statistic: $\left\|[\overline G_N^c(\pi^*_0)]^{-1/2} \mathbf X_M^{\top}\left(Y-\overline {\pi}^{\pi^*_0}\right)\right\|^2_2$ where 
\[\overline G_N^c(\pi^*_0)=\mathbf X_M^{\top} \overline {\mathds E}_{\pi^*_0}\big[ (Y-\overline \pi^{\pi^*_0}) (Y-\overline \pi^{\pi^*_0})^{\top}\big] \mathbf X_M,\]
\item the logistic unconditional Fisher statistic: $\left\|[H_N(\pi^*_0)]^{-1/2} \mathbf X_M^{\top}\left(Y-\pi^*_0\right)\right\|^2_2$.
\end{itemize}

We calibrate each testing procedure by sampling under the null distribution. Figure~\ref{fig:power-different-stats}.$(a)$ presents the cumulative distribution function of the p-values obtained considering the alternative $\pi^* = \sigma(\mathbf X \vartheta^*)$ with $\vartheta^* = 0.2\times \mathds 1_d$ for the different tests. We see that the SIGLE statistic leads to more powerful tests compared to the logistic unconditional Fisher statistic. Moreover, the SIGLE statistic and the SIGLE correlated statistic give similar result as already explained in Section~\ref{sec:intro-infeprocedures}.

Figure~\ref{fig:power-different-stats}.$(b)$ shows the pdf of the SIGLE statistic under the null and the pdf of the closer $\chi^2$ distribution, in the sense that we chose the degree of freedom for the $\chi^2$ distribution that gives the smallest $L^2$ error between the $\chi^2$ quantiles and the SIGLE's quantiles. It appears that this optimal degree of freedom is $14$. Figure~\ref{fig:power-different-stats}.$(b)$ makes clear that the SIGLE statistic is not distributed as a $\chi^2$ random variable contrary to what our conditional CLT from Section~\ref{sec:asymptotic} is suggesting. The obvious reason is that our conditional CLT from Section~\ref{sec:asymptotic} holds only under restrictive conditions that are nonetheless standard in the literature (cf. \cite{bunea}). This is one reason motivating the calibration of the SIGLE procedures by sampling under the null. Let us highlight that this is not restrictive in the sense that the computation of the SIGLE statistics require anyway to sample under the null in order to estimate both $\overline {\pi}^{\pi^*_0}$ and $\overline G_N(\pi^*_0)$. 

\medskip
We conducted the same analysis in the selected model working with the following three test statistics:
\begin{itemize}
\item the SIGLE statistic: $\left\|[\overline G_N(\theta^*_0)]^{-1/2} H_N(\theta^*_0)\left( \Psi(\mathbf X_M^{\top}Y)-\overline {\theta}(\theta^*_0)\right)\right\|^2_2$,
\item the SIGLE correlated statistic: $\left\|[\overline G_N^c(\theta^*_0)]^{-1/2} H_N(\theta^*_0)\left( \Psi(\mathbf X_M^{\top}Y)-\overline {\theta}(\theta^*_0)\right)\right\|^2_2$, where 
\[\overline G_N^c(\theta^*_0)=\mathbf X_M^{\top} \overline {\mathds E}_{\theta^*_0}\big[ (Y-\overline \pi^{\theta^*_0}) (Y-\overline \pi^{\theta^*_0})^{\top}\big] \mathbf X_M,\]
\item the logistic unconditional Fisher statistic: $\left\|[H_N(\theta^*_0)]^{-1/2} \left(\Psi(\mathbf X_M^{\top}Y)-\theta^*_0\right)\right\|^2_2$.
\end{itemize}
The results are presented in Figures~\ref{fig:power-different-stats}.$(c)$ and~$(d)$ with similar conclusions.

\begin{figure}[ht!]
\vskip 0.2in
\begin{center}
 \centering
    \begin{subfigure}[b]{0.49\textwidth}
        \centering
        \includegraphics[width=\textwidth]{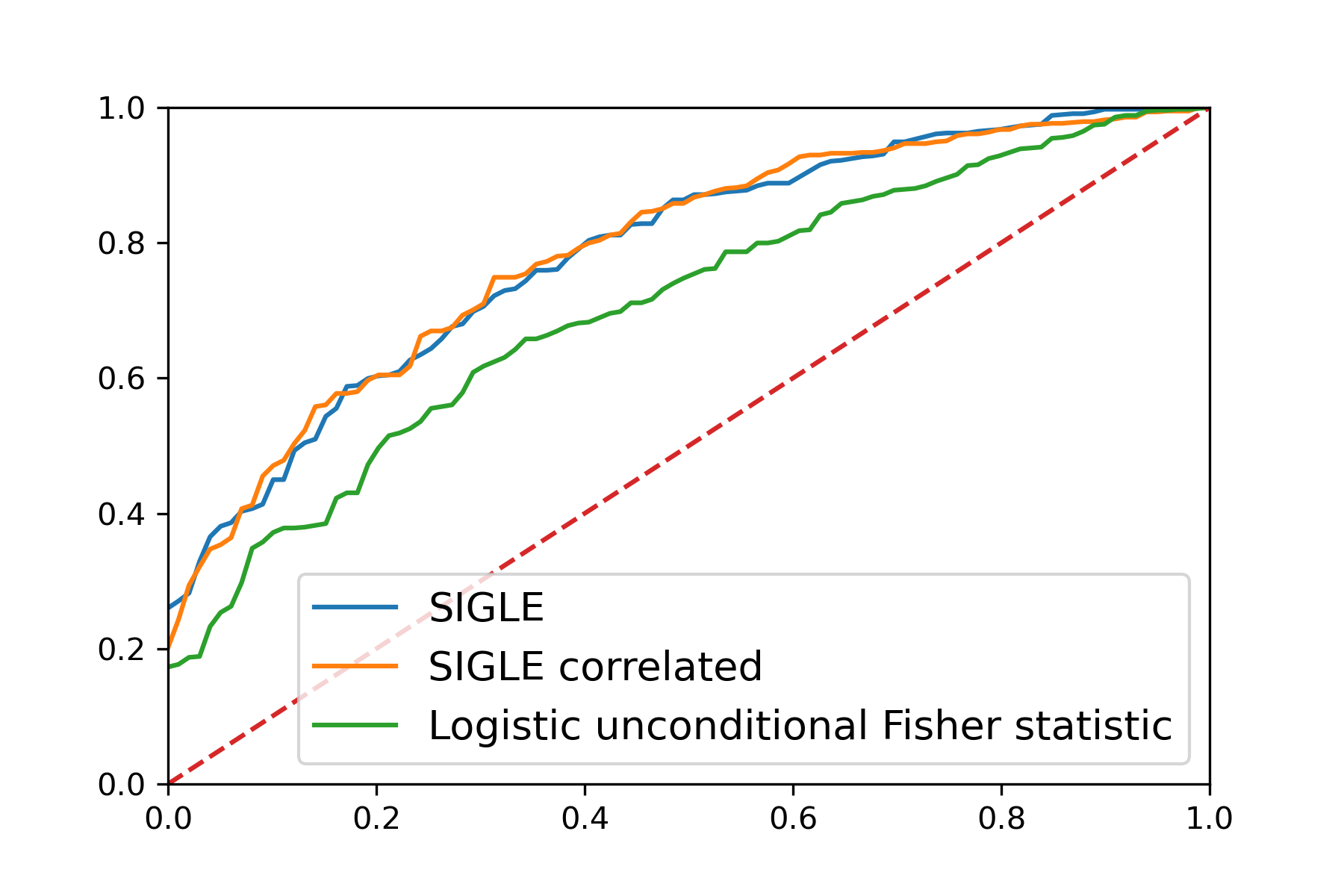}
        \caption[]%
        {{\small CDF of p-values for the alternative $\pi^* = \sigma(\mathbf X \vartheta^*)$ with $\vartheta^* = 0.2 \times \mathds 1_d$.}}    
    \end{subfigure}
    \hfill
     \centering
    \begin{subfigure}[b]{0.49\textwidth}
        \centering
    \includegraphics[width=\textwidth]{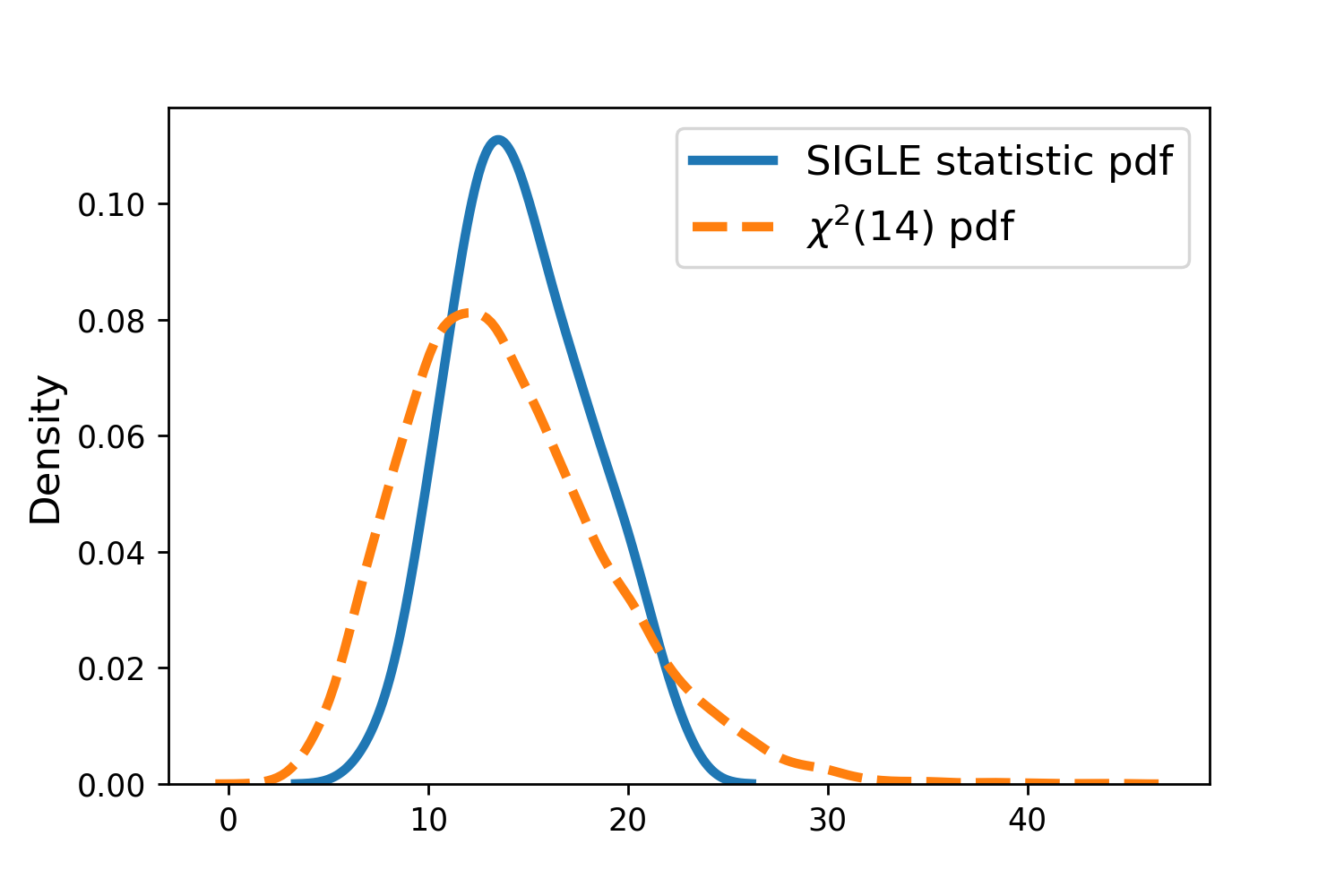}
        \caption[]%
        {{\small Pdfs of the SIGLE statistic under the null and of the $\chi^2(14)$ distribution.}}    
    \end{subfigure}

    \begin{subfigure}[b]{0.49\textwidth}
        \centering
        \includegraphics[width=\textwidth]{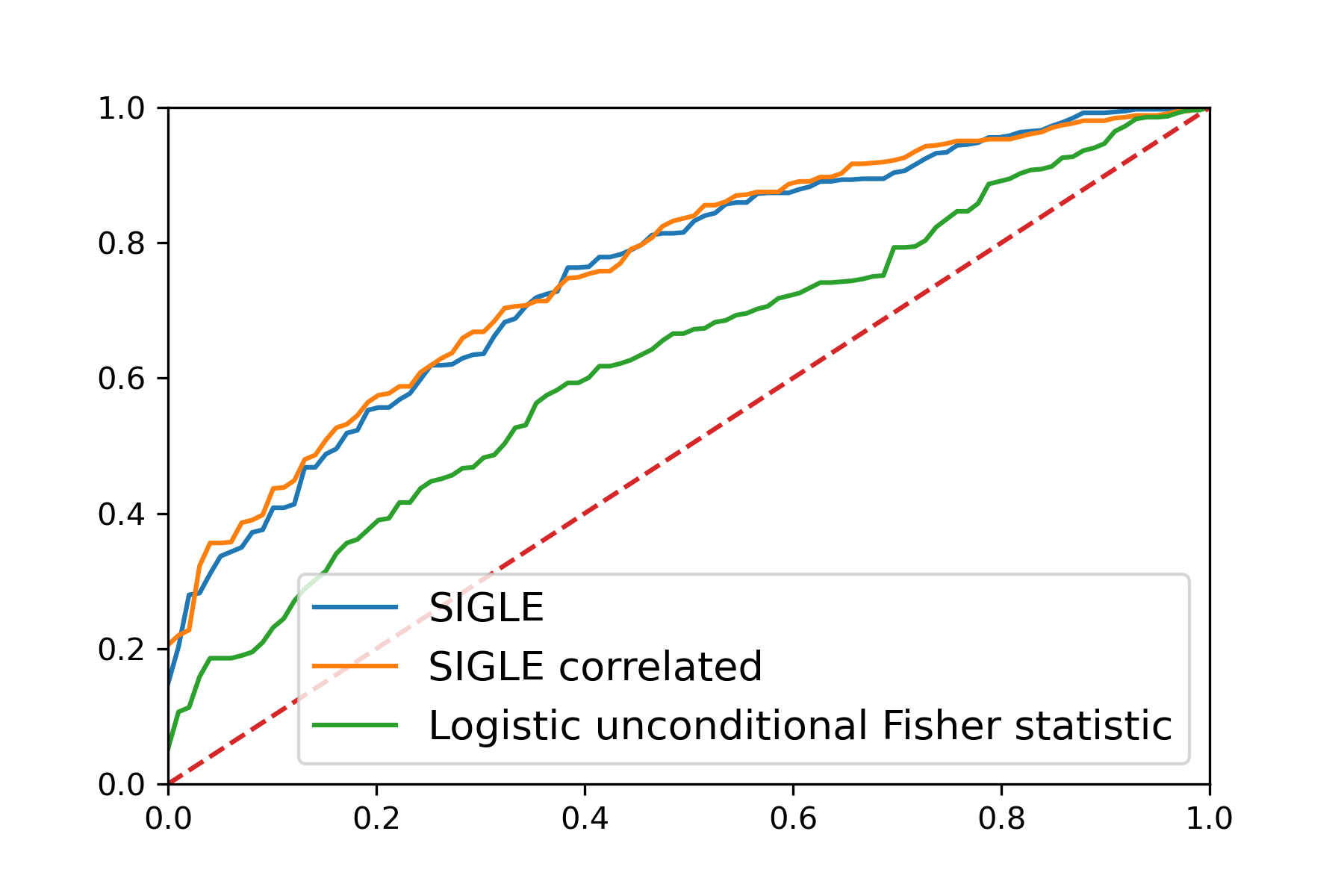}
        \caption[]%
        {{\small CDF of p-values for the alternative $\pi^* = \sigma(\mathbf X \vartheta^*)$ with $\vartheta^* = 0.2 \times \mathds 1_d$.}}    
    \end{subfigure}
    \hfill
     \centering
    \begin{subfigure}[b]{0.49\textwidth}
        \centering
    \includegraphics[width=\textwidth]{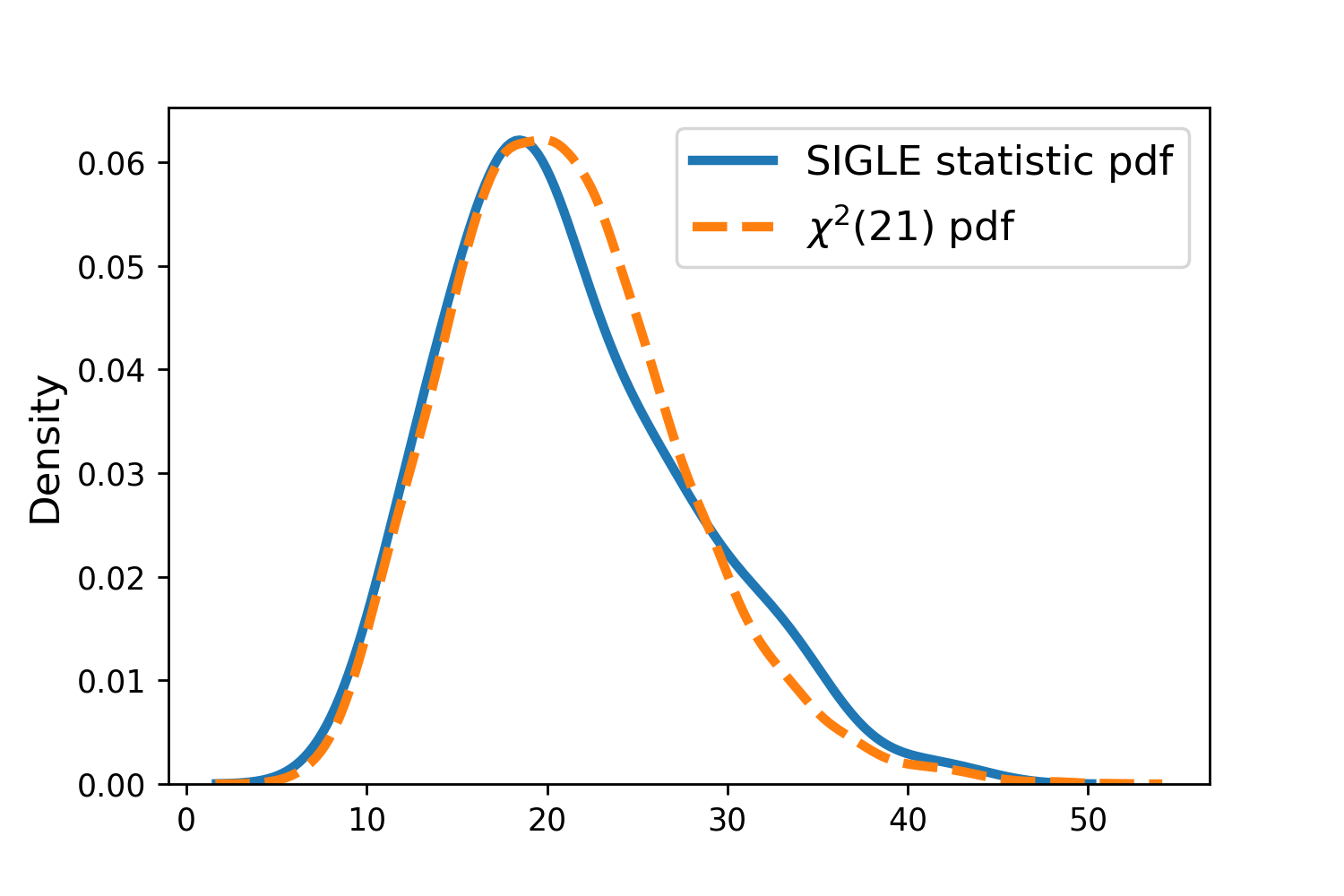}
        \caption[]%
        {{\small Pdfs of the SIGLE statistic under the null and of the $\chi^2(21)$ distribution.}}    
    \end{subfigure}
\caption{Figure $(a)$ (resp. $(c)$) shows the CDF of p-values for the alternative $\pi^* = \sigma(\mathbf X \vartheta^*)$ with $\vartheta^* = 0.2 \times \mathds 1_d$ for the test using the SIGLE statistic, the SIGLE correlated statistic and the logistic unconditional Fisher statistic in the saturated (resp. selected) model. Figure $(b)$ (resp. $(d)$) presents the probability density function (pdf) of the SIGLE statistic under the null and the one of a $\chi^2$ distribution with $14$ (resp. $21$) degrees of freedom in the saturated (resp. selected) model.}
\label{fig:power-different-stats}\end{center}
\vskip -0.2in
\end{figure}

\section{Sampling from the conditional distribution}
\label{sec:sampling}
 
Let us recall that we focus on the case of the logistic regression.
We propose two different approaches to compute quantities of the form~$\overline {\mathds E}_{\vartheta^*}[h(Y)]$ for some map~$h:\{0,1\}^N\to \mathds R.$\\
The first one is a simple rejection sampling method that can be used when carrying simple hypothesis testing as presented in Table~\ref{table:hypo-testing}. In this situation, one can sample states from $\mathds P_{\pi^*_0}$ in the saturated model (resp. $\mathds P_{\sigma(\mathbf X_M\theta^*_0)}$ in the selected model) while keeping only the ones leading to the selected support $M$. By construction, the distribution of the saved states is precisely $\overline{\mathds P}_{\pi^*_0}$ (resp. $\mathds P_{\sigma(\mathbf X_M\theta^*_0)}$). This method is particularly appropriate when the number of features~$d$ is small since the number of possible selected support for the lasso solution is exponential in~$d$. When~$d$ is getting large or when we want to derive a confidence region, another sampling method is needed.\\
In this section, we present an algorithm based on a simulated annealing approach that is proved to sample states~$Y^{(t)}$ uniformly distributed on the selection event~$E_M$ for any $M\subseteq[d]$ with cardinality~$s$ in the asymptotic regime as $t\to\infty$. Contrary to the rejection sampling method, this simulated-annealing based algorithm can be used to compute expectations of the form~$\overline {\mathds E}_{\vartheta^*}[h(Y)]$ regardless of the inference procedure conducted or when~$d$ is large. Nevertheless, let us point out that this approach requires an appropriate tuning of some parameters, and the convergence guarantees are only asymptotic. An extensive discussion of our sampling strategies is provided in Section~\ref{sec:sampling-strategies}.


\subsection{SEI-SLR: sampling the selection event}
\label{subsec:simu-annealing}

From Proposition~\ref{prop:signs} and the KKT conditions from~\eqref{eq:gKKTconditions}, we know that the selection event~$E_M$ can be written as \begin{equation}\label{def2:EM0}E_{M} = \left\{Y\in \{0,1\}^N \; |\; \mathds 1_{\|\widehat S_{-M}(Y)\|_{\infty}-1<0}, \; \mathds 1_{1=\min_{k \in M}\{|\widehat S_k(Y)|\}}\right\}.\end{equation}

\noindent \begin{minipage}{0.6\textwidth}
Based on the expression of~$E_{M}$ given in Eq.\eqref{def2:EM0}, we introduce the function
\[
b_{\delta}(x) = 1-\sqrt{\left(\frac{x}{\delta}\right)\wedge 1}\,,
\]
for some~$\delta>0$ and we define the energy
\[ \mathcal E(Y):= \max \left\{ p_1(Y) \; , \; p_2(Y)\right\},\]
\end{minipage} \hfill
\begin{minipage}{0.37\textwidth}

  \begin{tikzpicture}
    \draw[thick, gray, ->] (-0.25, 0) -- (3.25, 0)
      node[anchor=south west] {$x$};
      \draw[thick] (0, 0.2) -- (0, -0.2)
        node[anchor=north] {\(0\)};
      \draw[thick] (3, 0.2) -- (3, -0.2)
        node[anchor=north] {\(1\)};
    \draw[thick, gray, ->] (0, -0.25) -- (0, 3.25)
      node[anchor=west] {$b_{\delta}(x)$};
      \draw[thick] (-0.1, 3) -- (0.1, 3);
      \draw[blue] (0.8, 0) -- (3, 0);
    \draw[domain=0:0.8,smooth,variable=\x,blue] plot ({\x},{3-3*sqrt{\x/0.88}});
        \node at (-0.28,3) {$1$};
      \draw[thick] (0.8, 0.2) -- (0.8, -0.2)
        node[anchor=north] {\(\delta\)};
  \end{tikzpicture}      
\end{minipage}
where \[p_1(Y) := b_{\delta}\left(1-\|\widehat S_{-M}(Y)\|_{\infty}\right) \quad \text{and} \quad p_2(Y) :=  \frac{1}{| M|}\sum_{k\in M} (1-|\widehat S_k(Y)|).\]
The energy $\mathcal E$ measures how close some vector~$Y \in \{0,1\}^N$ is to~$E_{M}.$
With Lemma~\ref{lemma:energySA}, we make this claim rigorous by proving that for $\delta>0$ small enough, the selection event~$E_M$ corresponds to the set of vectors $Y\in \{0,1\}^N$ satisfying $\mathcal E(Y)=0$.




\begin{Lemma}\label{lemma:energySA} For any $M\subseteq [d]$, there exists $\delta
_c:=\delta_c(M,\mathbf X,\lambda)>0$ such that for all $\delta\in(0,\delta_c)$, the selection event $E_{M}=  \{Y \in \{0,1\}^N \; | \; \widehat M(Y) = M\}$ is equal to the set 
\[\left\{ Y \in \{0,1\}^N \; | \; p_1(Y)=0 \quad \text{and}\quad  p_2(Y)=0 \right\}.\]
\end{Lemma}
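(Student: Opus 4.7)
The plan is to analyze the two conditions $p_1(Y)=0$ and $p_2(Y)=0$ separately, translate each into an elementary inequality on $\widehat S(Y)$, and then exploit the finiteness of $\{0,1\}^N$ to extract a strictly positive $\delta_c$.

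First, I would record two elementary facts. On the one hand, the KKT conditions~\eqref{eq:gKKTconditions} together with Proposition~\ref{prop:signs} imply $|\widehat S_k(Y)|\le 1$ for every $k\in[d]$ and every $Y$. On the other hand, for $x\ge 0$ the function $b_\delta$ satisfies $b_\delta(x)=0$ if and only if $x\ge \delta$. Combining these two observations, $p_2(Y)=0$ is equivalent to $|\widehat S_k(Y)|=1$ for every $k\in M$ (i.e.\ $M\subseteq \widehat M(Y)$), while $p_1(Y)=0$ is equivalent to $\|\widehat S_{-M}(Y)\|_\infty \le 1-\delta$.

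Second, I would prove the ``easy'' inclusion, valid for any $\delta>0$: if $p_1(Y)=p_2(Y)=0$, then $M\subseteq \widehat M(Y)$ and $|\widehat S_k(Y)|\le 1-\delta<1$ for every $k\notin M$, hence $\widehat M(Y)\subseteq M$. Thus $\widehat M(Y)=M$, i.e.\ $Y\in E_M$.

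Third, I would turn to the reverse inclusion, which dictates the choice of $\delta_c$. For any $Y\in E_M$, the definition of the equicorrelation set gives $|\widehat S_k(Y)|=1$ for $k\in M$ (so $p_2(Y)=0$ automatically) and $|\widehat S_k(Y)|<1$ strictly for $k\notin M$. In particular, $1-\|\widehat S_{-M}(Y)\|_\infty>0$ for each $Y\in E_M$. Since $E_M\subseteq\{0,1\}^N$ is a finite set, the quantity
\[
\delta_c := \min_{Y\in E_M}\bigl(1-\|\widehat S_{-M}(Y)\|_\infty\bigr)
\]
is a minimum of finitely many strictly positive numbers, hence $\delta_c>0$ (with the convention $\delta_c=1$ if $E_M=\emptyset$, in which case the lemma is vacuous since the set $\{p_1=p_2=0\}$ is then empty by the previous step). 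For any $\delta\in(0,\delta_c)$ and any $Y\in E_M$ one has $1-\|\widehat S_{-M}(Y)\|_\infty\ge \delta_c>\delta$, so $p_1(Y)=0$. The main point, and the only non-routine one, is precisely this finite-min argument which converts the open inequality ``$|\widehat S_k(Y)|<1$ for $k\notin M$'' into the closed one ``$|\widehat S_k(Y)|\le 1-\delta$''; everything else is purely definitional.
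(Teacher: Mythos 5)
Your proof is correct and follows essentially the same route as the paper's: you translate $p_1(Y)=0$ and $p_2(Y)=0$ into the conditions $\|\widehat S_{-M}(Y)\|_\infty\le 1-\delta$ and $\min_{k\in M}|\widehat S_k(Y)|=1$, and you define $\delta_c=\min_{Y\in E_M}\bigl(1-\|\widehat S_{-M}(Y)\|_\infty\bigr)$, which is exactly the paper's choice, with positivity secured by the finiteness of $E_M$. The only additions are the explicit treatment of the case $E_M=\emptyset$ and the remark that $|\widehat S_k(Y)|\le 1$ keeps the argument of $b_\delta$ nonnegative, both harmless refinements of the paper's argument.
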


\begin{proof}
Let us consider some $\delta \in (0,\delta_c)$ where 
\[\delta_c:=  \min_{Y  \in E_M} \{ 1-\|\widehat S_{-M}(Y)\|_{\infty}\}.\]
Note that Eq.\eqref{def2:EM0} ensures that for any~$Y \in E_M$,~$\|\widehat S_{-M}(Y)\|_{\infty}<1$. This implies that $\delta_c>0$ since the set $E_M$ is finite.

It is obvious that for any $Y\in \{0,1\}^N$, the fact that $p_2(Y)=0$ is equivalent to $\min_{k \in M}|\widehat S_{k}(Y)|=1$. Moreover, thanks to our choice for the constant~$\delta$, it also holds that $p_1(Y)=0$ is equivalent to~$\|\widehat S_{-M}(Y)\|_{\infty}<1$. The characterization of the selection event~$E_M$ given by Eq.\eqref{def2:EM0} allows to conclude the proof.
\end{proof}

Lemma~\ref{lemma:energySA} states that-provided~$\delta$ is small enough--the selection event~$E_M$ corresponds to the set of global minimizers of the energy~$\mathcal E:\{0,1\}^N\to \mathds R_+$. This characterization allows us to formulate a simulating annealing~(SA) procedure in order to estimate~$E_M$. Let us briefly recall that simulated annealing algorithms are used to estimate the set of global minimizers of a given function. At each time step, the algorithm considers some neighbour of the current state and probabilistically decides between moving to the proposed neighbour or staying at its current location. While a transition to a state inducing a lower energy compared to the current one is always performed, the probability of transition towards a neighbour that leads to increase the energy is decreasing over time. The precise expression of the probability of transition is driven by a chosen {\it cooling schedule}~$(\mathrm T_t)_t$ where~$\mathrm T_t$ are called {\it temperatures} and vanish as~$t\to \infty$. Intuitively, in the first iterations of the algorithm the temperature is high and we are likely to accept most of the transitions proposed by the SA. In that way, we give our algorithm the chance to escape from local minimum. As time goes along, the temperature decreases and we expect to end up at a global minima of the function of interest. 

We refer to~\cite[Chapter 12]{bremaud2013markov} for further details on SA. Our method is described in Algorithm~\ref{algo1} and in the next section, we provide theoretical guarantees. In Algorithm~\ref{algo1}, $P:\{0,1\}^N\times \{0,1\}^N \to [0,1]$ is the Markov transition kernel such that for any~$Y\in \{0,1\}^N$, $P(Y,\cdot)$ is the probability measure on~$\{0,1\}^N$ corresponding to the uniform distribution on the vectors in~$\{0,1\}^N$ that differs from~$Y$ in exactly one coordinate.

\begin{algorithm}[H]
\textbf{Data: } $\mathbf X$, $Y$, $\lambda$, $K_0$, $T$
\begin{algorithmic}[1]
 \STATE Compute $\hat \vartheta^{\lambda} \in \underset{\vartheta \in \mathds R^d}{\arg \min} \{ \mathcal L_N(\vartheta,(Y,\mathbf X)) + \lambda \|\vartheta\|_1\}$\;
 \STATE Set $M=\{ k \in [d] \; |\; \hat \vartheta^{\lambda}_k \neq 0\}$\;
 \STATE $Y^{(0)}\leftarrow Y$\;
 \FOR{$t=1$ to $T$ }
\STATE $Y^{\mathrm{c}} \sim P(Y^{(t-1)},\cdot)$\;
 \STATE$\hat \vartheta^{\lambda,\mathrm c} \in \underset{\vartheta \in \mathds R^d}{\arg \min} \{ \mathcal L_N(\vartheta, (Y^{\mathrm{c}},\mathbf X)) + \lambda \|\vartheta\|_1\}$\;
\STATE $\widehat S(Y^{\mathrm{c}}) =\frac{1}{\lambda}\mathbf X^{\top}(Y^{\mathrm{c}}-\sigma(\mathbf X \hat \vartheta^{\lambda,\mathrm{c}}))$\;
  \STATE$\Delta \mathcal  E = \mathcal E(Y^{\mathrm{c}})-\mathcal E(Y^{(t-1)})$\;
  \STATE$U \sim \mathcal U([0,1])$\;
 \STATE  $\mathrm T_t \leftarrow \frac{K_0}{\log(t+1)}$\;
  \IF{$\exp\left(-\frac{\Delta \mathcal E}{\mathrm T_t}\right)\geq U$}
  \STATE $Y^{(t)} \leftarrow Y^{\mathrm{c}}$\;
   \ENDIF
 \ENDFOR
 \end{algorithmic}
 \caption{SEI-SLR: Selection Event Identification for SLR}
 \label{algo1}
\end{algorithm}

\subsection{Proof of convergence of the algorithm}


To provide theoretical guarantees on our methods in the upcoming sections, we need to understand what is the distribution of~$Y^{(t)}$ as~$t\to \infty$. This is the purpose of Proposition~\ref{prop:uniform-distribution} which shows that the SEI-SLR algorithm generates states uniformly distributed on~$E_M$ in the asymptotic~$t\to \infty$.
\begin{proposition}
\cite[Example 12.2.12]{bremaud2013markov} \label{prop:uniform-distribution}\\
For a cooling schedule satisfying $\mathrm T_t \geq 2^{N+1}/\log(t+1)$, the limiting distribution of the random vectors $Y^{(t)}$ is the uniform distribution on the selection event~$E_{M}$.
\end{proposition}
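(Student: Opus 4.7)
The plan is to recognize SEI-SLR as a standard Metropolis simulated annealing algorithm on the finite state space $\{0,1\}^N$, and then invoke the classical inhomogeneous Markov chain convergence theorem (in the form given by Bremaud). The chain $(Y^{(t)})_{t\ge 0}$ is time-inhomogeneous with a one-step kernel of Metropolis type: propose $Y^{\mathrm c}$ from $P(Y^{(t-1)},\cdot)$ and accept with probability $\min(1,\exp(-\Delta\mathcal E/\mathrm T_t))$. Because the state space is finite and the energy $\mathcal E:\{0,1\}^N\to\mathds R_+$ is well-defined on it, all quantities that appear in the classical theorem are bounded.

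First I would verify the three standard structural hypotheses on the proposal kernel $P$. By construction, $P(Y,Y')=1/N$ whenever $Y$ and $Y'$ differ in exactly one coordinate and $0$ otherwise, so $P$ is symmetric: $P(Y,Y')=P(Y',Y)$. Since the Hamming graph on $\{0,1\}^N$ is connected, $P$ is irreducible. Aperiodicity is inherited at positive temperature from the fact that rejection steps keep the chain in place with positive probability as soon as at least one neighbour has strictly larger energy (when this fails, one can always consider the two-step kernel, which does not affect convergence). Consequently, at each frozen temperature $\mathrm T>0$ the transition kernel is $P$-irreducible and reversible with respect to the Gibbs measure $\mu_{\mathrm T}(Y)\propto\exp(-\mathcal E(Y)/\mathrm T)$.

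Next I would identify the zero-temperature limit of $\mu_{\mathrm T}$. As $\mathrm T\downarrow 0$, the measure $\mu_{\mathrm T}$ concentrates on $\arg\min\mathcal E$, and Lemma~\ref{lemma:energySA} ensures that $\arg\min\mathcal E=E_M$, the mass being distributed uniformly since every global minimizer has the same energy value $0$. It then remains to control the non-homogeneous behaviour induced by the logarithmic schedule $\mathrm T_t=K_0/\log(t+1)$. This is exactly what Bremaud's Example 12.2.12 provides: whenever $\mathrm T_t\ge c^\star/\log(t+1)$ with $c^\star$ at least equal to the maximal depth of the non-global local minima of $\mathcal E$, the law of $Y^{(t)}$ converges in total variation to the uniform distribution on $\arg\min\mathcal E$. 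On $\{0,1\}^N$ equipped with the Hamming-neighbour proposal, this depth is uniformly bounded by the range of $\mathcal E$ times the diameter of the graph, and thus by $2^{N+1}$; the choice $K_0\ge 2^{N+1}$ made in the statement therefore majorises $c^\star$ in a way that is universal with respect to $M$, $\mathbf X$ and $\lambda$.

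The only genuinely delicate step is this last one, namely justifying the explicit constant $2^{N+1}$ rather than a sharper problem-dependent depth. I would handle it by pointing to the universal bound used in Bremaud's example: it majorises the oscillation of any bounded energy on $\{0,1\}^N$ by $2^N$ up to a factor $2$ coming from the worst-case escape path between two wells, and bypasses any fine analysis of the landscape of $\mathcal E$. All remaining parts are routine verifications of the hypotheses of the cited theorem, so the proof reduces to a direct application of \cite[Example 12.2.12]{bremaud2013markov}.
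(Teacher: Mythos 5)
Your proposal is correct and follows essentially the same route as the paper, which states this proposition as a direct consequence of \cite[Example 12.2.12]{bremaud2013markov} with no further argument; your verification of the hypotheses (symmetric, irreducible Hamming-neighbour proposal, identification of $\arg\min\mathcal E=E_M$ via Lemma~\ref{lemma:energySA}, concentration of the Gibbs measures, and a logarithmic schedule whose constant dominates the critical depth) is precisely the content being invoked. One minor imprecision: the oscillation of $\mathcal E$ is at most $1$ (each of $p_1,p_2$ lies in $[0,1]$), not $2^N$ --- the exponential factor in $2^{N+1}$ comes from a crude bound on the worst-case escape path through the state space of cardinality $2^N$, but since $2^{N+1}$ still majorises the critical constant the conclusion stands.
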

Proposition~\ref{prop:uniform-distribution} has the important consequence that we are able to compute the distribution of the binary vector~$Y=(y_i)_{i\in[N]}$ where each~$y_i$ is a Bernoulli random variable with parameter~$\pi^*_i \in (0,1)$ conditional on the selection event. The formal presentation of this result is given by Proposition~\ref{prop:point-alternative} which will be the cornerstone of our inference procedures presented in Section~\ref{sec:asymptotic}.

\begin{proposition}\label{prop:point-alternative}
Let us consider~$M\subseteq[d]$ and some~$\vartheta^* \in \mathds R^d$. Consider a random vector~$Y$ with distribution~$\overline{\mathds P}_{\pi^*}$ where $\pi^*=\sigma(\mathbf X \vartheta^*)$. For a cooling schedule satisfying $\mathrm T_t \geq 2^{N+1}/\log(t+1)$, it holds for any function $h:\{0,1\}^N \to \mathds R$,
 \[\frac{\sum_{t=1}^T h(Y^{(t)}) \mathds P_{\pi^*}(Y^{(t)})}{\sum_{t=1}^T  \mathds P_{\pi^*}(Y^{(t)})} \underset{T \to \infty}{\to}  \overline{\mathds E}_{\pi^*}\left[h(Y)  \right] \quad \text{almost surely}.\]
\end{proposition}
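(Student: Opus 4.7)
The strategy is a standard self-normalized importance sampling argument on top of the simulated annealing convergence already established in Proposition~\ref{prop:uniform-distribution}. First, from the very definition of the conditional law, $\overline{\mathds P}_{\pi^*}(y) = \mathds 1_{y\in E_M}\mathds P_{\pi^*}(y)/Z$ with normalizing constant $Z=\sum_{y\in E_M}\mathds P_{\pi^*}(y)$, which is positive because $E_M$ is nonempty (it contains at least the observed $Y$ used to build $M$) and $\pi^*\in(0,1)^N$ assigns positive mass to every point of $\{0,1\}^N$. Thus, writing out the expectation gives the importance-sampling identity
\[
\overline{\mathds E}_{\pi^*}[h(Y)] \;=\; \frac{\sum_{y\in E_M} h(y)\,\mathds P_{\pi^*}(y)}{\sum_{y\in E_M} \mathds P_{\pi^*}(y)},
\]
which is the target of the claim once the time averages in the numerator and the denominator are identified with the corresponding sums restricted to $E_M$.

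Next, I would upgrade the conclusion of Proposition~\ref{prop:uniform-distribution} from convergence in distribution to an almost-sure ergodic theorem for the inhomogeneous SA chain. Under the cooling schedule $\mathrm T_t\geq 2^{N+1}/\log(t+1)$, the very same result from~\cite[Chapter 12]{bremaud2013markov} (applied with Lemma~\ref{lemma:energySA}, which identifies $E_M$ with the set of global minimizers of $\mathcal E$) yields that, for every bounded $g:\{0,1\}^N\to\mathds R$,
\[
\frac{1}{T}\sum_{t=1}^T g(Y^{(t)}) \;\underset{T\to\infty}{\longrightarrow}\; \frac{1}{|E_M|}\sum_{y\in E_M} g(y) \qquad \text{almost surely}.
\]
Indeed $\{0,1\}^N$ is finite, so every function on it is bounded, and the convergence of empirical frequencies to the uniform law on the minimizer set is the classical strong law for logarithmically cooled simulated annealing.

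Then I apply the ergodic theorem twice, with $g(y)=h(y)\mathds P_{\pi^*}(y)$ for the numerator and $g(y)=\mathds P_{\pi^*}(y)$ for the denominator (both well-defined and bounded on $\{0,1\}^N$). This gives a.s.
\[
\frac{1}{T}\sum_{t=1}^T h(Y^{(t)})\mathds P_{\pi^*}(Y^{(t)})\to \frac{1}{|E_M|}\sum_{y\in E_M} h(y)\mathds P_{\pi^*}(y),
\]
and similarly for the sum of weights $\mathds P_{\pi^*}(Y^{(t)})$. Since the limit of the denominator is the strictly positive quantity $Z/|E_M|$, Slutsky-type continuous-mapping for the ratio yields the a.s.\ convergence of the quotient to the right-hand side of the importance-sampling identity, which equals $\overline{\mathds E}_{\pi^*}[h(Y)]$.

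The main obstacle is the justification of step two: the statement of Proposition~\ref{prop:uniform-distribution} is phrased in terms of a limiting distribution, whereas what we actually need is almost-sure convergence of time averages for a time-inhomogeneous Markov chain. I would therefore invoke the companion result in~\cite[Chapter 12]{bremaud2013markov}, which provides this SLLN for simulated annealing under the same logarithmic cooling schedule; alternatively, an elementary Borel--Cantelli/coupling argument exploiting the finiteness of the state space and the lower bound on $\mathrm T_t$ suffices. The remaining algebra (importance sampling identity, continuous mapping for the ratio) is routine once this ergodic statement is in hand.
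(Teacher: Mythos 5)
Your proposal follows essentially the same route as the paper: the self-normalized importance-sampling identity expressing $\overline{\mathds E}_{\pi^*}[h(Y)]$ as a ratio of averages with respect to the uniform law on $E_M$, combined with the simulated-annealing convergence of Proposition~\ref{prop:uniform-distribution} applied separately to numerator and denominator, and a continuous-mapping step for the ratio (the denominator being strictly positive since $E_M\neq\emptyset$ and $\pi^*\in(0,1)^N$). You are in fact more explicit than the paper's own one-line proof, which asserts that the conclusion ``directly follows'' from the convergence in distribution of $Y^{(t)}$: you correctly flag that the step genuinely requiring justification is the upgrade from that distributional limit to an almost-sure law of large numbers for time averages of the inhomogeneous chain, which is exactly where the argument's weight lies.
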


\begin{proof}Let us consider some map $h:\{0,1\}^N \to \mathds R$. Then,
\begin{align*}
  \overline{\mathds E}_{\pi^*}\left[h(Y)  \right]&=\frac{\sum_{y \in E_M} h(y) \mathds P_{\pi^*}(y)}{\sum_{y \in E_M} \mathds P_{\pi^*}(y)}= \frac{\mathds E(h(U_{M}) \mathds P_{\pi^*}(Y=U_{M}))}{\mathds E(  \mathds P_{\pi^*}(Y=U_{M} ))},
\end{align*}
where $U_M$ is a random variable taking values in $\{0,1\}^N$ which is uniformly distributed over $E_M$. Then the conclusion directly follows from Proposition~\ref{prop:uniform-distribution}.
\end{proof}

\section{Numerical results}
\label{sec:experiments}

The code to reproduce our results is available at the following url: \url{https://github.com/quentin-duchemin/SIGLE}.

\subsection{Sampling the conditional distribution with SEI-SLR}
\label{sec:sampling-strategies}

As already discussed in the beginning of Section~\ref{sec:sampling}, we propose two different ways to sample points on the hypercube $\{0,1\}^N$ allowing us to compute conditional expectations of the form $\overline {\mathds E}_{\theta^*} [h(Y)]$ or $\overline {\mathds E}_{\pi^*} [h(Y)]$ where $h:\{0,1\}^N \to \mathds R$. 

The first method is a simple rejection sampling approach and is described in Algorithm~\ref{algo:RS}.
\begin{algorithm}
\begin{algorithmic}[1]
    \STATE \text{\bf Input:} $ T$, $\pi^*$, $\mathbf X$, $M$, $\lambda$
  \STATE $t\leftarrow 0$
  \WHILE{$t<T$}
  \STATE $Y \sim \mathds P_{\pi^*}$
  \IF{$Y \in E_M$}
  \STATE $t\leftarrow t+1$
  \STATE $Y^{(t)}\leftarrow Y$
  \ENDIF
  \ENDWHILE
  \RETURN $(Y^{(t)})_{t\in[T]}$
\end{algorithmic}
\caption{Rejection sampling.}
\label{algo:RS}
\end{algorithm}
The rejection sampling algorithm does not require any parameter tuning and allow to estimate any expectation $\overline {\mathds E}_{\pi^*}[h(Y)]$ by taking a simple average over the list of returned states namely $\sum_{t\in[T]} h(Y^{(t)})$. Nevertheless, a major drawback of the rejection sampling method is its large computing time when the number of features $d$ is getting "large" (typically when $d$ exceeds ten). Indeed, the number of possible supports for a lasso solution is equal to $2^d$ and increases exponentially fast with $d$.

In order to bypass this curse of dimensionality, we proposed in Section~\ref{subsec:simu-annealing} the SEI-SLR algorithm: a simulated annealing-based method that is proved to generate states that are asymptotically uniformly distributed on the selection event. The SEI-SLR algorithm solves the computational issue faced by the rejection sampling for large $p$ values.
Nevertheless, the convergence of SEI-SLR algorithm requires the use of well-chosen parameters namely:
\begin{itemize}
\item the parameter $\delta$ involved in the energy (cf. Section~\ref{subsec:simu-annealing}), 
\item the temperatures $(\mathrm T_t)_{t}$,
\item the time horizon of the algorithm.
\end{itemize}
Let us finally mention that estimating expectations of the form $\overline {\mathds E}_{\theta^*}[h(Y)]$ from the samples $(Y^{(t)})_t$ obtained with the SEI-SLR algorithm requires the computation of weighting factors that allow to go from the uniform distribution on the selection event~$E_M$ to the target conditional distribution $\overline {\mathds E}_{\theta^*}$. In Table~\ref{table:RSvsSA}, we sum-up the previous discussion in order to give a comprehensive comparison between the two methods. In the rest of this section, we illustrate the performance of the SEI-SLR algorithm

\begin{table}
\centering
{\small
\begin{tabular}{C{4cm}|C{4cm}C{4cm}}
& Rejection Sampling  & SEI-SLR\\\hline
Conditions for application & \cellcolor{red!20!white}Simple hypothesis (cf. Table~\ref{table:hypo-testing})  & \cellcolor{green!20!white}No condition\\
Need for hyperparameters tuning&\cellcolor{green!20!white} No & \cellcolor{red!20!white}Yes\\
Computational time & Efficient only for a small $d$ but $N$ can be chosen (very) large & Easier to use for relatively small $N$ but $d$ can be large\\
\rowcolor{gray!20!white}Distribution of the sequence of states generated $(Y^{(t)})_{t\in [T]}$ & $\overline {\mathds P}_{\theta^*_0}$ or $\overline {\mathds P}_{\pi^*_0}$ (cf. Table~\ref{table:hypo-testing}) & Uniform distribution on $E_M$\\
\rowcolor{gray!20!white}& $\Downarrow$	 & $\Downarrow$	\\
\rowcolor{gray!20!white}In simple hypothesis testing with $\mathds H_0 \; :\; "\theta^*=\theta^*_0"$, $\overline {\mathds E}_{\theta^*_0}[h(Y)]\approx \dots$ & $\frac{1}{T} \sum_{t\in[T]} h(Y^{(t)})$ & $ \frac{\sum_{t\in[T]} \overline {\mathds P}_{\theta_0^*}(Y^{(t)})h(Y^{(t)})}{ \sum_{r\in[T]} \overline {\mathds P}_{\theta_0^*}(Y^{(r)})}$ \\
\rowcolor{gray!20!white}& i.e. the estimate is obtained with a simple average on the sequence of generated states. &  i.e. we need to weight properly each visited state.
\end{tabular}
}
\caption{Comparison between the rejection sampling method and the SEI-SLR algorithm.}
\label{table:RSvsSA}
\end{table}

We consider a design matrix $\mathbf X \in \mathds R^{10\times 20}$ where all entries are i.i.d. and sampled from a standard normal distribution. We consider $\delta = 0.01$, $\lambda=1.5$ and we sample some vector~$Y_0\in \{0,1\}^N$ with i.i.d. entries with a Bernoulli distribution of parameter $1/2$. Note that the tuple~$(\mathbf X,Y_0,\lambda)$ determined the set of active variables~$M$ (cf. Eq.\eqref{e:generalized_lasso}). We run the SEI-SLR algorithm for $3\,000\,000$ time steps. By choosing this toy example with a small value for~$N$, we are able to compute exactly the selection event~$E_M$ by running over the~$2^{10}$ possible vectors~$Y \in \{ 0,1\}^N$.
In the following, we identify each vector~$Y \in \{0, 1\}^N$ with the number between~$0$ and~$2^N-1=1024$ that it represents in the base-2 numeral system. Using this identification, it holds on our example that~$E_M = \{3,35,222,801,988,1020\}$.
\medskip

\begin{figure}[ht!]
\begin{center}
 \centering
    \begin{subfigure}[b]{0.49\textwidth}
        \centering
        \includegraphics[width=\textwidth]{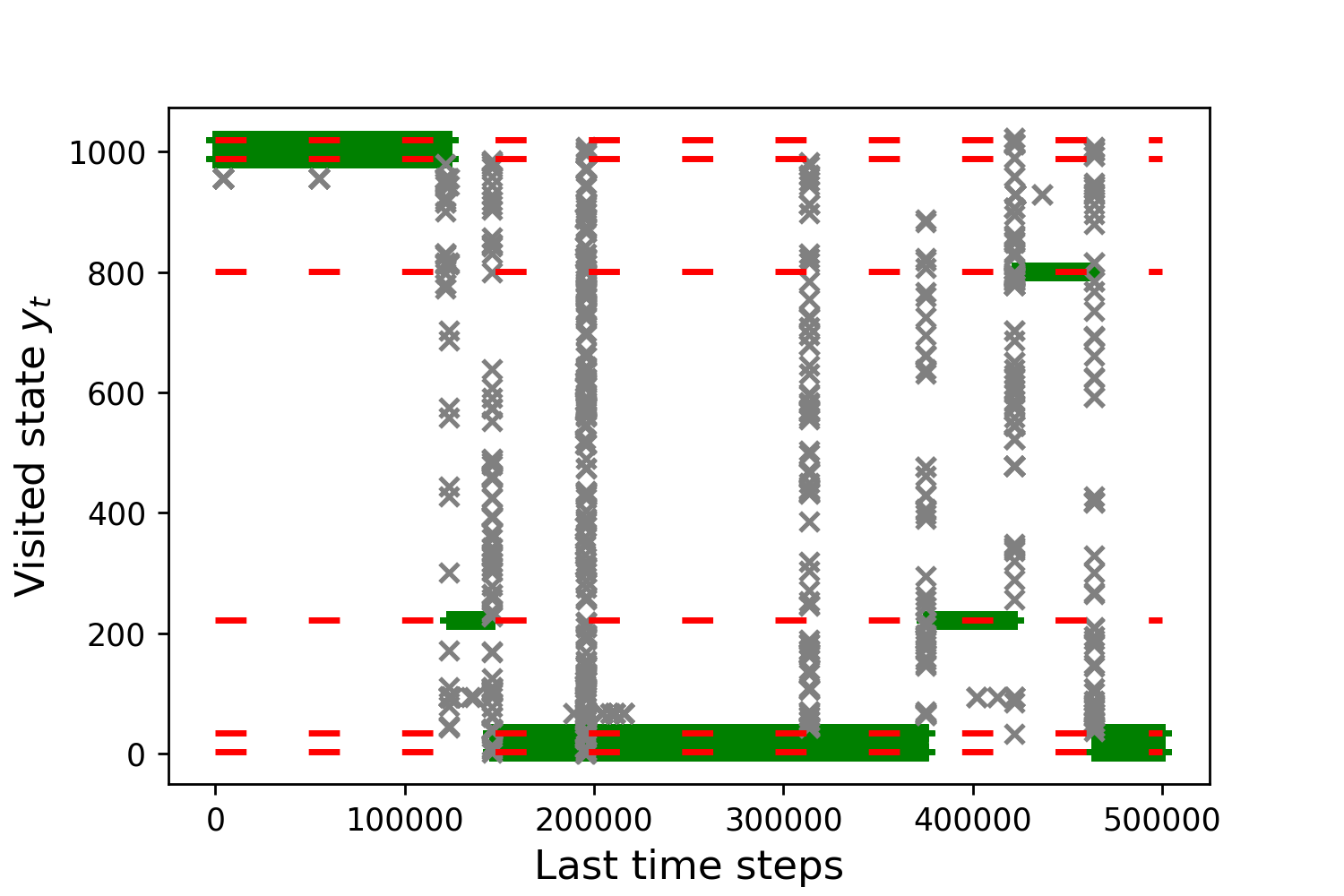}
        \caption[]%
        {{\small Last $500\,000$ visited states of the SEI-SLR algorithm. The dotted red lines represent the states in $E_M$.}}    
    \end{subfigure}
    \hfill
     \centering
    \begin{subfigure}[b]{0.49\textwidth}
        \centering
        \includegraphics[width=\textwidth]{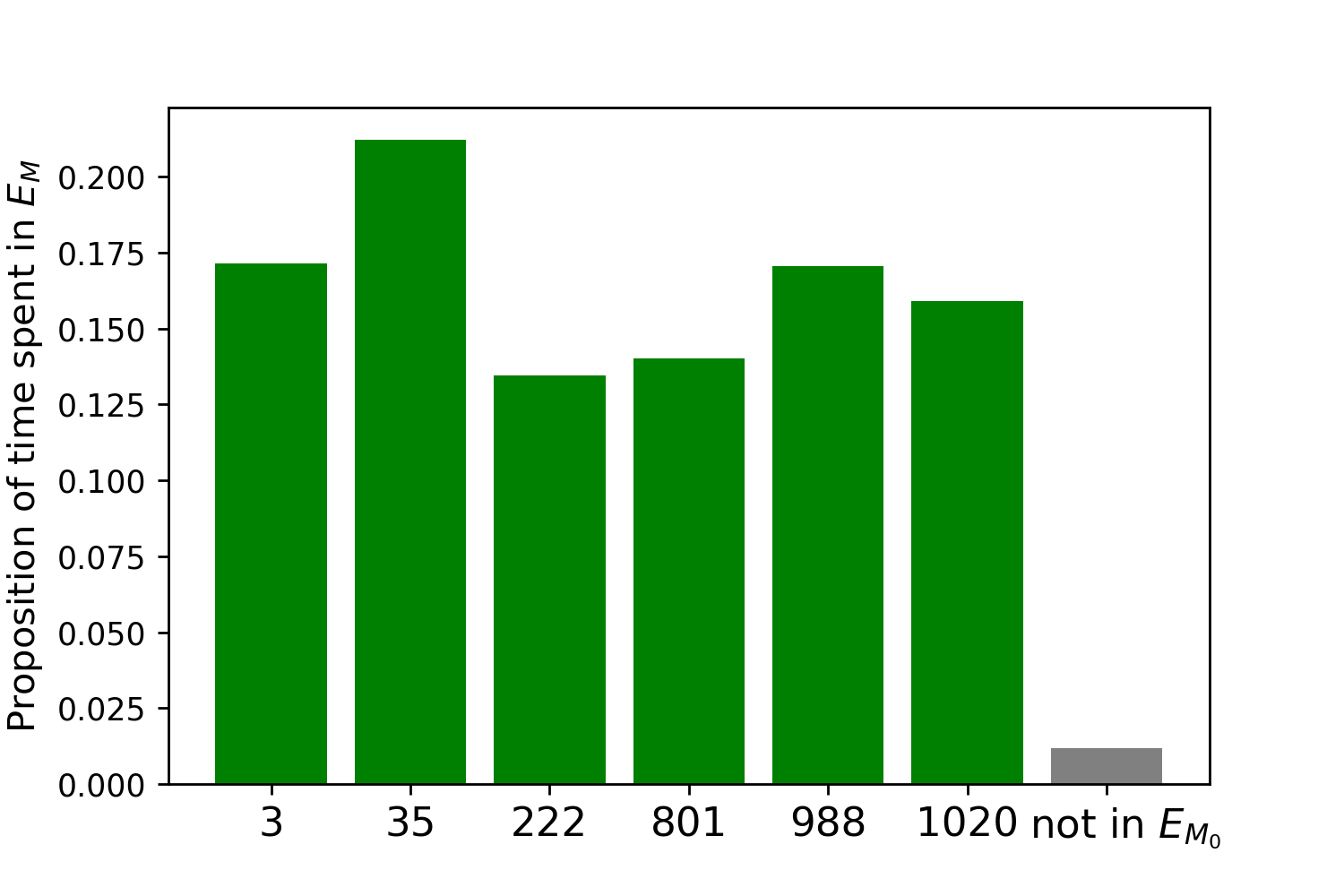}
        \caption[]%
        {{\small Time spent in each state of $E_{M}$ and outside of $E_{M}$.\\}}    
    \end{subfigure}
\caption{Visualization of the time spent in the selection event from the sequence of states provided by the SEI-SLR algorithm.}
\label{fig:time-spent}\end{center}
\vskip -0.2in
\end{figure}

Figure~\ref{fig:time-spent}.(a) shows
 the last~$500,000$ visited states for our simulated annealing path. On the vertical axis, we have the integers encoded by all possible vectors~$Y \in \{0,1\}^N$. The red dashed lines represent the states that belong to the selection event $E_M$. While crosses are showing the visited states on the last~$500,000$ time steps of the path, green crosses are emphasizing the ones that belong to the selection event. On this example, we see that the SEI-SLR algorithm covers properly the selection event without being stuck in one specific state of~$E_{M}$. The simulated annealing path is jumping from one state of~$E_{M}$ to another, ending up with an asymptotic distribution of the visited states that approximates the uniform distribution on~$E_{M}$ (see Figure~\ref{fig:time-spent}.(b)). Let us point that two neighboring states in space~$\{0,1\}^N$ will not necessarily be encoded by close integers.

Figure~\ref{fig:time-spent}.(a) suggests that the vectors encoded by the integers~$3$ and~$35$ are close in the space~$\{0,1\}^N$. Indeed, we see on Figure~\ref{fig:time-spent}.(a) that between indexes $180\,000$ and $350\,000$, our algorithm goes from one of these states to another passing through almost no state that does not belong to the selection event (this can be seen because there are only few gray crosses on this time window of the simulated annealing path). The same remark holds for the two states encoded by the integers~$988$ and~$1020$. However, we observe a large number of visited states that do not belong to~$E_M$ when we perform a transition between any other pair of states belonging to the selection event. We can therefore legitimately think that the selection event separates into four groups of fairly distant states. This is confirmed by Figure~\ref{fig:clusters} which presents the Hamming distances between the different vectors of~$E_M$ and reveals the existence of two clusters.

\begin{figure}[ht!]
\begin{center}
 \centering
    \begin{subfigure}[b]{0.67\textwidth}
        \centering
        \includegraphics[width=\textwidth]{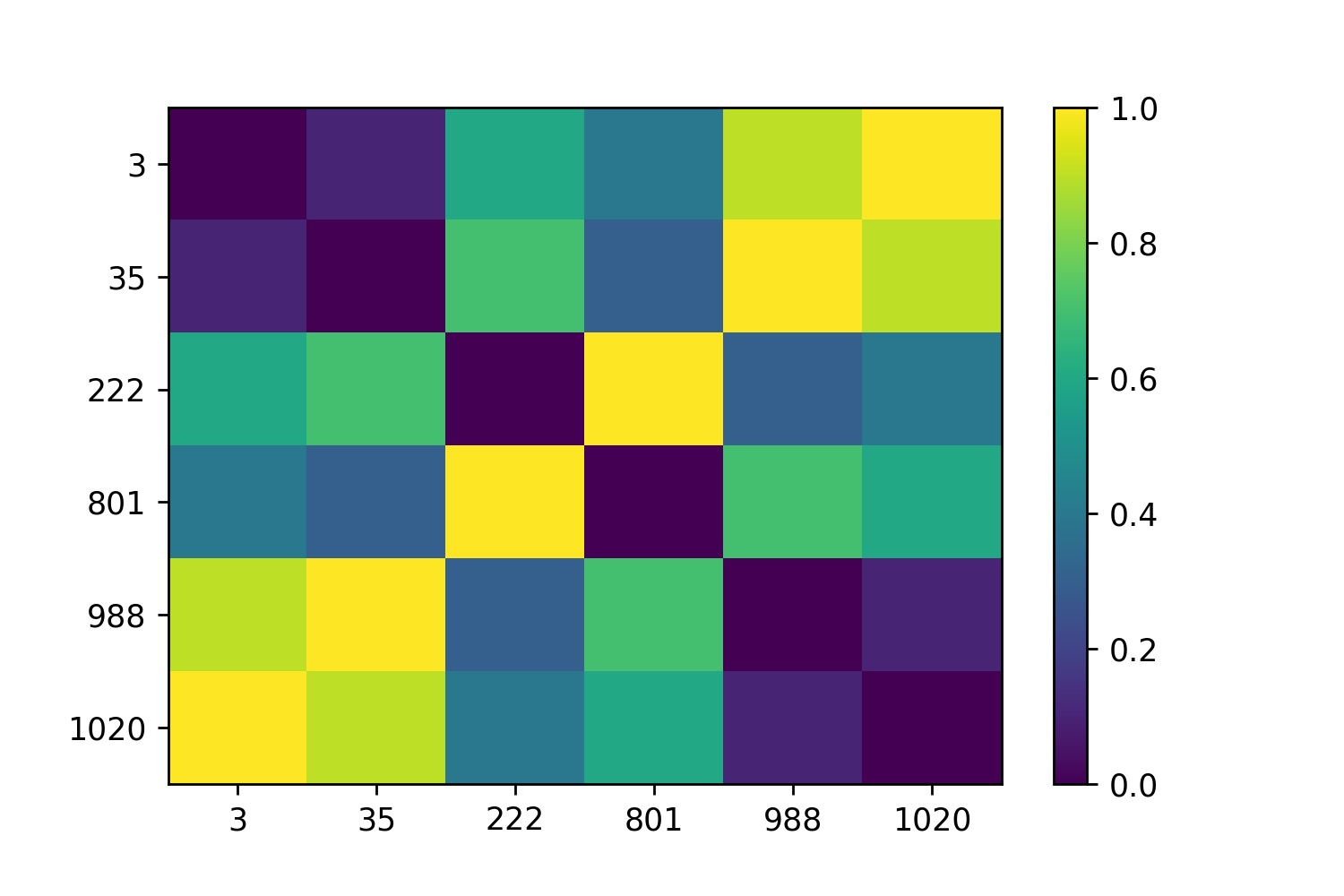}
    \end{subfigure}
\caption{Normalized (by $N$) Hamming distances between the different states of the selection event.}
\label{fig:clusters}
\end{center}
\vskip -0.2in
\end{figure}

With Figure~\ref{fig:other-path}, we show the results obtained from the SEI-SLR algorithm considering a similar experiment but taking $d=15$ (instead of $20$) and $\lambda=2$ (instead of $1.5$), which leads to a larger selection event.

\begin{figure}[ht!]
\vskip 0.2in
\begin{center}
 \centering
    \begin{subfigure}[b]{0.49\textwidth}
        \centering
        \includegraphics[width=\textwidth]{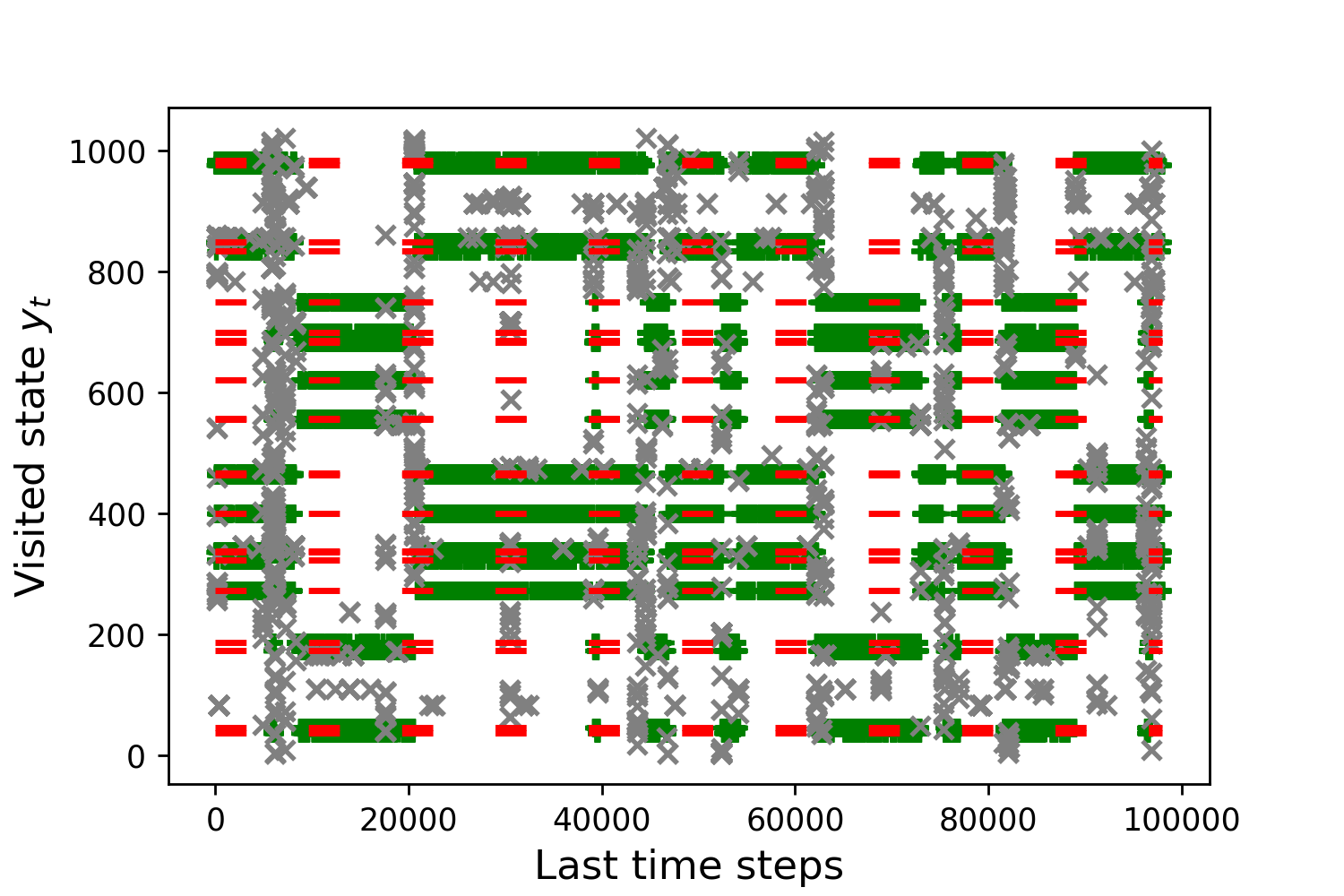}
        \caption[]%
        {{\small Last $100\,000$ visited states of the SEI-SLR algorithm. The dotted red lines represent the states in $E_M$.}}    
    \end{subfigure}
    \hfill
     \centering
    \begin{subfigure}[b]{0.49\textwidth}
        \centering
        \includegraphics[width=\textwidth]{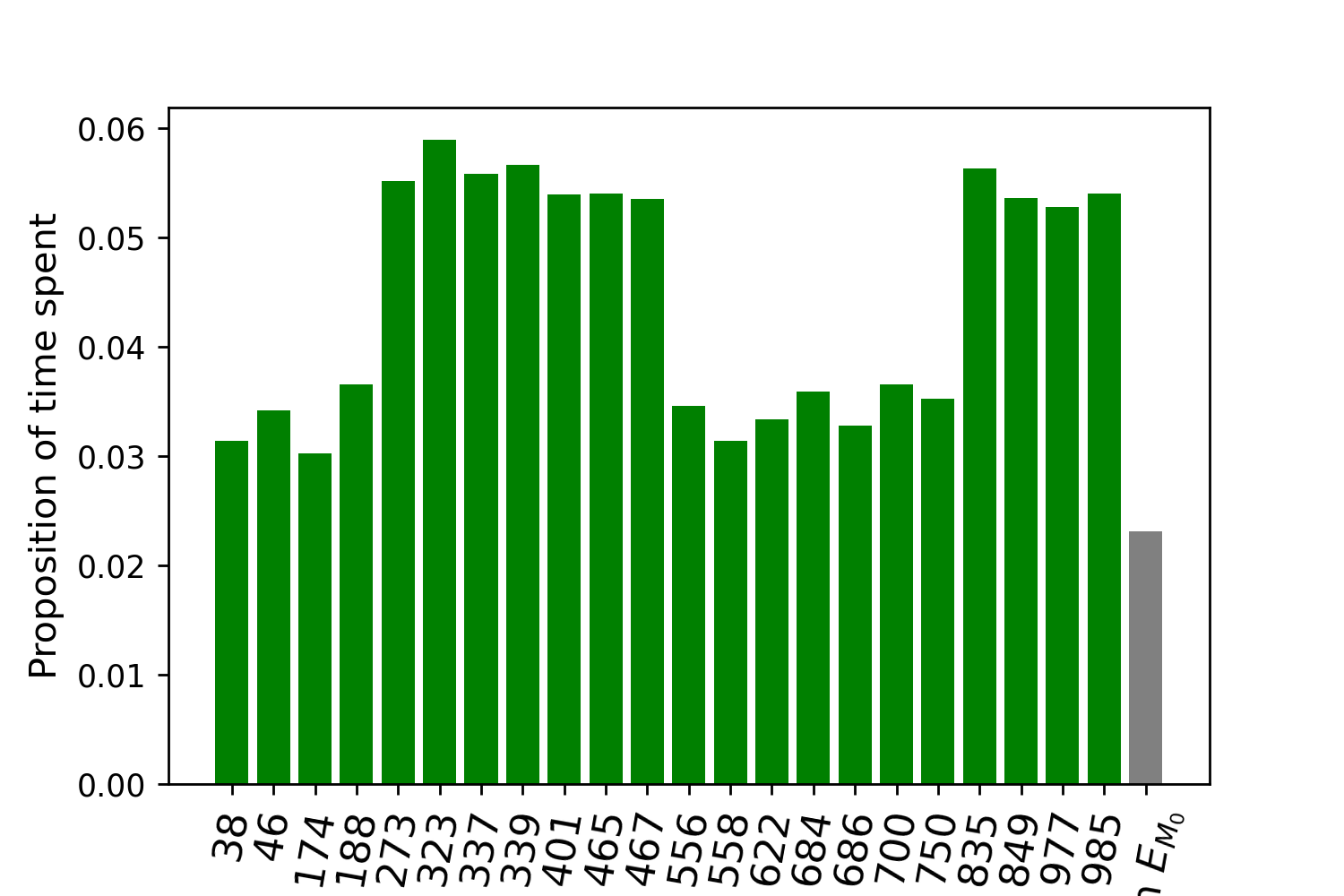}
        \caption[]%
        {{\small Time spent in each state of $E_{M}$ and outside of $E_{M}$.\\}}    
    \end{subfigure}
\caption{Visualization of the time spent in the selection event from the sequence of states provided by the SEI-SLR algorithm.}
\label{fig:other-path}\end{center}
\vskip -0.2in
\end{figure}

 \paragraph{Comparison with the linear model.} The previous theoretical and numerical results show that our approach allows to correctly identify the selection event~$E_M$. Nevertheless, this method suffers from the curse of dimensionality since the random walks in the simulated annealings need to cover a state space of~$2^N$ points. Let us mention that even in the linear model where the selection event~$E_M$ has the nice property to be a union of polyhedra, the method from \cite{sun16} to provide inference on a linear transformation of $Y$ can also cope with some computational issues. Indeed, the construction of confidence intervals conditionally on the event~$E_M$ requires the computation of~$2^s$ intervals (while the computation of each of them requires at least~$N^3$ operations) where~$s=|M|$ (see~\cite[Section 6]{sun16}). Roughly speaking, both our approach in the logistic model and the one from~\cite[Section 6]{sun16} in the linear model are limited in large dimensions. While in the linear case, computational efficiency of the known methods mainly depends on~$s=|M|$, the extra cost arising from the non-linearity of the logistic model is their dependence on~$N$.
 
 Let us finally mention that in the Gaussian linear model, one can bypass the limitation of computing the~$2^s$ intervals for each possible vector of dual signs on the equicorrelation set $M$ by conditioning further on the observed vector of signs $\widehat S_M(Y)=\mathrm{sign}(\widehat \theta^{\lambda})_M$. Stated otherwise, instead of conditioning on $E_M$, we condition on $E_{M}^{S_M}$ where $S_M=\widehat S_M(Y)$. This method reduces the computational burden but it will lead in general to less powerful inference procedures due to some information loss which can be quantified through the so-called 
leftover Fisher information. In Section~\ref{sec:conditional-signs}, we discuss with further details PSI when we condition additionally on the observed vector of signs.

 \subsection{Hypothesis Testing}
 
 \label{sec:hypo-testing}

 In this section, we propose to analyze the level and the power of the SIGLE procedure 
 considering the following simple hypothesis testing problem
 \[\mathds H_0: \, \{\theta^*=\theta^*_0\}, \qquad \mathds H_1: \, \{\theta^*\neq\theta^*_0\} .\]We compare the SIGLE method with the results obtained from a weak learner and from the heuristic method proposed by \cite{taylorGLM}.  
 
 \paragraph{Description of the settings of our experiments.}

We consider a design matrix $\mathbf X \in \mathds R^{N\times d}$ where the entries are i.i.d. and sampled from a standard normal distribution. We consider two different experiments (cf. Table~\ref{table:settings}).
For the Setting 1 under the null, the set of active variables $M$ is of size $4$. We sample states from $E_M$ using the rejection sampling method and approximately $8\%$ of the states sampled from $\mathds P_{\theta^*_0}$ fall in the selection event with this algorithm. For the Setting 2, we use the SEI-SLR algorithm to sample states in $E_M$. 

\begin{table}[h!]
\centering
\setlength{\tabcolsep}{5pt}
\begin{tabular}{c|cccccc}
   & $N$ & $d$ & $\mathbf X$ & $\lambda$ & $\theta^*_0 $ & Sampling method \\ \hline
Setting 1 & $100$ & $10$ & $\mathbf X_{i,j} \sim \mathcal N(0,1)$ & $5$  & $[0,\dots,0]$ & Rejection sampling  \\
Setting 2 & $20$ &  $15$ & $\mathbf X_{i,j} \sim \mathcal N(0,1)$ & $3$  & $[0,\dots,0]$ & SEI-SLR
\end{tabular}
\caption{Description of the experiments.}
\label{table:settings}
\end{table}

\subsubsection{Description of the benchmark methods}
 \label{sec:benchmark}
 
\paragraph{A weak learner.} Our weak learner is a two-sided test based on the statistic $\sum_{i=1}^n |\overline \pi_i^{\theta_0} - y_i|$ where $\overline \pi^{\theta_0}$ is the expectation of the vector of observations under the null conditional on the selection event (cf. Eq.\eqref{eq:gradbar}). Let us highlight that $\overline {\pi}^{\theta^*_0}$ is estimated by~$\widetilde \pi^{\theta^*_0}$ where
\begin{itemize}
\item $\widetilde \pi^{\theta^*_0}=\sum_{t=1}^T Y^{(t)}$ if the sequence $(Y^{(t)})_{t\in [T]}$ is generated from the rejection sampling method,
\item $\widetilde \pi^{\theta^*_0}=\frac{\sum_{t=1}^T Y^{(t)} \mathds P_{\theta^*_0}(Y^{(t)})}{\sum_{t=1}^T  \mathds P_{\theta^*_0}(Y^{(t)})}$ if we use the SEI-SLR algorithm to generate the states $(Y^{(t)})_{t\in [T]}$.
\end{itemize}
The method is calibrated empirically using the sequence~$(Y^{(t)})_{t\in [T]}$.
 
\paragraph{The PSI method from \cite{taylorGLM}.} The PSI method in the logistic model proposed by~\cite{taylorGLM} is described in details in Section~\ref{sec:debiasing}. Based on heuristic justifications, this approach has the advantage to provide an hypothesis testing method for any linear transformation of the debiased lasso solution $\underline \theta$ (i.e. of the form $\eta^{\top} \underline \theta$) that does not require a cumbersome sampling step. We propose to compare the SIGLE methods with the PSI procedure from \cite{taylorGLM} by considering different approaches:
\begin{itemize}
\item[{\bf TT-1}] We use the p-value obtained from a two-sided test based on the statistic~$\underline \theta_1$.
\item[{\bf TT-Bonferroni}] We use a Bonferroni method from the p-values computed from the set of two-sided composite tests with null hypotheses $\mathds H_0: \; "\theta^*_j = \big[\theta^*_0\big]_j"$ for $j \in [s]$ where $s=|M|$.  
\end{itemize}

\subsubsection{Calibration}
\label{sec:calibration-SIGLE}

\paragraph{SIGLE procedures.} 

To compute the SIGLE statistics, we need to estimate $\overline G_N(\pi^* _0)$ (and $\overline \theta(\theta^*_0) $ in the selected model). Since the conditional distribution~$\overline {\mathds P}_{\pi^*_0}$ (resp. $\overline {\mathds P}_{\theta^*_0}$) is not known, we sample states from these distributions to estimate these quantities. We use these states sampled in the selection event~$E_M$ in order to calibrate empirically the SIGLE procedures. In the literature, one often says that we {\it calibrate by sampling under the null}.

\paragraph{PSI methods from \cite{taylorGLM}.}
In \cite{taylorGLM}, the authors justify their approach with asymptotic considerations. Figure~\ref{fig:cali-null}.(a) shows that for a large value for $N$, the methods TT-1 and TT-Bonferroni are correctly calibrated since the CDF of the p-values are uniform under the null. On the contrary, for small value of $N$, the calibration of these procedures may be lost as shown with Figure~\ref{fig:cali-null}.(b).

\paragraph{The weak learner.} By construction, the p-values of the weak learner are stochastically larger than uniform under the null. The CDF of p-values are uniformly distributed in the Setting 1 with Figure~\ref{fig:cali-null}.(a). Note that in the Setting 2, the weak learner is irrelevant since the selection event~$E_M$ is such that $\overline \pi^{\theta^*_0}= \frac12 \mathbf 1_N$. This means that the test-statistic of the weak learner is constant.

\begin{figure}[ht!]
\vskip 0.2in
\begin{center}
 \centering
    \begin{subfigure}[b]{0.49\textwidth}
        \centering
        \includegraphics[width=\textwidth]{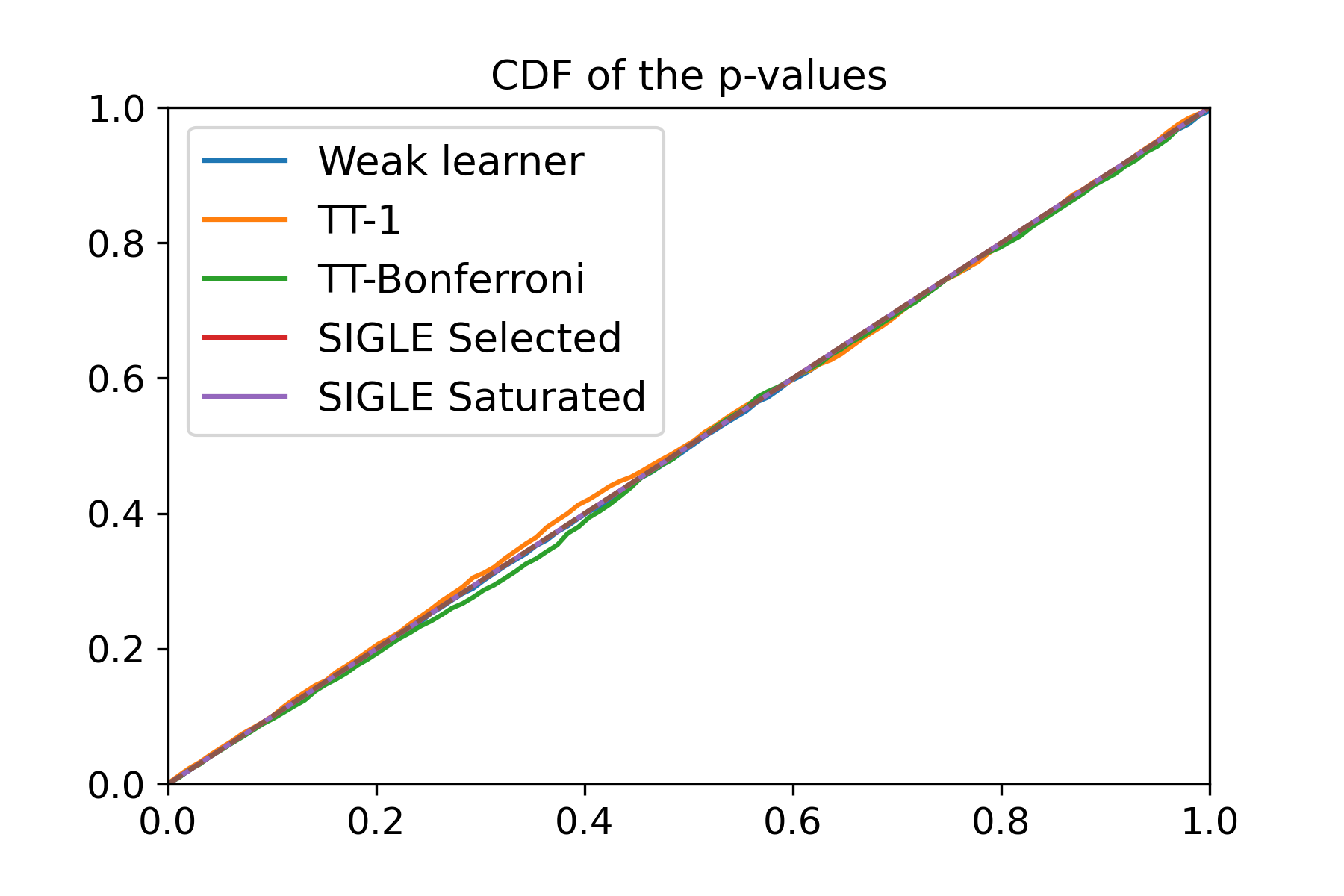}
        \caption[]%
        {{Setting 1.}}    
    \end{subfigure}
    \hfill
     \centering
    \begin{subfigure}[b]{0.49\textwidth}
        \centering
        \includegraphics[width=\textwidth]{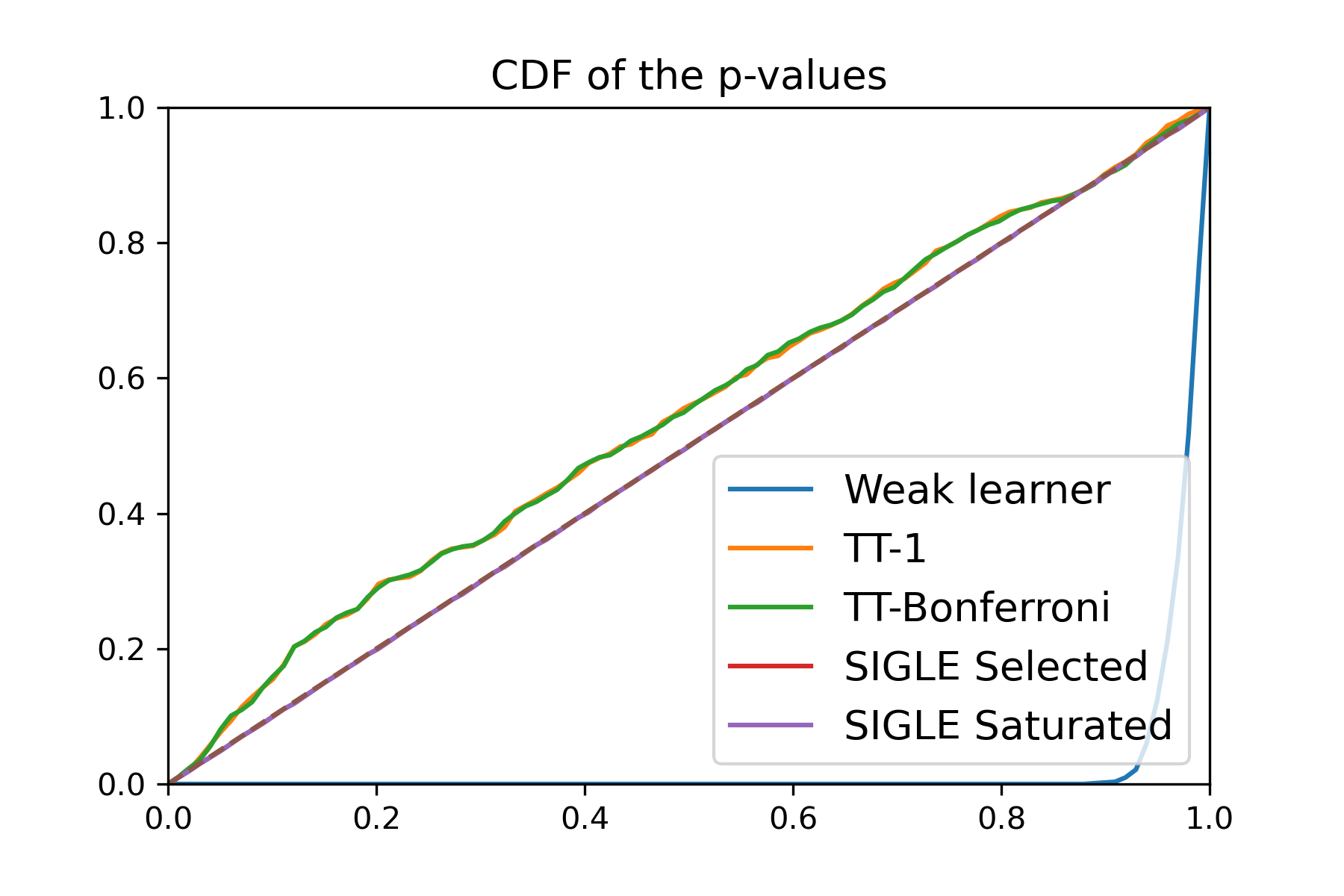}
        \caption[]%
        {{\small Setting 2.}}    
    \end{subfigure}
\caption{CDF of the p-values of the different testing procedures under the global null for the Settings given in Table~\ref{table:settings}. We calibrate empirically the SIGLE methods.}
\label{fig:cali-null}\end{center}
\vskip -0.2in
\end{figure}

\subsubsection{Power}

We consider two different types of alternatives:
\begin{itemize}
\item \underline{{\it Localized alternatives}.}\\
A {\it localized signal} is of the form $\vartheta^* = [\nu \, ,\, 0 \, ,\,,\, \dots \, , \, 0]$ for some $\nu >0$.
\item \underline{{\it Disseminated alternatives}.}\\
A {\it disseminated signal} is of the form $\vartheta^* = \nu \mathbf 1_d$ for some $\nu >0$. 
\end{itemize}
As explained in the previous sections, we calibrate the SIGLE methods empirically. Figure~\ref{fig:cali-null}.(a) shows that the p-values for the SIGLE methods are distributed uniformly under the null. Figure~\ref{fig:cali-null}.(a) also shows that the benchmark methods are correctly calibrated.\\ Figures~\ref{fig:power-localized-setting1} and~\ref{fig:power-disseminated-setting1} show that SIGLE is more powerful compared to the benchmark methods for the localized or disseminated alternative. In Figure~\ref{fig:power-disseminated-setting1}.(b), the power of the SIGLE methods is so high that we barely identify the curve of the CDF in the top-left corner. Figure~\ref{fig:power-nu-setting1} gives a complete visualization of the power of the different testing methods when tests have level $5\%$. We see that the methods of this paper are always improving upon the benchmark methods. The superiority of the SIGLE methods regarding power becomes even more significant when we consider disseminated alternatives. This is not surprising since the methods of this paper are intrinsically designed to tackle simple hypothesis testing problem.

Another interesting remark is that the procedure TT-1 is more powerful than the procedure TT-Bonferroni when considering localized alternatives as showed by Figure~\ref{fig:power-localized-setting1}. Again this result is not surprising: the TT-Bonferroni loses power by testing each coordinate of the parameter vector while TT-1 is focused on a single coordinate which is better suited to identify a localized signal. On the contrary, the TT-Bonferroni is more powerful when considering disseminated alternatives as observed with Figure~\ref{fig:power-disseminated-setting1} and Figure~\ref{fig:power-nu-setting1}.(b).

We conduct similar experiments in the Setting 2 given in Table~\ref{table:settings}. Figure~\ref{fig:cdf-setting2} shows that the TT-1 and TT-Bonferroni are still less powerful than the SIGLE methods. Moreover, Figure~\ref{fig:cdf-setting2}.(b) illustrates that in the high dimensional setting (i.e. when $d$ is larger than $N$), the size of the selection event can be small which leads to a non-smooth staircase function for the CDF of p-values. In the example of the Figure~\ref{fig:cdf-setting2}.(b), the selection event contains only~$22$ states.

\begin{figure}[ht!]
\vskip 0.2in
\begin{center}
 \centering
    \begin{subfigure}[b]{0.49\textwidth}
        \centering
        \includegraphics[width=\textwidth]{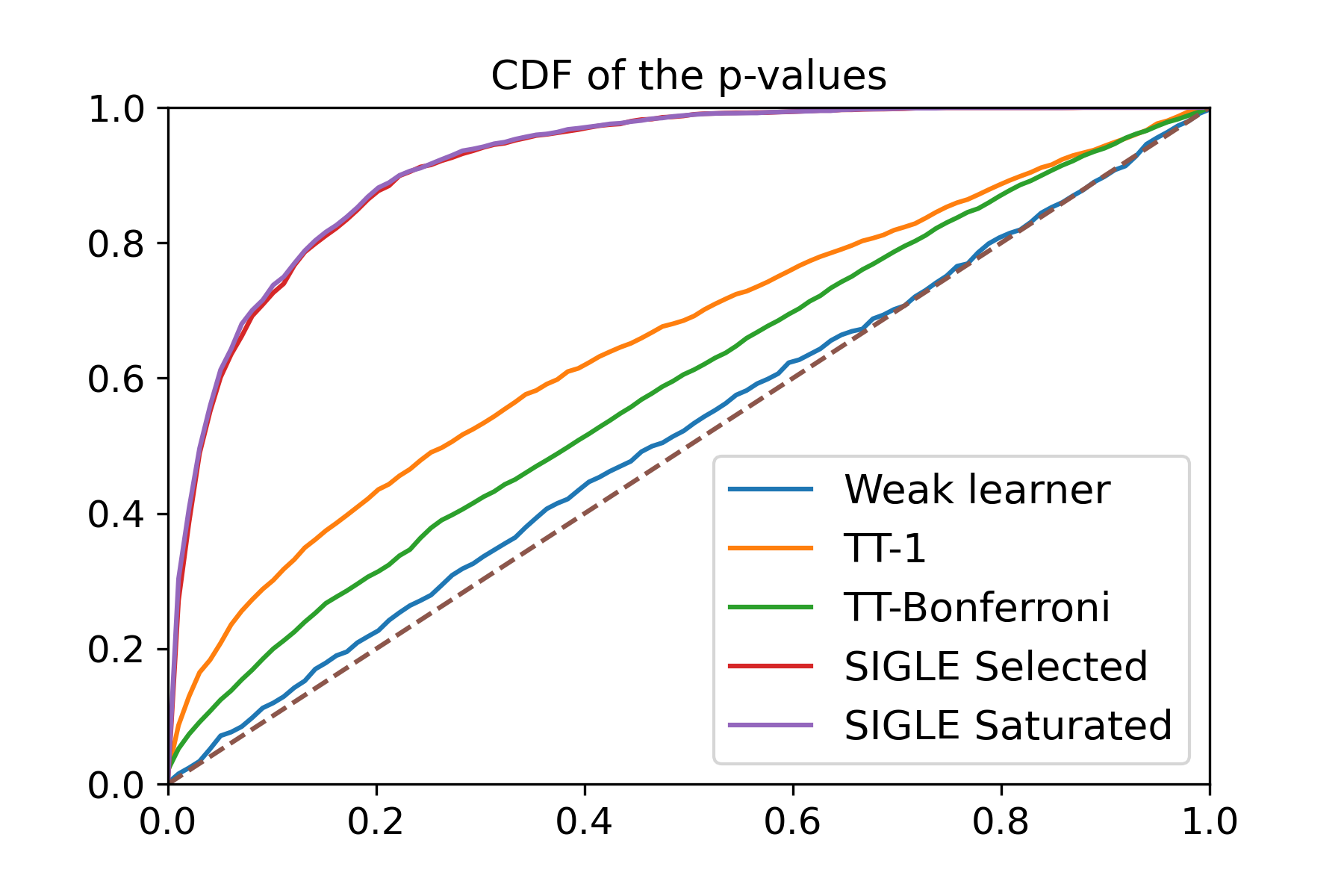}
        \caption[]%
        {{\small CDF of p-values for the alternative\\$\vartheta^*=[0.4 \, , \, 0 \, , \, \dots \, , \, 0]$.}}    
    \end{subfigure}
    \hfill
     \centering
    \begin{subfigure}[b]{0.49\textwidth}
        \centering
        \includegraphics[width=\textwidth]{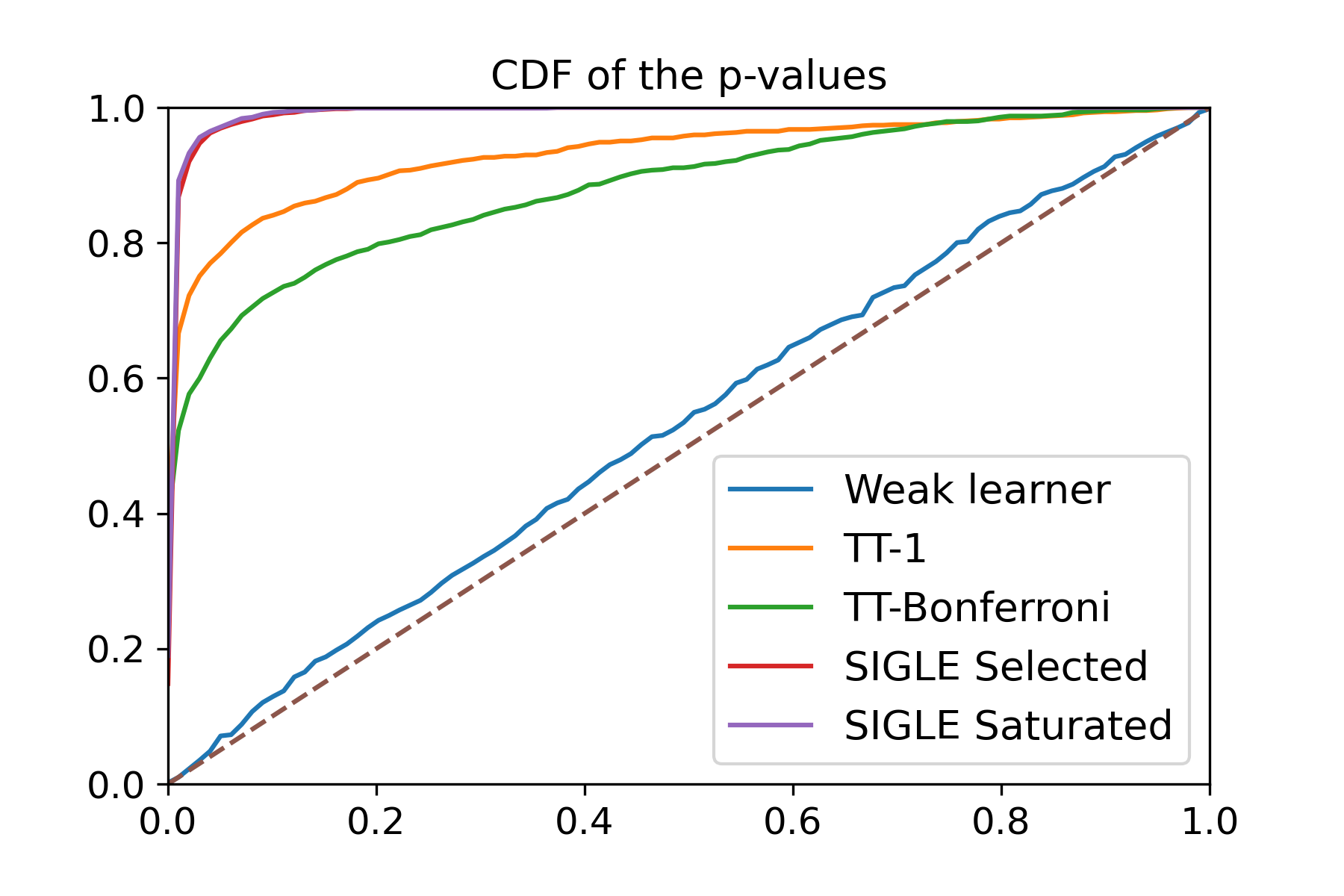}
        \caption[]%
        {{\small CDF of p-values for the alternative\\$\vartheta^*=[0.9 \, , \, 0 \, , \, \dots \, , \, 0]$.}}    
    \end{subfigure}
\caption{CDF of the p-values for the SIGLE procedures and the benchmark methods using the {\bf Setting 1} (cf. Table~\ref{table:settings}) for {\bf \color{red} localized alternatives}.}
\label{fig:power-localized-setting1}\end{center}
\vskip -0.2in
\end{figure}

\begin{figure}[ht!]
\vskip 0.2in
\begin{center}
 \centering
    \begin{subfigure}[b]{0.49\textwidth}
        \centering
        \includegraphics[width=\textwidth]{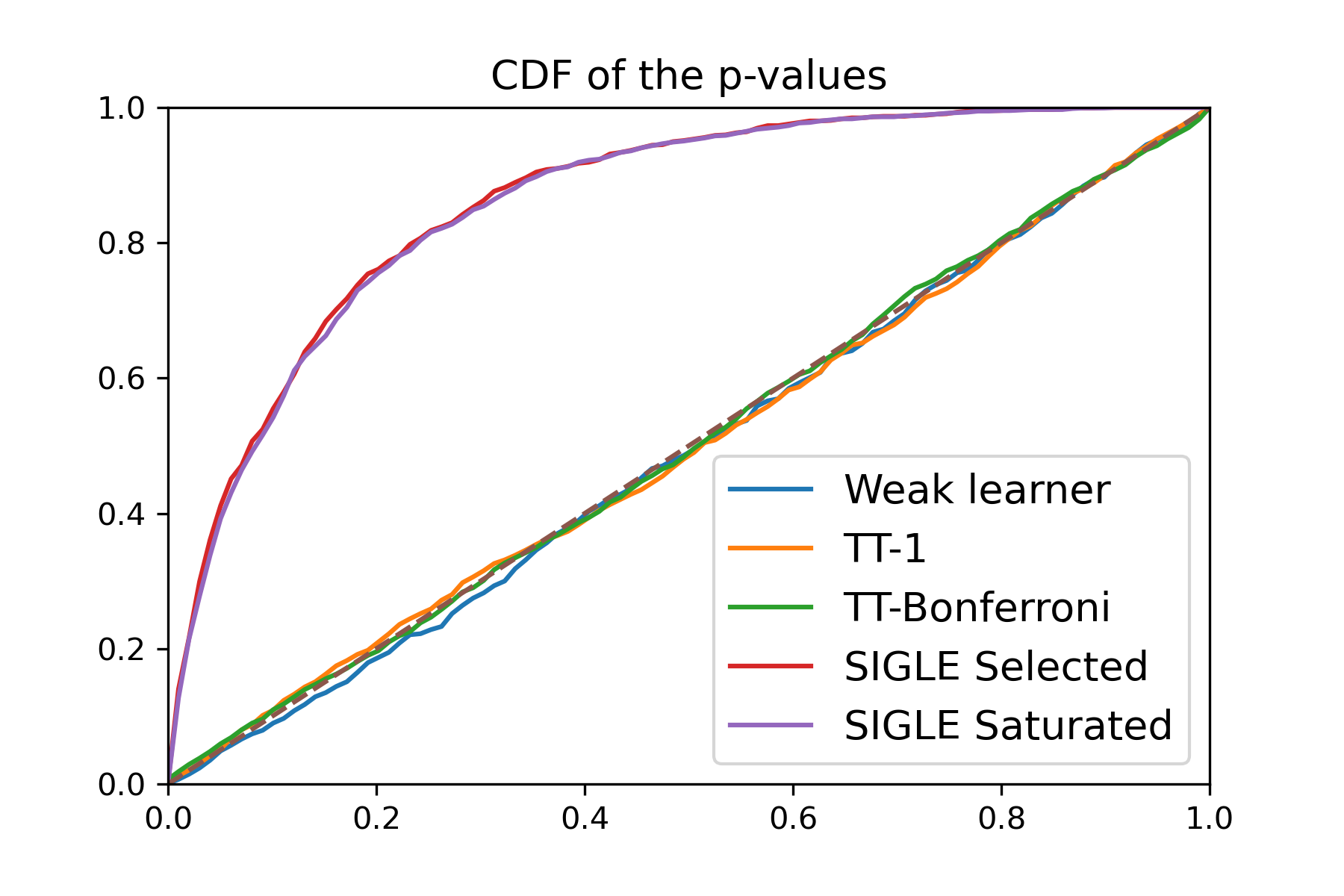}
        \caption[]%
        {{\small CDF of p-values for the alternative\\$\vartheta^*=0.04 \times\mathbf 1_{d}$.}}    
    \end{subfigure}
    \hfill
     \centering
    \begin{subfigure}[b]{0.49\textwidth}
        \centering
        \includegraphics[width=\textwidth]{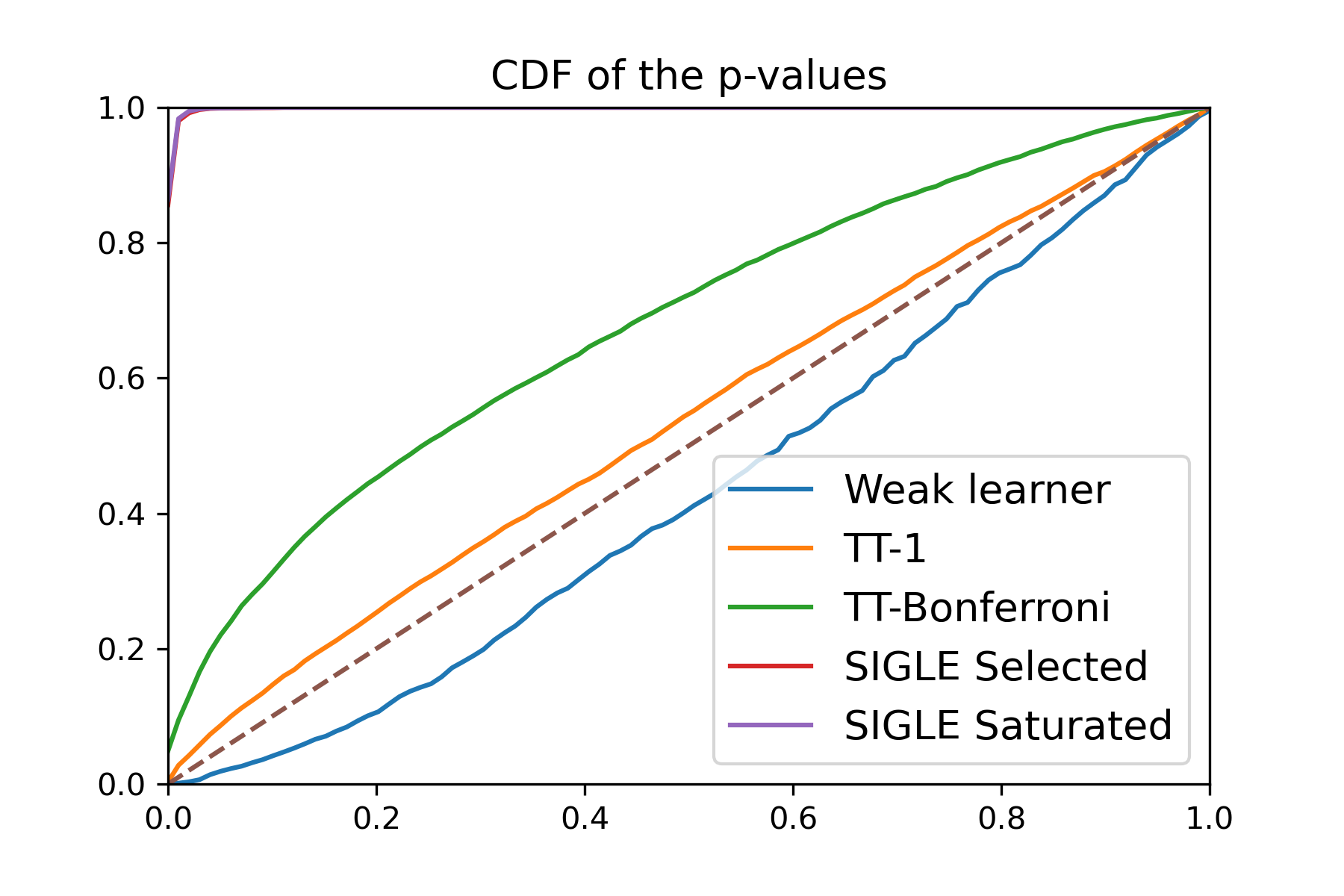}
        \caption[]%
        {{\small CDF of p-values for the alternative\\$\vartheta^*=0.3 \times\mathbf 1_{d}$.}}    
    \end{subfigure}
\caption{CDF of the p-values for the SIGLE procedures and the benchmark methods using the {\bf Setting 1} (cf. Table~\ref{table:settings}) for {\bf \color{red}disseminated alternatives}.}
\label{fig:power-disseminated-setting1}\end{center}
\vskip -0.2in
\end{figure}

\begin{figure}[ht!]
\vskip 0.2in
\begin{center}
 \centering
\begin{subfigure}[b]{0.49\textwidth}
\centering
\includegraphics[width=\textwidth]{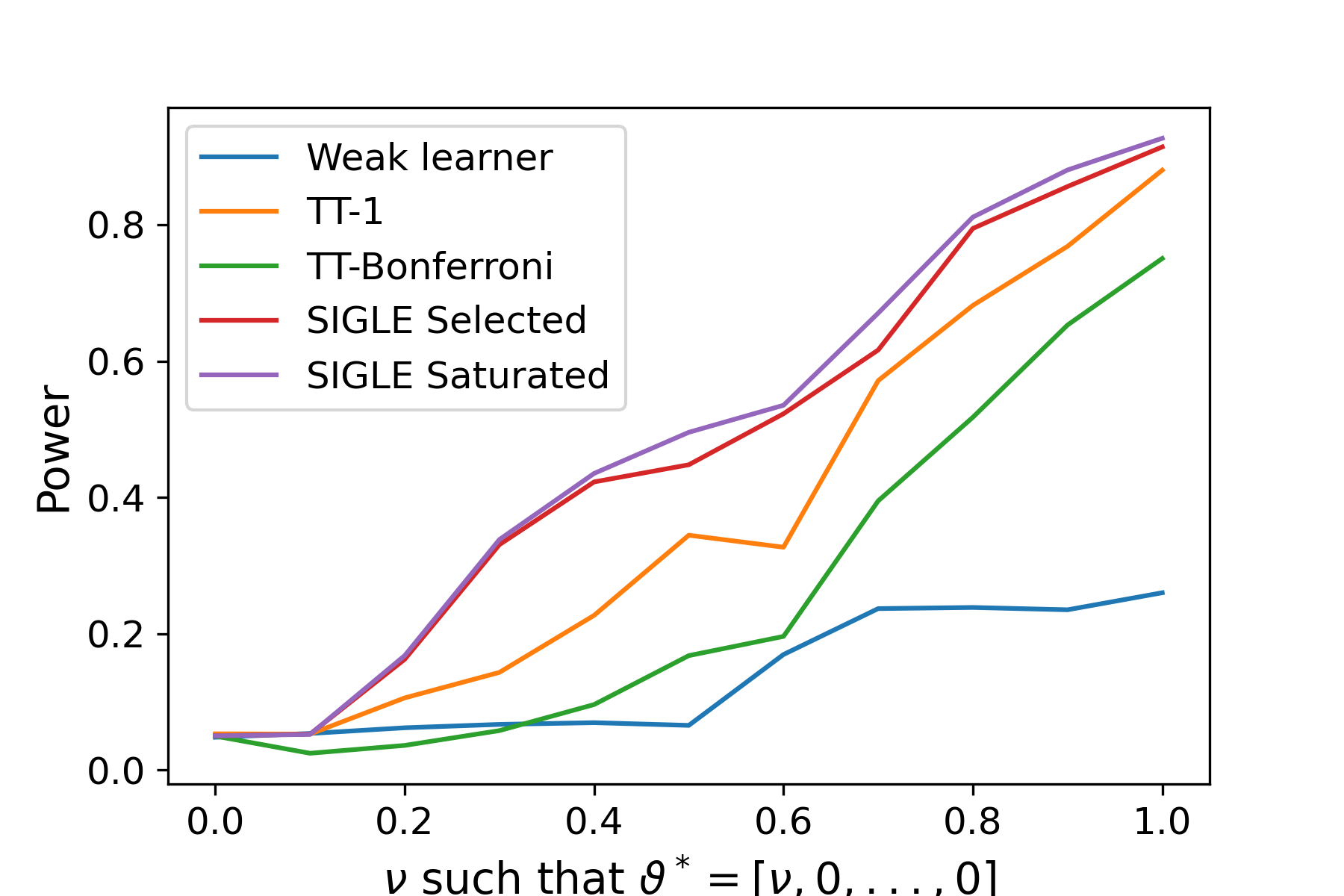}
\caption[]%
{{\small \color{red} \bf Localized alternatives.}} 
\end{subfigure}
    \hfill
     \centering
    \begin{subfigure}[b]{0.49\textwidth}
        \centering
        \includegraphics[width=\textwidth]{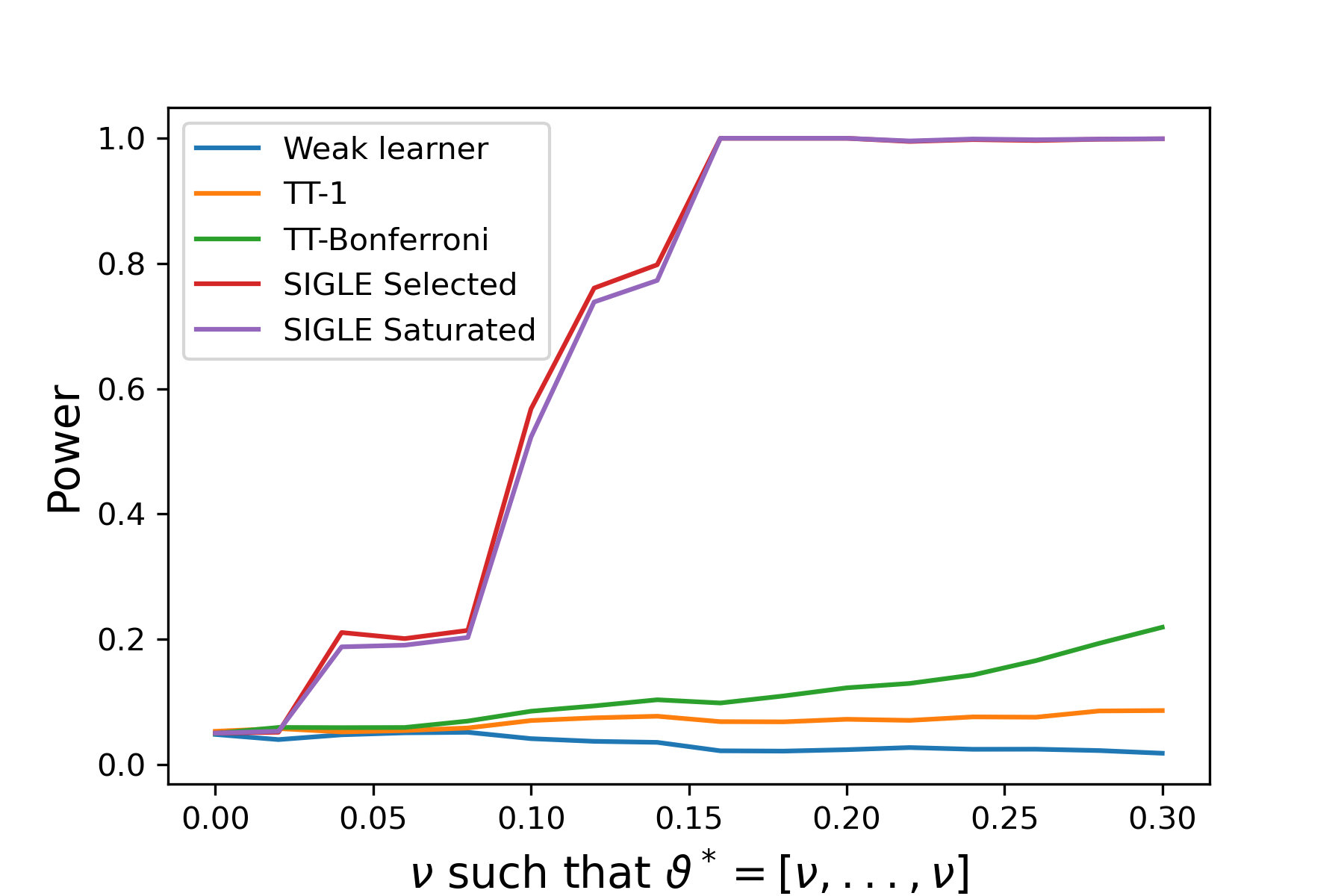}
        \caption[]%
     {{\small \color{red} \bf Disseminated alternatives.}}    
    \end{subfigure}
\caption{Comparison of the power of the SIGLE procedures and the benchmark methods using the {\bf Setting 1} (cf. Table~\ref{table:settings}) for tests with level $0.05$.}
\label{fig:power-nu-setting1}\end{center}
\vskip -0.2in
\end{figure}

\begin{figure}[ht!]
\vskip 0.2in
\begin{center}
 \centering
    \begin{subfigure}[b]{0.49\textwidth}
        \centering
        \includegraphics[width=\textwidth]{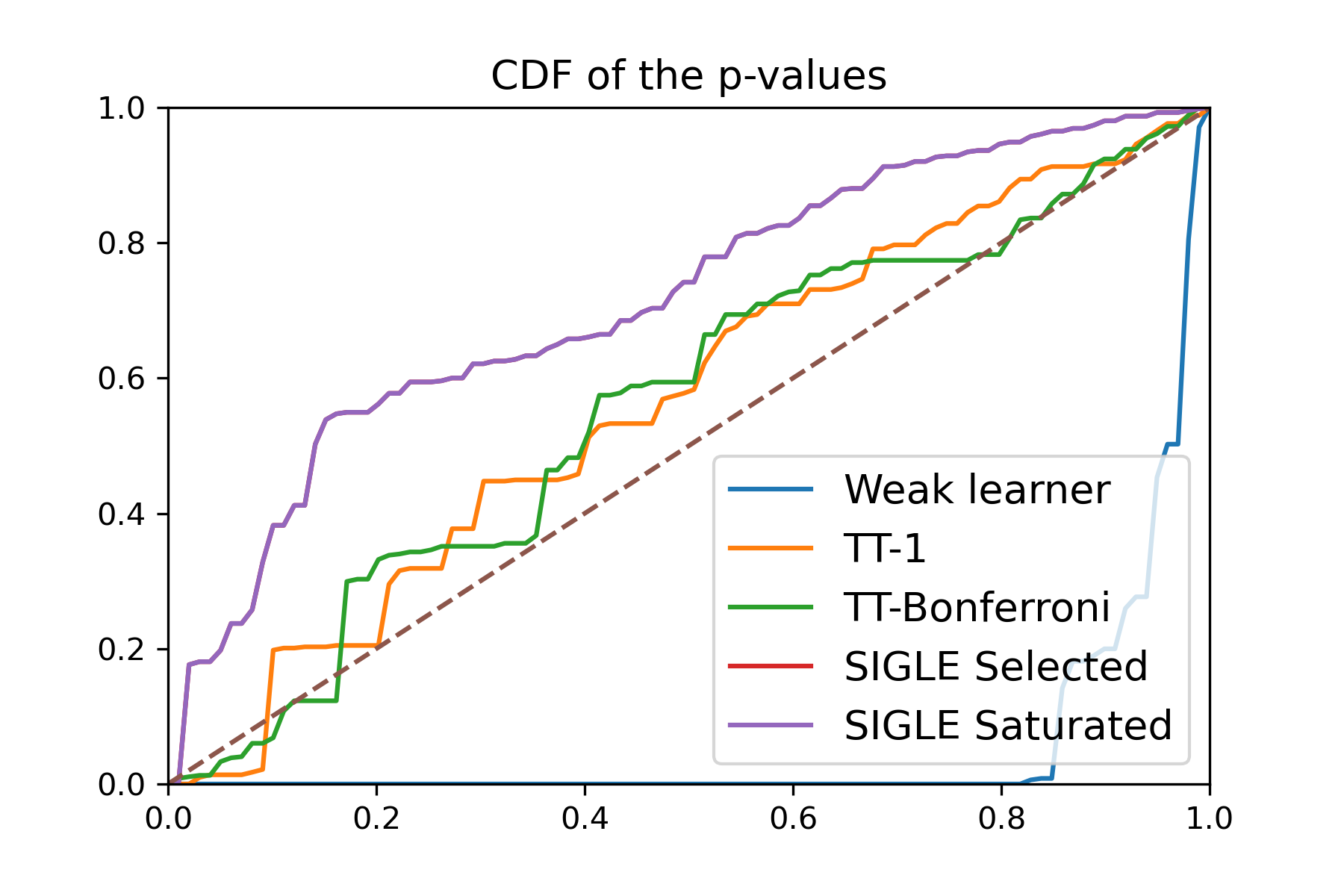}
        \caption[]%
        {{\small CDF of p-values for the {\bf \color{red} localized alternative} $\vartheta^*=[3 \, , \, 0 \, , \, \dots \, , \, 0]$.}}    
    \end{subfigure}
    \hfill
     \centering
    \begin{subfigure}[b]{0.49\textwidth}
        \centering
        \includegraphics[width=\textwidth]{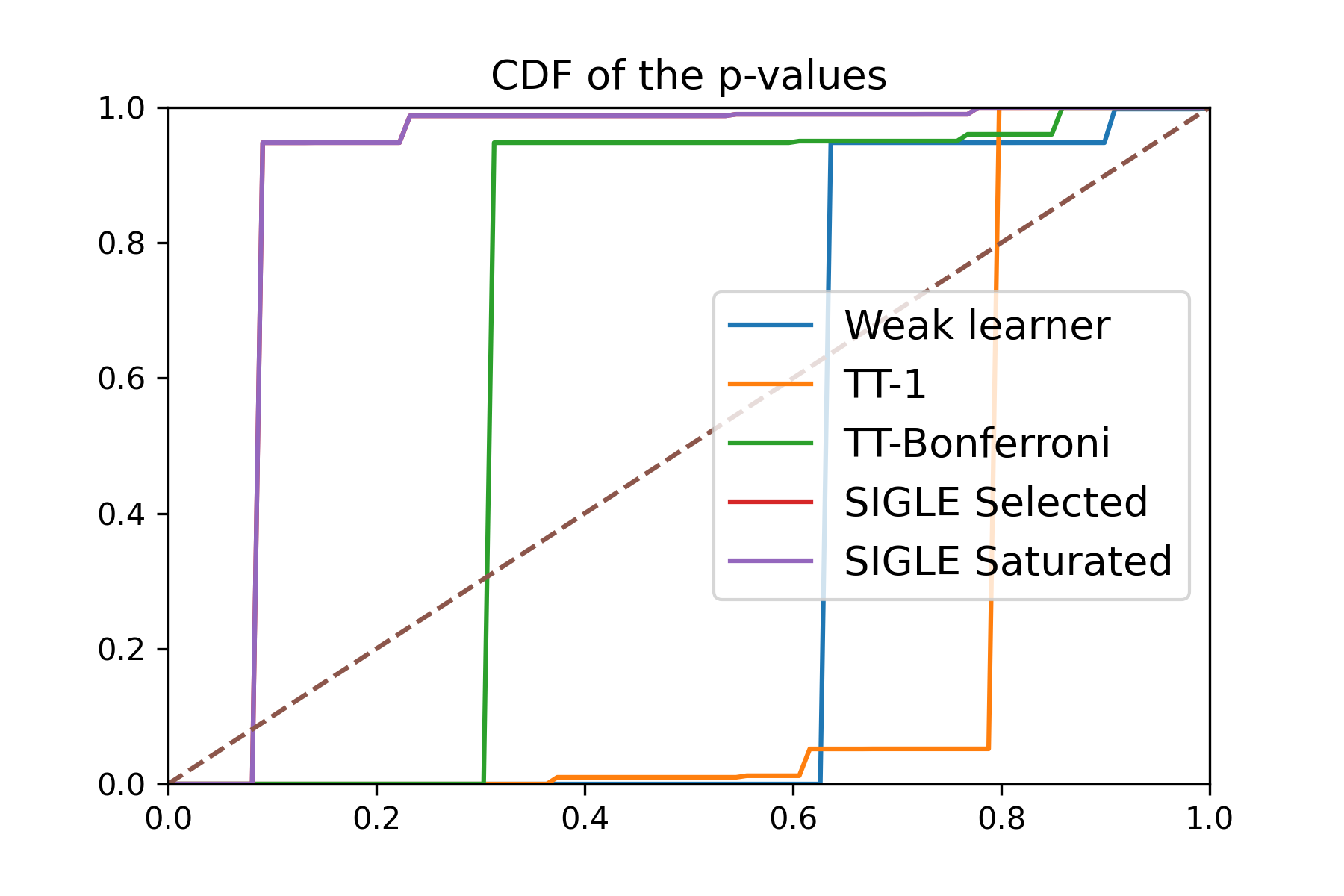}
        \caption[]%
        {{\small CDF of p-values for the {\bf \color{red} disseminated alternative}  $\vartheta^*=1.5 \times\mathbf 1_{d}$.}}    
    \end{subfigure}
\caption{CDF of the p-values for the SIGLE procedures and the benchmark methods using the {\bf Setting 2} (cf. Table~\ref{table:settings}).}
\label{fig:cdf-setting2}\end{center}
\vskip -0.2in
\end{figure}

\subsubsection{Computational time and implementation details}

{\bf Implementation of the SIGLE procedures.}
\begin{itemize}
\item \underline{SIGLE in the selected model.}\\
In the selected model, the SIGLE testing method requires to compute $\overline \theta (\theta^*_0)=\Psi(\mathbf X_M^{\top}\overline \pi^{\theta^*_0})$. Since we do not have a closed-form expression for $\Psi=\Xi^{-1}$, we first tried to learn this function by using a feed-forward neural network. We were not able to reach sufficient accuracy with this method and we proposed a gradient descent based approach to approximate $\overline \theta (\theta^*_0)$ from the estimate~$\widetilde \pi^{\theta^*_0}$ of~$\overline \pi^{\theta^*_0}$ (cf. Section~\ref{sec:benchmark}). This algorithm is fully described in Section~\ref{sec:sigle-graddescent}. Making use of a proper warm start, we found this method highly robust and accurate to compute~$\overline \theta (\theta^*_0)$.\\
\item \underline{SEI-SLR algorithm and speed of convergence.}\\
In the previous sections, we proved the correctness of the SEI-SLR algorithm: the states visited by the algorithm are asymptotically distributed according to the uniform measure on $E_M$. This is an asymptotic result and MCMC methods are known to converge slowly. In order to increase the speed of convergence of the SEI-SLR algorithm, we found very useful in practice to introduce a {\it repulsing force} in the markovian transition kernel. Denoting~$Y^{(t)}$ the visited state at time~$t$, we sample a candidate~$Y^c \sim P(Y^{(t)},\cdot)$ where we recall that~$P(Y^{(t)},\cdot)$ is the uniform distribution over the neighbours of~$Y^{(t)}$, i.e. the states of the hypercube~$\{0,1\}^N$ that differ from~$Y^{(t)}$ in exactly one coordinate. Instead of accepting the transition towards the candidate state~$Y^c$ if
\[1-\exp( - \frac{\Delta \mathcal E}{\mathrm T_t})\leq U_t,\]
where~$U_t \sim \mathcal U([0,1])$ and $\Delta \mathcal E:= \mathcal E(Y^c) - \mathcal E(Y^{(t)})$, we decide to set~$Y^{(t+1)}\leftarrow Y^c$ if and only if
\[\min\left( \, 1-\exp( - \frac{\Delta \mathcal E}{\mathrm T_t}) \, , \, 1-\mathcal E(Y^{(t)}) \, \right)\leq U_t.\]
The extra term $ 1-\mathcal E(Y^{(t)})$ in the acceptance rate acts like a {\it repulsion force}. If the current state $Y^{(t)}$ does not belong to the selection event, the energy $\mathcal E(Y^{(t)})$ is strictly positive. Nevertheless, if the neighbours of $Y^{(t)}$ have an energy which is larger than $\mathcal E(Y^{(t)})$, the algorithm may get stuck at $Y^{(t)}$ for some time before exploring other regions of the hypercube. Thanks to the extra term $ 1-\mathcal E(Y^{(t)})$, the acceptance rate is boosted whenever the current state is known to be outside of the selection event.
\end{itemize}

\subsubsection{Visualization of the SIGLE procedure in the selected model}
\label{ref:experiments}

Figure~\ref{fig:ellipse} provides a visualization of the SIGLE procedure in the selected model. We consider a design matrix $\mathbf X \in \mathds R^{100\times 5}$ with i.i.d. entries sampled according to a standard normal distribution. We consider the null hypothesis $\mathds H_0: " \theta^* = \mathbf 0"$. In Figure~\ref{fig:ellipse}.(a), we work under $\mathds H_0$ and we choose a regularization parameter~$\lambda=7$ in order to have a selected support of size 2 to be able to visualize in the plane the SIGLE method in the selected model. We calibrate our testing procedure empirically and we see on Figure~\ref{fig:ellipse}.(a) that $95\%$ of the states sampled using the rejection sampling method fall into the orange ellipse, meaning that our test has level $5\%$. On Figure~\ref{fig:ellipse}.(b), we consider a localized alternative by considering $\vartheta^*=[0.5 \, , \, 0 \, ,\, \dots \, , \, 0]$ and we choose $\lambda=8$ in order to have $|M|=2$. In this case, the number of states falling into the orange ellipse is less than $95\%$ which means that we reject the null hypothesis.


\begin{figure}[ht!]
\vskip 0.2in
\begin{center}
 \centering
    \begin{subfigure}[b]{0.49\textwidth}
        \centering
        \includegraphics[width=\textwidth]{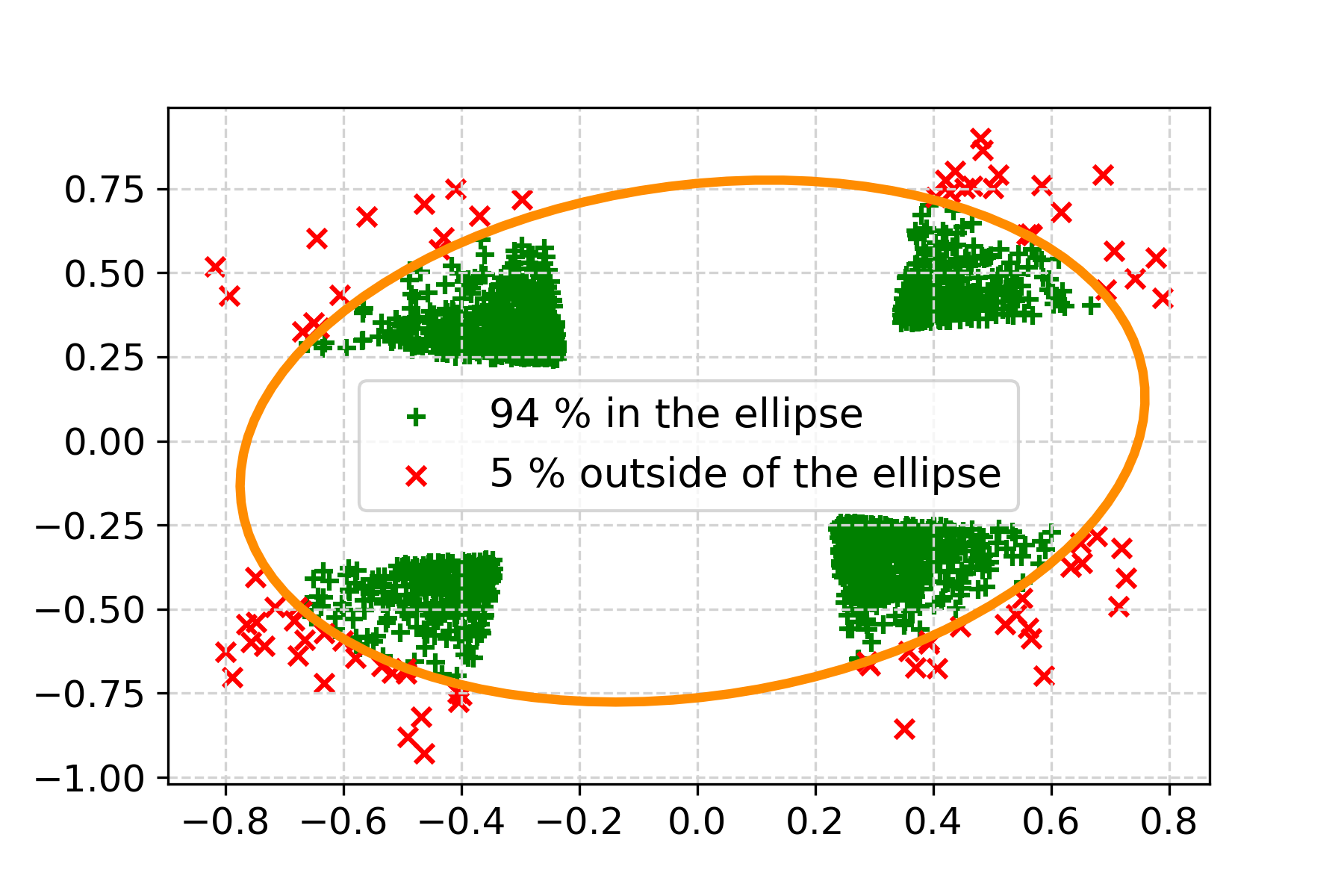}
        \caption[]%
        {{\small $\vartheta^*=\mathbf 0$, $\lambda=7$}}    
    \end{subfigure}
    \hfill
     \centering
    \begin{subfigure}[b]{0.49\textwidth}
        \centering
        \includegraphics[width=\textwidth]{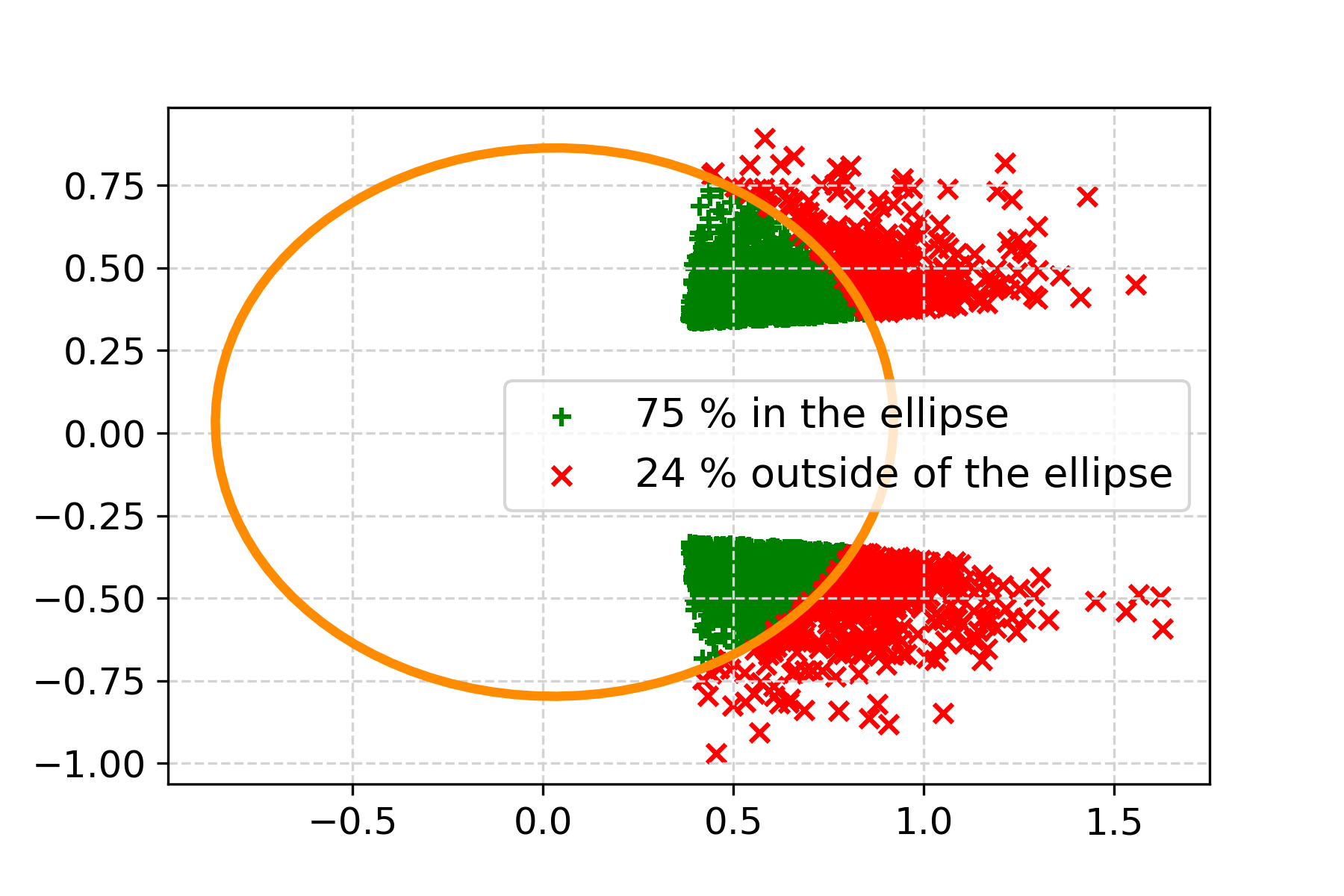}
        \caption[]%
        {{\small $\vartheta^*=[0.5,0, \dots,0]$, $\lambda=8$}}      
    \end{subfigure}
\caption{The orange ellipse represents the set of parameter $\theta \in \mathds R^s$ such that $\|\widetilde G_N^{-1/2}H_N( \widetilde \theta)(\theta-\widetilde \theta)\|_2^2=q_{1-\alpha}$ where $q_{1-\alpha}$ is the empirical quantile of order $1-\alpha$ of the test statistic of the SIGLE procedure in the selected model under the null $"\theta^*=0"$. For each $t$, we plot the MLE $\Psi(\mathbf X_M^{\top}Y^{(t)})$ with a green plus if the point falls into the orange ellipse and with a red cross otherwise. 
}
\label{fig:ellipse}
\end{center}
\vskip -0.2in
\end{figure}

\subsection{Discussion and final remarks}
\paragraph{Calibration.}

Despite the method proposed by \cite{taylorGLM} lacks theoretical guarantees, our experiments have shown that it is most of the time correctly calibrated. The calibration of SIGLE requires to sample under the null, which makes the method computationally more heavy.

\medskip

\paragraph{Power.}
Our experiments have shown that the {\it empirically calibrated} SIGLE procedures seem to be systematically more powerful compared to the approach from \cite{taylorGLM}. We would like to point out two main possible reasons explaining the lack of power of the PSI method from \cite{taylorGLM}.
\begin{itemize}
\item[$(i)$] We are tackling a simple hypothesis testing problem while the method proposed in~\cite{taylorGLM} is more naturally suited to address composite testing problems (typically testing the nullity of a specific coordinate of $\theta^*$). Note that the SIGLE methods cannot easily tackle single testing problems since the whole parameter $\pi_0^* $ (resp. $\theta^*_0$ in the selected model) is need to estimate $\overline G_N(\pi^*_0)$ (resp. $\overline G_N(\pi^*_0)$ and $\overline \theta(\theta^*_0)$). When deriving their PSI method, \cite{taylorGLM} face a similar issue and propose to use a plug-in approach by remplacing the unknown parameter $\theta^*$ by the lasso solution. A similar plug-in approximation for SIGLE could be investigated and this research direction is left for future work.

\item[$(ii)$] The method proposed by~\cite{taylorGLM} is motivated by non-rigorous computations that aim at characterizing the distribution of the debiased lasso solution $\underline \theta$ conditional on the selection event $E_M^{S_M}$ (we refer to Section~\ref{sec:debiasing} for details). It is well-known that conditioning on both the active variables and the vector of dual signs can lead to less powerful testing procedures. This statement can be made rigorous through the concept of {\it leftover Fisher information} (see Section~\ref{sec:conditional-signs} or \cite{fithian2014optimal} for details). As summarized in~\cite{fithian2014optimal}, "{\it on average, the price of conditioning on the [signs]~$S_M$ -- the price of selection -- is the information~$S_M$ carries about $\theta^*$}". Roughly speaking, even if the observed vector of dual signs is very surprising under the null, the method from~\cite{taylorGLM} will not reject the null hypothesis unless we are surprised anew by looking at $\underline \theta$. On the contrary, the SIGLE methods rely on the characterization of some test statistic conditional on $E_M$ (without conditioning on the signs).\\
In the Linear LASSO, the same situation arises and in~\cite{sun16}, the authors proved that one can rely on the work done conditional on $E_M^{S_M}$ in order to derive a more powerful testing method (at least on average) at the price of an additional computational cost. In this case, the linear transformation of the response vector is not distributed as a Gaussian truncated to an interval (as conditional on $E_M^{S_M}$) but is now a truncated Gaussian with a truncation set being a union of intervals. One important remark is that contrary to the Linear LASSO, the method from~\cite{taylorGLM} cannot be easily adapted to get more power by working directly on $E_M$. The reason is that the test statistic itself depends on the vector of dual signs $S_M$ (and not only the bounds of the truncation interval).
\end{itemize}


\begin{figure}[!h]
\centering
\begin{tikzpicture}[>=stealth,yscale=2,xscale=1.4]  
\node(?1) at (0,0)[rectangle,draw,text width=4cm,text centered] {What is the paradigm considered?};
\node(l?1) at (-3.5,0)[rectangle,draw,text width=2cm,text centered] {SIGLE};
\node(?2) at (0,-1)[rectangle,draw,text width=4cm,text centered] {What type of testing problem are you tackling?};
\node(l?2) at (3.5,-1)[rectangle,draw,text width=2cm,text centered] {TT};
\node(?3) at (0,-2.1)[rectangle,draw,text width=4cm,text centered] {At least one of the two parameters $d$ or $N$ is small?};
\node(l?3) at (-3.5,-2.1)[rectangle,draw,text width=2cm,text centered] {SIGLE};
\node(?4) at (0,-3.3)[rectangle,draw,text width=4cm,text centered] {Are you ready to sacrifice computational time to increase power?};
\node(r?4) at (-3.5,-3.3)[rectangle,draw,text width=2cm,text centered] {SIGLE};
\node(l?4) at (3.5,-3.3)[rectangle,draw,text width=2cm,text centered] {TT};
\node(tr?1) at (1,-0.5)[text width=7cm,text centered] {\it Selected model};
\node(tl?1) at (-2.1,0.03)[text width=2cm,text centered] {\it Saturated model};
\node(tl?2) at (2.1,-0.98)[text width=2cm,text centered] {\it Composite testing};
\node(tr?2) at (1.54,-1.5)[text width=7cm,text centered] {\it Simple hypothesis testing};
\node(tl?3) at (-2.1,-2)[text width=2cm,text centered] {\it Yes};
\node(tr?3) at (0.3,-2.68)[text width=7cm,text centered] {\it No};
\node(tl?4) at (2.1,-3.2)[text width=2cm,text centered] {\it No};
\node(tr?4) at (-2.1,-3.2)[text width=7cm,text centered] {\it Yes};
\draw[->] (?1) -- (?2);
\draw[->] (?1) -- (l?1);
\draw[->] (?2) -- (?3);
\draw[->] (?2) -- (l?2);
\draw[->] (?3) -- (l?3);
\draw[->] (?4) -- (l?4);
\draw[->] (?4) -- (r?4);
\draw[->] (?3) -- (?4);
\end{tikzpicture}
\caption{Choosing the PSI testing method that matches your setting: SIGLE (this paper) or TT (from~\cite{taylorGLM}).}
\label{fig:chooseYOURtest}
\end{figure}
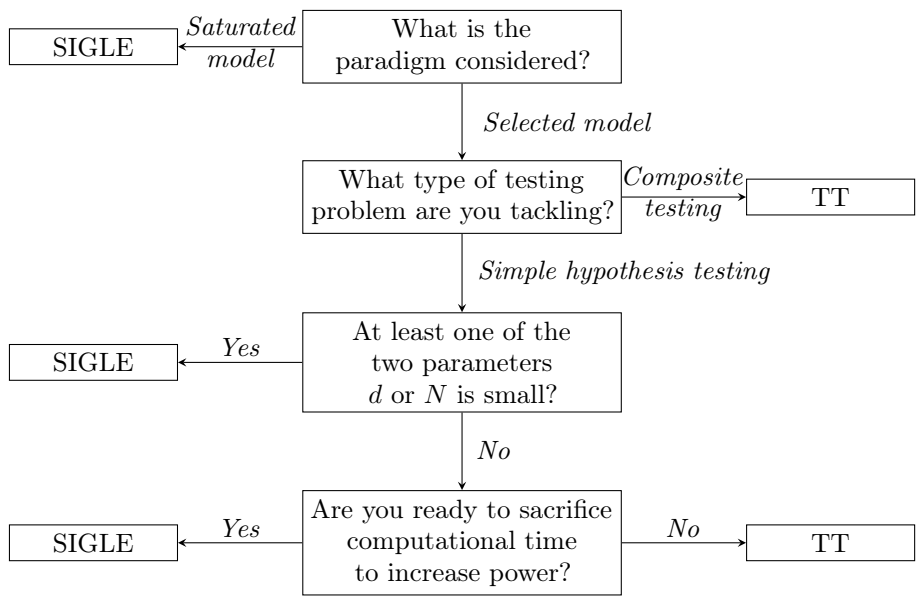

\paragraph{Conclusion.} The SIGLE procedures require to use either the rejection sampling method or the SEI-SLR algorithm to estimate the matrix $\overline G_N(\pi^*_0)$ (and the parameter $\overline \theta(\theta^*_0)$ in the selected model) and to estimate the parameter $w_{N,1-\alpha}$ needed to define the rejection region. This sampling step is the main computational burden of the SIGLE procedures. On the contrary, the approach of~\cite{taylorGLM} does not require such sampling stage and only requires to compute the bounds of the truncation interval of the distribution the $\eta^{\top}\underline \theta$ for some fixed vector~$\eta \in \mathds R^s$ under the null.\\
Figure~\ref{fig:chooseYOURtest} summarizes the main differences between the methods proposed in this paper and the one from~\cite{taylorGLM} and provides an easy way to select the best method for a given setting. This organizational chart stresses that when $d$ is small, the rejection sampling method allows to efficiently sample states from the conditional distribution $\overline {\mathds P}_{\theta^*}$ while when $N$ is small, the SEI-SLR algorithm allows to efficiently sample states uniformly distributed on $E_M$. In both cases, the SIGLE methods can be used with a small computational time and they should be preferred to get more powerful methods. 

\section{Conditional Central Limit Theorems for SLR}
\label{sec:asymptotic}
\label{sec:CLT}

\subsection{Preliminaries}
\label{assumption:design}

Before presenting our conditional CLTs, let us present the framework in which we state our asymptotic results. Let $(d_N)_{N\in\mathds N}$ be a non-decreasing sequence of positive integers converging to $d_{\infty} \in \mathds N \cup \{+\infty\}$ and let $s\in [d_1,d_{\infty}]\cap \mathds N$. For any $N$, we consider $[\vartheta^*]^{(N)} \in \mathds R^{d_N}$, $\lambda^{(N)}>0$, $M^{(N)}\subseteq [d_N]$ with cardinality $s$ and a design matrix $\mathbf X^{(N)}\in \mathds R^{N \times d_N}$. We recall the definitions of the selection event $E_M^{(N)}$ corresponding to the tuple $(\lambda^{(N)}, M^{(N)}, \mathbf X^{(N)})$ and of the conditional probability distribution $\overline {\mathds P}_{\pi^*}^{(N)}$ given in Section \ref{sec:intro-infeprocedures}.
We assume that it holds
\begin{itemize}
\item $\displaystyle K:=\sup_{N\in \mathds N}\max_{i\in [N],j\in M^{(N)}} |\mathbf X_{i,j}^{(N)} |  < \infty$,
\item there exist constants $C,c>0$ (independent of $N$) such that for any $N \in \mathds N$, 
\[cN\leq\lambda_{\min}(\big[\mathbf X_{M^{(N)}}^{(N)}\big]^{\top}\mathbf X_{M^{(N)}}^{(N)})\leq \lambda_{\max}(\big[\mathbf X_{M^{(N)}}^{(N)}\big]^{\top}\mathbf X_{M^{(N)}}^{(N)})\leq C N.\]
\end{itemize}
{\bf Remark.} Note that the latter assumption holds in particular if the matrices $\big({\mathbf X^{(N)}}/{\sqrt N}\big)_{N\geq1}$ satisfy (uniformly) the so-called $s$-Restricted Isometry Property~(RIP) condition \citep[cf.][Definition 7.10]{wainwright2019high}. Let us recall that a matrix $A \in \mathds R^{N\times p}$ satisfies the $s$-RIP condition if there exists a constant~$\delta_s \in (0,1)$ such that for any $N\times s$ submatrix~$A_s$ of~$A$, it holds
\[1-\delta_s \leq \lambda_{\min}(A_s^{\top}A_s) \leq \lambda_{\max}(A_s^{\top} A_s) \leq 1+\delta_s.\]
In Section~\ref{sec:CLT-saturated}, we start by presenting our first CLT for $\big[\mathbf X_M^{(N)}\big]^{\top}Y$ where $Y$ is distributed according to $\overline {\mathds P}_{\pi^*}^{(N)}$. 
Thereafter, we prove in Section~\ref{sec:CLT-selected} a CLT for the conditional unpenalized MLE $\widehat \theta$ working with the design $\mathbf X_M^{(N)}$ (see Eq.\eqref{eq:uncondi-MLE-XM}). 
\medskip

\noindent \begin{minipage}{.6\textwidth}
The proofs of our conditional CLTs make use of \cite[Thm.1]{bardet2008dependent} and rely on triangular arrays $\vec \xi:=\left((\xi_{i,N})_{i\in[N]},\, N \in \mathds N \right)$ where~$\xi_{i,N}$ is a random vector in~$\mathds R^s$ and is a function of the deterministic quantities~$\lambda^{(N)}$, $\mathbf X^{(N)}$, $M^{(N)}$ and of the random variable~$Y$ with probability distribution
$\overline{\mathds P}^{(N)}_{\pi^*}$. Most dependent CLTs have been proven for causal time series (typically satisfying some mixing condition) and are not well-suited to our case since conditioning on the selection event introduces a complex dependence structure. 
\end{minipage}\hfill
\begin{minipage}{.37\textwidth}
\begin{tabular}{cccccc}
$\xi_{1,1}$&&&&&\\
$\xi_{1,2}$ & $\xi_{2,2}$ &&&&\\
$\xi_{1,3} $& $\xi_{2,3}$ & $\xi_{3,3}$&&&\\
$\dots$ & $\dots$ & $\dots$ & $\dots$ &  &\\
$\xi_{1,N} $& $\xi_{2,N}$ & $\xi_{3,N}$&$\dots$&$\xi_{N,N}$&\\
$\dots$ & $\dots$ & $\dots$ & $\dots$ &  $\dots$ &$\dots$
\end{tabular}
\end{minipage}
\smallskip

\noindent The dependent Lindeberg CLT from \cite[Thm.1]{bardet2008dependent} gives us the opportunity to find conditions involving mainly the covariance matrix of $Y$ under which our conditional CLTs hold. More precisely, we provide conditions ensuring that the lines of the $\mathds R ^s$-valued process indexed by a triangular system $\vec \xi$ satisfy some Lindeberg's condition. Let us stress that we discuss the assumptions of the theorems presented in Sections~\ref{sec:CLT-saturated} and \ref{sec:CLT-selected} in Section~\ref{sec:discussion-CLT}.
\bigskip

\noindent To alleviate this notational burden, we will not specify the dependence on $N$ in the remainder of the paper, meaning that we will simply refer to $\mathbf X^{(N)}$, $M^{(N)}$, $d_N$, $[\vartheta^*]^{(N)}, \overline {\mathds P}^{(N)}_{\pi^*}, \dots$ as $\mathbf X$, $M$, $d$, $\vartheta^*, \overline {\mathds P}_{\pi ^*}, \dots$. Nevertheless, let us stress again that the integer $s$ is fixed and does not depend on $N$ in this paper.

\subsection{A conditional CLT for the saturated model}
\label{sec:CLT-saturated}

We aim at providing a simple hypothesis testing procedure and a confidence interval for the parameter~$\mathbf X^{\top}_M\pi^*$ conditionally on the selection event~$E_M$. To do so, we prove in this section a CLT for~$\mathbf X_M^{\top} Y$ when~$Y$ is a random variable on~$\{0,1\}^N$ following the multivariate Bernoulli distribution with parameter~$\pi^*\in [0,1]^N$ conditionally on the event~$\{Y \in E_M\}$. Let us first recall the notation for the distribution of~$Y$ conditional on~$E_M$ in the saturated model \[\overline {\mathds P}_{\pi^*}(Y) \propto \mathds 1_{E_M}(Y)  \mathds P_{ \pi^*}(Y), \,\] where the symbol $\propto$ means ‘proportional to'. In the following, we will denote by~$\overline{\mathds E}_{\pi^*}$ the expectation with respect to~$\overline {\mathds P}_{\pi^*}$. With Theorem~\ref{prop:CLT}, we give a conditional CLT that holds under some conditions that involve in particular the covariance matrix of the response~$Y$ under the distribution~$\overline{\mathds P}_{\pi^*}$, namely
\[
\overline \Gamma^{\pi^*}: = \overline {\mathds E}_{\pi^*}\left[ (Y-\overline \pi^{\pi^*})(Y-\overline \pi^{\pi^*})^{\top}\right]\in [-1,1]^{N\times N},\]
where~$\overline \pi ^{\pi^*} = \overline {\mathds E}_{\pi^*}[Y]$.

\begin{thm} \label{prop:CLT}
We keep the notations and assumptions from Section~\ref{assumption:design}. We denote~$\pi^* = \sigma(\mathbf X \vartheta^*)$ and~$Y$ the random vector taking values in~$\{0,1\}^N$ and distributed according to~$\overline {\mathds P}_{\pi^*}$.   
Assume further that
\begin{enumerate}
\item $\displaystyle \sum_{i=1}^N  \sqrt{  \|(\mathbf X_{[i-1],M})^{\top}\overline \Gamma^{\pi^*}_{[i-1],[i-1]} \mathbf X_{[i-1],M} \|_F\left(1- 2 \overline  \pi_i^{\pi^*}\right)^2} \underset{N\to +\infty}{=} o (  N),$
\item there exists~$\overline \sigma^2_{\min}>0$ such that $\overline \pi^{\pi^*}_i (1-\overline \pi^{\pi^*}_i )\geq \overline \sigma^2_{\min}$ for all $i \in [N]$.
\end{enumerate}
Then it holds
\[u^{\top}[\overline G_N(\pi^*)]^{-1/2} \mathbf X_M^{\top}( Y - \overline \pi^{\pi^*}) \overset{(d)}{\underset{N\to + \infty}{\longrightarrow}} \mathcal N(0,1),\]
where~$u$ is a unit~$s$-vector and where $\overline G_N (\pi^*):=\mathbf X_M^{\top} \mathrm{Diag}((\overline \sigma^{\pi^*})^2) \mathbf X_M$ with $(\overline \sigma^{\pi^*})^2:= \overline \pi^{\pi^*} \odot (1-\overline \pi^{\pi^*})$.
\end{thm}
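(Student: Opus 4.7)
The plan is to apply the dependent Lindeberg central limit theorem of~\cite{bardet2008dependent} (Theorem~1) to the real-valued triangular array
\[
\xi_{i,N} \;:=\; u^\top [\overline G_N(\pi^*)]^{-1/2}(y_i-\overline\pi_i^{\pi^*})\,\mathbf X_{i,M}, \qquad i \in [N],
\]
where $\mathbf X_{i,M}\in\mathds R^s$ denotes the $i$-th row of $\mathbf X_M$. Since $\sum_{i=1}^N \xi_{i,N}$ equals exactly the scalar whose asymptotic normality is to be established, three ingredients remain to be verified: (i) a Lindeberg-type bound on the individual summands, (ii) convergence to $1$ of the variance of $\sum_i \xi_{i,N}$ under $\overline{\mathds P}_{\pi^*}$, and (iii) the dependence condition of~\cite{bardet2008dependent}.

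\paragraph{Uniform bounds and Lindeberg.}
First I would use assumption~2 together with the eigenvalue hypothesis of Section~\ref{assumption:design} to obtain $\lambda_{\min}(\overline G_N(\pi^*))\geq c\,\overline\sigma_{\min}^2\, N$, hence $\|[\overline G_N(\pi^*)]^{-1/2}\|\leq (c\overline\sigma_{\min}^2 N)^{-1/2}$. Combined with $|y_i-\overline\pi_i^{\pi^*}|\leq 1$ and $\|\mathbf X_{i,M}\|_2\leq K\sqrt s$ (with $s$ fixed), this produces a uniform bound $\max_{i\in[N]}|\xi_{i,N}|\leq C N^{-1/2}$. The truncated-second-moment Lindeberg condition then follows immediately for every $\varepsilon>0$.

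\paragraph{Variance normalization.}
Writing $\overline\Gamma^{\pi^*}=\mathrm{Diag}((\overline\sigma^{\pi^*})^2)+R$, the diagonal part contributes exactly $u^\top [\overline G_N(\pi^*)]^{-1/2}\overline G_N(\pi^*)[\overline G_N(\pi^*)]^{-1/2}u = 1$ to the variance of $\sum_i\xi_{i,N}$. The off-diagonal residual is bounded, after sandwiching, by a multiple of $N^{-1}\|\mathbf X_M^\top R\,\mathbf X_M\|_F$, which in turn is dominated by the partial-sum quantity entering assumption~1 at the last index. Hence the contribution is $o(1)$.

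\paragraph{Dependence condition --- the main obstacle.}
The hard part will be to verify the dependence assumption of~\cite{bardet2008dependent}. In its typical form this is a telescoping replacement of the $\xi_{i,N}$ by centered Gaussians with matching variance, where each replacement error is controlled by a conditional expectation involving the partial sum over indices $j<i$. Two observations should make the match with assumption~1 work. First, for a centered Bernoulli variable the third central moment equals $\overline\sigma_i^2(1-2\overline\pi_i^{\pi^*})$, and this is the source of the factor $(1-2\overline\pi_i^{\pi^*})^2$ after Cauchy--Schwarz is applied to the Taylor remainder at step $i$. Second, $\|\mathbf X_{[i-1],M}^\top\overline\Gamma^{\pi^*}_{[i-1],[i-1]}\mathbf X_{[i-1],M}\|_F$ encodes the variance of the normalized partial sum of the first $i-1$ summands, which is precisely the only information about the past required in a second-order expansion. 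Recasting the conditioning-induced dependence of $Y$ under $\overline{\mathds P}_{\pi^*}$ into the weak-dependence framework of~\cite{bardet2008dependent}, and tracking the constants so that assumption~1 (divided by $N$) emerges as the resulting bound, is where the bulk of the technical work lies.
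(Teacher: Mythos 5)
Your route is the paper's: the same triangular array $\xi_{i,N}=u^{\top}\overline G_N^{-1/2}\mathbf X_{i,M}(y_i-\overline\pi_i^{\pi^*})$ fed into the dependent Lindeberg CLT of \cite{bardet2008dependent}, with your uniform bound $\max_i|\xi_{i,N}|\lesssim N^{-1/2}$ playing the role of the paper's moment condition $\sum_i\overline{\mathds E}_{\pi^*}\|\xi_{i,N}\|_2^3\to0$; those two ingredients match. The step that does not hold up is your ``variance normalization''. The covariance condition in \cite[Thm.~1]{bardet2008dependent} bears on the \emph{sum of the marginal covariances} $\sum_i\overline{\mathrm{Cov}}_{\pi^*}(\xi_{i,N})$, which here equals $1$ (resp.\ $\mathrm{Id}_s$ in vector form) \emph{exactly} by the definition of $\overline G_N(\pi^*)$ --- no residual analysis is needed. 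What you propose to prove instead, namely $\mathrm{Var}(\sum_i\xi_{i,N})=u^{\top}\overline G_N^{-1/2}\mathbf X_M^{\top}\overline\Gamma^{\pi^*}\mathbf X_M\overline G_N^{-1/2}u\to1$, is neither what the theorem requires (the cross-covariances induced by conditioning are absorbed into the dependent Lindeberg error terms, not into a variance condition) nor a consequence of assumption~1: the $i=N$ term of assumption~1 carries the factor $(1-2\overline\pi_N^{\pi^*})^2$, which may vanish, so it cannot dominate $N^{-1}\|\mathbf X_M^{\top}R\,\mathbf X_M\|_F$. As written, that paragraph of your argument fails; it should simply be replaced by the exact computation of $\sum_i\overline{\mathrm{Cov}}_{\pi^*}(\xi_{i,N})$.

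The second issue is that the part you yourself identify as the crux --- the two dependent Lindeberg covariance conditions, which is where all of assumption~1 is actually consumed --- is left as a sketch. The paper's mechanism is concrete: with $W_i=\sum_{a<i}\xi_{a,N}$, each term $\overline{\mathrm{Cov}}_{\pi^*}\bigl(\tfrac{\partial^2 f}{\partial x_l\partial x_k}(W_i),(\xi_i)_k(\xi_i)_l\bigr)$ is rewritten via an independent copy as $\tfrac12\overline{\mathds E}_{\pi^*}[(\cdot)(\cdot)]$ and then Cauchy--Schwarz gives $\|\nabla^3f\|_\infty\sqrt{\overline{\mathrm{Var}}_{\pi^*}(\|W_i\|_2)}\sqrt{\overline{\mathrm{Var}}_{\pi^*}((\xi_i)_k(\xi_i)_l)}$; the first factor is controlled by $\mathrm{Tr}(\overline G_N^{-1/2}\mathbf X_{[i-1],M}^{\top}\overline\Gamma^{\pi^*}_{[i-1],[i-1]}\mathbf X_{[i-1],M}\overline G_N^{-1/2})$, hence by the Frobenius term of assumption~1, and the second produces $(\overline\sigma_i^{\pi^*})^2(1-2\overline\pi_i^{\pi^*})^2$ through the variance of the squared centred Bernoulli (fourth central moment) --- and, in the first-order condition, through $\overline{\mathds E}|y_i-\overline\pi_i^{\pi^*}|$ --- rather than through the third central moment as you suggest. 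Your identification of the two driving quantities (partial-sum covariance and Bernoulli skewness) is essentially correct, but without carrying out this computation the proof is incomplete, and the misplaced variance step is a genuine error rather than a stylistic difference.
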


\subsection{A conditional CLT for the selected model}
\label{sec:CLT-selected}

We now work under the condition that there exists $\theta^*\in\mathds R ^s$ such that $\mathbf X_M \theta^*=\mathbf X \vartheta^*$. 
Given some~$Y \in \{0,1\}^N$ and provided that~$\mathbf X_M^{\top} Y \in \mathrm{Im}(\Xi)$,~$\Psi(\mathbf X_M^{\top}Y)$ is the MLE $\widehat \theta$ of the unpenalized logistic model. \citet[Theorem~1]{candesMLE} ensures that the MLE exists asymptotically almost surely when $Y$ is distributed as $\mathds P_{\theta^*}$. When the distribution of $Y$ is $\overline {\mathds P}_{\theta^*}$, we prove in Section~\ref{proof:thmMLE} a weaker counterpart of this result showing that for $N$ large enough, the MLE exists with high probability.

We aim at providing a simple hypothesis testing procedure and a confidence interval for the parameter~$\theta^*$ conditionally on the selection event. To do so, we first prove a CLT for the MLE~$\widehat \theta$ when~$Y$ is distributed according to $\overline {\mathds P}_{\theta^*}$ ({\it i.e.,} $Y$ is a random variable on~$\{0,1\}^N$ following the multivariate Bernoulli distribution with parameter~$\sigma(\mathbf X_M \theta^*)$ conditioned on the event~$\{Y \in E_M\}$). The unconditional MLE~$\widehat \theta$ (using only the features indexed by $M$) is known to be consistent and asymptotically efficient meaning that when~$Y$ is distributed according to~$\mathds P_{\theta^*}$,
\begin{equation}\label{eq:CLT-uncondi} u^{\top} [  H_N(\theta^*)]^{1/2}(\widehat \theta- \theta^*) \underset{N\to + \infty}{\overset{(d)}{\longrightarrow}} \mathcal N(0,1),\end{equation}
where~$u$ is a unit $s$-vector and where
\[H_N(\theta) :=\mathbf X_M^{\top} \mathrm{Diag}(\sigma'(\mathbf X_M  \theta)) \mathbf X_M=\mathbf X_M^{\top} \mathrm{Diag}((\sigma^{\theta})^2) \mathbf X_M,\]
is the Fisher information matrix with~$(\sigma^{\theta})^2:=  \pi^{\theta}\odot (1- \pi^{\theta})$ and $\pi^{\theta}=\mathds E_{\theta}[Y]$. 

In the following, we will consider the natural counterpart of the Fisher information matrix~$H_N(\theta^*)$ when we work under the conditional distribution~$\overline {\mathds P}_{\theta^*}$,
\[\overline G_N (\theta^*):=\mathbf X_M^{\top} \mathrm{Diag}((\overline \sigma^{\theta^*})^2) \mathbf X_M,\quad  (\overline \sigma^{\theta^*})^2:= \overline \pi^{\theta^*}\odot (1-\overline \pi^{\theta^*}),\, \overline \pi^{\theta^*}=\overline{\mathds E}_{\theta^*}[Y].\]
Theorem~\ref{thm:MLEasymptotic} proves that the MLE~$\widehat \theta$ under the conditional distribution~$\overline {\mathds P}_{\theta^*}$ also satisfies a CLT analogous to Eq.\eqref{eq:CLT-uncondi} by replacing respectively~$\theta^*$ and~$H_N(\theta^*)^{1/2}$ by~$\overline \theta(\theta^*)$ (cf. Eq.\eqref{def:thetabar}) and~$[\overline  G_N(\theta^*)]^{-1/2}H_N(\overline \theta(\theta^*))$. This conditional CLT holds under some conditions that involve in particular the covariance matrix of the response~$Y$ under the distribution~$\overline{\mathds P}_{\theta^*}$, namely
\[
\overline \Gamma^{\theta^*} = \overline {\mathds E}_{\theta^*}\left[ (Y-\overline \pi^{\theta^*})(Y-\overline \pi^{\theta^*})^{\top}\right]\in [-1,1]^{N\times N}.\]

\begin{thm}  \label{thm:MLEasymptotic} We keep the notations and assumptions from Section~\ref{assumption:design}. Let us consider~$\theta^* \in \mathds R^s$ and let us denote by~$Y$ the random vector taking values in~$\{0,1\}^N$ and distributed according to~$\overline {\mathds P}_{\theta^*}$.   
Assume further that
\begin{enumerate}
\item $\displaystyle \sum_{i=1}^N  \sqrt{  \|(\mathbf X_{[i-1],M})^{\top}\overline \Gamma^{\theta^*}_{[i-1],[i-1]} \mathbf X_{[i-1],M} \|_F\left(1- 2 \overline  \pi_i^{\theta^*}\right)^2} \underset{N\to +\infty}{=} o (  N),$
\item there exists~$\overline \sigma^2_{\min}>0$ such that for any $N$ and for any $i \in [N]$, \[\overline \pi^{\theta^*}_i (1-\overline \pi^{\theta^*}_i ) \wedge \sigma'(\mathbf X_{i,M} \overline \theta(\theta^*))\geq \overline \sigma^2_{\min}.\]
\item there exists some $\mathfrak K>0$ such that for any $N\in \mathds N$, \[\mathrm{Tr}\left[\overline G_N^{-1}\mathbf X_M^{\top} \overline {\Gamma}^{\theta^*}\mathbf X_M  \right] < \mathfrak K .\]
\end{enumerate}
Then, \[u^{\top} [\overline  G_N(\theta^*)]^{-1/2}H_N(\overline \theta(\theta^*))\big(\widehat \theta- \overline \theta(\theta^*)\big) \underset{N\to + \infty}{\overset{(d)}{\longrightarrow}} \mathcal N(0,1),\]
where~$u$ is a unit~$s$-vector and where we recall that~$\widehat \theta = \Psi(\mathbf X_M^{\top}Y)$ is the MLE.
\end{thm}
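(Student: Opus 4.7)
The plan is to reduce the statement to the saturated-model CLT of Theorem~\ref{prop:CLT} by linearizing the MLE score equation. Since $\widehat\theta=\Psi(\mathbf X_M^\top Y)$, by definition of $\Psi$ it satisfies the first-order condition $\mathbf X_M^\top(Y-\sigma(\mathbf X_M\widehat\theta))=0$, while by Eq.~\eqref{eq:gradbar} the minimizer of the conditional risk satisfies $\mathbf X_M^\top\bigl(\overline\pi^{\theta^*}-\sigma(\mathbf X_M\overline\theta)\bigr)=0$. Substracting these two identities and Taylor-expanding $\sigma$ at $\mathbf X_M\overline\theta$ with an exact Lagrange remainder yields
\[
G_N(\overline\theta(\theta^*))\bigl(\widehat\theta-\overline\theta(\theta^*)\bigr)
=\mathbf X_M^\top(Y-\overline\pi^{\theta^*})-\tfrac12\,\mathbf X_M^\top R_N,
\]
where $R_N\in\mathds R^N$ has entries $R_{N,i}=\sigma''(\zeta_i)\bigl(\mathbf X_{i,M}(\widehat\theta-\overline\theta)\bigr)^2$ for some intermediate points $\zeta_i$, and $\sigma''$ is uniformly bounded on $\mathds R$. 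Left-multiplying by $[\overline G_N(\theta^*)]^{-1/2}$ will isolate the dominant term, whose limiting distribution is given by Theorem~\ref{prop:CLT}, and a remainder that must be shown to be $o_P(1)$.

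For the leading term, conditions 1 and 2 of Theorem~\ref{thm:MLEasymptotic} are exactly the hypotheses of Theorem~\ref{prop:CLT} applied with $\pi^*=\sigma(\mathbf X_M\theta^*)$, so I can invoke that theorem to obtain
\[
u^\top[\overline G_N(\theta^*)]^{-1/2}\mathbf X_M^\top(Y-\overline\pi^{\theta^*})\overset{(d)}{\longrightarrow}\mathcal N(0,1)
\]
for any unit $s$-vector $u$. It then remains to prove that $[\overline G_N(\theta^*)]^{-1/2}\mathbf X_M^\top R_N$ converges to zero in probability.

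The hardest step, and the one that really uses condition 3, is the control of the remainder. First, the trace bound and Markov's inequality give $\|[\overline G_N(\theta^*)]^{-1/2}\mathbf X_M^\top(Y-\overline\pi^{\theta^*})\|_2=O_P(1)$; combined with $\lambda_{\max}(\overline G_N(\theta^*))\le \tfrac14 CN$, this yields $\|\mathbf X_M^\top(Y-\overline\pi^{\theta^*})\|_2=O_P(\sqrt N)$. Second, the lower bound $\sigma'(\mathbf X_{i,M}\overline\theta(\theta^*))\ge\overline\sigma_{\min}^2$ of condition 2, together with $\lambda_{\min}(\mathbf X_M^\top\mathbf X_M)\ge cN$, gives $\|G_N(\overline\theta(\theta^*))^{-1}\|=O(1/N)$. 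Inverting the identity above (and using that the remainder itself is already of smaller order) yields $\|\widehat\theta-\overline\theta(\theta^*)\|_2=O_P(N^{-1/2})$. Then, using the uniform boundedness of $\|\mathbf X_{i,M}\|_\infty$ and the fact that $s$ is fixed,
\[
\|\mathbf X_M^\top R_N\|_2\;\lesssim\;\sum_{i=1}^N\|\mathbf X_{i,M}\|_2\cdot(\mathbf X_{i,M}(\widehat\theta-\overline\theta))^2\;=\;O(N)\,\|\widehat\theta-\overline\theta\|_2^{2}\;=\;O_P(1),
\]
and finally $\|[\overline G_N(\theta^*)]^{-1/2}\mathbf X_M^\top R_N\|_2=O_P(N^{-1/2})\cdot O_P(1)=o_P(1)$, as required.

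Two technical points will deserve care. The main obstacle is to make the Taylor expansion rigorous: I need $\widehat\theta$ to exist (the event $\{\mathbf X_M^\top Y\in\mathrm{Im}(\Xi)\}$ has to occur with probability tending to one, a fact the paper proves separately in Section~\ref{proof:thmMLE}), and then I need the bootstrapping argument that first produces a crude rate $\|\widehat\theta-\overline\theta(\theta^*)\|_2=O_P(N^{-1/2})$ before plugging it back into the remainder bound; this will likely be done by localizing on the high-probability event $\{\|\widehat\theta-\overline\theta(\theta^*)\|_2\le r_N\}$ for a slowly growing $r_N$ and invoking strict convexity of the conditional negative log-likelihood restricted to a small neighbourhood of $\overline\theta(\theta^*)$. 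The second point is that Theorem~\ref{prop:CLT} is stated directionally; combined with tightness of $[\overline G_N(\theta^*)]^{-1/2}\mathbf X_M^\top(Y-\overline\pi^{\theta^*})$ inherited from condition 3, this suffices to conclude by Slutsky's lemma in the fixed dimension $s$.
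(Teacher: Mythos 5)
Your overall architecture coincides with the paper's: the leading term is handled by the score CLT of Theorem~\ref{prop:CLT} (conditions 1 and 2 of Theorem~\ref{thm:MLEasymptotic} do imply its hypotheses), the transfer from the score to the MLE is a delta-method step, and condition 3 enters exactly where you put it, namely in a Markov bound on $\|\overline G_N^{-1/2}\mathbf X_M^{\top}(Y-\overline\pi^{\theta^*})\|_2$. Your Taylor step differs only cosmetically from the paper's: you expand $\sigma$ to second order with a Lagrange remainder and dispose of the remainder by Slutsky, whereas the paper keeps the exact first-order integral form $Q_N(\widehat\theta)=\int_0^1 G_N(t\overline\theta+(1-t)\widehat\theta)\,dt$, bounds $\|\overline G_N^{-1/2}(G_N(\overline\theta)-Q_N(\widehat\theta))\overline G_N^{-1/2}\|\lesssim \delta N^{-1/2}$ uniformly over the localization ellipsoid (Lemma~\ref{lemma:supN}), and transfers the limit through bounded Lipschitz test functions via the Portmanteau theorem. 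Either variant works once the localization of $\widehat\theta$ is secured.

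The one genuine gap is the sentence ``inverting the identity above (and using that the remainder itself is already of smaller order) yields $\|\widehat\theta-\overline\theta(\theta^*)\|_2=O_P(N^{-1/2})$.'' As written this is circular twice over: the identity presupposes that $\widehat\theta$ exists at all (i.e.\ that $\mathbf X_M^{\top}Y\in\mathrm{Im}(\Xi)$), and the bound $\|\mathbf X_M^{\top}R_N\|_2=O(N)\,\|\widehat\theta-\overline\theta\|_2^2$ is only negligible against $\lambda_{\min}(G_N(\overline\theta))\,\|\widehat\theta-\overline\theta\|_2\gtrsim N\|\widehat\theta-\overline\theta\|_2$ if one already knows $\|\widehat\theta-\overline\theta\|_2=o_P(1)$. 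You correctly flag this as the main obstacle, but ``localizing on a high-probability event and invoking strict convexity'' does not by itself break the circle. The paper's resolution---and the real content of its proof---is a quantitative inversion argument: the map $F(\theta)=\overline G_N^{-1/2}\big(L_N(\overline\theta,(Y,\mathbf X_M))-L_N(\theta,(Y,\mathbf X_M))\big)$ is a global $\mathcal C^1$-diffeomorphism sending the boundary of the ellipsoid $\mathcal N_N(\overline\theta,\delta)$ to the boundary of its image (Lemma~\ref{lemma:boundary}); the curvature lower bound $\inf_{\theta\in\partial\mathcal N_N(\overline\theta,\delta)}\|F(\theta)\|_2\geq 2\delta C^{-1}c\,\overline\sigma^2_{\min}$ for $N$ large, combined with the Markov/condition-3 bound on $\|\overline G_N^{-1/2}L_N(\overline\theta,(Y,\mathbf X_M))\|_2$, then forces $\overline G_N^{-1/2}L_N(\overline\theta,(Y,\mathbf X_M))\in F(\mathcal N_N(\overline\theta,\delta))$ with probability at least $1-\epsilon$, so that on this event $\widehat\theta=F^{-1}(\cdot)$ exists and satisfies $\|\overline G_N^{1/2}(\widehat\theta-\overline\theta)\|_2\leq\delta$, which is precisely your $O_P(N^{-1/2})$ rate. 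With that step supplied, the rest of your argument goes through.
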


The proof of Theorem~\ref{thm:MLEasymptotic} can be found with full details in Section~\ref{proof:thmMLE} and we only provide here the main arguments. First we use Theorem~\ref{prop:CLT} that shows that the distribution of~$[\overline G_N(\theta^*)]^{-1/2}L_{N}(\overline \theta,Z^M)$ is asymptotically Gaussian using a Lindeberg Central Limit Theorem for dependent random variables from~\cite{bardet2008dependent}. Then, we show that for $N$ large enough, the following holds with high probability: the MLE~$\widehat \theta$ exists and is contained within an ellipsoid centered at~$\overline \theta$ with vanishing volume. This kind of result has already been studied in~\cite{Liang2012MaximumLE} but the proof provided by Liang and Du is wrong (Eq.(3.7) is in particular not true). As far as we know, we are the first to provide a correction of this proof in Section~\ref{proof:thmMLE}. Let us also stress that working with the conditional distribution $\overline {\mathds P}_{\theta^*}$ brings extra-technicalities that need to be handled carefully.\\
Using this consistency of~$\widehat \theta$ together with the smoothness of the map~$\theta \mapsto L_{N}( \theta,Z^M)$, one can convert the previously established result for 
\[
[\overline G_N(\theta^*)]^{-1/2}L_{N}(\overline \theta,Z^M)=[\overline G_N(\theta^*)]^{-1/2}(L_{N}(\overline \theta,Z^M)-L_{N}(\widehat \theta,Z^M))
\,,
\]
into a CLT for~$\widehat \theta$.

\subsection{Discussion}

\label{sec:discussion-CLT}
In this section, we discuss {\it informally} the assumptions of both Theorems~\ref{prop:CLT} and~\ref{thm:MLEasymptotic}. The conditions of Theorems~\ref{prop:CLT} and~\ref{thm:MLEasymptotic} can be seen at first glance as arcane or restrictive. Without pretending that those conditions are easy to check in practice, looking at these requirements through the lens of the usual asymptotic alternative where~$\vartheta^*$ itself depends on~$N$ gives a different perspective. Such assumption on~$\vartheta^*$ has been considered for example in \cite{bunea} or \cite[Section 3.1]{taylorGLM}. Following this line of work, we consider that $\vartheta^*=\alpha_N^{-1}\beta^*$ where each entry of~$\beta^*$ is independent of~$N$ and~$(\alpha_N)_N$ is a sequence of increasing positive numbers such that~$\alpha_N \underset{N\to \infty}{\to}+\infty$. We further assume~$\beta^*$ is $s^*$-sparse with support~$M^*$ (and with $s^*$ independent of $N$). Let us analyze the conditions of our theorems in this framework by considering that $E_M = \{0,1\}^N$ (i.e. there is no conditioning). Then, condition 3 of Theorem~\ref{thm:MLEasymptotic} holds automatically since in this case $\mathbf X_M^{\top}\overline \Gamma^{\theta^*}\mathbf X_M = H_N(\theta^*)$ and $\overline G_N^{-1} = [H_N(\theta^*)]^{-1}$, meaning that $\mathfrak K=s$ works. The condition 2 of Theorems~\ref{prop:CLT} and~\ref{thm:MLEasymptotic} holds also automatically since $\alpha_N\underset{N\to \infty}{\to}+\infty$, while the condition 1 is satisfied as soon as $\alpha_N \underset{N\to \infty}{=} \omega(N^{1/2})$. \\
\indent The quantity $\alpha_N$ is quantifying the dependence arising from conditioning on the selection event: the weaker the dependence between the entries of the random response $Y\sim \overline {\mathds P}_{\pi^*}$, the smaller $\alpha_N$ can be chosen while preserving the asymptotic normal distribution. Note that in the papers \cite{bunea} and \cite[Section 3.1]{taylorGLM}, the authors typically consider the case where $\alpha_N \underset{N\to \infty}{\sim}  N^{1/2}$, corresponding to the regime at which the validity of our CLTs may be questioned based on the simple analysis previously conducted. Nevertheless, we stress that stronger assumptions on the design could allow to bypass this apparent limitation. A promising line of investigation is the following: taking a closer at the proofs of Theorems~\ref{prop:CLT} and~\ref{thm:MLEasymptotic}, one can notice that the condition 1 can actually be weakened by
\[\min_{\nu \in \mathfrak{S}_N}\sum_{i=1}^N  \sqrt{  \|(\mathbf X_{\nu([i-1]),M})^{\top}\overline \Gamma^{\pi^*}_{\nu([i-1]),\nu([i-1])} \mathbf X_{\nu([i-1]),M} \|_F\left(1- 2 \overline  \pi_{\nu(i)}^{\pi^*}\right)^2} \underset{N\to +\infty}{=} o (  N),\]
where $\mathfrak{S}_N$ is the set of permutations of $[N]$.

\newpage

\bibliography{sample} 

\clearpage

\appendix

{\bf Guidelines for the Appendix.}
\begin{itemize}
\item {\bf Section~\ref{sec:MLEvsBIAS}: Regularization bias and conditional MLE.}\par
In this first section of the Appendix, we shed light on the difference between SIGLE and the work of~\cite{taylorGLM}. Both methods have already been compared on the practical side in Section~\ref{sec:experiments}. In Section~\ref{sec:MLEvsBIAS}, we take a step back to understand the different paradigms considered in these two approaches. We describe the strengths and drawbacks of both methods, highlighting the fact that the method of~\cite{taylorGLM} rely on non rigorous computations while SIGLE can be proved (see Section~\ref{apdx:hypotest_CLT}) to be asymptotically valid under the set of assumptions presented in Section~\ref{sec:discussion-CLT}.
\item {\bf Section~\ref{apdx:hypotest_CLT}: Theoretical guarantees for SIGLE in SLR.}\par
In this section, we show how the conditional CLTs of Section~\ref{sec:CLT} can be used to prove that the SIGLE methods are asymptotically correctly calibrated when the restrictive conditions of Section~\ref{sec:discussion-CLT} are satisfied.
\item {\bf Section~\ref{apdxsec:cr}: Confidence region.}\par
Following the spirit of the previous section, we make use of the conditional CLTs presented in Section~\ref{sec:CLT} to show how one can get confidence region using SIGLE.
\item {\bf Section~\ref{apdx:advanced_discussion}: Side notes about SIGLE.}\par
In this section, we put in the limelight more advanced questions related to the methods proposed in this paper. We start by proposing a reinterpretation of the methods presented in this paper when we consider that the model is misspecified in the sense that the observations~$y_i$'s have not been initially generated from the GLM presented in Section~\ref{sec:1.1}. In a second and last part, we focus on the diffeomorphism $\Psi$ which is a key ingredient involved in SIGLE. We provide a new perspective on~$\Psi$ relying on tools from convex analysis before explaining how we compute in practice quantities of the form~$\Psi(\rho)$ that are involved in the algorithms presented in this paper.
\item {\bf Section~\ref{apdx:proofs}: Proofs.}\par
We provide all the proofs of the theoretical results presented in this paper.
\item {\bf Section~\ref{sec:conditional-signs}: Inference conditional on the signs.}\par
We start by a gentle introduction to the Leftover Fisher information. Introduced in~\cite{fithian2014optimal}, this concept allows to show that conditioning on both the selected support and the signs of the dual variable (i.e.~$E_M^{S_M}$ with the notations of Section~\ref{sec:intro}) lead in general to wider (and thus worse) confidence intervals. Our goal is to use this preliminary to discuss with more details the method proposed by~\cite{taylorGLM}. In particular, we explain that the former approach is doomed to work conditional to~$E_M^{S_M}$ since the usual trick used in the linear model to condition only on~$E_M$ does not apply for an arbitrary GLM. 
\end{itemize}

\section{Regularization bias and conditional MLE}

\label{sec:MLEvsBIAS} 
In this section, we wish to emphasize the different nature of our approach and that of~\cite{taylorGLM} which we consider as the more relevant point of comparison, to the best of our knowledge. While we rely on a conditional MLE viewpoint, the former paper consider a debiasing approach.
\begin{itemize}
\item {\it The debiasing approach}\\
$\ell_1$-penalization induced a soft-thresholding bias and one can first try to modify the solution of the penalized GLM~$\hat \vartheta^{\lambda}$ to approximate the unconditional MLE of the GLM using only the features in the selected support~$M$ by some vector~$\underline \theta$. Provided that we work with a {\it correctly specified model}~$M$--{\it i.e.,} one that contains the true support~$\{j\in[d] \, |\, \vartheta^*_j\neq0\}$--standard results ensure that the unconditional MLE is asymptotically normal, asymptotically efficient and centered at~$\vartheta^*_M$. If one can show that the selection event only involve polyhedral constraints on a linear transformation $\eta^{\top}\underline \theta$ of the debiased vector~$\underline \theta$, the conditional distribution of~ $\eta^{\top}\underline \theta$ would be a truncated Gaussian. This is the approach from~\cite{taylorGLM} that we detail in Section~\ref{sec:debiasing}.
\item {\it SIGLE : the conditional MLE viewpoint}\\
In this paper we follow a different route: one can grasp the nettle by studying directly the properties of the unpenalized conditional MLE. 
\end{itemize}

\subsection{Selective inference through debiasing}
\label{sec:debiasing}

The idea behind the method proposed by~\cite{taylorGLM} is that we need two key elements to deploy the approach from~\cite{sun16} proposed in the linear model with Gaussian errors: 
\begin{itemize}
    \item A statistic~$T(Y)$ converging in distribution to a Gaussian distribution with a mean involving the parameter of interest;
    \item A selection event that can be written as a union of polyhedra with respect to~$\eta^{\top}T(Y)$ for some vector $\eta$.
\end{itemize}

\noindent
In practice, a solution of the generalized linear Lasso (cf. Eq.\eqref{e:generalized_lasso}) can be approximated using the Iteratively Reweighted Least Squares (IRLS).
 Defining 
\begin{align*}
W(\vartheta) &= \nabla^2_{\eta}\mathcal L_N(\eta)\big|_{\eta=\mathbf X\vartheta} = \mathrm{Diag}(\sigma'(\mathbf X \vartheta)),\\
\text{and } z(\vartheta) &= \mathbf X\vartheta - [W(\vartheta)]^{-1}\nabla_{\eta}\mathcal L_N(\eta) \big|_{\eta=\mathbf X\vartheta}  = \mathbf X\vartheta +  [W(\vartheta)]^{-1}(Y-\sigma(\mathbf X \vartheta)),
\end{align*}
the IRLS algorithm works as follows.
\begin{algorithm}
\begin{algorithmic}[1]
 	\STATE Initialize~$\vartheta_c=0$.
 	\STATE Compute~$W(\vartheta_c)$ and~$z(\vartheta_c)$.
 	\STATE Update the current value of the parameters with \vspace{-0.3cm}\[\vartheta_c \leftarrow {\arg \min}_{\vartheta} \frac12(z(\vartheta_c)-\mathbf X \vartheta)^{\top} W(\vartheta_c)(z(\vartheta_c)-\mathbf X \vartheta)+\lambda \|\vartheta\|_1.\]\vspace{-0.54cm}
 	\STATE Repeat steps 2. and 3. until convergence.
\end{algorithmic}
\end{algorithm}

\noindent If the IRLS has converged, we end up with a solution~$\hat  \vartheta^{\lambda}$ of Eq.\eqref{e:generalized_lasso} and, for $M=\{j\in[d]\, |\, \hat \vartheta^{\lambda}_j \neq0\}$, the active block of stationary conditions (Eq. \eqref{def:ES} $(i)$) can be written as
\[ \mathbf X^{\top}_M W\left\{z - \mathbf X_M \hat \vartheta^{\lambda}_M\right\} = \lambda S_M,\]
where~$W=W( \hat \vartheta^{\lambda})$, ~$z=z( \hat \vartheta^{\lambda})$ and $S_M = \mathrm{sign}(\hat\theta^{\lambda}_M)$. 
The solution~$\hat \vartheta^{\lambda}_M$ should be understood as a biased version of the unpenalized MLE~$\widehat \theta$ obtained by working on the support~$M$, namely
\[\widehat \theta \in \arg \min_{\theta \in \Theta_M} \sum_{i=1}^N
\xi(\langle \mathbf X_{i,M},\theta\rangle)
-\langle y_i\mathbf X_{i,M},\theta\rangle .\] 
If we work with a {\it correctly specified model}~$M$--{\it i.e.,} one that contains the true support~$\{j\in[d] \, |\, \vartheta^*_j\neq0\}$--then it follows from standard results that the MLE~$\widehat \theta$ is a consistent and asymptotically efficient estimator of~$\vartheta^*_M$ (see e.g.~\cite[Theorem 5.39]{van2000asymptotic}). A natural idea consists in debiasing the vector of parameters~$\vartheta^{\lambda}_M$ in order to get back to the parameter~$\widehat \theta$ and to use its nice asymptotic properties for inference. We thus consider
\[\underline \theta = \vartheta^{\lambda}_M + \lambda \left( \mathbf X_M^{\top} W \mathbf X_M\right)^{-1}S_M,\]
so that~$\underline \theta$ satisfies \begin{equation}\label{eq:almost-KKT1}\mathbf X_M^{\top}W\left\{ z- \mathbf X_M \underline \theta\right\}=0.\end{equation}
If one replaces~$W$ and~$z$ in Eq.\eqref{eq:almost-KKT1} by~$W(\underline \vartheta)$ and~$z(\underline \vartheta)$ (with the obvious notation that~$\underline \vartheta_M=\underline \theta$ and~$\underline \vartheta_{-M}=0$), Eq.\eqref{eq:almost-KKT1} corresponds to the stationarity condition of the unpenalized MLE for the generalized linear regression using only the features in~$M$.

Hence, \cite{taylorGLM} propose to treat the debiased parameters~$\underline \theta$ has asymptotically normal centered at~$\vartheta^*_M$ with covariance matrix~$\left( \mathbf X_M^{\top} W(\vartheta^*) \mathbf X_M\right)^{-1}$. Since~$\vartheta^*$ is unknown, they use a plug-in estimate and replace~$W(\vartheta^*)$ by ~$W(\hat \vartheta^{\lambda})$ in the Fisher information matrix. By considering that~$\vartheta^*=N^{-1/2}\beta^*$ where each entry of $\beta^*$ is independent of~$N$, they claim that the selection event~$E_M^{S_M}$ can be asymptotically approximated by
\[\mathrm{Diag}(S_M)\left(\underline \theta - \lambda\left( \mathbf X_M^{\top} W \mathbf X_M\right)^{-1}S_M\right) \geq 0.\]
 Hence, to derive post-selection inference procedure, they apply the polyhedral lemma to the limiting distribution of $N^{1/2}\underline \theta$, with~$M$ and~$S_M$ fixed.

\subsection{Discussion}

\paragraph{Duality between SIGLE and debiasing approaches.}
Oversimplifying the situation, our approach could be understood as the dual counterpart of the one from~\cite{taylorGLM} in the sense that the former paper is first focused on getting an (unconditional) CLT and deal with the selection event in a second phase. On the contrary, we are first focused on the conditional distribution ({\it i.e.}, we want to be able to sample from the conditional distribution) while the asymptotic (conditional) distribution considerations come thereafter. Figure~\ref{fig;dual_view_PSI} provides a visualization of these two different perspectives that can be used for PSI.

\begin{figure}[!ht]
\centering
\scalebox{0.9}{\begin{tikzpicture}[thick,scale=2]

\draw[color=blue,-stealth] (-2,3.5)--node[above]{$N\to +\infty$}(-1,3.5)node[xshift=0.8cm]{\shortstack{Asymptotic\\normality}} ;
\draw[color=auburn,-stealth] (-2,3.5)--node[left]{$\cdot \; | \; E_M$}(-2,2.5)node[yshift=-0.45cm]{\shortstack{Conditioning\\on $E_M$}} ;

\draw[color=blue,-stealth] (-1,0)--node[above]{We prove a {\color{red}conditional CLT}}node[below]{\shortstack{ $\overline V_N(\theta^*)^{1/2} (\widehat \theta-\overline \theta( \theta^*)) \underset{N\to \infty}{\overset{(d)}{\longrightarrow}} \mathcal N(0,\mathrm{Id}_s)$ }}(1.6,0) ;
\draw[color=auburn,-stealth] (-1,3)--node[midway,fill=white]{\shortstack{We consider $\widehat \theta$ the\\ {\color{red}conditional MLE} on $M$:\\
$ \widehat \theta \in \arg \min_{\theta \in \mathds R^s} \mathcal L_{N}^M(\theta,Y)$}}(-1,0);

\draw[dotted] (-1,3) --node[pos=.6,below,rotate=-40,]{This paper} node[pos=.6,above,rotate=-40]{Taylor \& Tibshirani} (1.5,1) ;

\draw[color=blue,-stealth] (-1,3)--node[above]{\shortstack{{\color{red}Debiasing} the generalized linear lasso solution}} node[below]{$\underline \theta^{\lambda} \underset{N\to \infty}{\overset{(d)}{\longrightarrow}}\mathcal N(\theta^*,\mathcal I(\theta^*)^{-1})$ }(2.5,3) ;
\draw[color=auburn,-stealth] (2.5,3)--node[midway,fill=white]{\shortstack{Asymptotic description  \\of $E_M$ as ${\color{red}A \underline \theta^{\lambda} \leq b}$}}(2.5,1);
\node[rectangle,draw] (r) at (2.5,0.65) {\shortstack{Inference using\\ the polyhedral Lemma}};

\node[rectangle,draw] (r) at (2.5,0) {\shortstack{Inference using the \\ SEI-SLR algorithm}};

\end{tikzpicture}}
\caption{Duality between SIGLE and debiasing approaches.}
\label{fig;dual_view_PSI}
\end{figure}
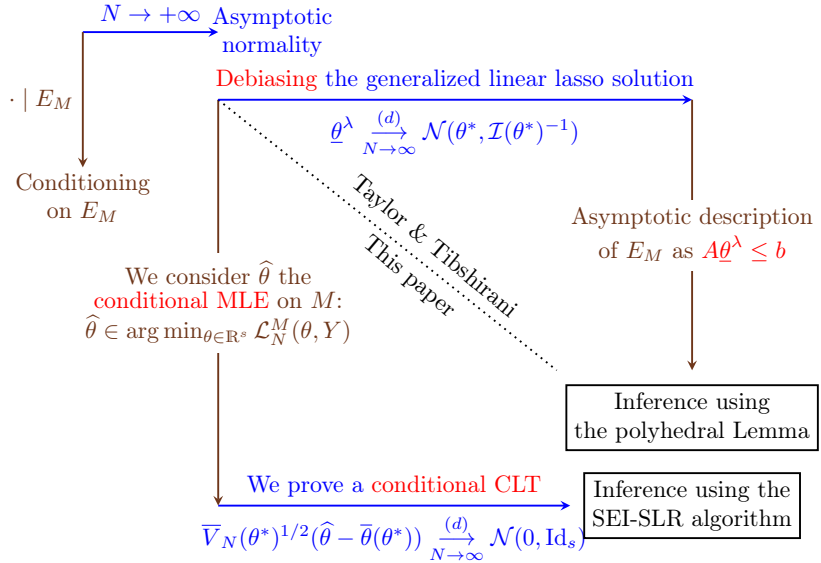




\paragraph{Comprehensive comparison between SIGLE and \cite{taylorGLM}.}

In~\cite{taylorGLM}, the authors consider only the more restrictive framework of the selected model where $\mathbf X \vartheta^* = \mathbf X_M \theta^*$ for some $\theta^* \in \mathds R^s$. Their method allows to conduct PSI inference on any linear transformation of $\theta^*$ (including in particular the local coordinates $\theta^*_j$ for $j\in [s]$), and can be efficiently used in practice. The authors do not provide a formal proof of their claim but rather motivate their approach with asymptotic arguments where they consider in particular that $\vartheta^*=N^{-1/2}\beta^*$ where each entry of $\beta^*$ is independent of $N$.

On the other hand, this paper presents {\it simple hypothesis} PSI methods in both the saturated and the selected models, in the sense that statistical inference is conducted on the vector-valued parameter of interest. Our methods are computationally more expensive than the one from~\cite{taylorGLM}, but they are proved (see Section~\ref{apdx:hypotest_CLT}) to be asymptotically valid under some set of assumptions that we discuss in details in Section~\ref{sec:discussion-CLT}. Table~\ref{table:comparison-taylor} sums up this comparison.

\renewcommand{\arraystretch}{1.3}
\begin{table}[!ht]
\centering
\hspace*{-1cm}
\begin{tabular}{|C{3.8cm}|C{4.4cm}|C{4.4cm}|}\cline{2-3}
 \multicolumn{1}{c|}{} & \cite{taylorGLM} & SIGLE (this paper)\\ \hline
Selected model & {\color{green}\cmark} & {\color{green}\cmark}  \\\hline
Saturated model & {\color{red}\xmark} & {\color{green}\cmark}  \\\hline
Hypotheses tested in the selected model & Composite: $\theta^*_j=[\theta_0^*]_j$ for some $j$ & Simple: $\theta^*=\theta^*_0$ \\\hline
Formal proof & {\color{red}\xmark} & {\color{green}\cmark}  \\\hline
Assumption on $\vartheta^*=\alpha_N^{-1}\beta^*$ with entries of $\beta^*$ independent of $N$ &For the theoretical sketches supporting their result, they consider $\alpha_N=N^{1/2}.$ & Require $\alpha_N = \omega(N^{1/2})$.
\\\hline
Low computational cost 
&{\color{green}\cmark}  &  {\color{red}\xmark} \\\hline
\end{tabular}
\caption[Comparison between SIGLE and~\cite{taylorGLM}.]{Comparison between SIGLE and~\cite{taylorGLM}.}
\label{table:comparison-taylor}
\end{table}

Note that our paper should be understood as an extension of the work from~\cite{Meir2017TractablePM} to the SLR. Indeed, the authors of the former paper propose a method to compute the conditional MLE after model selection in the linear model. 
They show empirically that the proposed confidence intervals are close to the desired level but they are not able to provide theoretical justification of their approach.

In Section~\ref{apdx:hypotest_CLT}, we show our the conditional CLTs provided in Section~\ref{sec:CLT} can be used to derive theoretical guarantees for the SIGLE procedures under the restrictive assumptions given in Theorems~\ref{prop:CLT} and~\ref{thm:MLEasymptotic}.

\section{Theoretical guarantees for SIGLE in SLR}
\label{apdx:hypotest_CLT}

In this section, we make use of the conditional CLTs presented in Section~\ref{sec:CLT} to prove that the SIGLE methods are asymptotically correctly calibrated when the assumptions of Section~\ref{sec:CLT} are satisfied. Let us stress that this section is not of practical interest for two main reasons. First, the condition under which the theoretical guarantees presented in this section hold are restrictive and correspond to the ones usually considered in the literature when analyzing the asymptotic properties of the MLE in high dimensions. Second, making use of the conditional CLTs of Section~\ref{sec:CLT} does not allow us to bypass the computational burden of sampling from the conditional distribution. Indeed, to get the SIGLE statistics, one still need to compute quantities such that $\overline G_N(\theta^*_0)$ (resp. $\overline G_N(\pi^*_0)$) or $\overline \theta(\theta^*_0)$ (resp. $\overline \pi^{\pi^*_0}$). Since the distribution of the observations conditional to the selection event has no closed form expression, we still need to use sampling methods such as the SEI-SLR algorithm presented in Section~\ref{sec:sampling} to estimate the SIGLE statistics.

\medskip

In this section, we consider the notations and assumptions described at the beginning of Section~\ref{sec:CLT} and that rely on a system of triangular arrays.

\subsection{In the selected model}
\label{sec:selective-inference-selected}


We keep the notations and the assumptions of Theorem~\ref{thm:MLEasymptotic}. Given some~$\theta^*_0 \in \mathds R^s$, we consider the hypothesis test with null and alternative hypotheses defined by
\begin{equation}
\mathds H_0 \; :\; \{\theta^*=\theta^*_0\}\quad \text{and}\quad \mathds H_1 \; :\; \{\theta^* \neq \theta^*_0\}\,.\end{equation}
The CLT from Theorem~\ref{thm:MLEasymptotic} naturally leads us to introduce the ellipsoid $W_N$ given by

\hspace*{-0.6cm}
\setlength\tabcolsep{1.5pt}
\begin{tabular}{rll}
\multirow{2}{*}{$W_N:=\Bigg\{Y \in \{0,1\}^N \; \Bigg| \;$} & $\diamond \, \, \mathbf X_M^{\top} Y \in \mathrm{Im}(\Xi)$&\multirow{2}{2mm}{$\Bigg\}$,}\\
&$\diamond \, \, \left\|[\overline G_N(\theta^*_0)]^{-1/2}H_N(\overline \theta(\theta_0^*))\left( \Psi(\mathbf X_M^{\top}Y)-\overline \theta(\theta^*_0)\right)\right\|^2_2 > \chi^2_{s,1-\alpha}$  &
\end{tabular}

\medskip

\noindent
where~$\chi^2_{s,1-\alpha}$ is the quantile of order~$1-\alpha$ of the~$\chi^2$ distribution with~$s$ degrees of freedom. If~$\overline \pi^{\theta_0^*}$ was known, we could compute~$\overline \theta(\theta_0^*)$ (using Eq.\eqref{eq:gradbar}) and thus~$\overline G_N(\theta_0^*)$. Then the test with rejection region~$W_N$ would be asymptotically of level~$\alpha$ since Theorem~\ref{thm:MLEasymptotic} gives that
\[ \overline{\mathds P}_{\theta^*_0}\left( Y \in W_N\right) \underset{N\to +\infty}{\longrightarrow} \alpha.\]

Based on this result, we construct an asymptotically valid simple hypothesis testing procedure for the test~\eqref{eq:test-selected}. Our method consists in finding an estimate of the parameter~$\overline \pi^{\theta^*_0}$ in order to approximate the rejection region~$W_N$ with a Monte-Carlo approach. From Proposition~\ref{prop:uniform-distribution}, we know that under an appropriate cooling scheme, the asymptotic distribution of the states visited by our SEI-SLR algorithm (cf. Algorithm~\ref{algo1}) is the uniform distribution on the selection event. We deduce that under the null, we are able to estimate~$\overline \pi^{\theta^*}$ and thus~$\overline \theta$ using Eq.\eqref{eq:gradbar}. This leads to the testing procedure presented in Proposition~\ref{prop:test-selected}, whose proof is postponed to Section~\ref{proof:prop:test-selected}.

\begin{proposition}\label{prop:test-selected} We keep notations and assumptions of Theorem~\ref{thm:MLEasymptotic}. We consider two independent sequences of vectors~$(Y^{(t)})_{t\geq1}$ and~$(Z^{(t)})_{t\geq1}$ generated by Algorithm~\ref{algo1}. Let us denote
\[ \widetilde \pi^{\theta^*_0} =  \frac{\sum_{t=1}^T \mathds P_{\theta^*_0}(Y^{(t)}) Y^{(t)}}{\sum_{t=1}^T \mathds P_{\theta^*_0}(Y^{(t)})} ,\quad \widetilde \theta = \Psi(\mathbf X_M^{\top} \widetilde \pi^{\theta^*_0}),\quad \widetilde G_N = \mathbf X_M^{\top} \mathrm{Diag}\left( \widetilde \pi^{\theta^*_0}\odot (1-\widetilde \pi^{\theta^*_0})\right) \mathbf X_M,\]
\begin{tabular}{rll}
\multirow{2}{*}{and $\widetilde W_N:=\Bigg\{Y \in \{0,1\}^N \; \Bigg| \;$} & $\diamond \, \, \mathbf X_M^{\top} Y \in \mathrm{Im}(\Xi)$&\multirow{2}{*}{$\Bigg\}$.}\\
&$\diamond \, \, \left\|\widetilde G_N^{-1/2}H_N(\widetilde \theta)\left( \Psi(\mathbf X_M^{\top}Y)-\widetilde \theta\right)\right\|^2_2 > \chi^2_{s,1-\alpha}$  &
\end{tabular}

\noindent Then the SIGLE procedure consisting in rejecting the null hypothesis $\mathds H_0$ when
\[   \zeta_{N,T}:= \frac{\sum_{t=1}^T \mathds P_{\theta^*_0}(Z^{(t)}) \mathds 1_{Z^{(t)}\in  \widetilde W_N}}{\sum_{t=1}^T \mathds P_{\theta^*_0}(Z^{(t)})} > \alpha,\]
has an asymptotic level lower than~$\alpha$ in the sense that for any~$\epsilon>0$, there exists~$N_0 \in \mathds N$ such that for any $N\geq N_0$ it holds,
\[ \mathds P\big( \bigcup_{T_N\in \mathds N} \bigcap_{T\geq T_N} \{\zeta_{N,T}\leq \alpha +\epsilon\}\big)=1.\]
\end{proposition}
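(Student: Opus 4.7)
The plan is to exploit the independence of the two simulated-annealing chains $(Y^{(t)})$ and $(Z^{(t)})$, and to sandwich the data-driven rejection region $\widetilde W_N$ between two deterministic, slightly perturbed oracle regions, so that the Monte-Carlo asymptotics $T\to \infty$ and the statistical asymptotics $N\to \infty$ can be handled separately. The three key ingredients are: (i) the almost-sure convergence of the plug-in estimators coming from Proposition~\ref{prop:point-alternative}; (ii) the continuity of the maps $\Psi$ and $\theta\mapsto G_N(\theta)$; (iii) the conditional CLT of Theorem~\ref{thm:MLEasymptotic}.

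\textbf{Step 1: plug-in consistency for fixed $N$.} Working under $\mathds H_0$, I would apply Proposition~\ref{prop:point-alternative} coordinate by coordinate (with $h$ taken to be each coordinate projection) to obtain $\widetilde \pi^{\theta^*_0}\to \overline \pi^{\theta^*_0}$ almost surely as $T\to\infty$. The continuity of $\Psi$ (Proposition~\ref{prop:diffeo-g}) and of $\theta\mapsto G_N(\theta)$, together with the continuity of the inverse and of the square root on the cone of positive definite matrices (well-posed thanks to assumption~2 of Theorem~\ref{thm:MLEasymptotic}), then yields
\[\widetilde \theta \longrightarrow \overline \theta(\theta^*_0),\qquad \widetilde G_N^{-1/2} G_N(\widetilde \theta) \longrightarrow \overline G_N(\theta^*_0)^{-1/2} G_N(\overline \theta(\theta^*_0))\qquad \text{almost surely.}\]

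\textbf{Step 2: sandwich and Monte-Carlo convergence of $\zeta_{N,T}$.} Fix $\eta>0$ and define the enlarged deterministic rejection region
\[W_N^\eta := \Big\{Y\in\{0,1\}^N : \mathbf X_M^\top Y \in \mathrm{Im}(\Xi),\ \bigl\|\overline G_N(\theta^*_0)^{-1/2} G_N(\overline \theta(\theta^*_0))\bigl(\Psi(\mathbf X_M^\top Y)-\overline \theta(\theta^*_0)\bigr)\bigr\|_2^2 > \chi^2_{s,1-\alpha}-\eta\Big\}.\]
Since $\{0,1\}^N$ is finite, Step~1 implies that almost surely there exists $T_0=T_0(\omega,N,\eta)$ such that $\widetilde W_N\subseteq W_N^\eta$ for every $T\geq T_0$. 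Because $W_N^\eta$ is deterministic and $(Z^{(t)})$ is independent of $(Y^{(t)})$, Proposition~\ref{prop:point-alternative} applied (conditionally on $(Y^{(t)})$) to the fixed test function $z\mapsto \mathds 1_{z\in W_N^\eta}$ gives
\[\limsup_{T\to \infty}\zeta_{N,T}\ \leq\ \overline{\mathds P}_{\theta^*_0}\bigl(Y\in W_N^\eta\bigr)\qquad \text{almost surely.}\]

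\textbf{Step 3: asymptotic control via the conditional CLT and conclusion.} By Theorem~\ref{thm:MLEasymptotic} applied at $\theta^*=\theta^*_0$ (together with the fact, established in its proof, that the MLE exists with high probability for $N$ large) and a Cramér--Wold argument, the vector $\overline G_N(\theta^*_0)^{-1/2} G_N(\overline \theta(\theta^*_0))(\widehat \theta - \overline \theta(\theta^*_0))$ converges in distribution under $\overline{\mathds P}_{\theta^*_0}$ to $\mathcal N(0,\mathrm{Id}_s)$; by the continuous mapping theorem its squared Euclidean norm converges to $\chi^2_s$. Using continuity of the chi-square CDF, pick $\eta>0$ so small that $F_{\chi^2_s}(\chi^2_{s,1-\alpha})-F_{\chi^2_s}(\chi^2_{s,1-\alpha}-\eta)<\epsilon/2$, and then $N_0$ so that $\overline{\mathds P}_{\theta^*_0}(Y\in W_N^\eta)\leq \alpha+\epsilon$ for every $N\geq N_0$. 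Combining with Step~2 yields the stated conclusion. The main obstacle is that $\widetilde W_N$ is itself random in $T$, which prevents a direct application of Proposition~\ref{prop:point-alternative} to $\mathds 1_{\widetilde W_N}$; the $\eta$-sandwich is precisely what disentangles the two asymptotics, at the cost of an arbitrarily small inflation of the level.
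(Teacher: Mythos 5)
Your proof is correct and follows essentially the same route as the paper's: almost-sure plug-in consistency from Proposition~\ref{prop:point-alternative}, a one-sided sandwich of the random region $\widetilde W_N$ inside a deterministic $\eta$-enlarged region, Monte-Carlo convergence along the independent chain $(Z^{(t)})$, and finally the conditional CLT of Theorem~\ref{thm:MLEasymptotic} to bound the level. The only (harmless) difference is that you obtain the uniform containment $\widetilde W_N\subseteq W_N^{\eta}$ by finiteness of $\{0,1\}^N$, whereas the paper derives it from an explicit uniform bound on the difference of the two test statistics via the Powers--Størmer inequality, and it shifts the quantile to $\chi^2_{s,1-\alpha-\eta}$ rather than lowering the threshold by $\eta$ — equivalent devices.
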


\subsection{In the saturated model}
\label{sec:selective-inference-saturated}

We keep the notations and the assumptions of Theorem~\ref{prop:CLT}. Given some~$\pi^*_0 \in \mathds R^N$, we consider the hypothesis test with null and alternative hypotheses defined by
\begin{equation}
\mathds H_0 \; :\; \{\pi^*=\pi^*_0\}\quad \text{and}\quad \mathds H_1 \; :\; \{\pi^* \neq \pi^*_0\}\,.
\end{equation}
The CLT from Theorem~\ref{prop:CLT} naturally leads us to introduce the ellipsoid $W_N$ given by \[W_N=\left\{ Y \in \{0,1\}^N \; | \; \left\|[\overline G_N(\pi^*_0)]^{-1/2} \mathbf X_M^{\top}\left(Y-\overline {\pi}^{\pi^*_0}\right)\right\|^2_2 \geq \chi^2_{s,1-\alpha} \right\},\]
where~$\chi^2_{s,1-\alpha}$ is the quantile of order~$1-\alpha$ of the~$\chi^2$ distribution with~$s$ degrees of freedom. If~$\overline \pi^{\pi^*_0}$ was known, we could compute~$\overline G_N(\pi_0^*)$. Then the test with rejection region~$W_N$ would be asymptotically of level~$\alpha$ since Theorem~\ref{prop:CLT} gives that
\[ \overline{\mathds P}_{\pi^*_0}\left( Y \in W_N\right) \underset{N\to +\infty}{\longrightarrow} \alpha.\]

Based on this result, we construct an asymptotically valid simple hypothesis testing procedure for the test~\eqref{eq:test}. Our method consists in finding an estimate of the parameter~$\overline \pi^{\pi^*_0}$ in order to approximate the rejection region~$W_N$ with a Monte-Carlo approach. From Proposition~\ref{prop:uniform-distribution}, we know that under an appropriate cooling scheme, the asymptotic distribution of the states visited by the SEI-SLR algorithm (cf. Algorithm~\ref{algo1}) is the uniform distribution on the selection event. We deduce that under the null, we are able to estimate~$\overline \pi^{\pi^*_0}$ and thus $\overline G_N(\pi^*_0)$. This leads to the testing procedure presented in Proposition~\ref{prop:test}, whose proof is strictly analogous to the one of Proposition~\ref{prop:test-selected}.

\begin{proposition}\label{prop:test} We keep notations and assumptions of Theorem~\ref{prop:CLT}. We consider two independent sequences of vectors~$(Y^{(t)})_{t\geq1}$ and~$(Z^{(t)})_{t\geq1}$ generated by Algorithm~\ref{algo1}. Let us denote
\[ \widetilde \pi^{\pi^*_0} =  \frac{\sum_{t=1}^T \mathds P_{\pi^*_0}(Y^{(t)}) Y^{(t)}}{\sum_{t=1}^T \mathds P_{\pi^*_0}(Y^{(t)})} ,\quad \quad \widetilde G_N = \mathbf X_M^{\top} \mathrm{Diag}\left( \widetilde \pi^{\pi^*_0}\odot (1-\widetilde \pi^{\pi^*_0})\right) \mathbf X_M,\]
and $\widetilde W_N:= \left\{  Y \in \{0,1\}^N \; | \; \left\|\widetilde G_N^{-1/2}\mathbf X_M^{\top}\left( Y- \widetilde \pi^{\pi_0^*}\right)\right\|^2_2 > \chi^2_{s,1-\alpha} \right\}.$
Then the SIGLE procedure consisting of rejecting the null hypothesis $\mathds H_0$ when
\[   \zeta_{N,T}:= \frac{\sum_{t=1}^T \mathds P_{\pi^*_0}(Z^{(t)}) \mathds 1_{Z^{(t)}\in  \widetilde W_N}}{\sum_{t=1}^T \mathds P_{\pi^*_0}(Z^{(t)})} > \alpha,\]
has an asymptotic level lower than~$\alpha$ in the sense that for any~$\epsilon>0$, there exists~$N_0 \in \mathds N$ such that for any $N\geq N_0$ it holds,
\[ \mathds P\big( \bigcup_{T_N\in \mathds N} \bigcap_{T\geq T_N} \{\zeta_{N,T}\leq \alpha +\epsilon\}\big)=1.\]
\end{proposition}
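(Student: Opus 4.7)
The plan is to follow the blueprint of the proof of Proposition~\ref{prop:test-selected}, which the saturated case mirrors with the simplification that no conditional MLE inversion through $\Psi$ enters the picture. The statistic $\zeta_{N,T}$ is a \emph{double} Monte--Carlo average: the $(Y^{(t)})$ sequence is used to build the data-driven rejection region $\widetilde W_N$, while the independent $(Z^{(t)})$ sequence is used to estimate $\overline{\mathds P}_{\pi^*_0}(Z\in \widetilde W_N)$. I would first fix $N$ and let $T\to\infty$ to show that $\zeta_{N,T}$ converges almost surely to $\overline{\mathds P}_{\pi^*_0}(Y\in W_N)$, and then let $N\to\infty$ and invoke Theorem~\ref{prop:CLT} to obtain the advertised asymptotic level.

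The first step applies Proposition~\ref{prop:point-alternative} coordinatewise with $h(y)=y_i$ to the $(Y^{(t)})$ sequence, yielding $\widetilde \pi^{\pi^*_0}\to \overline \pi^{\pi^*_0}$ almost surely as $T\to\infty$; the continuous mapping theorem then provides $\widetilde G_N\to \overline G_N(\pi^*_0)$ a.s. Since the ambient space $\{0,1\}^N$ is finite for fixed $N$, there is a (random) integer $T^\star$ beyond which $\mathds 1_{\widetilde W_N}(y)=\mathds 1_{W_N}(y)$ for every $y\in \{0,1\}^N$ lying outside the boundary $\{y\,|\,\|\overline G_N^{-1/2}\mathbf X_M^\top(y-\overline\pi^{\pi^*_0})\|^2=\chi^2_{s,1-\alpha}\}$; this boundary set is generically empty in $\pi^*_0$, and otherwise can be absorbed into an arbitrarily small slack by a standard perturbation of the threshold.

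Conditionally on the $(Y^{(t)})$ sequence, which freezes $\widetilde W_N$, the independence of the two chains and Proposition~\ref{prop:point-alternative} applied to $h=\mathds 1_{W_N}$ on the $(Z^{(t)})$ sequence yield
\[
\widetilde \zeta_{N,T}:=\frac{\sum_{t=1}^T \mathds P_{\pi^*_0}(Z^{(t)})\,\mathds 1_{Z^{(t)}\in W_N}}{\sum_{t=1}^T \mathds P_{\pi^*_0}(Z^{(t)})} \underset{T\to\infty}{\longrightarrow} \overline{\mathds P}_{\pi^*_0}(Y\in W_N)\quad\text{a.s.}
\]
It remains to control $|\zeta_{N,T}-\widetilde \zeta_{N,T}|$ by the $(Z^{(t)})$-based Monte--Carlo estimator of $\overline{\mathds P}_{\pi^*_0}(Z\in \widetilde W_N\mathbin{\triangle} W_N)$, which vanishes a.s.\ by the stabilization argument above. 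Finally, Theorem~\ref{prop:CLT} combined with the Cram\'er--Wold device ensures $\overline{\mathds P}_{\pi^*_0}(Y\in W_N)\to\alpha$ as $N\to\infty$ (continuous mapping gives $\|\cdot\|_2^2\to \chi^2_s$ in distribution, and the threshold is a continuity point of the $\chi^2_s$ CDF). Picking $N_0$ with $\overline{\mathds P}_{\pi^*_0}(Y\in W_N)\leq\alpha+\varepsilon/2$ for $N\geq N_0$ and combining with the a.s.\ convergence in $T$ yields the claimed eventual-in-$T$ statement.

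The main obstacle I expect is the double-indexing in $T$: both the rejection region $\widetilde W_N$ and the averaging defining $\zeta_{N,T}$ depend on the same running index, so one cannot directly invoke Proposition~\ref{prop:point-alternative} with a single fixed integrand. The cleanest remedy is the triangle-inequality split above, leveraging the crucial fact that on the finite space $\{0,1\}^N$ the pointwise stabilization $\mathds 1_{\widetilde W_N}\to \mathds 1_{W_N}$ automatically upgrades to uniform-in-$t$ stabilization after a random time $T^\star$, so the discrepancy term collapses deterministically rather than only in the Monte--Carlo sense.
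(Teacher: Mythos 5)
Your proposal is correct and follows essentially the same route as the paper, which proves this result by declaring it ``strictly analogous'' to Proposition~\ref{prop:test-selected}: almost-sure convergence of $\widetilde \pi^{\pi^*_0}$ and $\widetilde G_N$ via Proposition~\ref{prop:point-alternative}, a comparison of $\widetilde W_N$ with a deterministic region, and then Theorem~\ref{prop:CLT} to send $N\to\infty$. The only cosmetic difference is in the comparison step: where you invoke finiteness of $\{0,1\}^N$ to get exact stabilization of the indicators (with a threshold-perturbation fallback for boundary points), the paper works directly with that fallback, bounding the difference of the two test statistics uniformly over $\{0,1\}^N$ and dominating $\mathds 1_{\widetilde W_N}$ by the indicator of an enlarged region with quantile $\chi^2_{s,1-\alpha-\eta}$, which yields the one-sided bound $\limsup_N \overline{\mathds P}_{\pi^*_0}(Y\in W_N(\alpha+\eta))\leq \alpha+\eta$ that suffices for the stated conclusion.
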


\subsection{Calibration of SIGLE}

In the main paper, we have clearly stated that SIGLE procedures are calibrated by sampling under the null using the SEI-SLR algorithm or the rejection sampling method. In this section, we conduct some experiments to study the distribution under the null of the p-values of the SIGLE methods when we calibrate the tests by using the theoretical quantile given by Proposition~\ref{prop:test-selected} and Proposition~\ref{prop:test}.

\medskip 
{\bf A correct calibration under weak dependence.}
Our experiments have shown that calibrating the SIGLE procedures using our conditional CLTs from Section~\ref{sec:CLT} can lead to anti-conservative tests. This undesirable property was still observed when we conducted experiments with large values for $N$ (typically $N=30,000$). Based on our extensive simulations, we strongly believe that our conditional CLTs hold when the entries of the response vector $Y\sim \overline {\mathds P}_{\theta^*}$ are weakly dependent. To illustrate our conclusions, we conducted simulations with different regularization parameters $\lambda$ using the Setting 1 (cf. Table~\ref{table:settings}). In the first situation, a small regularization parameter is chosen (namely $\lambda=0.1$), leading to select $9$ out of the $10$ features. In the second situation, we choose $\lambda = 1$ leading to a set of active variables of size $8$. Figure~\ref{fig:cali-sigle} shows that for $\lambda=0.1$, SIGLE in the saturated model is correctly calibrated while SIGLE in the selected model is anti-conservative. When $\lambda$ is increased to $0.5$, we see that the SIGLE procedure in both the selected and the saturated model is anti-conservative.
\medskip

{\bf Despite the use the conditional CLTs for calibration, one still needs to sample under the null.}
Let us point out that calibrating the SIGLE procedure using the conditional CLTs from Section~\ref{sec:CLT} does not exempt us from sampling states using the rejection method or the SEI-SLR algorithm since we need to estimate $\overline G_N(\pi^* _0)$ (and $\overline \theta(\theta^*_0) $ in the selected model).

\begin{figure}[ht!]
\begin{center}
 \centering
    \begin{subfigure}[b]{0.49\textwidth}
        \centering
        \includegraphics[width=\textwidth]{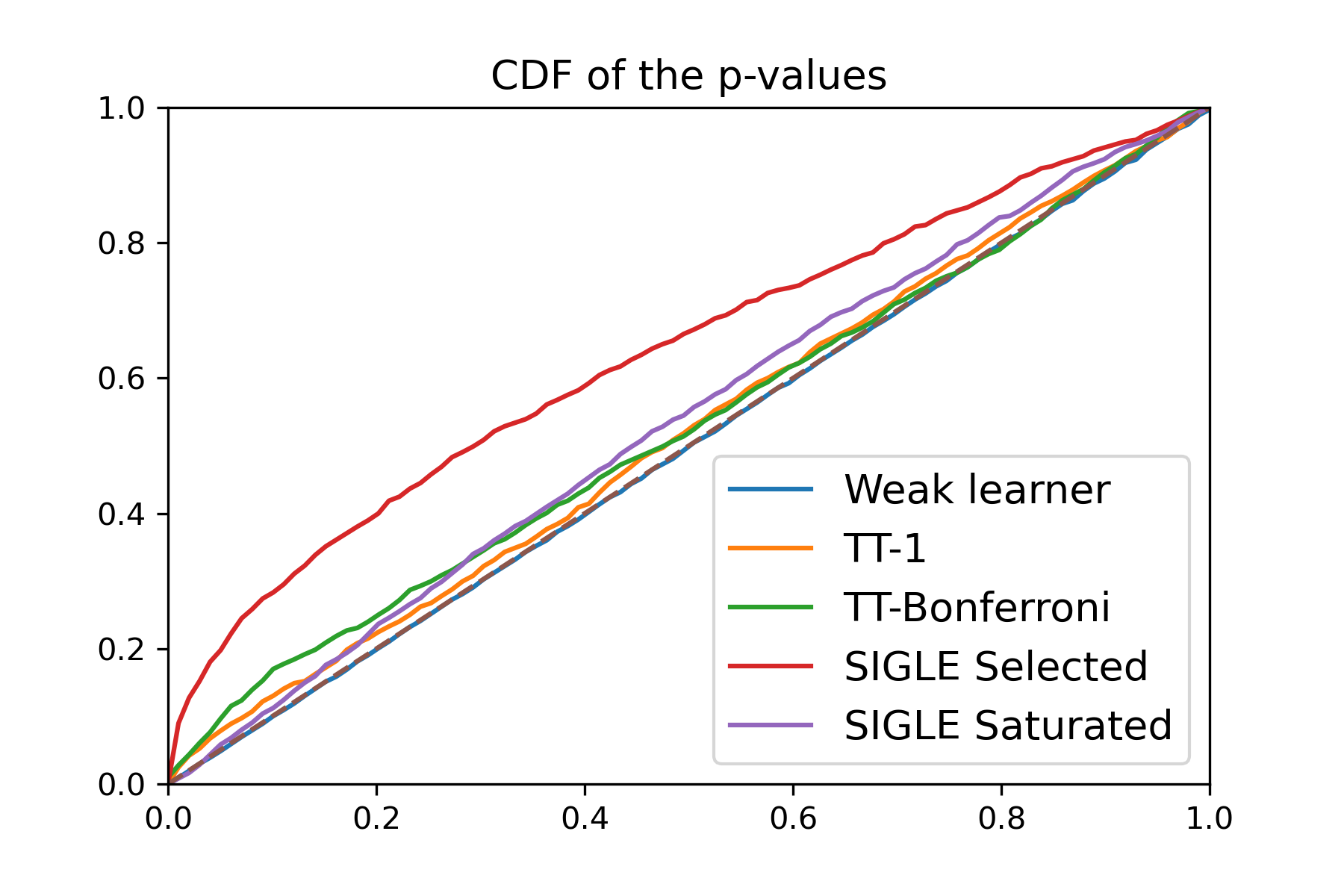}
        \caption[]%
        {{$\lambda=0.1$.}}    
    \end{subfigure}
    \hfill
     \centering
    \begin{subfigure}[b]{0.49\textwidth}
        \centering
        \includegraphics[width=\textwidth]{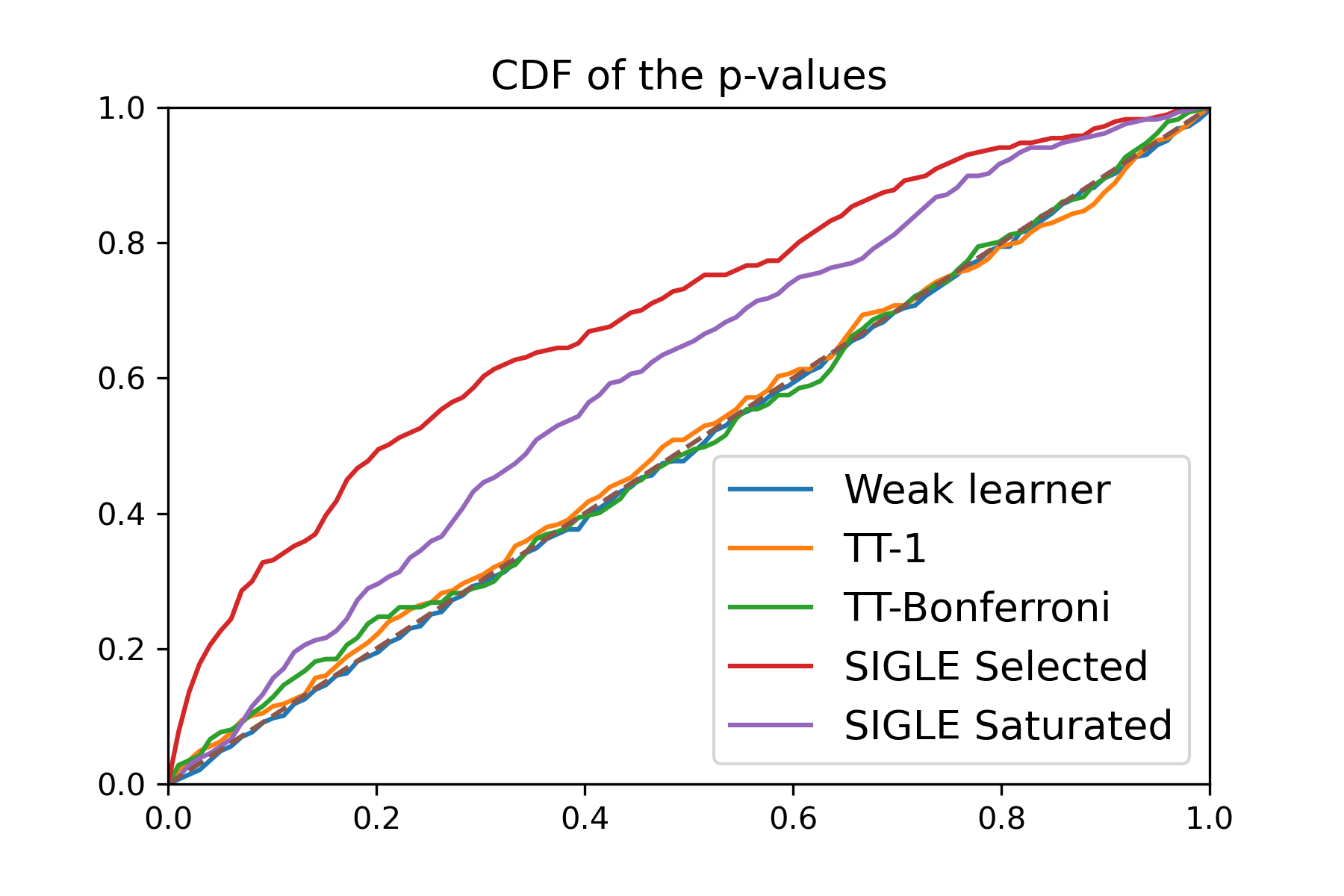}        \caption[]%
        {{\small $\lambda=0.5$.}}    
    \end{subfigure}
\caption{CDF of the p-values of the different testing procedures under the global null with the {\bf Setting 1} (cf. Table~\ref{table:settings}) for different regularization parameters $\lambda$.}
\label{fig:cali-sigle}\end{center}
\vskip -0.2in
\end{figure}

\section{Confidence region}
\label{apdxsec:cr}

\subsection{Asymptotic confidence region in the selected model}
\label{sec:CI-selected}

\subsubsection{Main result}

In the previous section, we proved that the MLE~$\widehat \theta$ satisfies a CLT with a centering vector that is not the parameter of interest~$\theta^*$. Two questions arises at this point. 
\begin{enumerate}
 \item How can we compute a relevant estimate for~$\theta^*$? \item Can we provide theoretical guarantees regarding this estimate?
\end{enumerate}
Proposition~\ref{thm:CI} answers both questions. It provides a valid confidence region with asymptotic level~$1-\alpha$ for any estimate~$\theta^{\bigstar}$ of~$\theta^*$ where the width of the confidence region is asymptotically driven by~$\|\overline \theta(\theta^{\bigstar})-\widehat \theta\|_2$. The proof of Proposition~\ref{thm:CI} can be found in Section~\ref{proof:thm:CI}.

\begin{proposition}
We keep notations and assumptions of Theorem~\ref{thm:MLEasymptotic} and we assume further that there exist $p\in[1,\infty]$ and $\kappa,R>0$ such that 
\[\theta^*\in \mathds B_p(0,R) \quad \text{ and } \quad\forall \theta \in \mathds B_p(0,R), \quad \lambda_{\min}(\overline  \Gamma^{\theta})\geq \kappa,\]
where~$\mathds B_p(0,R):=\{ \theta \in \mathds R^s \; |\; \|\theta\|_p\leq R\}$. Let us consider any estimator~$ \theta^{\bigstar} \in \mathds B_p(0,R)$ of~$\theta^*$. Then the probability of the event 
\[\| \theta^*-\theta^{\bigstar} \|_2\leq C\left(\kappa c \right)^{-1}\left\{  \|\overline \theta(\theta^{\bigstar} )-\widehat \theta\|_2+\|(\sigma^{\overline \theta})^{-2}\|_{\infty}\left(Nc^2/C\right)^{-1/2}\sqrt{\chi^2_{s,1-\alpha}}\right\},\]
tends to $1-\alpha$ as $N\to \infty$. We recall that $(\sigma^{\overline \theta})^2=\sigma'(\mathbf X_M \overline \theta(\theta^*))$.
\label{thm:CI}
\end{proposition}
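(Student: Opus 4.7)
The plan is to reduce the estimation error $\|\theta^*-\theta^\bigstar\|_2$ to a quantity the conditional CLT of Theorem~\ref{thm:MLEasymptotic} can control. The idea is to pass to the observable statistic through the map $\mathcal M:\theta\mapsto \mathbf X_M^\top\overline\pi^\theta$: if $\mathcal M$ is locally well-conditioned on $\mathds B_p(0,R)$, then distances in $\theta$-space can be compared to distances in $\mathcal M(\theta)$-space, which in turn rewrite via~$\Psi$ as distances in $\overline\theta(\theta)$-space, where the MLE provides a natural center.

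The first step is to compute $\nabla\mathcal M$. Differentiating $\overline\pi^\theta=(\sum_{w\in E_M}\mathds P_\theta(w))^{-1}\sum_{z\in E_M} z\,\mathds P_\theta(z)$ via the identity $\nabla_\theta\mathds P_\theta(z)=-L_N(\theta,(z,\mathbf X_M))\,\mathds P_\theta(z)$ and using $L_N(\theta,(z,\mathbf X_M))=\mathbf X_M^\top(\sigma(\mathbf X_M\theta)-z)$, a short calculation (the $\sigma(\mathbf X_M\theta)$ terms cancel after centering) yields $\nabla\mathcal M(\theta)=\mathbf X_M^\top\overline\Gamma^\theta\mathbf X_M$. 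This is the technical heart of the proof; I expect the main obstacle to be bookkeeping so that the cancellations go through cleanly. Combining the standing assumption $\lambda_{\min}(\overline\Gamma^\theta)\geq\kappa$ on $\mathds B_p(0,R)$ with the design bound $\lambda_{\min}(\mathbf X_M^\top\mathbf X_M)\geq cN$ gives $\lambda_{\min}(\nabla\mathcal M(\theta))\geq c\kappa N$ uniformly on $\mathds B_p(0,R)$.

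Since $\mathds B_p(0,R)$ is convex and contains both $\theta^*$ and $\theta^\bigstar$, the mean value identity $\mathcal M(\theta^*)-\mathcal M(\theta^\bigstar)=\big(\int_0^1\nabla\mathcal M(t\theta^*+(1-t)\theta^\bigstar)\,dt\big)(\theta^*-\theta^\bigstar)$ and the spectral lower bound above yield $\|\theta^*-\theta^\bigstar\|_2\leq (c\kappa N)^{-1}\|\mathcal M(\theta^*)-\mathcal M(\theta^\bigstar)\|_2$. Next, Eq.~\eqref{eq:gradbar} reads $\mathbf X_M^\top\overline\pi^\theta=\Xi(\overline\theta(\theta))$, so $\|\mathcal M(\theta^*)-\mathcal M(\theta^\bigstar)\|_2=\|\Xi(\overline\theta(\theta^*))-\Xi(\overline\theta(\theta^\bigstar))\|_2$; the Lipschitz bound $\sup_\eta\|\nabla\Xi(\eta)\|=\sup_\eta\|\mathbf X_M^\top\mathrm{Diag}(\sigma'(\mathbf X_M\eta))\mathbf X_M\|\leq \tfrac14 CN$ (using $\sigma'\leq 1/4$) then gives $\|\theta^*-\theta^\bigstar\|_2\leq C(\kappa c)^{-1}\|\overline\theta(\theta^*)-\overline\theta(\theta^\bigstar)\|_2$, after absorbing the factor $1/4$ into the constant.

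The final step is a triangle inequality through the MLE and an application of Theorem~\ref{thm:MLEasymptotic}. Writing $\|\overline\theta(\theta^*)-\overline\theta(\theta^\bigstar)\|_2\leq \|\overline\theta(\theta^\bigstar)-\widehat\theta\|_2+\|\widehat\theta-\overline\theta(\theta^*)\|_2$, the CLT of Theorem~\ref{thm:MLEasymptotic} states that $\|V_N(\theta^*)(\widehat\theta-\overline\theta(\theta^*))\|_2^2\leq \chi^2_{s,1-\alpha}$ with asymptotic probability $1-\alpha$, where $V_N(\theta^*):=[\overline G_N(\theta^*)]^{-1/2}G_N(\overline\theta(\theta^*))$. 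Using $\|[\overline G_N(\theta^*)]^{1/2}\|\leq (CN)^{1/2}$ and $\|G_N(\overline\theta)^{-1}\|\leq (cN)^{-1}\|(\sigma^{\overline\theta})^{-2}\|_\infty$, we obtain $\|V_N(\theta^*)^{-1}\|\leq \|(\sigma^{\overline\theta})^{-2}\|_\infty (Nc^2/C)^{-1/2}$, so that $\|\widehat\theta-\overline\theta(\theta^*)\|_2\leq \|(\sigma^{\overline\theta})^{-2}\|_\infty(Nc^2/C)^{-1/2}\sqrt{\chi^2_{s,1-\alpha}}$ holds with probability tending to $1-\alpha$. Plugging this into the previous chain delivers the stated confidence region.
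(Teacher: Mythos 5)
Your proposal is correct and follows essentially the same route as the paper's proof: the map $\mathcal M:\theta\mapsto\mathbf X_M^{\top}\overline\pi^{\theta}$ with $\nabla\mathcal M(\theta)=\mathbf X_M^{\top}\overline\Gamma^{\theta}\mathbf X_M$, the uniform spectral lower bound $c\kappa N$ on $\mathds B_p(0,R)$, the passage to $\overline\theta(\cdot)$-space via the Lipschitz bound on $\Xi=\Psi^{-1}$, and the triangle inequality combined with Theorem~\ref{thm:MLEasymptotic} and the operator-norm bound on $V_N(\theta^*)^{-1}$. All steps, including the cancellation in the gradient computation and the final constants, match the paper's argument.
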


\paragraph{Remarks.}  In Proposition~\ref{thm:CI}, note that the constants~$c$ and~$C$ can be easily computed from the design matrix. Nevertheless, we point out that the confidence region from Proposition~\ref{thm:CI} involves two constants (namely~$\kappa$ and~$\sigma^{\overline \theta}$) that cannot be {\it a priori} easily computed in practice. \\
Proposition~\ref{thm:CI} proves that when~$N$ is large enough, the size of our confidence region is driven by the distance~$\|\overline \theta(\theta^{\bigstar} )-\widehat \theta\|_2$. This remark motivates us to choose~$\theta^{\bigstar}$ among the minimizers of the function \[m:\theta \mapsto \|\overline \theta (\theta) - \widehat \theta\|_2^2 . \]
In the sake of minimizing~$m$, a large set of methods are at our disposal. In the next section, we propose a deep learning and a gradient descent approach for our numerical experiments.

\subsubsection{Simulations}

\paragraph{Deep learning method}

We train a feed forward neural network with ReLu activation function and three hidden layers. With this network, we aim at estimating any~$\theta \in \mathds R^s$ by feeding as input~$\overline \theta(\theta)$. We generate our training dataset by first sampling~$n_{train}=500$ random vectors~$\theta_i \sim \mathcal N(0,\mathrm{Id}_s)$, $i\in [n_{train}]$. Then, for any $i\in [n_{train}]$ we compute the estimate~$\widetilde \theta(\theta_i)$ of~$\overline \theta(\theta_i)$ as follows
\[ \widetilde \pi^{\theta_i} =  \frac{\sum_{t=1}^T \mathds P_{\theta_i}(Y^{(t)}) Y^{(t)}}{\sum_{t=1}^T \mathds P_{\theta_i}(Y^{(t)})} \quad \text{and} \quad \widetilde \theta(\theta_i) = \Psi(\mathbf X_M^{\top} \widetilde \pi^{\theta_i}),\]where $(Y^{(t)})_{t\geq 1}$ is the sequence generated from the SEI-SLR algorithm (see Algorithm \ref{algo1}). We train our network using stochastic gradient descent with learning rate~$0.01$ and~$500$ epochs. At each epoch, we feed to the network the inputs~$(\widetilde \theta(\theta_i))_{i \in [n_{train}]}$ with the corresponding target values~$(\theta_i)_{i \in [n_{train}]}$. We then compute our estimate~$\theta^{\bigstar}$ of~$\theta^*$ by taking the output of our network when taking as input the unpenalized MLE~$\widehat \theta$ using the design $\mathbf X_M$ (cf. Eq.\eqref{eq:uncondi-MLE-XM}). Figure~\ref{fig:NN} illustrates the result obtained from this deep learning approach. We keep the experiment settings of Section~\ref{ref:experiments} namely, we consider~$\vartheta^*=(1 \; 1 \; 0 \; \dots \; 0)^{\top} \in \mathds R^d$ and we choose the regularization parameter~$\lambda$ so that the selected model corresponds to the true set of active variables, namely $M=\{1,2\}$.

\begin{figure}[!ht]
\centering
\includegraphics[scale=0.4]{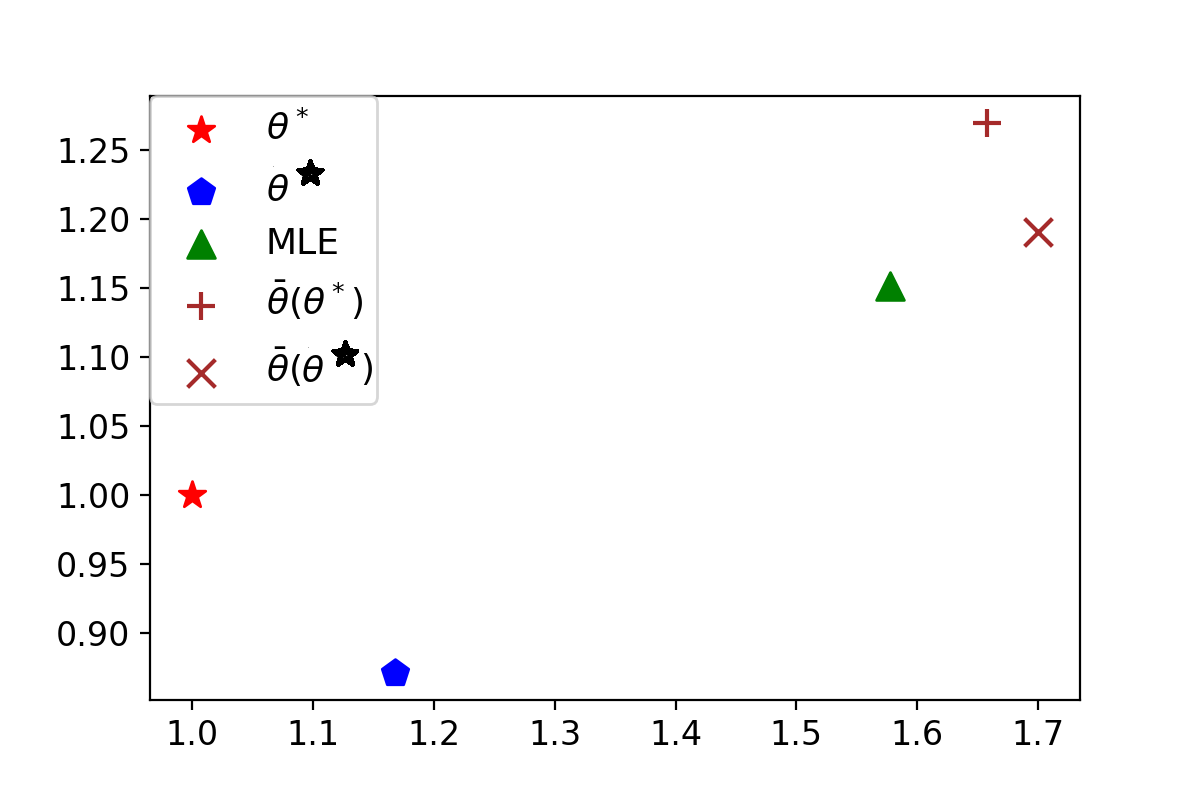}
\caption{Visualization of the results obtained using our deep learning approach to compute an estimate~$\theta^{\bigstar}$ (the blue hexagone) of~$\theta^*$ (the red star). $\theta^{\bigstar}$ corresponds to the output of the neural network when feeding as input the MLE~$\widehat \theta$ (the green triangle). We also plot the parameter~$\overline \theta(\theta^*)$ (the brown plus) and~$\overline \theta(\theta^{\bigstar})$ (the brown cross). }
\label{fig:NN}
\end{figure}

\paragraph{Gradient descent method}

As shown in the proof of the expression of Proposition~\ref{thm:CI} (cf. Eq.\eqref{eq:nable-pibar}), it holds \[\forall \theta \in \mathds R^s, \quad \nabla_{\theta} \overline \pi^{\theta}= \overline \Gamma^{\theta} \mathbf X_{M}.\] Recalling additionally that~$\overline \theta(\theta) = \Psi\left(\mathbf X_M^{\top} \overline \pi^{\theta}\right)$ (cf. Eq.\eqref{eq:gradbar}), we get that for any~$\theta \in \mathds R^s$,
\begin{align*}\nabla_{\theta} m(\theta) &=2\nabla_{\theta} \overline \theta (\theta) (\overline \theta (\theta) - \widehat \theta)\\
&=2\nabla \Psi(\mathbf X_M^{\top} \overline \pi^{\theta}) \mathbf X_M^{\top} \overline \Gamma ^{\theta} \mathbf X_M (\overline \theta (\theta) - \widehat \theta)\\
&=2\nabla \Psi(\mathbf X_M^{\top}  \pi^{\overline \theta(\theta)}) \mathbf X_M^{\top} \overline \Gamma ^{\theta} \mathbf X_M (\overline \theta (\theta) - \widehat \theta)\\
&=2\left( \mathbf X_M^{\top} \mathrm{Diag}( \pi^{\overline \theta(\theta)} \odot (1- \pi^{\overline\theta(\theta)})) \mathbf X_M\right)^{-1}\mathbf X_M^{\top} \overline \Gamma ^{\theta} \mathbf X_M (\overline \theta (\theta) - \widehat \theta).
\end{align*}
Hence,
\[ \nabla_{\theta} m(\theta)=2\left[ H_N(\overline \theta(\theta)) \right]^{-1}\mathbf X_M^{\top}\overline \Gamma ^{\theta} \mathbf X_M (\overline \theta (\theta) - \widehat \theta).\]
Given some $\theta$, $\overline \pi^{\theta}$ and $\overline \Gamma^{\theta}$ can be estimated using samples generated by the SEI-SLR algorithm (and thus the same holds for~$\overline \theta (\theta) = \Psi(\mathbf X_M^{\top} \overline \pi^{\theta})$ and for~$H_N(\overline \theta(\theta))$).

\begin{figure}[!ht]
\centering
\includegraphics[scale=0.36]{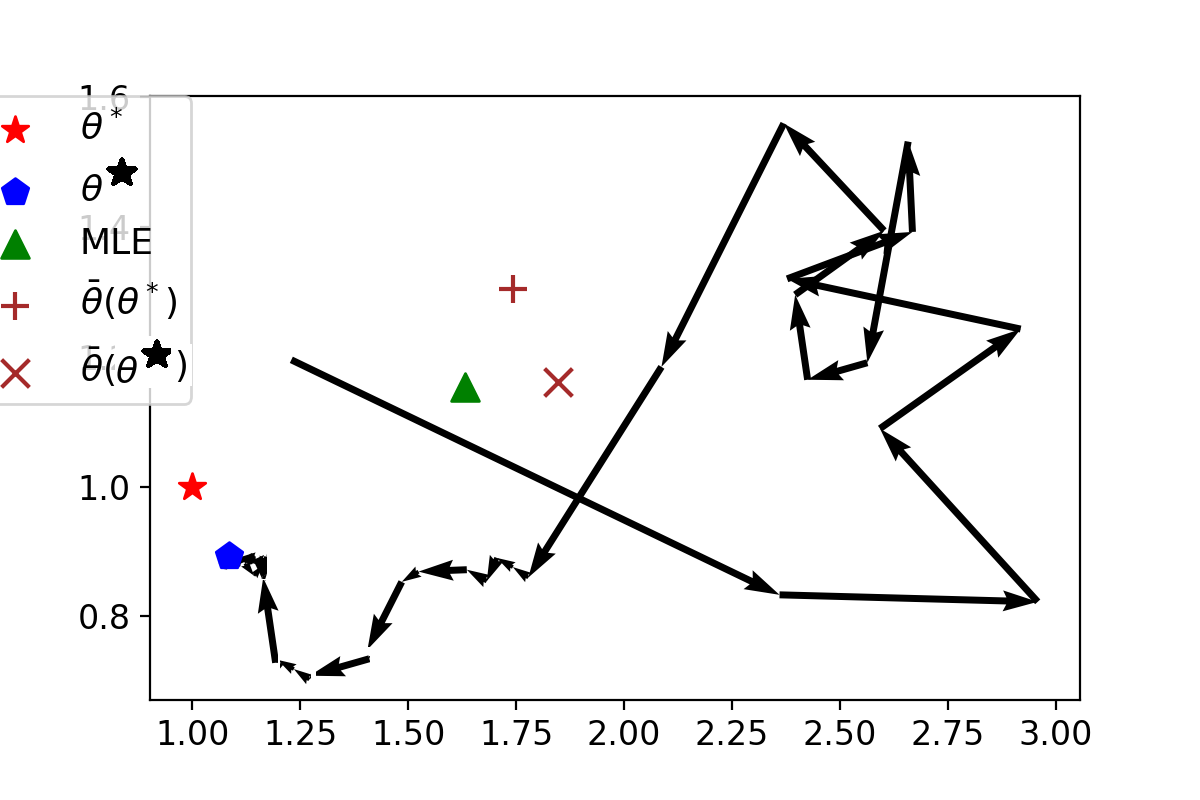}
\caption{Visualization of our gradient descent procedure to compute an estimate~$\theta^{\bigstar}$ (the blue hexagone) of~$\theta^*$ (the red star). The MLE~$\widehat \theta$ is the green triangle. We also plot the parameter~$\overline \theta(\theta^*)$ (the brown plus) and~$\overline \theta(\theta^{\bigstar})$ (the brown cross). }
\label{fig:gradstocha}
\end{figure}

\subsection{Asymptotic confidence region in the saturated model}

With Theorem~\ref{prop:CLT}, we proved that~$\mathbf X_M^{\top}Y$ with $Y$ distributed according to $\overline{\mathds P}_{\pi^*}$ satisfies a CLT with an asymptotic Gaussian distribution centered at $\mathbf X_M^{\top} \overline \pi^{\pi^*}$. Using an approach analogous to Section~\ref{sec:CI-selected}, we propose here to build an asymptotic confidence region for $\pi^*$. The proof of Proposition~\ref{thm:CI-saturated} is postponed to Section~\ref{sec:proof-CI-saturated}.

\begin{proposition}
We keep notations and assumptions of Theorem~\ref{prop:CLT} and we consider $\alpha \in (0,1)$. We assume further that there exist $p\in[1,\infty]$ and $\kappa,R>0$ such that 
\begin{align*}\pi^* &\in \mathds B_{p}\big(\frac{\mathbf 1_N}{2},R\big) \quad \text{ and } \quad\forall \pi \in  \mathds B_{p}\big(\frac{\mathbf 1_N}{2},R\big), \quad \lambda_{\min}(\overline  \Gamma^{\pi})\geq \kappa.\end{align*}
Let us consider any estimator~$ \pi^{\bigstar} \in  \mathds B_{p}(\frac{\mathbf 1_N}{2},R)$ of~$\pi^*$. Then the probability of the event
\[\| \pi^*- \pi^{\bigstar}\|_2\leq  (4\kappa)^{-1} \big\{\|\mathrm{Proj}_{\mathbf X_M}(Y-\overline \pi^{\pi^{\bigstar}})\|_2+Cc^{-1}\sqrt{\chi^2_{s,1-\alpha}}+\|\mathrm{Proj}^{\perp}_{\mathbf X_M}(\overline \pi^{\pi^*}-\overline \pi^{\pi^{\bigstar}})\|_2\big\},\]
tends to $1-\alpha$ as $N\to \infty$.
\label{thm:CI-saturated}
\end{proposition}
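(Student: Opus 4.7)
\textbf{Proof proposal for Proposition~\ref{thm:CI-saturated}.} The plan is to follow the blueprint already used for the selected model in the proof of Proposition~\ref{thm:CI} (and sketched in Table~\ref{table:CI}), adapting every step to the saturated setting where the target is $\pi^*\in(0,1)^N$ rather than a low-dimensional $\theta^*$. Concretely, I would introduce the conditional-mean map
\[
\mathcal R:\pi\in(0,1)^N\longmapsto\overline\pi^{\pi}\in\mathds R^N,
\]
show that $\mathcal R$ is smooth with a uniformly lower-bounded differential on the ball $\mathds B_p(\mathbf 1_N/2,R)$, and then invert this bound via the mean-value theorem to compare $\|\pi^*-\pi^\bigstar\|_2$ to $\|\overline\pi^{\pi^*}-\overline\pi^{\pi^\bigstar}\|_2$. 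The rest of the proof will split the latter norm through the projection $\mathrm{Proj}_{\mathbf X_M}$ and control the in-range component using the CLT of Theorem~\ref{prop:CLT}.

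\textbf{Step 1 (Jacobian of $\mathcal R$).} For $z\in\{0,1\}^N$ and $i\in[N]$, a direct computation gives $\partial_{\pi_i}\mathds P_{\pi}(z)=\mathds P_\pi(z)(z_i-\pi_i)/(\pi_i(1-\pi_i))$. Plugging this into $\overline\pi^\pi=\overline{\mathds E}_\pi[Y]=\sum_{z\in E_M}z\mathds P_\pi(z)/\sum_{z\in E_M}\mathds P_\pi(z)$ and using the usual score-function trick (as in Eq.\eqref{eq:nable-pibar} for the selected-model proof), one obtains
\[
\nabla\mathcal R(\pi)=\overline\Gamma^\pi\,\mathrm{Diag}(\pi\odot(1-\pi))^{-1}.
\]

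\textbf{Step 2 (Uniform lower bound on $\nabla\mathcal R$).} Because $\pi_i(1-\pi_i)\leq 1/4$, the diagonal factor satisfies $\mathrm{Diag}(\pi\odot(1-\pi))^{-1}\succeq 4\,\mathrm{Id}_N$, and by hypothesis $\lambda_{\min}(\overline\Gamma^\pi)\geq\kappa$ on the ball. Therefore, for any $v\in\mathds R^N$ and any $\pi\in\mathds B_p(\mathbf 1_N/2,R)$, $\|\nabla\mathcal R(\pi)v\|_2\geq 4\kappa\|v\|_2$. Writing $\mathcal R(\pi^\bigstar)-\mathcal R(\pi^*)=\int_0^1\nabla\mathcal R(t\pi^\bigstar+(1-t)\pi^*)(\pi^\bigstar-\pi^*)\,dt$ and using the convexity of $\mathds B_p(\mathbf 1_N/2,R)$, this yields $\|\overline\pi^{\pi^\bigstar}-\overline\pi^{\pi^*}\|_2\geq 4\kappa\|\pi^\bigstar-\pi^*\|_2$.

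\textbf{Step 3 (Splitting by $\mathrm{Proj}_{\mathbf X_M}$ and using the CLT).} By the Pythagorean identity and a triangle inequality,
\[
\|\overline\pi^{\pi^\bigstar}-\overline\pi^{\pi^*}\|_2\leq\|\mathrm{Proj}_{\mathbf X_M}(\overline\pi^{\pi^\bigstar}-Y)\|_2+\|\mathrm{Proj}_{\mathbf X_M}(Y-\overline\pi^{\pi^*})\|_2+\|\mathrm{Proj}^\perp_{\mathbf X_M}(\overline\pi^{\pi^\bigstar}-\overline\pi^{\pi^*})\|_2,
\]
matching the red/blue decomposition of Table~\ref{table:CI}. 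The middle term is the one I need to control probabilistically. Using $\|\mathrm{Proj}_{\mathbf X_M}v\|_2\leq\|\mathbf X_M(\mathbf X_M^\top\mathbf X_M)^{-1}\|\|\mathbf X_M^\top v\|_2\leq(CN)^{1/2}(cN)^{-1}\|\mathbf X_M^\top v\|_2$ together with the operator-norm bound $\|[\overline G_N(\pi^*)]^{1/2}\|\leq(CN)^{1/2}$, Theorem~\ref{prop:CLT} implies that with probability tending to $1-\alpha$ one has $\|\mathbf X_M^\top(Y-\overline\pi^{\pi^*})\|_2\leq(CN)^{1/2}\sqrt{\chi^2_{s,1-\alpha}}$, hence $\|\mathrm{Proj}_{\mathbf X_M}(Y-\overline\pi^{\pi^*})\|_2\leq Cc^{-1}\sqrt{\chi^2_{s,1-\alpha}}$. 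Dividing by $4\kappa$ the triangle-inequality display gives the announced CR.

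\textbf{Main obstacle.} The only conceptually nontrivial step is Step 1: one must justify the differentiation under the normalizing sum $\sum_{z\in E_M}\mathds P_\pi(z)$, rewrite the resulting expression as a conditional covariance, and get the clean identity $\nabla\mathcal R(\pi)=\overline\Gamma^\pi\mathrm{Diag}(\pi\odot(1-\pi))^{-1}$. Everything else is either a convexity argument on the ball, a Pythagorean/triangle manipulation, or a direct application of Theorem~\ref{prop:CLT}. Note that, in contrast to the selected-model proof, we do not need any information-matrix-type CLT on a finite-dimensional statistic here; the dimension-$N$ nature of $\pi^*$ is absorbed entirely through the orthogonal-complement term $\|\mathrm{Proj}^\perp_{\mathbf X_M}(\overline\pi^{\pi^\bigstar}-\overline\pi^{\pi^*})\|_2$, which is treated as a deterministic deviation of the estimator from the truth rather than a fluctuation controlled by the data.
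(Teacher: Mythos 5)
Your proposal follows the paper's proof essentially verbatim: the same Jacobian identity $\nabla\mathcal R(\pi)=\overline\Gamma^{\pi}\,\mathrm{Diag}(\pi\odot(1-\pi))^{-1}$ obtained by differentiating the conditional expectation, the same lower bound $\|\nabla\mathcal R(\pi)v\|_2\geq 4\kappa\|v\|_2$ inverted through the mean-value integral, the same projection/triangle decomposition, and the same application of Theorem~\ref{prop:CLT} with the bounds $\|[\overline G_N(\pi^*)]^{1/2}\|\leq (CN)^{1/2}$ and $\|\mathrm{Proj}_{\mathbf X_M}v\|_2\leq (CN)^{1/2}(cN)^{-1}\|\mathbf X_M^{\top}v\|_2$ yielding the $Cc^{-1}\sqrt{\chi^2_{s,1-\alpha}}$ term. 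This matches the paper's argument in both structure and detail, so the proposal is correct relative to the paper's own standard of rigor.
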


\paragraph{Remarks.}  
\begin{itemize}
\item Analogously to Section~\ref{sec:CI-selected}, Proposition~\ref{thm:CI-saturated} motivates us to choose~$\pi^{\bigstar}$ among the minimizers of the function \[M:\pi \mapsto \|\mathbf X_M^{\top}\overline \pi^{\pi}- \mathbf X_M^{\top} Y\|_2^2 . \]
As mentioned in the Section~\ref{sec:CI-selected}, one can rely for example on a deep learning or a gradient descent method in order to reach a local minimum $\pi^{\bigstar}$ for $M$.
\item The term $\|\mathrm{Proj}^{\perp}_{\mathbf X_M}(\overline \pi^{\pi^*}-\overline \pi^{\pi^{\bigstar}})\|_2$ arising in the confidence region from Proposition~\ref{thm:CI-saturated} illustrates that our conditional CLT from Theorem~\ref{prop:CLT} holds on $\mathbf X_M^{\top} Y$ and that we do not control what occurs in the orthogonal complement of the span of the columns of $\mathbf X_M$. Nevertheless, let us comment informally our result in the case where $E_M=\{0,1\}^N$ (meaning that there is no conditioning) and where $\vartheta^*$ is close to $0$ (meaning that $\pi^*$ is close to $\mathbf 1_N /2$). In this framework, $\overline {\Gamma}^{\pi} = \mathrm{Diag}(\pi \odot (1-\pi))$ is close to~$\frac14 \mathrm{Id}_N$ for $\pi$ in a small neighbourhood around $\mathbf 1_N /2$. Hence, we get that $\kappa$ is approximately~$\frac14$. Since it also holds that $\overline \pi^{\pi^*}-\overline \pi^{\pi^{\bigstar}}=\pi^*-\pi^{\bigstar}$ (since $E_M= \{0,1\}^N$),  we obtain from Proposition~\ref{thm:CI-saturated} that a CR for $\mathrm{Proj}_{\mathbf X_M} \pi^*$ with asymptotic coverage $1-\alpha$ is
\[\|\mathrm{Proj}_{\mathbf X_M} (\pi^*-\pi^{\bigstar})\|_2 \leq \|\mathrm{Proj}_{\mathbf X_M}(Y-\overline \pi^{\pi^{\bigstar}})\|_2+Cc^{-1}\sqrt{\chi^2_{s,1-\alpha}}.\]
\end{itemize}

\section{Side notes about SIGLE}
\label{apdx:advanced_discussion}
\subsection{SIGLE for a misspecified model from the start}
\label{apdx:advanced_discussion_misspe}

In this paper, we have considered the case where the observed data~$y_i \in \mathcal Y$ has indeed by generated from the GLM presented in Section~\ref{sec:1.1}. Can we extend the methods presented in this paper when we remove this assumption?

In this section, we consider that the~$y_i$'s are i.i.d. and distributed according to an arbitrary probability distribution~$\mathds P$. 

\subsubsection{SIGLE in the selected model}

In the case of a misspecified model from the start, the assumption made to be in the selected model is 
\[\sigma^{-1}(\mathds E[Y]) \in \mathrm{Im}(\mathbf X_M),\]
where the expectation is taken with respect to~$\mathds P$. 
We define 
\begin{equation}\label{apdx:misspe-theta*}\theta ^* \in \arg \min _{\theta \in \mathds R^s} \overline{\mathds E}\big[- \log \overline{ \mathds P}_{\theta}(Y) \big]. \end{equation}
$\mathds P_{\theta^*}$ can be understood as the probability distribution belonging to the GLM family with design matrix $\mathbf X_M$ leading to the conditional distribution $\overline{\mathds P}_{\theta^*}$ that is the closest possible to $\overline {\mathds P}$. More precisely, for any GLM distribution $\mathds P_{\theta}$, $\theta \in \mathds R^s$, we have
\[\mathrm {KL}(\overline {\mathds P} \mid \overline {\mathds P}_{\theta}) \geq \mathrm {KL}(\overline {\mathds P} \mid \overline {\mathds P}_{\theta^*}) .\] 

In the following, we reinterpret the methods of this paper relaxing the assumption that the model is well-specified from the start, as 
%
%
%
%
it might happen that the true initial distribution of the observation~$\mathds P$ does not belong to the GLM family. More precisely, considering the null hypothesis:
\[\mathds H_0: \quad \{ \overline {\mathds P}\equiv \overline{\mathds P}_{\theta^*_0}\}\,,\]
we can fall into one of the following cases:
\begin{enumerate}
\item If the model was well-specified initially, this means that there exists some~$\vartheta^* \in \mathds R^d$ such that~$\mathds P = \mathds P_{\vartheta^*}$ and thus~$\overline {\mathds P} \equiv \overline {\mathds P}_{\vartheta^*_M}$ (in the selected model). Namely, the null hypothesis is true for at least one parameter vector~$\theta^*_0 \in \mathds R^s$.
\item If the model was not well-specified initially but the null is true for some~$\theta^*_0 \in \mathds R^s$, this means that by conditioning on the selection event, we lost the information regarding the fact that the model was misspecified initially. 
\item If the model was not well-specified initially and the null is false for any~$\theta^*_0$, this means that~$\overline{\mathds P}$ still carries the information of the initial model misspecificity.
\end{enumerate}

\paragraph{A predictive viewpoint on SIGLE in the selective model.}

To obtain the SIGLE statistic in the selected model, we need to compute~$\overline \theta(\theta^*_0)$ which is defined by
\[\overline \theta(\theta_0^*) \in \arg \min_{\theta \in \mathds R^s} \overline{\mathds E}_{\theta^*_0} \big[-\log \mathds P_{\theta}(Y)\big] =\arg \min_{\theta \in \mathds R^s} \mathrm{KL}(\overline{\mathds P}_{\theta^*_0} \mid \mathds P_{\theta}) .\]
The question that we ask is how far is the distribution~$\mathds P_{\overline \theta(\theta^*_0)}$ from~$\overline {\mathds P}$. This can be of interest for a prediction task where one might want to use~$\overline \theta(\theta^*_0)$ to predict the response to new entries.

The best approximation of~$\overline {\mathds P}$ that we can get considering an unconditional GLM distribution of the form $\mathds P_{\theta}$ is~$\mathds P_{\vec \theta}$ where
\[\vec \theta \in \arg \min_{\theta \in \mathds R^s} \overline{\mathds E} \big[-\log \mathds P_{\theta}(Y)\big]= \arg \min_{\theta \in \mathds R^s} \mathrm{KL} (\overline{\mathds P} \mid  \mathds P_{\theta}).\]
Therefore, we want to compare the difference between the KL divergence between~$\overline {\mathds P}$ and~$\mathds P_{\vec \theta}$, and the KL divergence between~$\overline {\mathds P}$ and~$\mathds P_{\overline  \theta(\theta_0^*)}$. It holds

\begin{equation*}
\mathrm{KL}(\overline{\mathds P} \mid \mathds P_{\overline \theta(\theta^*_0)})=  \mathrm{KL}(\overline{\mathds P} \mid \mathds P_{\vec \theta} ) + \overline {\mathds E} \big[\log \frac{\mathds P_{\vec \theta}}{\mathds P_{\overline \theta( \theta^*_0)}}\big],
\end{equation*}
where~$\overline {\mathds E} \big[\log \frac{\mathds P_{\vec \theta}}{\mathds P_{\overline \theta( \theta^*_0)}}\big]\geq 0$ by definition of~$\vec \theta$. Therefore, $\overline {\mathds E} \big[\log \frac{\mathds P_{\vec \theta}}{\mathds P_{\overline \theta( \theta^*_0)}}\big]$ corresponds to the additional error we make in terms of KL divergence by working with the proxy~$\overline {\mathds P}_{\theta^*_0}$ instead of the true conditional distribution of the observations~$\overline{\mathds P}$. 

\bigskip

\tikzstyle{latent} = [circle,fill=white,draw=black,inner sep=1pt,minimum size=20pt, font=\fontsize{10}{10}\selectfont,node distance=3]
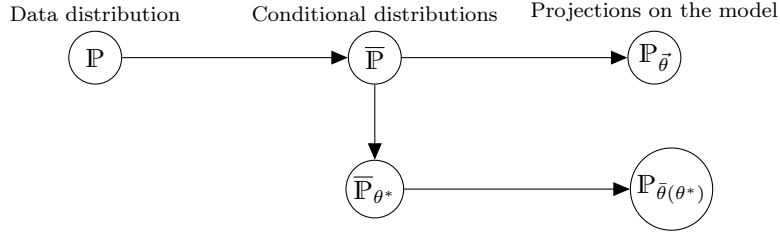
\begin{figure}
\centering
\begin{tikzpicture}
 \node[latent, label=above:{Data distribution}] (P) {$\mathds P$}; %
 \node[latent,right=of P, label=above:{Conditional distributions}] (barP) {$\overline {\mathds P}$};%
  \node[latent,below=of barP,yshift=2cm] (barPtheta) {$\overline {\mathds P}_{\theta^*}$};%
  \node[latent,right=of barP, label=above:{Projections on the model}] (vectheta) {$ {\mathds P}_{\vec \theta}$};%
  \node[latent,right=of barPtheta] (bartheta) {$ {\mathds P}_{\bar \theta( \theta^*)}$};%

 \edge[scale=3] {P} {barP}  
 \edge {barP} {barPtheta}
 \edge {barPtheta} {bartheta}
  \edge {barP} {vectheta}
\end{tikzpicture}
\caption{Visualizations of all distributions that we consider if the model is a priori not necessarily well-specified from the start.}
\end{figure}

\subsection{Inverting the first order optimality condition}
\label{apdx:inverting-PSI}

When characterizing the selection event $E_M^{S_M}$ (see Theorem~\ref{thm:ES}), we highlighted the crucial role of the diffeomorphism $\Xi=\Psi^{-1}$ arising in the first order optimality condition. In this section, we aim at presenting
\begin{itemize}
\item a
different view on $\Psi$ using tools from convex analysis,  
\item the practical methods we use to compute quantities involving $\Psi$ in our simple hypothesis testing method in the selected model.
\end{itemize}

\subsubsection{SIGLE through the lens of convex analysis}

Recalling the definition of the negative log-likelihood $\mathcal L_N(\theta, (Y,\mathbf X_M))$, we will denote in this section
\[\mathcal L_{N,0,M}(\theta) := \mathcal L_N(\theta,(0,\mathbf X_M))=\sum_{i=1}^N \xi(\langle \mathbf X_{i,M},\theta\rangle).\]
Let us recall that the Fenchel conjugate of the map $\mathcal L_{N,0,M}$ is defined by
\[\mathcal L_{N,0,M}^*:\rho\in \mathds R^s \mapsto \sup_{\theta \in \mathds R^s} \left\{ \langle \rho, \theta\rangle -\mathcal L_ {N,0,M}(\theta)\right\}.\]
Since $\xi$ is a convex and $C^{m+1}$ function, $\mathcal L_{N,0,M}$ is also a convex and a $C^{m+1}$ map which implies that $L_{N,0,M} = \nabla \mathcal L_{N,0,M}$ is a homeomorphism. We deduce that for any $\rho \in \mathds R^s,$
\begin{align*} \mathcal L_{N,0,M}^*(\rho ) &= \langle \rho, L_{N,0,M}^{-1}(\rho)\rangle - \mathcal L_{N,0,M}(L_{N,0,M}^{-1}(\rho)),\\
\nabla \mathcal L_{N,0,M}^*(\rho) &= L_{N,0,M}^{-1}(\rho).
\end{align*}
For any $Y\in \{0,1\}^N$, the unpenalized MLE $\widehat \theta$ with the design matrix $\mathbf X_M$ and the observed response $Y$ is given by (using the first order optimality condition)
\[\widehat \theta = L_{N,0,M}^{-1}(\mathbf X_M^{\top}Y).\]
We deduce that 
\[\widehat \theta = \nabla \mathcal L^*_{N,0,M}(\mathbf X_M^{\top}Y) .\]
Similarly, using Eq.\eqref{eq:gradbar} we get
\[\overline \theta(\theta^*) = \nabla \mathcal L_{N,0,M}^*(\mathbf X_M ^{\top} \overline \pi^{\theta^*}).\]
We deduce that $\Psi = \nabla \mathcal L_{N,0,M}^*$.

In order to provide a concrete interpretation of the function~$\Psi$, let us first characterize the Fenchel conjugate $\mathcal L^*_{N,0,M}$:
\begin{align}
\mathcal L^*_{N,0,M}(\rho) &= \sup_{\theta \in \mathds R^s} \left\{ \langle \rho, \theta\rangle -\mathcal L_ {N,0,M}(\theta)\right\}\notag\\
 &= \sup_{\theta \in \mathds R^s} \sum_{i=1}^N \left\{\rho_i\theta_i-\xi(\mathbf X_{i,M}\theta)\right\}\notag\\
&= (\sum_{i=1}^N f_i)^*(\rho)\notag\\
&= (f_1^* \square \cdots \square f_N^*)(\rho)\notag\\
&:= \min_{\rho=\rho^{(1)}+\dots+\rho^{(N)}} \left\{ f_1^*(\rho^{(1)}) + \dots+f_N^*(\rho^{(N)})\right\},\label{eq:infconv-rho}
\end{align}
where in the last equality we used \cite[Theorem 6.5.8]{laurent1972approximation} and where for any $i\in [N]$,
\[f_i:\theta \in \mathds R^s \mapsto \xi(\mathbf X_{i,M}\theta).\] 
Using Lemma~\ref{lemma:infconv}, we obtain that $\mathcal L^* _{N,0,M}(\rho)$ is the minimal entropy obtained among the vectors of probabilities $\pi \in (0,1)^N$ satisfying $\rho = \mathbf X_M^{\top} \pi$.

\begin{Lemma} \label{lemma:infconv}
The inf-convolution in Eq.\eqref{eq:infconv-rho} is attained for $(\rho^{(i)})_{i \in [N]} \in \left(\mathds R^s\right)^N$ such that for any $i\in [N]$, $\rho^{(i)}= \pi_i \mathbf X_{i,M}$ for some $\pi_i \in (0,1)$. Moreover,
\begin{align}
\mathcal L^*_{N,0,M}(\rho) &= \min_{\pi\in(0,1)^N \; s.t. \; \rho=\mathbf X_M^{\top} \pi} H(\pi),\label{eq:lemma:infconv}
\end{align}
where 
\[H(\pi) = \sum_{i=1}^N \{ \pi_i\ln(\pi_i) + (1-\pi_i) \ln(1-\pi_i)\}.\]
\end{Lemma}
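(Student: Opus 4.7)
Starting from the inf-convolution formula already displayed in Eq.\eqref{eq:infconv-rho}, namely
\[
\mathcal L^*_{N,0,M}(\rho) \;=\; \min_{\rho=\rho^{(1)}+\dots+\rho^{(N)}} \sum_{i=1}^N f_i^*(\rho^{(i)})\,,
\qquad f_i(\theta)=\xi(\mathbf X_{i,M}\theta)\,,
\]
the whole argument reduces to identifying $f_i^*$ explicitly and then rewriting the infimum. My plan is therefore in three steps.

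First, I would compute the Fenchel conjugate of the one-dimensional logistic cumulant $\xi(t)=\ln(1+e^t)$. Since $\xi$ is strictly convex and smooth, inverting $\xi'=\sigma$ on $(0,1)$ and plugging back in gives the standard identity $\xi^*(\pi)=\pi\ln\pi+(1-\pi)\ln(1-\pi)$ for $\pi\in[0,1]$ (with the convention $0\ln 0=0$), and $\xi^*(\pi)=+\infty$ otherwise. This is the Bernoulli entropy that will appear in the right-hand side.

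Second, I would lift this to $f_i$. Because $f_i$ depends on $\theta$ only through the scalar $\mathbf X_{i,M}\theta$, the map $\theta\mapsto\langle\rho^{(i)},\theta\rangle-\xi(\mathbf X_{i,M}\theta)$ is unbounded above whenever $\rho^{(i)}$ has any component orthogonal to $\mathbf X_{i,M}^{\top}$: one can push $\theta$ in that orthogonal direction while $\xi$ stays fixed. Hence $f_i^*(\rho^{(i)})<+\infty$ forces $\rho^{(i)}=\pi_i\,\mathbf X_{i,M}^{\top}$ for some scalar $\pi_i$. Substituting $t=\mathbf X_{i,M}\theta$ and using the fact that $t$ sweeps all of $\mathds R$ (as $\mathbf X_{i,M}\neq 0$; the degenerate rows contribute nothing anyway), the supremum collapses to $\xi^*(\pi_i)$, so $f_i^*(\pi_i\mathbf X_{i,M}^{\top})=\pi_i\ln\pi_i+(1-\pi_i)\ln(1-\pi_i)$ for $\pi_i\in[0,1]$ and $+\infty$ otherwise.

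Third, I plug these expressions back into the inf-convolution. Any decomposition $\rho=\sum_i\rho^{(i)}$ giving a finite sum must be of the form $\rho^{(i)}=\pi_i\mathbf X_{i,M}^{\top}$ with $\pi_i\in[0,1]$, and the constraint $\sum_i\rho^{(i)}=\rho$ becomes exactly $\mathbf X_M^{\top}\pi=\rho$. This yields
\[
\mathcal L^*_{N,0,M}(\rho)\;=\;\min_{\pi\in[0,1]^N,\ \mathbf X_M^{\top}\pi=\rho}H(\pi)\,,
\]
which is Eq.\eqref{eq:lemma:infconv} up to the open/closed distinction. The minor issue I expect to handle is precisely that boundary point: for $\rho\in\mathrm{Im}(\Xi)$, Proposition~\ref{prop:diffeo-g} provides $\theta\in\Theta_M$ with $\mathbf X_M^{\top}\sigma(\mathbf X_M\theta)=\rho$, so the choice $\pi=\sigma(\mathbf X_M\theta)\in(0,1)^N$ is admissible; because $H$ has gradient blowing up near $\{0,1\}^N$ (through $\ln(\pi_i/(1-\pi_i))$), the Lagrangian stationarity condition $\ln(\pi_i/(1-\pi_i))=\mathbf X_{i,M}\theta$ can only be satisfied in the open cube, so the minimum is attained in $(0,1)^N$. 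The main (mild) obstacle is only this last verification that the argmin stays in the interior, which is really a statement about the effective domain of $\xi^*$ and strict convexity. Everything else is a direct application of standard convex-analytic identities.
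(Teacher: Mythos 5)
Your proposal is correct and follows essentially the same route as the paper's proof: show that $f_i^*(\rho^{(i)})<+\infty$ forces $\rho^{(i)}$ into the span of $\mathbf X_{i,M}$ (by pushing $\theta$ along the orthogonal component), reduce $f_i^*$ to the one-dimensional conjugate $\xi^*=H$, and translate the inf-convolution constraint $\sum_i\rho^{(i)}=\rho$ into $\mathbf X_M^{\top}\pi=\rho$. Your additional care about the endpoints $\pi_i\in\{0,1\}$ is, if anything, slightly more precise than the paper, which simply sets $H(p)=+\infty$ outside the open interval $(0,1)$ and invokes the attainment of the inf-convolution from a cited theorem.
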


\begin{proof}[Proof of Lemma~\ref{lemma:infconv}]$ $\newline
\begin{itemize}
\item {\bf The inf-convolution in Eq.\eqref{eq:infconv-rho} is attained.}\\
First, we know from  \cite[Theorem 6.5.8]{laurent1972approximation} that the minimum in the inf-convolution of Eq.\eqref{eq:infconv-rho} is attained. 
\item {\bf $\rho^{(i)}$ in Eq.\eqref{eq:infconv-rho} can be chosen in the span of $\mathbf X_{i,M}$.}\\Let us consider $i\in [N]$ and some $\rho^{(i)} \in \mathds R^s$. Let us assume by contradiction that $\rho^{(i)} \notin \mathrm{Span}(\mathbf X_{i,M})$. Then considering
\[\theta^{(i)}(t):= t  \big(\mathrm{Id}-\frac{1}{\|\mathbf X_{i,M}\|^2_2}\mathbf X_{i,M}^{\top} \mathbf X_{i,M}\big) \rho^{(i)},\]
we have 
\begin{align*}
&\lim_{t \to +\infty} \left\{ \langle \rho^{(i)}, \theta^{(i)}(t)\rangle -\xi(\mathbf X_{i,M}\theta^{(i)}(t))\right\} =\lim_{t \to +\infty} \left\{ t [\rho^{(i)}]^{\top}\mathrm{Proj}^{\perp}_{\mathbf X_{i,M}}\rho^{(i)}-0\right\} = +\infty,
\end{align*}
which means that $f_i^*(\rho^{(i)})=+\infty$ since for any $t>0$ it holds 
\begin{align*}
f_i^*(\rho^{(i)}) &= \sup_{\theta \in \mathds R^s} \left\{ \langle \rho^{(i)}, \theta\rangle -\xi(\mathbf X_{i,M}\theta)\right\}\\
&\geq  \left\{ \langle \rho^{(i)}, \theta^{(i)}(t)\rangle -\xi(\mathbf X_{i,M}\theta^{(i)}(t))\right\}.
\end{align*}
We deduce that in the inf-convolution of Eq.\eqref{eq:infconv-rho}, we can consider that for any $i\in [N]$, $\rho^{(i)} = \pi_i \mathbf X_{i,M}$ for some $\pi_i \in \mathds R$. 
\item {\bf $\rho^{(i)}$ in Eq.\eqref{eq:infconv-rho} can be chosen as $\pi_i \mathbf X_{i,M}$ with $\pi_i \in (0,1)$.}\\
We have already proved that $\rho^{(i)}$ in Eq.\eqref{eq:infconv-rho} can be chosen as $\rho^{(i)}=\pi_i \mathbf X_{i,M}$. It holds
\begin{align*}
f_i^*(\pi_i \mathbf X_{i,M}) &= \sup_{\theta \in \mathds R^s} \left\{ \langle \pi_i \mathbf X_{i,M}, \theta\rangle -\xi(\mathbf X_{i,M}\theta)\right\}\\
 &=\sup_{\theta \in \mathds R^s} \left\{ \langle \pi_i, \mathbf X_{i,M}\theta\rangle -\xi(\mathbf X_{i,M}\theta)\right\}\\
&= \sup_{ r \in \mathds R} \left\{ \pi_i r - \xi(r)\right\}\\
&= \xi^*(\pi_i)\\
&= H(\pi_i),
\end{align*}
where we used that the Fenchel conjugate of the softmax function is the entropy $H$ defined by 
\[
H(p)= \left\{
    \begin{array}{ll}
        p\ln(p)+(1-p)\ln(1-p) & \mbox{if } p\in (0,1), \\
        +\infty & \mbox{otherwise.}
    \end{array}
\right.\]
\end{itemize}
Since in Eq.\eqref{eq:infconv-rho} we aim a reaching a minimum, we deduce from these computations that one can restrict $\rho^{(i)}$ to be of the form $\pi_i \mathbf X_{i,M}$ with $\pi_i \in (0,1)$.
\end{proof}

\noindent {\bf Interpretation of $\Psi(\rho)$.} Lemma~\ref{lemma:infconv} shows that~$\mathcal L^*_{N,0,M}(\rho)$ is the minimum entropy of a population characterized by~$N$ binary features with the constraint that we have some information on the population given by~$\rho \in \mathds R^s$. We assume that~$\rho$ depends linearly on the proportion of the population with the different features, namely
\[\rho = \mathbf X_M^{\top}\pi,\]
where for all~$i\in[N]$, $\pi_i$ represents the proportion of people with feature~$i$. Hence, given the observation of~$s$ aggregated properties about the population (namely~$\rho$), $\mathcal L^*_{N,0,M}(\rho)$ is the entropy corresponding to the most uniform allocation of the~$N$ binary features in the population. Hence, $\Psi(\rho)=\nabla \mathcal L_{N,0,M}^*(\rho)$ quantifies how much the entropy of this ideal description of the population is changed when a small shift in the observation of the~$s$ properties occurs.

\medskip
Taking a concrete example, one can consider that the $N$ features are the following:  age between 20 and 40, age between 40 and 60, age above 60, manager, manual labourer, lives in a big city, lives in a small town, ... The vector~$\rho$ represents the number of votes obtained by~$s$ different candidates during an election. We assume that the number of votes obtained by each candidate is a linear function of the proportion of the population with the different features. We observe only the number of votes obtained by each candidate. Then $\mathcal L^*_{N,0,M}(\rho)$ represents the entropy of the population assuming that the different features are distributed as uniformly as possible in the population. $\Psi(\rho)$ measures the variation of the entropy of the population when a small change in the number of votes obtained by the different candidates is observed.

\subsubsection{Practical implementation of SIGLE in the selected model}
\label{sec:sigle-graddescent}

The PSI method in the selected model for the $\ell^1$-penalized logistic regression proposed in this paper requires the ability to compute efficiently
\begin{itemize}
\item $\Psi(\mathbf X_M^{\top} Y)$ for any $Y\in \{0,1\}^N$,
\item $\Psi(\mathbf X_M^{\top} \overline \pi^{\theta_0^*})$.
\end{itemize}  
As already mentioned in Eq.\eqref{eq:gradbar}, for any $Y\in \{0,1\}^N$, $\Psi(\mathbf X_M^{\top} Y)$ corresponds to the unpenalized MLE $\widehat \theta$ computed using the design $\mathbf X_M$ (see Eq.\eqref{eq:uncondi-MLE-XM}). As a result, we compute $\Psi(\mathbf X_M^{\top} Y)$ by simply solving the unpenalized MLE for logistic regression using standard open source libraries (such as scikit-learn in Python where we remove the $\ell^2$-regularization which is applied by default). 

Solvers computing the MLE for logistic regression require - as far as we know - the response vector to have binary entries. As a consequence, a different approach is required to compute $\Psi(\mathbf X_M^{\top}\overline \pi^{\theta_0^*})$ since $\overline \pi^{\theta_0^*} \in (0,1)^N$. 
We found our method to be extremely accurate in our numerical experiments and it works as follows. First, we compute 
\[\theta^c \in \arg \min_{\theta \in \mathds R^s} \|\mathbf X_M \theta - \sigma^{-1}( \overline \pi^{\theta_0^*})\|_2^2,\]
and we end up with two possible cases:
\begin{enumerate}
\item Either it holds \begin{equation}\label{eq:condition-bartheta}\mathbf X_M^{\top} \sigma(\mathbf X_M \theta^c) = \mathbf X_M^{\top} \overline \pi ^{\theta_0^*},\end{equation} which is equivalent to $\overline \theta(\theta^*)=\theta^c$ (see Eq.\eqref{eq:gradbar}). In this case, we output $\theta^c$. Note that this situation occurs in particular when 
\[\sigma^{-1}(\mathbf X_M^{\top} \overline \pi^{\theta_0^*}) \in \mathrm{Im}(X_M),\]
which can be understood as a conditional selected model-type assumption.
\item Or Eq.\eqref{eq:condition-bartheta} does not hold and we consider a gradient descent approach using as warm start the vector $\theta^c$ to minimize the map
\[G:\theta \mapsto \|\mathbf X_M^{\top} \sigma(\mathbf X_M \theta) - \mathbf X_M^{\top} \overline \pi^{\theta_0^*}\|^2_2.\]
Note that the gradient of $G$ at $\theta \in \mathds R^s$ is given by
\[\nabla G(\theta) = 2 \mathbf X_M^{\top} \mathrm{Diag}(\sigma'(\mathbf X_M \theta)) \mathbf X_M  \mathbf X_M^{\top} \left( \sigma(\mathbf X_M \theta)-\overline \pi^{\theta_0^*}\right),\]
and satisfies
\[\forall \theta \in \mathds R^s, \quad \|G(\theta)\|_2 \leq \frac14 \|\mathbf X_M^{\top} \mathbf X_M \|  \times \|\mathbf X_M\|_{1,2}=:L_G,\]
where $\|\mathbf X_M\|_{1,2}:= \sqrt{\sum_{i=1}^N  \|\mathbf X_{i,M}\|_1^2}.$
\end{enumerate}
Our method is summarized with Algorithm~\ref{algo:bartheta}. 

\begin{algorithm}
\begin{algorithmic}[1]
    \STATE \text{\bf Input:} $ t_{\max}, \epsilon, \ell_r$
  \STATE $\theta^c \in \arg \min_{\theta \in \mathds R^s} \|\mathbf X_M \theta - \sigma^{-1}( \overline \pi^{\theta_0^*})\|_2^2$
  \IF{$G(\theta^c)<\epsilon$}
  \RETURN{$\theta^c$}
  \ELSE{ \STATE $\theta^{(0)} \leftarrow \theta^c$
  \STATE $t\leftarrow 0$\\
  \WHILE{$t<t_{\max}$ and $G(\theta^{(t)})> \epsilon$}
  \STATE $t \leftarrow t+1$
  \STATE $\theta^{(t)} \leftarrow \theta^{(t-1)} - \frac{\ell_r}{L_G} \nabla G(\theta^{(t-1)})$
  \ENDWHILE}
  \RETURN $\theta^{(t)}$
  \ENDIF
\end{algorithmic}
\caption{Computing $\overline \theta(\theta_0^*)$}
\label{algo:bartheta}
\end{algorithm}

\section{Proofs}
\label{apdx:proofs}

\subsection{Proof of Proposition~\ref{prop:signs}}
\label{proof-prop:signs}

Let us consider~$\vartheta_1,\vartheta_2$ two vectors in~$\Theta$ achieving the minimum in~\eqref{e:generalized_lasso}. Then, denoting~$\vartheta_3= \frac12 \vartheta_1 + \frac12 \vartheta_2$ it holds
\[\frac{\mathcal L_N(\vartheta_1,Z)+\mathcal L_N(\vartheta_2,Z)}{2} + \lambda \frac{\|\vartheta_1\|_1+\|\vartheta_2\|_1}{2} \leq \mathcal L_N(\vartheta_3,Z) + \lambda \|\vartheta_3\|_1.\]
Since the triangle inequality gives $ \|\vartheta_3\|_1 \leq \frac{\|\vartheta_1\|_1+\|\vartheta_2\|_1}{2} $ and since the function~$\xi$ 
is strictly convex, it holds that~$\mathbf X\vartheta_1=\mathbf X \vartheta_2$. Indeed, otherwise we would have by strict convexity
\begin{align*}&\mathcal L_N(\vartheta_3,Z)+\lambda \|\vartheta_3\|_1\\
= \quad & \sum_{i=1}^N \left(
\xi(\langle\mathbf x_i,\vartheta_3\rangle)
-\langle y_i\mathbf x_i,\vartheta_3\rangle \right)+ \lambda \|\vartheta_3\|_1\\
\leq \quad & \sum_{i=1}^N \left(
\xi(\langle\mathbf x_i,\frac{\vartheta_1+\vartheta_2}2\rangle)
-\frac12\langle y_i\mathbf x_i,\vartheta_1\rangle-\frac12\langle y_i\mathbf x_i,\vartheta_2\rangle \right)+ \frac12\lambda \|\vartheta_1\|_1  + \frac12\lambda \|\vartheta_2\|_1\\
<\quad &\frac{\mathcal L_N(\vartheta_1,Z)+\mathcal L_N(\vartheta_2,Z)}{2} + \lambda \frac{\|\vartheta_1\|_1+\|\vartheta_2\|_1}{2} .
\end{align*}
From the KKT conditions, we deduce that for a given $Y \in \mathcal Y^N$, all solutions~$\hat \vartheta^{\lambda}$ of~\eqref{e:generalized_lasso} have the same vector of signs denoted~$\widehat S(Y)$ which is given
\[\widehat S(Y) = \frac{1}{\lambda}\mathbf X^{\top} \left( Y - \sigma(\mathbf X \hat \vartheta^{\lambda}) \right) ,\]
where~$\hat \vartheta^{\lambda}$ is any solution to~\eqref{e:generalized_lasso}.

\subsection{Proof of Proposition~\ref{prop:EMSM-deformed-polytope}}
\label{proof-prop:EMSM-deformed-polytope}

Partitioning the KKT conditions of Eq.\eqref{eq:gKKTconditions} according to the equicorrelation set $\widehat M(Y)$ leads to
\begin{align*}
\mathbf X^{\top}_{\widehat M(Y)} \left( Y -\sigma(\mathbf X_{\widehat M(Y)} \hat{\vartheta}^{\lambda}_{\widehat M(Y)})\right)&=\lambda \widehat{{S}}_{\widehat M(Y)},\\
\mathbf X^{\top}_{-{\widehat M(Y)}} \left( Y -\sigma(\mathbf X_{\widehat M(Y)} \hat{\vartheta}^{\lambda}_{\widehat M(Y)})\right)&=\lambda \widehat{{S}}_{-{\widehat M(Y)}},\\
\mathrm{sign}(\hat{\vartheta}^{\lambda}_{\widehat M(Y)})&=\widehat {S}_{\widehat M(Y)},\\
\|\widehat {S}_{-{\widehat M(Y)}}\|_{\infty}&< 1 .\end{align*}
Since the KKT conditions are necessary and sufficient for a solution, we obtain
that~$Y$ belongs to~$E_M^{S_M}$ if and only if there exists~$\theta \in \Theta_M$ satisfying
 \begin{align*}
\mathbf X^{\top}_{M} \left( Y -\sigma(\mathbf X_{M} \theta)\right)&=\lambda  S_{M} ,\\
\mathrm{sign}(\theta)&= S_{M},\\
\| \mathbf X^{\top}_{-{M}} \left( Y -\sigma(\mathbf X_{M} \theta)\right)\|_{\infty}&< \lambda .
\end{align*}

\subsection{Proof of Proposition~\ref{prop:diffeo-g}}
\label{proof-prop:diffeo-g}

Let us consider~$\theta,\theta' \in \Theta_M$ such that~$\Xi(\theta)=\Xi(\theta')$.
Then we have
\begin{align}
0 &=\mathbf X_M^{\top} \sigma(\mathbf X_M \theta) - \mathbf X_M^{\top} \sigma(\mathbf X_M \theta') \notag\\
&= \Xi(\theta)-\Xi(\theta')\notag\\
&= \int_0^1 \nabla \Xi(\theta t + (1-t) \theta')\cdot (\theta-\theta') dt\notag\\
&= \int_0^1 \mathbf X_M^{\top} \mathrm{Diag}\left[ \sigma'(\mathbf X_M\theta t + (1-t) \mathbf X_M\theta')\right]\mathbf X_M (\theta-\theta') dt\notag\\
&= \mathbf X_M^{\top} \underbrace{\left(\int_0^1 \mathrm{Diag}\left[ \sigma'(\mathbf X_M\theta t + (1-t) \mathbf X_M\theta')\right] dt\right)}_{=:D} \mathbf X_M (\theta-\theta').\label{eq:Xi-proof}
\end{align}
Note that for any~$t\in[0,1]$ and for any~$i\in[N]$, $\left\{\sigma'(\mathbf X_M\theta t + (1-t) \mathbf X_M\theta') \right\}_{i} >0$ since~$\xi''(u)=\sigma'(u) >0$ for any~$u \in \mathds R$. We deduce that~$D \in \mathds R^{N\times N}$ is a diagonal matrix with strictly positive coefficients on the diagonal.
Eq.\eqref{eq:Xi-proof} gives that $\theta - \theta' \in \mathrm{Ker}(\mathbf X_M^{\top} D \mathbf X_M)$ which implies that $(\theta - \theta')^{\top} \mathbf X_M^{\top} D \mathbf X_M(\theta - \theta' )=0$. This means that 
\[\sum_{i =1}^N D_{i,i} \left[\mathbf X_M(\theta - \theta' )\right]_{i}^2=0.\]
Since $D_{i,i}>0$ for all $i\in [N]$, we get that $\mathbf X_M(\theta-\theta')=0$, i.e. $\mathbf X_M \theta = \mathbf X_M \theta'.$ Since $\mathbf X_M$ has full column rank, this leads to $\theta=\theta'.$

Since~$\Xi$ is injective and of class~$\mathcal C^{m}$ with a differential given by~$\nabla_{\theta}\Xi(\theta) = \mathbf X_M^{\top} \mathrm{Diag(\sigma'(\mathbf X_M\theta))}\mathbf X_M$ which is invertible at any~$\theta \in \Theta_M$ under the assumptions of Proposition~\ref{prop:diffeo-g}. Hence the global inversion theorem gives Proposition~\ref{prop:diffeo-g}.

\subsection{Proof of Theorem~\ref{prop:CLT}}
\label{proof:propCLT}

For the sake of brevity, we will simply denote $\overline G_N(\pi^*)$ by $\overline G_N$. Let us further denote $\mathbf X_M^{\top}= \left[ \mathbf w_1 \;|\; \mathbf w_2 \; |\; \dots \;| \; \mathbf w_N \right]$, where $\mathbf w_i = \mathbf x_{i,M}\in \mathds R^s$.

The proof of Theorem~\ref{prop:CLT} relies on~\cite[Theorem 1]{bardet2008dependent}. In the following, we check that all the assumptions of~\cite[Theorem 1]{bardet2008dependent} are satisfied. Denoting for any $i\in[N]$, $\xi_{i,N}= \overline G_N^{-1/2} \mathbf w_i (y_i-\overline \pi^{\pi^*}_i)$, it holds \[\overline G_N^{-1/2}\mathbf X_M^{\top} (Y-\overline \pi^{\pi^*})=\sum_{i=1}^N \overline G_N^{-1/2} \mathbf w_i (y_i-\overline \pi^{\pi^*}_i)=\sum_{i=1}^N \xi_{i,N}.\]
Let us also point that $\overline {\mathds E}_{\pi^*}[\xi_{i,N}]=0$. In the following, we will simply refer to $\xi_{i,N}$ as $\xi_i$ to ease the reading of the proof.
Let us denote further\[A_N = \sum_{i=1}^N \overline{\mathds E}_{\pi^*}\left( \|\xi_{i}\|^{3}_2\right).\] 
One can notice that\begin{align*}
\overline {\mathds E}_{\pi^*} \left(\|\xi_i\|^3_2\right)
=\overline {\mathds E}_{\pi^*}\big[(y_i-\overline \pi^{\pi^*}_i)^3\big] \|\overline G_N^{-1/2}\mathbf w_i\|^3_2
\leq \left( \frac{K}{\sqrt{c}\overline \sigma_{\min}}\right)^3 N^{-3/2} s^{3/2},
\end{align*}
where we used that
\[\|\overline G^{-1/2}_N\mathbf w_i\|_2^2\leq \|\overline G^{-1/2}_N\|^2 \times \|\mathbf w_i\|_2^2 \leq \|\overline G_N^{-1}\| (sK^2)\leq (c\overline \sigma_{\min}^2 N)^{-1}(sK^2).\]
We deduce that \[A_N \leq  \left( \frac{K}{\sqrt{c}\overline \sigma_{\min}}\right)^3 N^{-1/2} s^{3/2}.\]
Hence~$A_N \underset{N\to \infty}{\to} 0$ which the first condition that needed to be checked to apply~\cite[Theorem 1]{bardet2008dependent}.

Let us now check the second condition from that \cite{bardet2008dependent} that consists in identifying the appropriate asymptotic covariance matrix.
\begin{align*}
\sum_{i=1}^N \overline {Cov}_{\pi^*}(\xi_i) &=\sum_{i=1}^N \overline {\mathds E}_{\pi^*}\left[  \overline G_N^{-1/2} \mathbf w_i \mathbf w_i^{\top} \overline G_N^{-1/2} (y_i-\overline \pi^{\pi^*}_i)^2\right]\\
&= \sum_{i=1}^N\overline G_N^{-1/2} \mathbf w_i \underbrace{\overline {\mathds E}_{\pi^*}(y_i-\overline \pi^{\pi^*}_i)^2}_{=(\overline \sigma^{\pi^*}_i)^2} \mathbf w_i^{\top} \overline G_N^{-1/2}\\
&=\overline G_N^{-1/2}   \sum_{i=1}^N \mathbf w_i (\overline \sigma^{\pi^*}_i)^2 \mathbf w_i^{\top}   \overline G_N^{-1/2}\\
&=\overline G_N^{-1/2}  \mathbf X_M^{\top} \mathrm{Diag}\big((\overline \sigma^{\pi^*})^2\big) \mathbf X_M  \overline G_N^{-1/2}\\
&= \overline G_N^{-1/2} \overline G_N \overline G_N^{-1/2}\\
&= \mathrm{Id}_s.
\end{align*}

To apply \cite[Theorem 1]{bardet2008dependent}, it remains to check that the dependent Lindeberg conditions hold. For this, we consider some map $f\in \mathcal C_b^3(\mathds R^s,\mathds R)$ where $\mathcal C_b^3(\mathds R^s,\mathds R)$ is the set of functions from $\mathds R^s$ to $\mathds R$ with bounded and continuous partial derivatives up to order $3$. In the following, we denote \[W_i =  \overline G_N^{-1/2} (\mathbf X_{[i-1],M})^{\top}(Y - \overline \pi^{\pi^*})_{[i-1]}= \sum_{a=1}^{i-1}\xi_a.\]

\noindent {\bf First dependent Lindeberg condition.}

For any $i\in [N]$, let us consider $W_i'$ (resp. $\xi'_i$) an independent copy of the random vector $W_i$ (resp. $\xi_i$). Let us recall the following well-known result
\begin{Lemma}\label{lemma:copy}
Let us consider two real valued random variables $A,B$ on some probability space $(\Omega,\mathcal F,\mathds P)$. Let us consider $(A',B')$ an independent copy of the random vector $(A,B)$. Then it holds,
\[Cov(A,B) = \frac12 \mathds E\big[ (A-A')(B-B')\big].\]
\end{Lemma}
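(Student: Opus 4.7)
The plan is to expand the product $(A-A')(B-B')$ into its four terms and evaluate each expectation using the independence and distributional assumptions on $(A',B')$. Concretely, I would write
\[
\mathds E\big[(A-A')(B-B')\big] = \mathds E[AB] - \mathds E[AB'] - \mathds E[A'B] + \mathds E[A'B'].
\]
The first and last terms are straightforward: since $(A',B')$ has the same joint distribution as $(A,B)$, we have $\mathds E[A'B'] = \mathds E[AB]$.

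For the two cross terms, I would use the fact that $(A',B')$ is \emph{independent} of $(A,B)$. This implies in particular that $A$ is independent of $B'$ and that $A'$ is independent of $B$. Combined with the equality of marginal distributions $\mathds E[A']=\mathds E[A]$ and $\mathds E[B']=\mathds E[B]$, this yields
\[
\mathds E[AB'] = \mathds E[A]\mathds E[B'] = \mathds E[A]\mathds E[B], \qquad \mathds E[A'B] = \mathds E[A']\mathds E[B] = \mathds E[A]\mathds E[B].
\]

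Plugging these identities back gives $\mathds E[(A-A')(B-B')] = 2\big(\mathds E[AB] - \mathds E[A]\mathds E[B]\big) = 2\,\mathrm{Cov}(A,B)$, and dividing by two concludes. There is no real obstacle here; the only subtlety worth making explicit is the distinction between $(A',B')$ being an independent copy of the \emph{vector} $(A,B)$ (which is used in the cross terms to decouple $A$ from $B'$) versus of the marginals separately, but both interpretations coincide on the relevant cross-expectations and the identity holds in either case.
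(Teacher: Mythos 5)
Your proof is correct and is the standard argument; the paper itself does not prove this lemma at all --- it is invoked as a ``well-known result'' inside the proof of Theorem~\ref{prop:CLT} --- so your expansion of $(A-A')(B-B')$ into four terms, using equality of joint distributions for $\mathds E[A'B']=\mathds E[AB]$ and independence plus equality of marginals for the cross terms, supplies exactly the missing justification.

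One small caveat on your closing aside: the two ``interpretations'' do \emph{not} both make the identity true. The cross terms $\mathds E[AB']$ and $\mathds E[A'B]$ indeed only require the marginal laws of $A'$ and $B'$, but the term $\mathds E[A'B']$ genuinely uses that $(A',B')$ is a copy of the \emph{vector} $(A,B)$: if $A'$ and $B'$ were merely marginal copies with an arbitrary joint law, you would get $\mathds E[(A-A')(B-B')]=\mathrm{Cov}(A,B)+\mathrm{Cov}(A',B')$, which need not equal $2\,\mathrm{Cov}(A,B)$. This does not affect your proof, since the lemma's hypothesis is the vector version, but the claim that ``the identity holds in either case'' should be dropped.
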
Using Lemma~\ref{lemma:copy}, the Cauchy-Schwarz inequality and Jensen's inequalities, we get, 
\begingroup
\allowdisplaybreaks
\begin{align*}
&\sum_{k,l=1}^s \sum_{i=1}^N |\overline{Cov}_{\pi^*}(\frac{\partial^2 f}{\partial x_l\partial x_k}(W_i),(\xi_i)_k(\xi_i)_l)|\\
&=\sum_{k,l=1}^s \sum_{i=1}^N |\overline{Cov}_{\pi^*}(\frac{\partial^2 f}{\partial x_l\partial x_k}(W_i),(\xi_i)_k(\xi_i)_l)|\\
&=\sum_{k,l=1}^s \sum_{i=1}^N \frac12 |\overline{\mathds E}_{\pi^*} \left[ \left(\frac{\partial^2 f}{\partial x_l\partial x_k}(W_i)-\frac{\partial^2 f}{\partial x_l\partial x_k}(W'_i)\right)\left((\xi_i)_k(\xi_i)_l-(\xi'_i)_k(\xi'_i)_l\right) \right]|\\
&\leq \sum_{k,l=1}^s \sum_{i=1}^N\frac12 \|\nabla^3 f\|_{\infty} \overline {\mathds E}_{\pi^*}\left( \|W_i-W'_i\|_2 \times |(\xi_i)_k(\xi_i)_l-(\xi'_i)_k(\xi'_i)_l|\right)\\
&\leq \sum_{k,l=1}^s \sum_{i=1}^N\frac12 \|\nabla^3 f\|_{\infty} \sqrt{\overline {\mathds E}_{\pi^*}\left( \|W_i-W'_i\|_2^2 \right)}\times \sqrt{\overline {\mathds E}_{\pi^*}\left( |(\xi_i)_k(\xi_i)_l-(\xi'_i)_k(\xi'_i)_l|^2\right)}
\\
&\leq \sum_{k,l=1}^s \sum_{i=1}^N \|\nabla^3 f\|_{\infty} \sqrt{ \overline {\mathds Var}_{\pi^*}\left( \|W_i\|_2 \right)}\times  \sqrt{\overline {\mathds Var}_{\pi^*}\left(|(\xi_i)_k(\xi_i)_l|\right)}\\
&\leq s \sum_{i=1}^N \|\nabla^3 f\|_{\infty} \sqrt{ \overline {\mathds Var}_{\pi^*}\left( \|W_i\|_2 \right)}\times  \sqrt{\sum_{k,l=1}^s \overline {\mathds Var}_{\pi^*}\left(|(\xi_i)_k(\xi_i)_l|\right)},
\end{align*}
\endgroup
where in the last inequality we used Jensen's inequality. Let us upper-bound the terms $ \overline {\mathds Var}_{\pi^*}\left( \|W_i\|_2 \right)$ and $\sum_{k,l=1}^s \overline {\mathds Var}_{\pi^*}\left(|(\xi_i)_k(\xi_i)_l|\right)$ independently. We have
\begin{align*}
& \overline {\mathds Var}_{\pi^*}\left( \|W_i\|_2 \right)\\
&\leq\overline {\mathds E}_{\pi^*}\left( \|W_i\|_2^2 \right) \\
& = \overline {\mathds E}_{\pi^*}\left[(Y- \overline {\pi}^{\pi^*})^{\top}_{[i-1]} \mathbf X_{[i-1],M}  \overline G^{-1/2}_N \overline G^{-1/2}_N (\mathbf X_{[i-1],M})^{\top}(Y- \overline {\pi}^{\pi^*})_{[i-1]}\right]\\
&= \overline {\mathds E}_{\pi^*} \left[ \mathrm{Tr}\left(   \overline G^{-1/2}_N (\mathbf X_{[i-1],M})^{\top}(Y- \overline {\pi}^{\pi^*})_{[i-1]} (Y- \overline {\pi}^{\pi^*})^{\top}_{[i-1]} \mathbf X_{[i-1],M}  \overline G^{-1/2}_N \right) \right]\\
&= \mathrm{Tr}\left(   \overline G^{-1/2}_N (\mathbf X_{[i-1],M})^{\top}\overline \Gamma^{\pi^*}_{[i-1],[i-1]} \mathbf X_{[i-1],M}  \overline G^{-1/2}_N \right),
\end{align*}
and
\begin{align*}
&\sum_{k,l=1}^s\overline {\mathds Var}_{\pi^*}\left( |(\xi_i)_k(\xi_i)_l| \right) \\
& = \sum_{k,l=1}^s((\overline G_N^{-1/2})_{k,:}\mathbf w_i)^2 ((\overline G_N^{-1/2})_{l,:}\mathbf w_i)^2 \left\{\overline {\mathds E}_{\pi^*}\left[(y_i- \overline \pi^{\pi^*}_i)^4\right]-\overline {\mathds E}_{\pi^*}\left[(y_i- \overline \pi^{\pi^*}_i)^2\right]^2\right\}\\
&=\sum_{k,l=1}^s ((\overline G_N^{-1/2})_{k,:}\mathbf w_i)^2 ((\overline G_N^{-1/2})_{l,:}\mathbf w_i)^2 (\overline \sigma^{\pi^*}_i)^2 (1-2\overline \pi^{\pi^*}_i)^2\\
&= \|\overline G_N^{-1/2}\mathbf w_i\|_2^4(\overline \sigma^{\pi^*}_i)^2 (1-2\overline \pi^{\pi^*}_i)^2\\
&\leq K^4(c \overline \sigma_{\min}^2)^{-2}\frac{s^2}{N^2} (\overline \sigma^{\pi^*}_i)^2 (1-2\overline \pi^{\pi^*}_i)^2,
\end{align*}
where $(\overline \sigma^{\pi^*}_i)^2 = \overline \pi^{\pi^*}_i(1-\overline \pi^{\pi^*}_i)$. Hence, coming back the first Lindeberg condition, we have (forgetting to mention the constants $K,s,c,\overline \sigma_{\min}^2$ that do not depend on $N$, which is the sense of the symbol $\lesssim$),
\begin{align*}
&\sum_{k,l=1}^s \sum_{i=1}^N |\overline {Cov}_{\pi^*}(\frac{\partial^2 f}{\partial x_l\partial x_k}(W_i),(\xi_i)_k(\xi_i)_l)|\\
&\lesssim \frac{1}{N} \sum_{i=1}^N \|\nabla^3 f\|_{\infty} \sqrt{  \mathrm{Tr}\left(  \overline G^{-1/2}_N (\mathbf X_{[i-1],M})^{\top}\overline \Gamma^{\pi^*}_{[i-1],[i-1]} \mathbf X_{[i-1],M} \overline G^{-1/2}_N \right)(1-2\overline \pi^{\pi^*}_i)^2 (\overline \sigma^{\pi^*}_i)^2}  \\
&\leq  \frac{1}{N} \sum_{i=1}^N \|\nabla^3 f\|_{\infty} \sqrt{  \| \overline G^{-1}_N\|_{F} \|(\mathbf X_{[i-1],M})^{\top}\overline \Gamma^{\pi^*}_{[i-1],[i-1]} \mathbf X_{[i-1],M} \|_F(1-2\overline \pi^{\pi^*}_i)^2 (\overline \sigma^{\pi^*}_i)^2}  \\
&\lesssim  \frac{1}{N} \sum_{i=1}^N \|\nabla^3 f\|_{\infty} \sqrt{  \frac{1}{N} \|(\mathbf X_{[i-1],M})^{\top}\overline \Gamma^{\theta^*}_{[i-1],[i-1]} \mathbf X_{[i-1],M}  \|_F(1-2\overline \pi^{\pi^*}_i)^2 (\overline \sigma^{\pi^*}_i)^2}  \\
&\leq  \frac{1}{N^{3/2}} \|\nabla^3 f\|_{\infty} \sum_{i=1}^N  \sqrt{  \|(\mathbf X_{[i-1],M})^{\top}\overline \Gamma^{\pi^*}_{[i-1],[i-1]} \mathbf X_{[i-1],M} \|_F(1-2\overline \pi^{\pi^*}_i)^2 (\overline \sigma^{\pi^*}_i)^2}  ,
\end{align*}
where we used that $\|\overline G_N^{-1}\|_F \leq \sqrt s \|\overline G_N^{-1}\|\lesssim N^{-1} $ (since $\overline G_N^{-1}$ has rank $s$, see Section~\ref{assumption:design}).
Hence, the first dependent Lindeberg condition from \cite{bardet2008dependent} holds thanks to the assumptions made in Theorem~\ref{prop:CLT}.

\medskip

\noindent {\bf Second dependent Lindeberg condition.}\\
Using an approach analogous to the one conducted for the first dependent Lindeberg condition, one can obtain
\begin{align*}
&\sum_{l=1}^s \sum_{i=1}^N |\overline{Cov}_{\pi^*}(\frac{\partial f}{\partial x_l}(W_i),(\xi_i)_l)|\\
& \leq \sqrt s \sum_{i=1}^N \|\nabla^2 f\|_{\infty} \sqrt{ \overline {\mathds Var}_{\pi^*}\left( \|W_i\|_2 \right)}\times  \sqrt{\sum_{l=1}^s \overline {\mathds Var}_{\pi^*}\left(|(\xi_i)_l|\right)}\\
&\lesssim \frac{1}{\sqrt N}\|\nabla^2 f\|_{\infty} \sum_{i=1}^N  \sqrt{   \mathrm{Tr}\left(   \overline G^{-1/2}_N (\mathbf X_{[i-1],M})^{\top}\overline \Gamma^{\pi^*}_{[i-1],[i-1]} \mathbf X_{[i-1],M}  \overline G^{-1/2}_N \right)\left(1- 2 \overline \pi_i^{\pi^*}\right)^2(\overline \sigma^{\pi^*}_i)^2} \\
&\lesssim \frac{1}{\sqrt N}\|\nabla^2 f\|_{\infty} \sum_{i=1}^N  \sqrt{   \| \overline G^{-1}_N\|_F \|(\mathbf X_{[i-1],M})^{\top}\overline \Gamma^{\pi^*}_{[i-1],[i-1]} \mathbf X_{[i-1],M} \|_F\left(1- 2 \overline \pi_i^{\pi^*}\right)^2(\overline \sigma^{\pi^*}_i)^2} \\
&\lesssim   \frac{1}{ N} \|\nabla^2 f\|_{\infty} \sum_{i=1}^N  \sqrt{  \|(\mathbf X_{[i-1],M})^{\top}\overline \Gamma^{\pi^*}_{[i-1],[i-1]} \mathbf X_{[i-1],M} \|_F\left(1- 2 \overline \pi_i^{\pi^*}\right)^2(\overline \sigma^{\pi^*}_i)^2} ,
\end{align*}
where we used that
\begin{align*}
&\overline {\mathds Var}_{\pi^*}\left(|(\xi_i)_l|\right)\\
&= \overline {\mathds E}_{\pi^*}\left(|(\xi_i)_l|^2\right)-\left( \overline {\mathds E}_{\pi^*}|(\xi_i)_l|\right)^2\\
&= ((\overline G_N^{-1/2})_{l,:}\mathbf w_i)^2  \left\{\overline {\mathds E}_{\pi^*}\left((y_i- \overline \pi_i^{\pi^*})^2\right)- \left(\overline {\mathds E}_{\pi^*}|y_i- \overline \pi_i^{\pi^*}|\right)^2\right\}\\
&= ((\overline G_N^{-1/2})_{l,:}\mathbf w_i)^2  \left\{\overline \pi_i^{\pi^*}(1-\overline \pi_i^{\pi^*}) - \left( \overline \pi_i^{\pi^*}(1-\overline \pi_i^{\pi^*}) + (1-\overline \pi_i^{\pi^*}) \overline \pi_i^{\pi^*}\right)^2\right\}\\
&= ((\overline G_N^{-1/2})_{l,:}\mathbf w_i)^2  \overline \pi_i^{\pi^*}(1-\overline \pi_i^{\pi^*}) \left(1- 4 (1-\overline \pi_i^{\pi^*}) \overline \pi_i^{\pi^*}\right)\\
&= ((\overline G_N^{-1/2})_{l,:}\mathbf w_i)^2  (\overline \sigma_i^{\pi^*})^2 \left(1- 2 \overline \pi_i^{\pi^*}\right)^2\\
&\lesssim \frac{1}{N} (\overline \sigma_i^{\pi^*})^2 \left(1- 2 \overline \pi_i^{\pi^*}\right)^2.
\end{align*}
Assuming that \[\sum_{i=1}^N  \sqrt{  \|(\mathbf X_{[i-1],M})^{\top}\overline \Gamma^{\pi^*}_{[i-1],[i-1]} \mathbf X_{[i-1],M} \|_F\left(1- 2 \overline \pi_i^{\pi^*}\right)^2} \underset{N\to\infty}{=} o ( N),\]
we obtain applying~\cite[Theorem 1]{bardet2008dependent} the following CLT
\[\overline G_N^{-1/2} \mathbf X_M^{\top}(Y-\overline \pi^{\pi^*})\overset{(d)}{\underset{N\to + \infty}{\longrightarrow}} \mathcal N(0,\mathrm {Id}_s).\]

\subsection{Proof of Theorem~\ref{thm:MLEasymptotic}}
\label{proof:thmMLE}


To make the notations less cluttered, we will simply denote in the following~$\overline G_N(\theta^*)$ by~$\overline G_N$ and~$\overline \theta (\theta^*)$ by $\overline \theta$.

\paragraph{First step.}

We use Theorem~\ref{prop:CLT} where we established a CLT for
\[-L_{N}(\overline \theta,(Y,\mathbf X_M))= \mathbf X_M^{\top}(Y-\pi^{\overline \theta})=\mathbf X_M^{\top}(Y- \overline \pi ^{\theta^*})=\mathbf X_M^{\top}(Y-\overline \pi^{\pi^*}).\] 
Let us highlight that the first equality comes directly from the definition of $L_{N}(\overline \theta,(Y,\mathbf X_M))$ (see Section~\ref{sec:intro-conditional-MLE}), the second equality comes from Eq.\eqref{eq:gradbar} and the last equality holds since we work under the selected model meaning that $\pi^*=\sigma(\mathbf X\vartheta^*)=\sigma(\mathbf X_M\theta^*)$ (and thus that $\overline{\mathds P}_{\theta^*} \equiv \overline{\mathds P}_{\pi^*}$). Let us recall that to prove Theorem~\ref{prop:CLT}, we used a variant of the Linderberg CLT for dependent random variables proved by~\cite{bardet2008dependent}. The proof of Theorem~\ref{prop:CLT} is given in Section~\ref{proof:propCLT}.

\paragraph{Second step.}

We now prove that for any~$\epsilon> 0$ there is some~$\delta > 0$ such that when~$N$ is large
enough
\[\overline {\mathds P}_{\theta^*}\left( \text{there is }
\widehat \theta \in \mathcal N_N(\overline \theta,\delta)\text{ such that } L_{N}(\widehat \theta,(Y,\mathbf X_M))= 0\right) > 1 - \epsilon,\]
with $\mathcal N_N(\overline \theta,\delta) = \{\theta \;  :\;  \|\overline G_N^{1/2} (\theta - \overline \theta)\|_2 \leq \delta \}$. Stated otherwise, we will prove that there exist a constant $\delta>0$ and an integer $N_{\delta}\in \mathds N$ such that for any $N\geq N_{\delta}$, the following holds with high probability,
\begin{itemize}
\item the conditional MLE $\widehat \theta$ exists, 
\item the conditional MLE $\widehat \theta$ is contained in the ellipsoid $\mathcal N_N(\overline \theta, \delta)$ centered at $\overline \theta$.
\end{itemize}
Let us denote 
\begin{align*}F:\theta \in \mathds R^s \mapsto&\,\, \overline G_N^{-1/2} (L_{N}(\overline \theta,(Y,\mathbf X_M))-L_{N}(\theta,(Y,\mathbf X_M)))\\&=\overline G_N^{-1/2} \mathbf X_M^{\top} (\pi^{\overline \theta} - \pi^{\theta}).\end{align*} Note that $F$ is a deterministic function and does not depend on the random variable $Y$. Moreover we choose to leave implicit the dependence on $N$ of $F$. We also point out that it holds for any $\theta \in \mathds R^s$, \[ \nabla_{\theta} F( \theta)= -\overline G^{-1/2}_N \mathbf X_M^{\top} \mathrm{Diag}(\sigma'(\mathbf X_M \theta)) \mathbf X_M =-\overline G_N^{-1/2} H_N( \theta).\]
Hence~$F$ is a~$\mathcal C^1$ map with invertible Jacobian at any~$ \theta\in \mathds R^s$ and is injective (thanks to Proposition~\ref{prop:diffeo-g}). Applying the global inversion theorem, we deduce that~$F$ is a~$\mathcal C^1$-diffeomorphism from~$\mathds R^s$ to~$\mathds R^s$.

\bigskip

\begin{changebar}
\underline{Sketch of proof.}\\
In the following, we prove that for any $\epsilon$, we can choose $\delta>0$ such that for some $N_{\delta} \in \mathds N$ and for any $N\geq N_{\delta}$, it holds on some event $E_N$ satisfying $\overline {\mathds P}_{\theta^*}(E_N)\geq 1- \epsilon$,
\begin{align}&\overline G_N^{-1/2} L_N(\overline \theta,(Y,\mathbf X_M)) \in F(\mathcal N_N(\overline \theta,\delta))\notag\\
\Leftrightarrow \quad & \overline G_N^{-1/2}  (\underbrace{\mathbf X_M^{\top}\overline \pi^{\theta^*} }_{=\mathbf X_M^{\top}\pi^{\overline \theta}}-\mathbf X_M^{\top}Y) \in F(\mathcal N_N(\overline \theta,\delta)).\label{eq:proof-existenceMLE}\end{align}
This would mean (by definition of $F$) that on $E_N$, there exists some $\widehat \theta  \in \mathcal N_N(\overline \theta, \delta)$ such that $\overline G_N^{-1/2}L_N(\widehat \theta,(Y,\mathbf X_M))=0$ or equivalently that $L_N(\widehat \theta,(Y,\mathbf X_M))=0$. A sufficient condition for Eq.\eqref{eq:proof-existenceMLE} to hold is to check that on the event $E_N$ it holds
\begin{equation}\label{eq:in-ball}\|\overline G_N^{-1/2}L_N(\overline \theta,(Y,\mathbf X_M))\|_2 < \inf_{\theta \in \partial \mathcal N_N(\overline \theta,\delta)} \|F(\theta)\|_2, \end{equation}
where $\partial \mathcal N_N(\overline \theta,\delta) := \{ \theta \in \mathds R^s \, |\, \|\overline G_N^{1/2}(\theta - \overline \theta)\|_2=\delta\}.$ This sufficient condition is a direct consequence of Lemma~\ref{lemma:boundary} and Figure~\ref{fig:existenceMLE} gives a visualization of our proof strategy.
\end{changebar}

\begin{Lemma}\label{lemma:boundary}
Let $f: \mathds R ^s \to \mathds R^s$ be a $\mathcal C^1$-diffeomorphism from $\mathds R^s$ to $f(\mathds R^s)$. Then for any closed space $D \subset \mathds R^s$ it holds \[f(\partial D) = \partial f(D),\]
where for any set $U\subseteq \mathds R^s$, $\partial U= \overline U \backslash \mathring{U}$ with $\overline U$ the closure of the set $U$ and $\mathring U$ the interior of the set $U$.
\end{Lemma}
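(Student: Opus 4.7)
The plan is to verify the two inclusions $f(\partial D)\subseteq\partial f(D)$ and $\partial f(D)\subseteq f(\partial D)$ separately, relying on the fact that a $\mathcal C^1$-diffeomorphism $f:\mathds R^s\to f(\mathds R^s)$ is in particular a homeomorphism onto its image, and that this image is open in $\mathds R^s$ (by the inverse function theorem, since the Jacobian is invertible everywhere). Hence both $f$ and $f^{-1}$ are open maps between open subsets of $\mathds R^s$, and in particular $f(\mathring D)$ is open and $f^{-1}$ applied to any open set is open. The application we have in mind uses $D=\mathcal N_N(\overline\theta,\delta)$, which is a closed ellipsoid and therefore compact; continuity of $f$ then ensures that $f(D)$ is compact, hence closed in $\mathds R^s$, so that $\partial D=D\setminus\mathring D$ and $\partial f(D)=f(D)\setminus\mathring{f(D)}$ are directly comparable.

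For the first inclusion, I would pick $x\in\partial D$ and assume for contradiction that $f(x)\in\mathring{f(D)}$: choose an open neighborhood $V\subseteq f(D)$ of $f(x)$; since $V$ lies in the open set $f(\mathds R^s)$ and $f^{-1}$ is continuous there, $f^{-1}(V)$ is open in $\mathds R^s$, and by injectivity of $f$ one has $f^{-1}(V)\subseteq f^{-1}(f(D))=D$. This would place $x$ in $\mathring D$, contradicting $x\in\partial D$. Therefore $f(x)\in f(D)\setminus\mathring{f(D)}=\partial f(D)$.

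For the reverse inclusion, take $y\in\partial f(D)\subseteq f(D)$ and write $y=f(x)$ with $x\in D$. If $x$ lay in $\mathring D$, then $f(\mathring D)$ would be open (as the image of an open set under the open map $f$) and contained in $f(D)$, placing $y$ in $\mathring{f(D)}$ and contradicting $y\in\partial f(D)$. Hence $x\in\partial D$ and $y=f(x)\in f(\partial D)$.

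The only genuine subtlety is that the statement implicitly requires $f(D)$ to be closed in $\mathds R^s$, so that the boundary computed in the ambient space $\mathds R^s$ coincides with $f(D)\setminus\mathring{f(D)}$ and no spurious boundary arises from the possibility that $f(\mathds R^s)$ is a strict open subset of $\mathds R^s$. Compactness of $D$ (which is the only regime in which the lemma will be invoked) removes this difficulty at once; after that, the argument is a purely topological manipulation using nothing more than the fact that $f$ is a homeomorphism onto an open subset of $\mathds R^s$.
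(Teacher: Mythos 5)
Your proof is correct and rests, at bottom, on the same fact as the paper's one-line argument: a $\mathcal C^1$-diffeomorphism is a homeomorphism onto its image and therefore preserves interiors and boundaries. The paper stops there; you spell out the two inclusions explicitly, which is harmless. What you add that is genuinely valuable is the observation that the lemma as literally stated is false for general closed $D$ when $f(\mathds R^s)$ is a proper open subset of $\mathds R^s$: for $s=1$, $f=\arctan$ and $D=\mathds R$ one has $\partial D=\emptyset$ while $\partial f(D)=\{-\pi/2,\pi/2\}$. The reason is that the homeomorphism property only yields $f(\partial D)=\partial_{f(\mathds R^s)}f(D)$, the boundary relative to the image, which can be strictly smaller than the ambient boundary $\partial f(D)$ whenever $\overline{f(D)}$ meets $\partial f(\mathds R^s)$. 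Your remark that compactness of $D=\mathcal N_N(\overline\theta,\delta)$ forces $f(D)$ to be compact, hence closed in $\mathds R^s$ with closure contained in the open set $f(\mathds R^s)$, is exactly what makes the relative and ambient boundaries coincide, and it is needed here: the map $F$ to which the lemma is applied has bounded image (its values are built from $\sigma$, which takes values in $(0,1)$), so $F(\mathds R^s)$ really is a proper open subset of $\mathds R^s$ and the caveat is not vacuous. In short, your argument is a more careful version of the paper's, and it repairs a hypothesis the paper leaves implicit.
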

\begin{proof}
As a $\mathcal C^1$-diffeomorphism, $f$ is in particular a homeomorphism, and as such, it preserves the topological structures.
\end{proof}

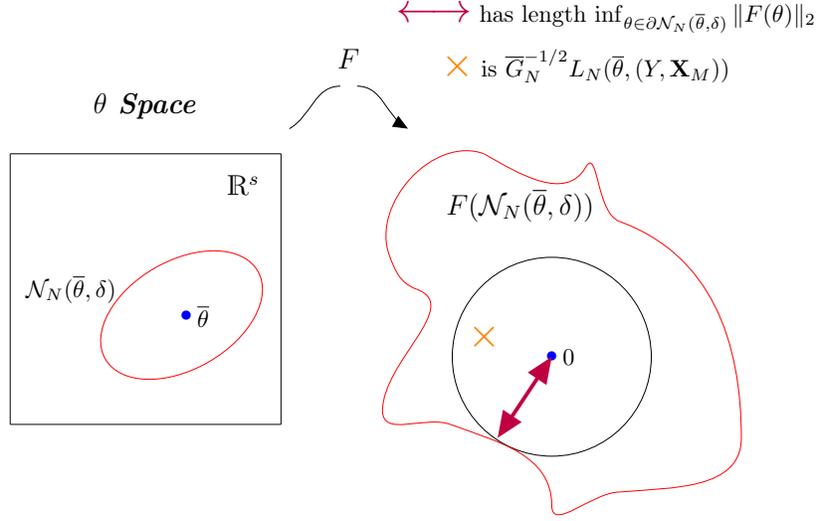
\begin{figure}[!ht]
\centering
\scalebox{0.9}{
\begin{tikzpicture}
    \node  (r3) at (0,5)  {};
    \node[label={[label distance=0.1cm]-135:{\large $\mathds R^s$}}]  (r2) at (4,5)  {};
    \node  (r1) at (4,1)  {};
    \node  (r0) at (0,1)  {};  
    \draw (r0.center) to (r1.center);
    \draw (r1.center) to (r2.center);
    \draw (r2.center) to (r3.center);
    \draw (r3.center) to (r0.center);

    \node (thetaspace) at (2,5.7) {{\bf \large $\theta$ \textit{Space}}};
    \draw[rotate=30,color=red] (3.5,1) ellipse (1.3cm and 0.8cm);
    \node[color=blue] (overlinepoint) at (2.6,2.6) {$\bullet$};
    \node  at ([xshift=0.25cm]$(overlinepoint)$) {$\overline \theta$};
    \node (X12) at (0.9,3) {$ \mathcal N_N(\overline \theta, \delta)$};

    \node  (c4) at (7,5)  {};
    \node[label={[label distance=0.8cm]70:{\large $F(\mathcal N_N(\overline \theta,\delta))$}}]   (c3) at (6,3)  {};
    \node  (c2) at (6.5,1)  {};
    \node  (c1) at (9,0)  {};
    \node  (c0) at (9,4)  {};  
    \node[color=blue] (X12) at (8,2) {$ \bullet$};
    \node (X12point) at ([xshift=0.25cm]$(X12)$) {$0$};
    \draw (8,2) ellipse (1.47cm and 1.47cm);
    \node[color=orange] (gn) at (7,2.3) {{\LARGE$ \times$}};
    \node (gnpt) at (8.5,6.3) {{\LARGE${\color{orange} \times}$} is $\overline G_N^{-1/2} L_N(\overline \theta,(Y,\mathbf X_M))$};
    \node (gnpt2) at (8.8,7) {{\LARGE${\color{purple} \longleftrightarrow}$} has length $\inf_{\theta \in \partial \mathcal N_N(\overline \theta,\delta)} \|F(\theta)\|_2$};
    \draw[color=purple,ultra thick,<->] (7.2,0.8) -- (8,2);

    \draw[color=red] (c1.center) to [out=5,in=-90]++(1.8,0.8) to[out=90,in=-20](c0.center);
    \draw[color=red] (c1.center) to [out=-180,in=-90]++(-1,-0.2) to[out=90,in=-20](c2.center);
    \draw[color=red] (c2.center) to [out=160,in=-90]++(-1,0.2) to[out=90,in=-20](c3.center);
    \draw[color=red] (c3.center) to [out=160,in=-70]++(-0.4,0.5) to[out=110,in=160](c4.center);
    \draw[color=red] (c4.center) to [out=-30,in=-120]++(1.5,-0.2) to[out=60,in=160](c0.center);

    \node (R) at ([xshift=-1cm,yshift=0.3cm]$(c4)$) {};
    \node (C) at ([xshift=-3cm,yshift=0.3cm]$(c4)$) {};
    \node[label={[label distance=0.02cm]90:{\large $F$}}]  (CR) at ([yshift=0.7cm]$(C)!0.5!(R)$) {};
    \draw[->] (C) to[out=30,in=-180]  (CR)
    to [out=0,in=150](R) ; 
    
\end{tikzpicture}
}
\caption{Visualization support for the proof of the existence of the MLE with large probability in a neighbourhood of $\overline \theta$.  We show that with large probability, the orange cross is in the black circle ({\it i.e.,} Eq.\eqref{eq:in-ball} holds) which implies that the orange cross belongs to $F(\mathcal N_N(\overline \theta, \delta))$ ({\it i.e.,} Eq.\eqref{eq:proof-existenceMLE} holds). The MLE is then defined as $\widehat \theta= F^{-1}(\overline G_N^{-1/2}L_N (\overline \theta, (Y,\mathbf X_M)) \in \mathcal N_N(\overline \theta,\delta)$.}
\label{fig:existenceMLE}
\end{figure}

\bigskip

Let $\epsilon>0$ and let us consider 
\begin{equation} \label{eq:defdelta}\delta:= \frac{ \mathfrak K^{1/2}}{\epsilon^{1/2}  2C^{-1}c \overline \sigma^2_{\min}},\end{equation} (the reason of this choice will become clear with Eq.\eqref{eq:choice-delta}). Let us first notice that for any $\theta \in \mathds R^s$,
\begin{align}
&   L_{N}(\overline \theta,(Y,\mathbf X_M)) - L_{N}( \theta,(Y,\mathbf X_M)) \\
&= \mathbf X_M^{\top}(\pi^{\overline \theta}-\pi^{ \theta})\\
&=  \underbrace{\int_0^1 H_N(t \overline \theta+(1-t)  \theta)dt}_{=:Q_N( \theta)} \;(\overline \theta -  \theta),\label{eq:intGN}\end{align}
where we used that the Jacobian of the map $\theta \mapsto \mathbf X_M^{\top} \pi^{\theta} = \mathbf X_M ^{\top}\sigma(\mathbf X_M\theta)$ is $\mathbf X_M \mathrm{Diag}(\sigma'(\mathbf X_M \theta)) \mathbf X_M = H_N(\theta)$. Recalling further that $\|\overline G_N^{-1/2}(\theta-\overline \theta)\|_2=\delta$ for any $\theta \in \partial \mathcal N_N(\overline \theta, \delta)$, it holds,
\begin{align}
&\inf_{\theta \in  \partial \mathcal N_N(\overline \theta,\delta)} \|F(\theta)\|_2 \notag\\
&= \inf_{\theta \in  \partial \mathcal N_N(\overline \theta,\delta)} \|\overline G_N^{-1/2} Q_N(\theta) (\theta- \overline \theta)\|_2\quad \text{(using Eq.\eqref{eq:intGN})} \notag\\
&= \inf_{\theta \in \partial \mathcal N_N(\overline \theta,\delta)} \|\overline G_N^{-1/2} Q_N(\theta) (\theta- \overline \theta) \|_2 \times \frac{\|\overline G_N^{1/2} (\theta-\overline \theta)\|_2}{\|\overline G_N^{1/2}  (\theta-\overline \theta)\|_2} \notag\\
&\geq \inf_{\theta \in \partial \mathcal N_N(\overline \theta, \delta)} \frac{(\theta-\overline \theta)^{\top} Q_N(\theta)(\theta-\overline \theta)}{\|\overline G_N^{1/2}  (\theta-\overline \theta)\|_2} \quad \text{(using the Cauchy Schwarz's inequality)} \notag\\
& =\delta \inf_{\theta \in \partial \mathcal N_
N(\overline \theta, \delta)} \frac{(\theta-\overline \theta)^{\top}\overline G_N^{1/2}}{\|\overline G_N^{1/2}  (\theta-\overline \theta)\|_2}\overline G_N^{-1/2}  Q_N(\theta)\overline G_N^{-1/2} \frac{\overline G_N^{1/2} (\theta-\overline \theta)}{\|\overline G_N^{1/2}  (\theta-\overline \theta)\|_2} \notag\\
&\geq \delta  \inf _{\|e\|_2=1,\theta \in \partial \mathcal N_N(\overline \theta,\delta)} e^{\top} \overline G_N^{-1/2} Q_N(\theta) \overline G_N^{-1/2} e \notag\\
&= \delta \inf _{\|e\|_2=1,\theta \in \partial \mathcal N_N(\overline \theta,\delta)} e^{\top} \overline G_N^{-1/2} \int_0^1H_N(t\overline \theta+(1-t) \theta)dt \overline G_N^{-1/2} e \notag\\
&= \delta \inf _{\|e\|_2=1,\theta \in \partial \mathcal N_N(\overline \theta,\delta)} \int_0^1 \left(e^{\top} \overline G_N^{-1/2} H_N(t\overline \theta+(1-t) \theta) \overline G_N^{-1/2} e\right) dt \notag\\
&\geq \delta  \inf _{\|e\|_2=1,\theta \in  \mathcal N_N(\overline \theta,\delta)} e^{\top} \overline G_N^{-1/2} H_N(\theta) \overline G_N^{-1/2} e \notag\\
&\geq \delta \left\{\inf _{\|e\|_2=1} e^{\top} \overline G_N^{-1/2} H_N(\overline \theta) \overline G_N^{-1/2} e - \mathcal C\frac{\delta}{N^{1/2}}\right\}=: \mathcal I_N(\delta,\overline \theta),\label{eq:lowerboundF}
\end{align}
where in the penultimate inequality we used that $\overline \theta \in \mathcal N_N(\overline \theta,\delta)$ and the convexity of $ \mathcal N_N(\overline \theta,\delta)$. In the last inequality, we used Lemma~\ref{lemma:supN} whose proof is postponed to Section~\ref{proof:lemmasupN}.

\begin{Lemma}\label{lemma:supN}Let us consider some $\delta>0$. Then for any $N\in\mathds N$ and for any unit vector $u \in \mathds R^s$, it holds
\[\sup_{\theta \in \mathcal N_N(\overline \theta,\delta)} |u^{\top}\overline  G_N^{-1/2}(H_N(\theta)-H_N(\overline \theta))\overline  G_N^{-1/2}
u | \leq \mathcal C \frac{\delta}{N^{1/2}},\]
where $\mathcal N_N(\overline \theta,\delta) = \{\theta \in \mathds R^s\;  :\;  \|\overline  G_N^{1/2} (\theta - \overline \theta)\|_2 \leq \delta \}$ and where $\mathcal C$ is a constant that only depends on the quantities $s,K,c,\overline \sigma^2_{\min}$ (that do not depend on $N$).
\end{Lemma}
To lower bound uniformly in $N$ the term $\mathcal I_N(\delta,\overline \theta)$, we notice that
\begin{align*}
&\inf _{\|e\|_2=1} e^{\top} \overline G_N^{-1/2} H_N(\overline \theta) \overline G_N^{-1/2} e \\
&= \inf _{\|e\|_2=1} \frac{e^{\top} \overline G_N^{-1/2}}{\|\overline G_N^{-1/2}e\|_2} H_N(\overline \theta)\frac{ \overline G_N^{-1/2} e}{\|\overline G_N^{-1/2}e\|_2} \|\overline G_N^{-1/2}e\|^2_2\\
&\geq \lambda_{\min}(H_N(\overline \theta)) \inf_{\|e\|_2=1} \|\overline G_N^{-1/2}e\|^2_2\\
&\geq\lambda_{\min}(H_N(\overline \theta)) \, \lambda_{\min}(\overline G_N^{-1})\\
&\geq \left(\overline \sigma^2_{\min} cN \right) \times \left(4C^{-1}N^{-1}\right) \\
&\geq 4C^{-1}c \overline \sigma^2_{\min},
\end{align*}
where we used that for any $i \in [N]$, $\sigma'(\mathbf x_{i,M} \overline \theta) \geq \overline \sigma^2_{\min}$.
Let us denote $N_{\delta}:=\ceil{\big(\frac{\mathcal C \delta}{2C^{-1}c \overline \sigma^2_{\min}}\big)^2}$ so that for any $N\geq N_{\delta}$ it holds
\[ \mathcal I_N(\delta,\overline \theta) \geq \delta2C^{-1}c \overline \sigma^2_{\min}.\]
Using Markov's inequality, we get that for any $N\geq N_{\delta}$,
\begin{align*}
&\overline {\mathds P}_ {\theta^*} ( \|\overline G_N^{-1/2} L_N(\overline \theta,(Y,\mathbf X_M))\|_2 \geq \mathcal I_N(\delta,\overline \theta))\\
&\leq (\mathcal I_N(\delta,\overline \theta))^{-2} \overline {\mathds E}_{\theta^*} ( \|\overline G_N^{-1/2} L_N(\overline \theta,(Y,\mathbf X_M))\|_2^2 )\\
&\leq (\mathcal I_N(\delta,\overline \theta))^{-2} \overline {\mathds E}_{\theta^*} ( (Y- \overline {\pi}^{\theta^*})^{\top} \mathbf X_M \overline G_N^{-1}\mathbf X_M^{\top} (Y- \overline  \pi^{\theta^*}) )\\
&= (\mathcal I_N(\delta,\overline \theta))^{-2} \overline {\mathds E}_{\theta^*} ( \mathrm{Tr}\left[(Y- \overline {\pi}^{\theta^*})^{\top} \mathbf X_M \overline G_N^{-1}\mathbf X_M^{\top} (Y- \overline  \pi^{\theta^*})\right] )\\
&= (\mathcal I_N(\delta,\overline \theta))^{-2} \overline {\mathds E}_{\theta^*} ( \mathrm{Tr}\left[\mathbf X_M \overline G_N^{-1}\mathbf X_M^{\top} (Y- \overline  \pi^{\theta^*})(Y- \overline {\pi}^{\theta^*})^{\top} \right] )\\
&= (\mathcal I_N(\delta,\overline \theta))^{-2}  \mathrm{Tr}\left[\mathbf X_M \overline G_N^{-1}\mathbf X_M^{\top} \overline {\Gamma}^{\theta^*} \right] \\
&=  (\mathcal I_N(\delta,\overline \theta))^{-2}   \mathrm{Tr}\left[\overline G_N^{-1}\mathbf X_M^{\top} \overline {\Gamma}^{\theta^*}\mathbf X_M  \right].
\end{align*}
Hence, it holds for any $N\geq N_{\delta}$,
\begin{align}
&\overline {\mathds P}_ {\theta^*} ( \|\overline G_N^{-1/2} L_N(\overline \theta,(Y,\mathbf X_M))\|_2 \geq \mathcal I_N(\delta,\overline \theta))\notag\\
&\leq  \frac{\mathrm{Tr}\left[\overline G_N^{-1}\mathbf X_M^{\top} \overline {\Gamma}^{\theta^*}\mathbf X_M  \right]}{ \mathcal I_N(\delta,\overline \theta)^2 }\notag\\
&< \frac{\mathfrak K }{\delta^2 (2C^{-1}c \overline \sigma^2_{\min})^2}\notag\\
&\leq \epsilon, \label{eq:choice-delta}
\end{align}
where the last inequality comes from the choice of $\delta$ (see Eq.\eqref{eq:defdelta}). From Eq.\eqref{eq:lowerboundF} and Eq.\eqref{eq:choice-delta}, we deduce that for any $N \geq N_{\delta}$, it holds 
\[\overline {\mathds P}_{\theta^*}(E_N) \geq 1-\epsilon,\]
where
\[E_N := \left\{\|\overline G_N^{-1/2}L_N(\overline \theta,(Y,\mathbf X_M))\|_2 < \inf_{\theta \in \partial \mathcal N_N(\overline \theta,\delta)} \|F(\theta)\|_2\right\}.\]
Hence, on the event $E_N$, we define $\widehat \theta = F^{-1}(\overline G_N^{-1/2}L_N(\overline \theta,(Y,\mathbf X_M)))$ which means by definition of $F$ that $\widehat \theta$ is the conditional MLE, namely
\[L_N(\widehat \theta, (Y, \mathbf X_M)) = 0.\]

\paragraph{Third and final step.} In the previous step, we proved that for~$N$ large enough, the MLE exists and is contained in an ellipsoid centered at~$\overline \theta$ with vanishing volume with high probability. Now we show how using this result to turn the CLT on~$L_{N}(\overline \theta,(Y,\mathbf X_M))$ from Theorem~\ref{prop:CLT} into a CLT for~$\widehat \theta$. 

We consider $N \geq N_{\delta}$ and we work on the event~$E_N$ of the previous step. Since~$L_{N}(\widehat \theta,(Y,\mathbf X_M))=0$ by definition of~$\widehat \theta$, we get that
\begin{align*}
L_{N}(\overline \theta,(Y,\mathbf X_M))& =   L_{N}(\overline \theta,(Y,\mathbf X_M)) - L_{N}(\widehat \theta,(Y,\mathbf X_M)) \\
&= \mathbf X_M^{\top}(\pi^{\overline \theta}-\pi^{\widehat \theta})\\
&=  \underbrace{\int_0^1 H_N(t \overline \theta+(1-t) \widehat \theta)dt}_{=Q_N(\widehat \theta)} \;(\overline \theta - \widehat \theta),\end{align*}
where we used that the Jacobian of the map $\theta \mapsto \mathbf X_M^{\top} \pi^{\theta} = \mathbf X_M \sigma(\mathbf X_M\theta)$ is $\mathbf X_M \mathrm{Diag}(\sigma'(\mathbf X_M \theta)) \mathbf X_M = H_N(\theta)$. From the Portmanteau Theorem \citep[cf.][Lemma 2.2]{van2000asymptotic}), we know that a sequence of $\mathds R^s$-valued random vectors $(X_n)_n$ converges weakly to a random vector $X$ if and only if for any Lipschitz and bounded function $h:\mathds R^s\to \mathds R$ it holds
\[\mathds E h(X_n) \underset{n\to \infty}{\to} \mathds E h(X).\] Hence, we consider a Lipschitz and bounded function $h:\mathds R^s\to \mathds R$. We denote by $L_h>0$ the Lipschitz constant of $h$. It holds for any $N\geq N_{\delta}$,
\begin{align}
&|\overline{\mathds E}_{\theta^*} [h(\overline G_N^{-1/2} H_N(\overline \theta)(\overline \theta-\widehat \theta))]-\overline{\mathds E}_{\theta^*}\big[h\big(\overline G_N^{-1/2}L_N(\overline \theta,(Y,\mathbf X_M))\big)\big]| \notag\\
&= |\overline{\mathds E}_{\theta^*} [h(\overline G_N^{-1/2} H_N(\overline \theta)(\overline \theta-\widehat \theta))]-\overline{\mathds E}_{\theta^*}[h(\overline G_N^{-1/2}Q_N(\widehat \theta)(\overline \theta - \widehat \theta))]| \notag\\
&\leq |\overline{\mathds E}_{\theta^*} \left[\mathds 1_{E_N}\left\{h(\overline G_N^{-1/2} H_N(\overline \theta)(\overline \theta-\widehat \theta))-h\big(\overline G_N^{-1/2}Q_N(\widehat \theta)(\overline \theta - \widehat \theta)\big)\right\}\right]|+ 2\|h\|_{\infty} \overline {\mathds P}_{\theta^*}(E_N^c) \notag\\
&\leq \overline{\mathds E}_{\theta^*} \big[ L_h \mathds 1_{E_N} \|\overline G_N^{-1/2}H_N(\overline \theta)(\overline \theta-\widehat \theta)-\overline G_N^{-1/2}Q_N(\widehat \theta)(\overline \theta - \widehat \theta)\|_2 \big]+2\|h\|_{\infty}\epsilon \notag\\
&\leq L_h\overline{\mathds E}_{\theta^*} \big[ \mathds 1_{E_N}\|\overline G_N^{-1/2}(H_N(\overline \theta)-Q_N(\widehat \theta))\overline G_N^{-1/2}\| \|\overline G_N^{1/2}(\overline \theta - \widehat \theta)\|_2 \big]+2\|h\|_{\infty}\epsilon \notag\\
&\leq L_h\delta \sup_{\theta \in \mathcal N_N(\overline \theta,\delta)} \|\overline G_N^{-1/2}(H_N(\overline \theta)-Q_N( \theta))\overline G_N^{-1/2}\| + 2\|h\|_{\infty}\epsilon, \label{eq:portemanteau1}
\end{align}
where we used that on the event $E_N$, $\widehat \theta \in \mathcal N_N(\overline \theta,\delta)$, i.e. $\|\overline G_N^{1/2}(\overline \theta - \widehat \theta)\|_2\leq \delta$. Moreover, for any $\theta' \in \mathcal N_N(\overline \theta,\delta)$ we have,

\begin{align}
&\|\overline G_N^{-1/2}(H_N(\overline \theta)-Q_N( \theta'))\overline G_N^{-1/2}\|\notag\\
&=\sup_{\|u\|_2=1} |u^{\top}\overline G_N^{-1/2}(H_N(\overline \theta)-Q_N( \theta'))\overline G_N^{-1/2}u|\notag\\
&\leq \sup_{\|u\|_2=1}\int_0^1 \left|u^{\top}\overline G_N^{-1/2}(H_N(\overline \theta)-H_N(t\overline \theta + (1-t)  \theta'))\overline G_N^{-1/2} u\right|dt\notag\\
&\leq \sup_{\|u\|_2=1}  \sup_{\theta \in \mathcal N_N(\overline \theta,\delta)}|u^{\top}\overline G_N^{-1/2}(H_N(\overline \theta)-H_N( \theta))\overline G_N^{-1/2}u|\notag\\
&\leq \mathcal C \frac{\delta}{N^{1/2}},\label{eq:portemanteau2}
\end{align}
where in the penultimate inequality we used the convexity of the set $\mathcal N_N(\overline \theta,\delta))$ and in the last inequality we used Lemma~\ref{lemma:supN} (which is proved in Section~\ref{proof:lemmasupN}). Using Eq.\eqref{eq:portemanteau1} and Eq.\eqref{eq:portemanteau2}, we deduce that for $G\sim \mathcal N(0,\mathrm{Id}_s)$ we have
\begin{align}
& |\overline{\mathds E}_{\theta^*} [h(\overline G_N^{-1/2} H_N(\overline \theta)(\overline \theta-\widehat \theta))] - \mathds E[h(G)]|\notag\\ 
&\leq |\overline{\mathds E}_{\theta^*} [h(\overline G_N^{-1/2} H_N(\overline \theta)(\overline \theta-\widehat \theta))]-\overline{\mathds E}_{\theta^*}\big[h\big(\overline G_N^{-1/2}L_N(\overline \theta,(Y,\mathbf X_M))\big)\big]|\notag\\
&\qquad +|\overline{\mathds E}_{\theta^*}\big[h\big(\overline G_N^{-1/2}L_N(\overline \theta,(Y,\mathbf X_M))\big)\big]-\mathds E[h(G)]|\notag\\
&\leq L_h\delta \mathcal C \frac{\delta}{N^{1/2}} + 2\|h\|_{\infty}\epsilon + |\overline{\mathds E}_{\theta^*}\big[h\big(\overline G_N^{-1/2}L_N(\overline \theta,(Y,\mathbf X_M))\big)\big]-\mathds E[h(G)]|.\label{eq:endproofCLT}
\end{align}
The CLT from Theorem~\ref{prop:CLT} states that
\[\overline G_N^{-1/2}L_N(\overline \theta,(Y,\mathbf X_M)) \overset{(d)}{\underset{N\to \infty}{\longrightarrow}} \mathcal N(0,\mathrm{Id_s}),\]
which means by the Portmanteau Theorem \citep[cf.][Lemma 2.2]{van2000asymptotic}) that
\[|\overline{\mathds E}_{\theta^*}\big[h\big(\overline G_N^{-1/2}L_N(\overline \theta,(Y,\mathbf X_M))\big)\big]-\mathds E[h(G)]| \underset{N\to +\infty}{\to}0.\]
We deduce that for any $\epsilon>0$ and for any Lipschitz and bounded function $h: \mathds R^s \to \mathds R$, one can choose $N$ large enough to ensure that the right hand side of Eq.\eqref{eq:endproofCLT} is smaller than $4\|h\|_{\infty} \epsilon$. Note that this is true since the constant $\delta$ does not depend on $N$. This concludes the proof thanks to the Portmanteau Theorem.

\subsection{Proof of Lemma~\ref{lemma:supN}}
\label{proof:lemmasupN}

Let us first recall that $ H_N(\overline \theta)=\mathbf X_M^{\top} \mathrm{Diag}(\sigma'(\mathbf X_M \overline \theta)) \mathbf X_M$ and that $\mathbf X_M^{\top}= \left[ \mathbf w_1 \;|\; \mathbf w_2 \; |\; \dots \;| \; \mathbf w_N \right]$, where $\mathbf w_i = \mathbf x_{i,M}\in \mathds R^s$. Let us consider some $\theta \in \mathcal N_N(\overline \theta,\delta)$. We have that
\begin{align}
H_N(\theta) - H_N(\overline \theta)& = \sum_{i=1}^N \mathbf w_i \left[ \sigma'(\mathbf w_i^{\top} \theta) -\sigma'(\mathbf w_i^{\top}\overline \theta)\right]\mathbf w_i^{\top}\notag\\
&=\sum_{i=1}^N \mathbf w_i \underbrace{\int _0^1 \sigma''(t\mathbf w_i^{\top} \theta + (1-t)\mathbf w_i^{\top} \overline \theta) dt}_{=: H_i} \mathbf w_i^{\top} (\theta-\overline \theta)\mathbf w_i^{\top}.\label{eq:GN-GNbar}
\end{align}
We get using Eq.\eqref{eq:GN-GNbar} that for any unit vector~$u \in \mathds R^s$,
\begin{align}
&|u^{\top}\overline G_N^{-1/2}
(H_N(\theta) - H_N(\overline \theta))\overline G_N^{-1/2}u|
\notag\\
&=\left|\sum_{i=1}^N u^{\top}\overline G_N^{-1/2} \mathbf w_iH_i\mathbf w_i^{\top} (\theta-\overline \theta)\mathbf w_i^{\top}
\overline G_N^{-1/2}u \right|\notag\\
&=\left|\sum_{i=1}^N \mathbf w_i^{\top} (\theta-\overline \theta) \times  u^{\top}\overline G_N^{-1/2} \mathbf w_iH_i\mathbf w_i^{\top}
\overline G_N^{-1/2}u\right|\notag\\
&=\left|\sum_{i=1}^N \mathbf w_i^{\top} (\theta-\overline \theta) \times  H_i |\mathbf w_i^{\top}
\overline G_N^{-1/2}u|^2\right|\notag\\
&\leq \max_{1\leq j\leq N} |\mathbf w_j^{\top} (\theta-\overline \theta)| \sum_{i=1}^N  |H_i| |\mathbf w_i^{\top}
\overline G_N^{-1/2}u|^2\notag\\
&= \max_{1\leq j\leq N} |\mathbf w_j^{\top} (\theta-\overline \theta)| \; \|\mathbf H^{1/2} \mathbf X_M^{\top}
\overline G_N^{-1/2}u\|_2^2,\label{eq:lemma-GN}
\end{align}
where $\mathbf H^{1/2} := \mathrm{Diag}((|H_i|^{1/2})_{i\in [N]})$. The proof is concluded by upper-bounding both terms involved in the product of the right hand side of Eq.\eqref{eq:lemma-GN}.
Using the assumption of the design matrix presented in Section~\ref{assumption:design} and recalling that $\theta \in \mathcal N_N(\overline \theta,\delta)$, we have
\begin{align*}
\max_{1\leq j\leq N} |\mathbf w_j^{\top} (\theta-\overline \theta)| &\leq \max_{1\leq j\leq N} \|\overline G_N^{-1/2}\mathbf w_j\|_2  \underbrace{\|\overline G_N^{1/2}(\theta-\overline \theta)\|_2}_{\leq \delta}\\
&= \delta  K \sqrt{(\overline \sigma^2_{\min} c)^{-1}s}  N^{-1/2},
\end{align*}
where we used that $\|\overline G_N^{-1/2}\|^2=\|\overline G_N^{-1}\|\leq (c\overline \sigma^2_{\min}N)^{-1}$ and that for any $i\in [N]$, $\|\mathbf w_i\|_2^2 \leq sK^2.$ Since $|H_i|\leq 1$ for any $i\in [N]$,
\begin{align*}
\|\mathbf H^{1/2} \mathbf X_M^{\top}
\overline G_N^{-1/2}u\|_2^2&\leq \| \mathbf X_M^{\top}
\overline G_N^{-1/2}u\|_2^2\\
&=\sum_{i=1}^N ( \mathbf w_{i}^{\top}
\overline G_N^{-1/2}u)^2\\
&\leq \sum_{i=1}^N \|
\overline G_N^{-1/2}\mathbf w_{i}\|_2^2 \leq (\overline \sigma_{\min}^2c)^{-1} s K^2,
\end{align*}
where in the penultimate inequality we used Cauchy-Schwarz inequality.

\subsection{Proof of Proposition~\ref{prop:test-selected}}
\label{proof:prop:test-selected}

For any $N \in \mathds N$, let us denote 
\begin{equation}\label{def:EN-MLE}\mathcal E_N:=\{ Z\in \{0,1\}^N\, |\, \mathbf X_M^{\top}Z \in \mathrm{Im}(\Xi)\}.\end{equation}
In order to clarify the notations of this proof, let us stress that we denote in the following by $\overline {\mathds P} _{\theta^*_0}$ the distribution of $Y$, $\mathds P_1$ the distribution of the sequence $(Y^{(t)})_{t\geq 1}$ and $\mathds P_2$ the distribution of $(Z^{(t)})_{t\geq 1}$. Let us consider some~$\epsilon>0$. 

\bigskip

\noindent {\bf Step 1:  $\mathds P_1$ almost sure convergences.}\\
From Proposition~\ref{prop:point-alternative}, we know that under the null $\mathds H_0$
 \begin{equation}\label{eq:cv-asp1}\frac{\sum_{t=1}^T Y^{(t)} \mathds P_{\theta^*_0}(Y^{(t)})}{\sum_{t=1}^T  \mathds P_{\theta^*_0}(Y^{(t)})} \underset{T \to \infty}{\to} \overline {\mathds E}_{\theta^*_0}\left[Y \right]=\overline \pi^{\theta^*_0} \quad \mathds P_1- \text{almost surely}.\end{equation}
Since~$\widetilde \pi^{\theta_0^*} \underset{T \to \infty}{\to} \overline \pi^{\theta^*_0}$ $\mathds P_1$-a.s., we know that $\mathds P_1$-a.s, there exists some~$T_1 \in\mathds N$ such that for any~$T\geq T_1$ it holds
\[\|\widetilde \pi^{\theta_0^*}\odot (1-  \widetilde \pi^{\theta^*_0}) - \overline \pi^{\theta^*_0}\odot (1-  \overline \pi^{\theta^*_0})\|_{\infty} <\epsilon,\] and since $(\overline \sigma^{\theta^*_0})^2\geq(\sigma_{\min})^2>0$, we get by continuity of the inverse of a matrix that $\mathds P_1$-a.s, there exists some~$T_2\in \mathds N$ such that for any~$T\geq T_2$, it holds \[\|\widetilde G_N^{-1} - \overline G_N^{-1} \|<\epsilon^2,\]
where we recall that 
\[\widetilde G_N = \mathbf X_M^{\top}\mathrm{Diag}(\widetilde \pi^{\theta^*_0} \odot (1-  \widetilde \pi^{\theta^*_0}) )\mathbf X_M,\]
and  \[\overline G_N = \mathbf X_M^{\top}\mathrm{Diag}(\overline \pi^{\theta^*_0} \odot (1-  \overline \pi^{\theta^*_0}) )\mathbf X_M. \]
From Eq.\eqref{eq:cv-asp1} and by continuity of the map~$\Psi$, we get that $\mathds P_1$-a.s.~$\widetilde \theta =\Psi(\mathbf X_M^{\top} \widetilde \pi^{\theta^*_0})\underset{T \to \infty}{\to}\Psi(\mathbf X_M^{\top} \overline \pi^{\theta^*_0})=\overline \theta(\theta^*_0)$ (see Eq.\eqref{eq:gradbar}). Hence, $\mathds P_1$-a.s, there exists some~$T_3\in \mathds N$ such that for any~$T\geq T_3$, it holds
\[\|\widetilde \theta - \overline \theta\|_2 \leq \epsilon.\]
Note that we left the dependence of~$\widetilde \pi^{\theta^*_0}$ and~$\widetilde \theta$ on~$T$ implicit. 
\medskip

\noindent {\bf Step 2: Comparing $\widetilde W_N$ and $W_N$.}\\
It holds for any $Z \in \mathcal E_N$,
\begin{align*}
&\left| \left\|\widetilde G_N^{-1/2}H_N(\widetilde \theta)\left( \Psi(\mathbf X_M^{\top}Z)-\widetilde \theta\right)\right\|_2-\left\|\overline G_N^{-1/2}H_N(\overline \theta)\left( \Psi(\mathbf X_M^{\top}Z)-\overline \theta\right)\right\|_2\right|\\
&\leq \left| \left\|\widetilde G_N^{-1/2}H_N(\widetilde \theta)\left( \Psi(\mathbf X_M^{\top}Z)-\overline \theta\right)\right\|_2-\left\|\overline G_N^{-1/2}H_N(\overline \theta)\left( \Psi(\mathbf X_M^{\top}Z)-\overline \theta\right)\right\|_2\right|\\
&\quad +\left\|\widetilde G_N^{-1/2}H_N(\widetilde \theta)\left(\overline \theta-\widetilde \theta\right)\right\|_2\\
&\leq \|\widetilde G_N^{-1/2}-\overline G_N^{-1/2}\|\|H_N( \widetilde \theta)\| \left\| \Psi(\mathbf X_M^{\top}Z)-\overline \theta\right\|_2\\
&\quad + \|\overline G_N^{-1/2}\|\|H_N( \widetilde \theta)-H_N(\overline \theta)\| \left\| \Psi(\mathbf X_M^{\top}Z)-\overline \theta\right\|_2 +\left\|\mathbf X_M^{\top}\mathbf X_M\right\| \|\overline \theta-\widetilde \theta\|_2.
\end{align*}
Using the Powers–Størmer inequality \citep[cf.][Lemma 4.1]{powersinequality} and denoting $\|M\|_1$ the Schatten 1-norm of any matrix $M$, it holds
\begin{align*}
&\|\widetilde G_N^{-1/2}-\overline G_N^{-1/2}\|^2\leq \|\widetilde G_N^{-1/2}-\overline G_N^{-1/2}\|^2_F\leq \|\widetilde G_N^{-1}-\overline G_N^{-1}\|_1\leq 2s \|\widetilde G_N^{-1}-\overline G_N^{-1}\|,
\end{align*}
where in the last inequality we used that $\widetilde G_N$ and $\overline G_N$ have rank at most $s$. Hence, $\mathds P_1$-a.s, for any~$T\geq T_N(\epsilon):= \max(T_1,T_2,T_3)$ it holds
\begin{align*}&\left| \left\|\widetilde G_N^{-1/2}H_N(\widetilde \theta)\left( \Psi(\mathbf X_M^{\top}Z)-\widetilde \theta\right)\right\|_2-\left\|\overline G_N^{-1/2}H_N(\overline \theta)\left( \Psi(\mathbf X_M^{\top}Z)-\overline \theta\right)\right\|_2\right|\\
&\leq \left\| \Psi(\mathbf X_M^{\top}Z)-\overline \theta\right\|_2\left\{\epsilon 2sCN +(c(\overline \sigma_{\min})^2N)^{-1/2}CN  \epsilon\right\}+CN\epsilon  =: \mathcal C_N(Z,\epsilon).
\end{align*}
We get that $\mathds P_1$-a.s, for any~$T\geq T_N(\epsilon)$ it holds
\begin{align*}&\sup_{Z \in \mathcal E_N}\left| \left\|\widetilde G_N^{-1/2}H_N(\widetilde \theta)\left( \Psi(\mathbf X_M^{\top}Z)-\widetilde \theta\right)\right\|_2-\left\|\overline G_N^{-1/2}H_N(\overline \theta)\left( \Psi(\mathbf X_M^{\top}Z)-\overline \theta\right)\right\|_2\right|\\
&\leq \sup_{Z \in \mathcal E_N}\mathcal C_N(Z,\epsilon)=:\mathcal C_N(\epsilon).\end{align*}

\medskip
\noindent {\bf Step 3: Conclusion.}\\
Let us consider some $\eta \in (0,1-\alpha)$. Since~$\mathcal C_N(\epsilon)$ goes to~$0$ as~$\epsilon\to 0$, we deduce that we can choose $\epsilon$ small enough such that $\mathds P_1$-a.s., for any~$T\geq T_N(\epsilon)$ it holds 
\begin{equation}\label{eq:indicator-rejection}\forall Z \in \mathcal E_N,\quad \mathds 1_{Z \in \widetilde W_N} \leq \mathds 1_{Z \in W_N(\alpha+\eta)},\end{equation}
where 

\setlength\tabcolsep{1.5pt}
\begin{tabular}{rll}
\multirow{2}{*}{$W_N(\alpha+\eta):=\Bigg\{Z \in \{0,1\}^N \; \Bigg| \;$} & $\diamond \, \, \mathbf X_M^{\top} Z \in \mathrm{Im}(\Xi)$&\multirow{2}{2mm}{$\Bigg\}$,}\\
&$\diamond \, \, \left\|[\overline G_N]^{-1/2}H_N(\overline \theta)\left( \Psi(\mathbf X_M^{\top}Z)-\overline \theta\right)\right\|^2_2 > \chi^2_{s,1-\alpha-\eta}$  &
\end{tabular}
\smallskip

\noindent Recalling the definition of $\mathcal E_N$ from Eq.\eqref{def:EN-MLE} and using the definitions of $W_N(\alpha+\eta)$ and $\widetilde W_N$, it also holds trivially
\begin{equation}\label{eq:indicator-rejection2}\forall Z \in \{0,1\}^N \backslash \mathcal E_N,\quad 0= \mathds 1_{Z \in \widetilde W_N} \leq \mathds 1_{Z \in W_N(\alpha+\eta)}=0.\end{equation}
Using both Eq.\eqref{eq:indicator-rejection} and Eq.\eqref{eq:indicator-rejection2}, we deduce that
\[\forall Z \in \{0,1\}^N,\quad \mathds 1_{Z \in \widetilde W_N} \leq \mathds 1_{Z \in W_N(\alpha+\eta)},\]
and we then get that $\mathds P_1$-a.s., for any~$T\geq T_N(\epsilon)$, we have
\begin{align*}
&\zeta_{N,T}=\frac{\sum_{t=1}^T \mathds P_{\theta^*_0}(Z^{(t)}) \mathds 1_{Z^{(t)}\in  \widetilde W_N}}{\sum_{t=1}^T \mathds P_{\theta^*_0}(Z^{(t)})}\leq \frac{\sum_{t=1}^T \mathds P_{\theta^*_0}(Z^{(t)}) \mathds 1_{Z^{(t)}\in  W_N(\alpha+\eta)}}{\sum_{t=1}^T \mathds P_{\theta^*_0}(Z^{(t)})}.
\end{align*}
The right hand side of the previous inequality converges $\mathds P_2$-a.s. to $\overline {\mathds P}_{\theta^*_0}(Y\in W_N(\alpha+\eta))$ as $T\to +\infty$ thanks to Proposition~\ref{prop:point-alternative}. Since from Theorem~\ref{thm:MLEasymptotic} it holds,
\[\underset{N \to +\infty}{\lim \sup}\, \, \overline  {\mathds P}_{\theta^*_0}(Y\in  W_N(\alpha+\eta)) \leq \alpha+\eta,\] we get that for any~$\epsilon>0$, there exists~$N_0 \in \mathds N$ such that for any $N\geq N_0$ it holds,
\[ \mathds P\big( \bigcup_{T_N\in \mathds N} \bigcap_{T\geq T_N} \{\zeta_{N,T}\leq \alpha +\epsilon\}\big)=1.\]

\subsection{Proof of Proposition~\ref{thm:CI}}
\label{proof:thm:CI}
Let us denote~$\mathcal M: \theta \in \mathds R^s \mapsto \mathbf X_M^{\top} \overline \pi^{\theta}.$ Since for any~$z \in \{0,1\}^N$, $\mathds P_{\theta}(z) = \exp( - \mathcal L_{N}(\theta,(z,\mathbf X_M)))$, we get~$\nabla_{\theta} \mathds P_{\theta}(z)=-L_{N}(\theta,(z,\mathbf X_M))\mathds P_{\theta}(z)$. Recalling that $\overline \pi^{\theta} = \overline {\mathds E}_{\theta}[Y]$, we have for any $k \in [s]$,
\begin{align} \frac{\partial \overline \pi^{\theta}}{\partial \theta_k} &= \left( \sum_{w \in E_M} \mathds P_{\theta}(w)  \right)^{-2}\sum_{w,z \in E_M}\mathds P_{\theta}(z) \mathds P_{\theta}(w)z\left\{ L_{N}(\theta,(w,\mathbf X_M))-L_{N}(\theta,(z,\mathbf X_M)) \right\}_k\notag \\
&= \overline {\mathds E}_{\theta}\left[Z\left\{ L_{N}(\theta,(W,\mathbf X_M))-L_{N}(\theta,(Z,\mathbf X_M)) \right\}_k \right]\notag \\
&= \overline {\mathds E}_{\theta}\left[Z\left\{ \mathbf X_M^{\top}(Z-W)\right\}_k \right]\notag \\
&=  \overline \Gamma^{\theta} \mathbf X_{:,M[k]},\label{eq:nable-pibar}
\end{align}
where~$Z$ and~$W$ are independent random vectors valued in $\{0,1\}^N$ and distributed according to $\overline {\mathds P}_{\theta }$. Note that we used that for any $W \in \{0,1\}^N$, it holds
\[L_N(\theta,(W,\mathbf X_M)) = \mathbf X_M^{\top}(\sigma(\mathbf X_M \theta)-W).\]
Hence it holds \[\forall \theta \in \mathds R^s, \quad \nabla \mathcal M(\theta) =  \mathbf X_M^{\top} \overline \Gamma^{\theta} \mathbf X_M.\]
Suppose that we are able to compute an estimate~$ \theta^{\bigstar}\in \mathds B_p(0,R)$ of~$\theta^*$. Using that $\theta^* \in \mathds B_p(0,R)$ and that
\[\inf_{\theta\in\mathds B_p(0,R)}\lambda_{\min}\left(\nabla \mathcal M(\theta)\right)\geq \kappa\lambda_{ \min}\left(\mathbf X_M^{\top} \mathbf X_M\right)\geq c\kappa N ,\]
it holds 
\begin{align*}
\|\mathcal M( \theta^{\bigstar})-\mathcal M(\theta^*)\|_2^2&= \| \int_0^1 \nabla \mathcal M (t\theta^{\bigstar}+(1-t)\theta^*) ( \theta^{\bigstar}-\theta^*) dt\|_2^2\\
&=(\theta^{\bigstar}-\theta^*)^{\top} \left\{ \int_0^1 \nabla \mathcal M (t\theta^{\bigstar}+(1-t)\theta^*) dt \right\}^2(\theta^{\bigstar}-\theta^*)\\
&\geq \|\theta^{\bigstar}-\theta^*\|_2^2 \inf_{\theta \in \mathds B_p(0,R)} \lambda_{\min}( \nabla \mathcal M(\theta))^2\\
&\geq (c\kappa N)^2\|\theta^{\bigstar}-\theta^*\|_2^2.
\end{align*}
Noticing further that 
\[\sup_{\theta\in\mathds R^s}\|\nabla \Psi^{-1}(\theta)\| = \sup_{\theta\in\mathds R^s}\|\mathbf X_M^{\top} \mathrm{Diag}(\sigma'(\mathbf X_M\theta)) \mathbf X_M\| \leq \frac14 CN,\]
we get
\begin{align*}
\| \theta^*-\theta^{\bigstar}\|_2&\leq   \left(\kappa c N\right)^{-1} \|\mathbf X_M^{\top}\overline \pi^{\theta^{\bigstar}}-\mathbf X_M^{\top}\overline \pi^{\theta^*}\|_2\\
&=   \left(\kappa c N\right)^{-1} \|\mathbf X_M^{\top} \pi^{\overline \theta(\theta^{\bigstar})}-\mathbf X_M^{\top} \pi^{\overline \theta(\theta^*)}\|_2\quad \text{(using Eq.\eqref{eq:gradbar})}\\
&\leq   \left(\kappa c N\right)^{-1} \sup_{\theta\in\mathds R^s}\|\nabla \Psi^{-1}(\theta)\|\|\Psi\left(\mathbf X_M^{\top} \pi^{\overline \theta (\theta^{\bigstar})}\right)-\Psi\left(\mathbf X_M^{\top} \pi^{\overline \theta(\theta^*)}\right)\|_2\\
&\leq    C\left(\kappa c \right)^{-1} \|\Psi\left(\mathbf X_M^{\top} \pi^{\overline \theta(\theta^{\bigstar})}\right)-\Psi\left(\mathbf X_M^{\top} \pi^{\overline  \theta(\theta^*)}\right)\|_2 \\
&=    C\left(\kappa c \right)^{-1} \|\overline \theta(\theta^{\bigstar})-\overline \theta(\theta^*)\|_2\\
&\leq C\left(\kappa c \right)^{-1} \left[\|\overline \theta(\theta^{\bigstar})-\widehat \theta\|_2+ \|\widehat \theta-\overline \theta(\theta^*)\|_2\right],
\end{align*}
where we used that $\mathbf X_M^{\top} \pi^{\overline \theta (\theta^*)} = \mathbf X_M^{\top} \sigma\big(\mathbf X_M\overline \theta (\theta^*)\big) = \Xi\big(\overline \theta (\theta^*)\big) \in \mathrm{Im}(\Xi)$ and thus $\Psi(\mathbf X_M^{\top} \pi^{\overline \theta (\theta^*)} )$ is well-defined. Similarly, we have that  $\mathbf X_M^{\top} \pi^{\overline \theta (\theta^{\bigstar})} \in \mathrm{Im}(\Xi)$. 
Since Theorem~\ref{thm:MLEasymptotic} gives that
\[\overline {\mathds P}_{\theta^*}\left( \|V_N(\theta^*)(\widehat \theta - \overline \theta)\|_2^2\leq \chi^2_{s,1-\alpha}\right) \underset{N\to + \infty}{\to}  1-\alpha,\] with $V_N(\theta^*):=[\overline G_N(\theta^*)]^{-1/2}H_N(\overline \theta(\theta^*))$, we deduce (using the assumption of the design matrix from Section~\ref{assumption:design}) that the event  \[
\|\widehat \theta-\overline \theta(\theta^*)\|_2\leq \|[V_N(\theta^*)]^{-1}\| \|V_N(\theta^*)(\widehat \theta - \overline \theta)\|_2\leq\|(\sigma^{\overline \theta})^{-2}\|_{\infty}c^{-1}\left(N/C\right)^{-1/2}\sqrt{\chi^2_{s,1-\alpha}} ,\]
holds with probability tending to $1-\alpha$ as $N\to+\infty$. Note that we used that 
\[\|H_N(\overline \theta(\theta^*))^{-1}\|\leq  (cN)^{-1} \|(\sigma^{\overline \theta})^{-2}\|_{\infty},\]
and that
\[\|[\overline G_N(\theta^*)]^{1/2}\|\leq (CN)^{1/2}.\]
Hence we obtain an asymptotic confidence region for~$\theta^*$ of level~$1-\alpha$.

\subsection{Proof of Proposition~\ref{thm:CI-saturated}}

\label{sec:proof-CI-saturated}
Let us denote~$\mathcal R: \pi \in (0,1)^N \mapsto  \overline \pi^{\pi}.$ It holds for any $i\in[N]$,
\begin{align}  \frac{\partial \overline \pi^{\pi}}{\partial \pi_i} &= \left( \sum_{w \in E_M} \mathds P_{\pi}(w)  \right)^{-2}\sum_{w,z \in E_M}\mathds P_{\pi}(z) \mathds P_{\pi}(w)z\left\{ z-w \right\}_i \big( \pi_i(1-\pi_i)\big)^{-1}\notag \\
&= \overline {\mathds E}_{\pi}\left[Z(Z-W)^{\top}_i \right] \big( \pi_i(1-\pi_i)\big)^{-1}, \notag 
\end{align}
where~$Z$ and~$W$ are independent random vectors valued in $\{0,1\}^N$ and distributed according to $\overline {\mathds P}_{\pi}$. Hence it holds \[\forall \pi \in (0,1)^N, \quad \nabla \mathcal R(\pi) =  \overline \Gamma^{\pi} \mathrm{Diag}(\pi \odot (1-\pi))^{-1}.\]
Suppose that we are able to compute an estimate~$\pi^{\bigstar}\in \mathds B_p(\frac{\mathbf 1_N}2,R)$ of~$\pi^*$. Then since it holds for any $v\in \mathds R^N$,
\[\inf_{\pi\in\mathds B_p(\frac{\mathbf 1_N}2,R)}\|\nabla \mathcal R(\pi)v\|_2\geq  4\kappa \|v\|_2,\]
we get that
\begin{align*}
\|\mathcal R(\pi^{\bigstar})-\mathcal R(\pi^*)\|_2&= \| \int_0^1 \nabla \mathcal R (t\pi^{\bigstar}+(1-t)\pi^*) (\pi^{\bigstar}-\pi^*) dt\|_2\\
&\geq  4\kappa \|\pi^{\bigstar}-\pi^*\|_2.
\end{align*}
Hence we have that
\begin{align*}
\| \pi^*-\pi^{\bigstar}\|_2&\leq  (4\kappa)^{-1} \|\overline \pi^{\pi^{\bigstar}}-\overline \pi^{\pi^*}\|_2\\
&\leq  (4\kappa)^{-1} \big\{\|\mathrm{Proj}_{\mathbf X_M}(\overline \pi^{\pi^{\bigstar}}- Y)\|_2 + \|\mathrm{Proj}_{\mathbf X_M}(Y-\overline \pi^{\pi^*})\|_2\\
&\qquad +\|\mathrm{Proj}^{\perp}_{\mathbf X_M}(\overline \pi^{\pi^{\bigstar}}-\overline \pi^{\pi^*})\|_2\big\}.
\end{align*}
Since Theorem~\ref{prop:CLT} gives that
\[\overline {\mathds P}_{\pi^*}\left( \|[\overline G_N(\pi^*)]^{-1/2}(\mathbf X_M^{\top} Y -\mathbf X_M^{\top} \overline \pi^{\pi^*})\|_2^2\leq \chi^2_{s,1-\alpha}\right) \underset{N\to + \infty}{\to}  1-\alpha,\] we deduce that the event  \begin{align*}
\|\mathbf X_M^{\top} Y-\mathbf X_M^{\top}\overline \pi^{\pi^*}\|_2&\leq \|[\overline G_N(\pi^*)]^{1/2}\| \|[\overline G_N(\pi^*)]^{-1/2}\mathbf X_M^{\top}( Y - \overline \pi^{\pi^*})\|_2\\
&\leq (CN)^{1/2} \sqrt{\chi^2_{s,1-\alpha}} ,\end{align*}holds with probability tending to $1-\alpha$ as $N\to+\infty$. Noticing further that for any vector $v \in \mathds R^N$,
\[\|\mathrm{Proj}_{\mathbf X_M}v\|_2 \leq \|\mathbf X_M \left( \mathbf X_M^{\top} \mathbf X_M \right)^{-1}\| \times \| \mathbf X_M^{\top}v\|_2 \leq (CN)^{1/2} (cN)^{-1} \|\mathbf X_M^{\top}v\|_2,\]
we get that for any $\epsilon>0$, there exists $N_0 \in \mathds N$ such that for any $N\geq N_0$, it holds with at least $1-\alpha-\epsilon$,
\begin{align*}
\| \pi^*-\pi^{\bigstar}\|_2 &\leq  (4\kappa)^{-1} \big\{\|\mathrm{Proj}_{\mathbf X_M}(Y-\overline \pi^{\pi^{\bigstar}})\|_2+Cc^{-1}\sqrt{\chi^2_{s,1-\alpha}} \\
&\qquad +\|\mathrm{Proj}^{\perp}_{\mathbf X_M}(\overline \pi^{\pi^{\bigstar}}-\overline \pi^{\pi^*})\|_2\big\}.
\end{align*}
Hence we obtain an asymptotic confidence region for~$\pi^*$ of level~$1-\alpha$.

\section{Inference conditional on the signs}
\label{sec:conditional-signs}

\subsection{Leftover Fisher information}

As highlighted in~\cite{fithian2014optimal}, conducting inference conditional on some random variable prevents the use of this variable as evidence against a hypothesis. Selective inference should be understood as partitioning the observed information in two sets: the one used to select the model and the one used to make inference. This communicating vessels principle is illustrated with the following inclusions borrowed from~\cite{fithian2014optimal}.
\[\mathcal F_0 \underbrace{\subset}_{\text{used for selection}} \mathcal F(\mathds 1_{Y \in \mathcal M}) \underbrace{\subset}_{\text{used for inference}} \mathcal F(Y). \]
Typically, let us assume that we condition on both the selected support~$\widehat M(Y)=M$ and the observed vector of signs~$\widehat S_M(Y)=S_M\in \{0,1\}^{|M|}$, meaning that~$\mathcal M = E_M^{S_M}$ (cf. Eq.\eqref{def:EMSM}). Even if the vector of signs~$S_M$ is surprising under~$\mathds H_0$, we will not reject unless we are surprised anew by observing the response variable~$Y$. Stated otherwise, when we condition on both the selected support and the vector of signs, we cannot take advantage of the possible unbalanced probability distribution of the vector of signs~$\widehat S_M(Y)$ conditionally on~$E_M$. Hence, conditioning on a finer $\sigma$-algebra results in some information loss. \cite{fithian2014optimal} explain that we can actually quantify this waste of information.
The Hessian of the log-likelihood can be decomposed as
\begin{equation}\label{eq:hessian-likelihood}\nabla^2_{\vartheta}\mathcal L_N(\vartheta,Y \, |\, E_M) =\nabla^2_{\vartheta}\mathcal L_N(\vartheta,\widehat S_M(Y) \,|\, E_M) +\nabla^2_{\vartheta}\mathcal L_N(\vartheta,  Y \, | \, \{ E_M, \widehat S_M(Y)\}).\end{equation}
For any~$\sigma$-algebra~$\mathcal F \subseteq\sigma(Y)$, we consider the conditional expectation
\[\mathcal I_{Y \, |\, \mathcal F} (\vartheta):= -\mathds E  \left[ \nabla^2_\vartheta \mathcal L_N(\vartheta,  Y \, | \,\mathcal F) \,|\,\mathcal F \right].\]
The {\it leftover Fisher information} after selection at~$\widehat S_M(Y)$ is defined by~$\mathcal I_{Y \, |\, \{E_M,\widehat S_M(Y)\}} (\vartheta).$ Taking expectation in both sides of Eq.\eqref{eq:hessian-likelihood} leads to
\begin{align*}\mathds E\left[\mathcal I_{Y \, |\, \{E_M, \widehat S_M(Y)\}} (\vartheta)\right] &= \mathds E \, \mathcal I_{Y \, |\, E_M}(\vartheta) - \mathds E \, \mathcal I_{\widehat S_M(Y) \, |\, E_M}(\vartheta)\\
&\preceq   \mathds E \, \mathcal I_{Y \, |\, E_M}(\vartheta)
,\end{align*}
which can also be written as
\[\sum_{S_M \in \{\pm1\}^s} \mathds P(\widehat S_M(Y)=S_M \, |\, E_M)\, \mathds E \mathcal I_{Y \,|\, E_M^{S_M}}(\vartheta)\preceq   \mathds E \, \mathcal I_{Y \, |\, E_M}(\vartheta).\]
In expectation, the loss of information induced by conditioning further on the vector of signs is quantified by the information~$\widehat S_M(Y)$ carries about~$\vartheta$. Let us stress that this conclusion is only true in expectation and it may exist some vector of signs~$S_M \in \{-1,+1\}^s$ such that
\[\mathcal I_{Y \, |\, E_M} (\vartheta) \preceq   \mathcal I_{Y \, |\, E_M^{S_M}}(\vartheta).\]

Hence, conditioning on the signs will generally lead to wider confidence intervals. Nevertheless, let us stress that inference procedures correctly calibrated conditional on $E_M^{S_M}$ will be also valid conditional on $E_M$. More precisely, considering some transformation $T:\mathds R^N \to \mathds R$ and real valued random variables $L(Y,S_M)<U(Y,S_M)$ such that for any vector of signs $S_M \in \{-1,+1\}^s$ it holds
\[{\mathds P}\left(T(\pi^*) \in [L(Y,S_M),U(Y,S_M)] \, | \, E_M^{S_M}\right)=1-\alpha ,\]
the confidence interval has also~$(1-\alpha)$ coverage conditional on the~$E_M=\{\widehat M (Y)=M\}$ since
\begin{align*}
&\mathds P( T(\pi^*) \in [L(Y,\widehat S_M(Y)),U(Y,\widehat S_M(Y))]  \; |\; E_M) \\
&=\sum_{S_M \in \{\pm1\}^s} \mathds P(\widehat S_M(Y)=S_M \, |\, E_M)\underbrace{\mathds P( T(\pi^*) \in [L(Y,S_M),U(Y, S_M)]  \; |\; E_M^{S_M})}_{=1-\alpha} \\
&=1-\alpha.
\end{align*}

\subsection{Discussion}
\label{comparison:taylorGLM}

Let us recall that in~\cite{taylorGLM}, the authors work in the selected model for logistic regression. They consider a selected model $M\subseteq[d]$ associated to a response vector $Y=(y_i)_{i\in[n]}\in \{0,1\}^N$ where for any~$i\in[N]$, $y_i$ is a Bernoulli random variable with parameter~$\{\sigma(\mathbf X_M \theta^*)\}_i$ for some $\theta^* \in \mathds R^s$ ($s=|M|$). As presented in Section~\ref{sec:MLEvsBIAS}, in~\cite{taylorGLM} the authors claim the following asymptotic distribution
\begin{equation}\label{eq:taylor}\underline{\theta} \sim \mathcal N(\vartheta^*_M,H_N( \vartheta^{*}_M)^{-1}),\end{equation}
 where $\underline{\theta} =  \hat \vartheta^{\lambda}_M + \lambda H_N(\widehat \vartheta^{\lambda}_M)^{-1}\widehat S_M(Y).$
Note that this approximation corresponds to the one usually made to form Wald tests and confidence intervals in
generalized linear models. They claim that the selection event~$\{Y \in \{0,1\}^N\; :\; \widehat M(Y)=M, \widehat S_M(Y)=S_M\}$ can be asymptotically approximated by
\[\{ Y \; : \; \mathrm{Diag}(S_M) \left(  \underline{\theta} - H_N(\vartheta^{*}_M)^{-1}\lambda S_M\right)\geq 0  \}.\] Let us denote by~$F^{[a,b]}_{\mu,\sigma^2}$ the CDF of a $\mathcal N(\mu,\sigma^2)$ random variable
truncated to the interval $[a,b]$. Then they use the polyhedral lemma to state that for some random variables $\mathcal V^-$ and $\mathcal V^+$ it holds
\[\left[F^{[\mathcal V^-_{S_M},\mathcal V^+_{S_M}]}_{\vartheta^*_{M[j]},\left[H_N(\vartheta^*_M)^{-1}\right]_{j,j}}(\underline \theta_j) \; | \; \widehat M(Y)=M, \; \widehat S_M(Y)=S_M\right]\; \sim \; \mathcal U([0,1]).\]
Several problems arise at this point.
\begin{enumerate}
\item {\bf Lack of theoretical guarantee due to the use of Monte-Carlo estimates.}\\
The first problem is that both~$\underline \theta$ and the selection event~$\{\widehat M(Y)=M, \; \widehat S_M(Y)=S_M\}$ involve the unknown parameter~$\vartheta^*_M$ through~$H_N(\vartheta^*_M)$. Taylor and al. propose to use a Monte-Carlo estimate for~$H_N(\vartheta^*_M)$ by replacing it with~$H_N(\widehat \theta^{\lambda})$. Using this Monte-Carlo estimate, one can compute~$L$ and~$U$ such that
\[F^{[\mathcal V^-_{S_M},\mathcal V^+_{S_M}]}_{L,\left[H_N(\vartheta^*_M)^{-1}\right]_{j,j}}(\underline \theta_j)=1-\frac{\alpha}{2} \quad \text{and} \quad F^{[\mathcal V^-_{S_M},\mathcal V^+_{S_M}]}_{U,\left[H_N(\vartheta^*_M)^{-1}\right]_{j,j}}(\underline \theta_j)=\frac{\alpha}2.\]
Then,~$[L,U]$ is claimed to be a confidence interval with (asymptotic)~$(1-\alpha)$ coverage for~$\vartheta^*_{M[j]}$ conditional on~$\{\widehat M(Y) = M, \widehat S_M(Y)=S_M\}$, that is,
\[\mathds P(\vartheta^*_{M[j]}\in [L,U]\; |\; \widehat M(Y) = M,\; \widehat S_M(Y)=S_M)= 1 -\alpha.\]

\item {\bf Their approach is not well suited to provide more powerful inference procedures by conditioning only on $E_M$.}\\In the linear model,~\cite{sun16} also start by deriving a pivotal quantity by conditioning on both the selected variables and the vector of signs. However, in the context of linear regression, the vector of signs only appears in the threshold values~$\mathcal V^-$ and~$\mathcal V^+$. Hence, conditioning only on the selected variables~$\{\widehat M(Y)=M\}$ simply reduces to take the union~$\cup_{S_M\in \{\pm1\}^s} [\mathcal V^-_{S_M},\mathcal V^+_{S_M}]$ for the truncated Gaussian. In the method proposed by~\cite{taylorGLM}, the vector of signs also appears in the computation of~$\underline \theta$. The consequence is that the (asymptotic) distribution of~$\underline{\theta}$ conditional on~$\{\widehat M(Y)=M\}$ is not a truncated Gaussian anymore but a mixture of truncated Gaussians. In this situation, it seems unclear how to take advantage of this structure to provide more powerful inference procedures.
\end{enumerate}

\end{document}